\documentclass[11pt,letterpaper,twoside,reqno,nosumlimits]{amsart}

\allowdisplaybreaks

\usepackage[usenames,dvipsnames]{xcolor}
\usepackage{fancyhdr}
\usepackage{amsmath,amsfonts,amsbsy,amsgen,amscd,amssymb,amsthm}
\usepackage{url}

\usepackage{mathtools}

\usepackage[font=small,margin=0.25in,labelfont={bf},labelsep={colon}]{caption}
\usepackage{subcaption}

\usepackage{tikz}
\usepackage{microtype}
\usepackage{enumitem}

\definecolor{dark-gray}{gray}{0.3}
\definecolor{dkgray}{rgb}{.4,.4,.4}
\definecolor{dkblue}{rgb}{0,0,.5}
\definecolor{medblue}{rgb}{0,0,.75}
\definecolor{rust}{rgb}{0.5,0.1,0.1}

\usepackage[colorlinks=true]{hyperref}

\hypersetup{urlcolor=rust}
\hypersetup{citecolor=dkblue}
\hypersetup{linkcolor=dkblue}

\usepackage{setspace}
\usepackage{graphicx}
\usepackage{booktabs,longtable,tabu} % Nice tables
\usepackage{multirow} % More control over tables

\usepackage{float}

\usepackage[full]{textcomp}

\usepackage[scaled=.98,sups]{XCharter}% lining figures in math, osf in text
\usepackage[scaled=1.04,varqu,varl]{inconsolata}% inconsolata typewriter
\usepackage[type1]{cabin}% sans serif
\usepackage[charter,vvarbb,scaled=1.07]{newtxmath}
\usepackage[cal=boondoxo]{mathalfa}
\usepackage[T1]{fontenc}

\usepackage{bm} % boldmath must be called after the package

\graphicspath{{figures/}}

\newtheorem{theorem}{Theorem}[section]
\newtheorem{lemma}[theorem]{Lemma}

\newtheorem{conjecture}[theorem]{Conjecture}

\theoremstyle{definition}

\newtheorem{remark}[theorem]{Remark}

\numberwithin{equation}{section} 
\numberwithin{figure}{section}
\numberwithin{table}{section}

\floatstyle{plaintop}
\newfloat{recipe}{thp}{lor}
\floatname{recipe}{Recipe}
\numberwithin{recipe}{section}

\providecommand{\mathbold}[1]{\bm{#1}}  % Must be after 'fourier'
\renewcommand{\phi}{\varphi}

\providecommand{\mathbbm}{\mathbb} % In case we don't load bbm

\newcommand{\R}{\mathbbm{R}}

\newcommand{\N}{\mathbbm{N}}

\newcommand{\diff}[1]{\mathrm{d}{#1}}
\newcommand{\idiff}[1]{\, \diff{#1}}

\newcommand{\mtx}[1]{\mathbold{#1}}

\newcommand{\triplenorm}[1]{{\left\vert\kern-0.25ex\left\vert\kern-0.25ex\left\vert #1
    \right\vert\kern-0.25ex\right\vert\kern-0.25ex\right\vert}}

\usepackage[numbers]{natbib}

\newcommand{\om}{\omega}

\newcommand{\la}{\langle}
\newcommand{\ra}{\rangle}

\mathtoolsset{showonlyrefs}

\begin{document}

\title[Self-similar Blowups with Singular Profiles of Hou--Luo Model and Boussinesq equations]{Novel Self-similar Finite-time Blowups with Singular Profiles\\ of the 1D Hou--Luo Model and the 2D Boussinesq equations:\\ A Numerical Investigation}
\author[B. Chen, D. Huang, X. Li]{Bojin Chen$^1$, De Huang$^2$, and Xiangyuan Li$^3$}
\thanks{$^1$School of Mathematical Sciences, Peking University. E-mail: bj\_chen@stu.pku.edu.cn}
\thanks{$^2$School of Mathematical Sciences, Peking University. E-mail: dhuang@math.pku.edu.cn}
\thanks{$^3$School of Mathematical Sciences, Peking University. E-mail: lixiangyuan23@stu.pku.edu.cn}

\begin{abstract}
We present novel self-similar finite-time blowup scenarios for the 1D Hou--Luo model. We numerically demonstrate that solutions that initially satisfy certain derivative degeneracy condition can develop asymptotically self-similar finite-time blowups with singular self-similar profiles that are unbounded at some point. Moreover, this blowup phenomenon exhibits a two-stage feature: the solution first undergoes a local $L^{\infty}$ blowup at some time $\tilde{T}$, then continues in the weak sense beyond $\tilde{T}$ and develops a local $L^p$ blowup at a later time $T>\tilde{T}$ for some $p>0$. A further numerical investigation indicates that both stages are asymptotically self-similar. Finally, we extend our numerical study to the 2D Boussinesq equations and discover similar self-similar finite-time blowups with singular profiles that also exhibit a two-stage feature.
\end{abstract}

\maketitle

\section{Introduction}
In this paper, we present new self-similar blowup phenomenons of the 1D Hou--Luo (HL) model:
\begin{equation}\label{eqt:1Dhouluo}
    \begin{aligned}
    &\om_t+u\om_x=\theta_x,\\
    &\theta_t+u\theta_x=0,\\
    & u_x=\mtx{H}(\om),
    \end{aligned}
\end{equation}
for $x\in \R$, where $\mtx{H}$ denotes the Hilbert transform on the real line. This model was first proposed by Luo and Hou \cite{luo2014potentially,luo2014toward} to acquire understanding of the numerically observed self-similar singularity formation of the 3D axisymmetric Euler equations on the solid boundary of an infinitely long cylinder. 

Whether initially smooth solutions to the 3D incompressible Euler equations can develop a singularity in finite time is one of the most fundamental problems in mathematical fluid dynamics. So far, the Hou--Luo scenario \cite{luo2014potentially,luo2014toward} stands as the only case where finite-time blowup of the 3D Euler equations from smooth initial data has been rigorously established, though the presence of a solid boundary is critically necessary in this scenario. Whether finite-time blowup can happen in the free space $\R^3$ still remains open. Ever since the report of the convincing numerical evidence of the Hou--Luo scenario, vast amounts of effort have been made to try to rigorously prove the existence of such boundary singularity for the 3D Euler equations. Recently, Chen and Hou \cite{chen2022stable,chen2025singularity} used a powerful computer-assisted approach to prove the asymptotically self-similar finite-time blowup of the 2D Boussinesq/3D Euler equations with boundary, thus finally settling the conjecture on the Hou--Luo scenario.

As a simplified model of the boundary behavior of the Hou--Luo scenario, the 1D HL model has attracted significant attention regarding its singularity formation. In view of the scaling property of the equations, we are particularly interested in the self-similar finite-time blowups of \eqref{eqt:1Dhouluo}. Specifically, we say \eqref{eqt:1Dhouluo} exhibits an exact self-similar blowup if it admits a solution $(\om,\theta)$ that takes the form
\begin{equation}\label{eqt:HL_exact_blowup}
    \om(x,t)=(T-t)^{\lambda}\bar{\Omega}\left(\frac{x}{(T-t)^{\gamma}}\right), \quad \theta(x,t)=(T-t)^{\mu}\bar{\Theta}\left(\frac{x}{(T-t)^{\gamma}}\right),
\end{equation}
 where $\bar{\Omega}$ and $\bar{\Theta}$ are the self-similar profiles, $\lambda$, $\mu$, $\gamma$ are scaling factors, and $T$ is the finite blowup time. Plugging this ansatz into \eqref{eqt:1Dhouluo}, and balancing the equations as $t\rightarrow T$ yields $\lambda=-1$, $\mu=\gamma+2\lambda=\gamma-2$.  Moreover, the undetermined factor $\gamma$ is related to the far-field decay rates of $\bar{\Omega}$ and $\bar{\Theta}_X$ \cite{huang2025exact}. In contrast to an exact self-similar solution, an asymptotically self-similar finite-time blowup refers to a solution that exhibits clear self-similarity only as $t$ approaches the finite blowup time $T$:   
\begin{equation}\label{eqt:asymptotic_selfsimilar_HL}
    \om(x,t)=(T-t)^{\lambda}\Omega\left(\frac{x}{(T-t)^{\gamma}},t\right), \quad \theta(x,t)=(T-t)^{\mu}\Theta\left(\frac{x}{(T-t)^{\gamma}},t\right),
\end{equation}
where $\lambda=-1$, $\mu=\gamma-2$, and the time-dependent profiles $\Omega(\cdot,t)$, $\Theta(\cdot,t)$ will converge to some non-trivial steady states $\bar{\Omega}$, $\bar{\Theta}_X$ as $t\to T$. Sometimes \eqref{eqt:asymptotic_selfsimilar_HL} is more preferred as it characterizes a more robust blowup mechanism.

Next, we review some existing results on the singularity formation of the HL model \eqref{eqt:1Dhouluo}. Shortly after the original work of Luo and Hou \cite{luo2014toward}, Choi et al. \cite{choi2017finite} used a functional argument to prove the finite-time blowup of the HL model \eqref{eqt:1Dhouluo} and another 1D model known as the CKY model \cite{choi2015finite}. However, their approach was not able to capture the self-similar nature of the blowup. Years later, Chen, Hou, and Huang \cite{chen2022asymptotically} developed a novel analysis framework based on rigorous computer-assisted proofs to establish asymptotically self-similar blowups of \eqref{eqt:1Dhouluo} from smooth initial data. In particular, they first constructed an approximate self-similar profile $(\tilde{\Omega},\tilde{\Theta}_X)$ using numerical computation, and then reformulated the problem as a stability analysis of the approximate steady state $(\tilde{\Omega},\tilde{\Theta}_X)$ by analyzing the evolution of the rescaled variables $(\Omega,\Theta_X)$ rather than the physical variables $(\omega,\theta_x)$. Through an energy argument, they demonstrated that, under a proper choice of normalization conditions and initial data, there exist $\gamma$ and $T$ such that \eqref{eqt:1Dhouluo} exhibits a blowup of the form \eqref{eqt:asymptotic_selfsimilar_HL}. Specifically, if the initial data of \eqref{eqt:1Dhouluo} is sufficiently close to $(\tilde{\Omega},\tilde{\Theta}_X)$ in some suitable energy norm, then the rescaled variables $(\Omega,\Theta_X)$ will stay in a small neighborhood of $(\tilde{\Omega},\tilde{\Theta}_X)$ in the same norm. Furthermore, they also used a limit argument to show that, near the approximate steady state $(\tilde{\Omega},\tilde{\Theta}_X)$, there lies a true stable steady state that corresponds to exact self-similar profiles $(\bar{\Omega},\bar{\Theta}_X)$ of \eqref{eqt:1Dhouluo}. The existence of exact self-similar profiles of the HL model was later established alternatively by Huang, Qin, Wang, and Wei \cite{huang2025exact} via a fixed-point method which is purely analytic. By performing a detailed study of the fixed-point map, the authors provided finer characterizations of the self-similar profiles such as their smoothness and precise far-field decay rates.

Moreover, the numerical simulations performed in \cite{huang2025exact} suggested that the self-similar profiles obtained via the fixed-point iteration coincide with those constructed in \cite{chen2022asymptotically} (up to scaling). Thus, it is expected that these two results characterize the same self-similar blowup mechanism of the HL model. Figure \ref{fig:HL_physicalspace_profiles_nondegenerate} illustrates the evolution of $\omega$ in this scenario. Importantly, the initial data $(\omega^0,\theta_x^0)$ considered in \cite{chen2022asymptotically} are odd-symmetric smooth functions that are \textit{non-degenerate} at the origin in the sense that $\omega^0_x(0)\ne 0$, $\theta_{xx}^0(0)\ne 0$. Starting from such initial data, $\omega$ develops a finite-time singularity at the symmetry point $x=0$. Furthermore, when the profiles of $\omega$ are dynamically rescaled near the origin with appropriate scaling factors, the resulting variable $\Omega$ converges to a regular profile which is non-degenerate at the origin (as shown in Figure \ref{fig:HL_rescaling_profiles_nondegenerate}). Here, by ``regular'' we mean that the profile is a bounded continuous function. These observations strongly suggest that \cite{chen2022asymptotically} and \cite{huang2025exact} characterized a robust blowup scenario for the HL model: starting from odd-symmetric non-degenerate initial data, \eqref{eqt:1Dhouluo} can develop self-similar blowups with non-degenerate regular profiles.

In contrast, the understanding of the HL model starting from more degenerate initial data (in the sense that $\omega^0_x(0)=\theta_{xx}^0(0)=0$) remains very limited. To date, the only existing effort in this direction is a preliminary numerical study in the thesis of Liu \cite{liu2017spatial}, which suggested that in the degenerate case, \eqref{eqt:1Dhouluo} may blow up in a way distinct from the non-degenerate case. In this paper, we conduct a thorough numerical investigation of \eqref{eqt:1Dhouluo} from degenerate initial data and report a novel blowup scenario for the HL model. Unlike the non-degenerate case, we find that, starting from smooth initial data that are degenerate at the origin, \eqref{eqt:1Dhouluo} can develop self-similar finite-time blowups with singular profiles. Specifically, as $t$ approaches the blowup time, the rescaled variable $\Omega$ converges to a profile that is locally unbounded at some point (see Figure \ref{fig:HL_rescaling_profiles_degenerate}), which partially confirms Liu's early observations.

Similar to the non-degenerate scenario, our numerical results support the existence of a finite time $T>0$ such that the location of the maximum of $\omega$ converges to the origin as $t \to T$. However, refined numerical fitting reveals that the $L^{\infty}$ norm of $\om$ actually blows up at an earlier time $\tilde{T} < T$. Thus, our results suggest a two-stage finite-time blowup mechanism for the HL model from degenerate initial data (as illustrated in Figure \ref{fig:HL_physicalspace_profiles_degenerate}): the solution first undergoes a local $L^{\infty}$ blowup (while the peak is away from the origin), then continues in the weak sense and develops a local $L^p$ blowup at the origin for some $p>0$. We want to remark that our result characterizes how the HL model can behave after a blowup in the $L^{\infty}$ norm. Moreover, further investigation demonstrates that both stages are asymptotically self-similar: the first stage develops regular self-similar profiles that do not change sign on the real line, whereas the second develops singular self-similar profiles. To the best of our knowledge, this represents a previously unreported blowup phenomenon.

\begin{figure}[!htbp]
\centering
    \begin{subfigure}[b]{0.95\textwidth}
        \centering
        \includegraphics[width=0.4\textwidth]{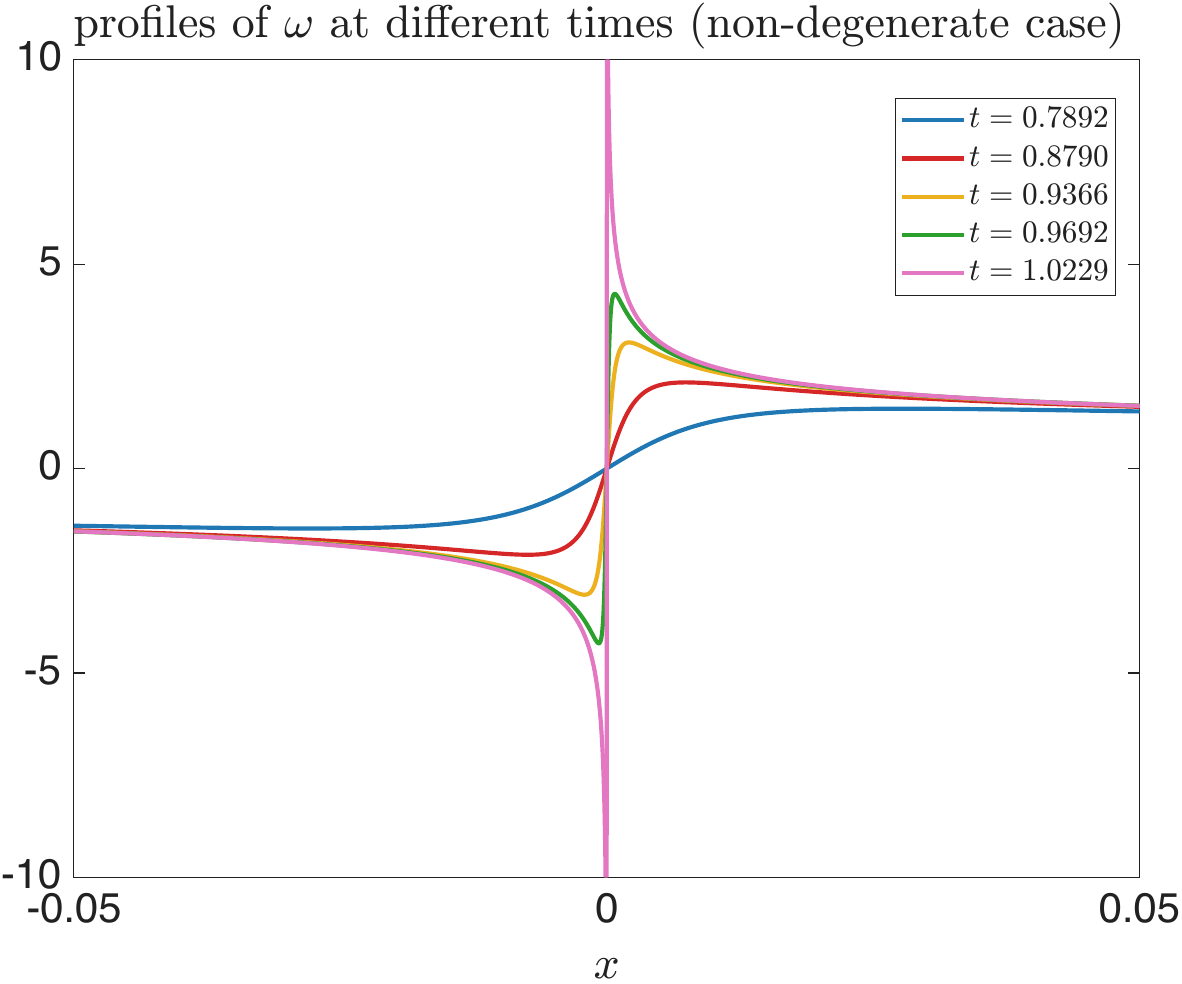}
        \caption{Evolution of $\om(\cdot,t)$ starting from odd-symmetric non-degenerate initial data. As shown, the spatial profile of $\omega$ concentrates towards the origin. During this process, the location of the maximum converges to $x=0$, accompanied by a rapid growth in the $L^{\infty}$ norm. Eventually, $\omega$ develops a finite-time singularity at $x=0$.}
            \label{fig:HL_physicalspace_profiles_nondegenerate}
    \end{subfigure}

    \begin{subfigure}[b]{0.95\textwidth}
        \centering
        \includegraphics[width=0.4\textwidth]{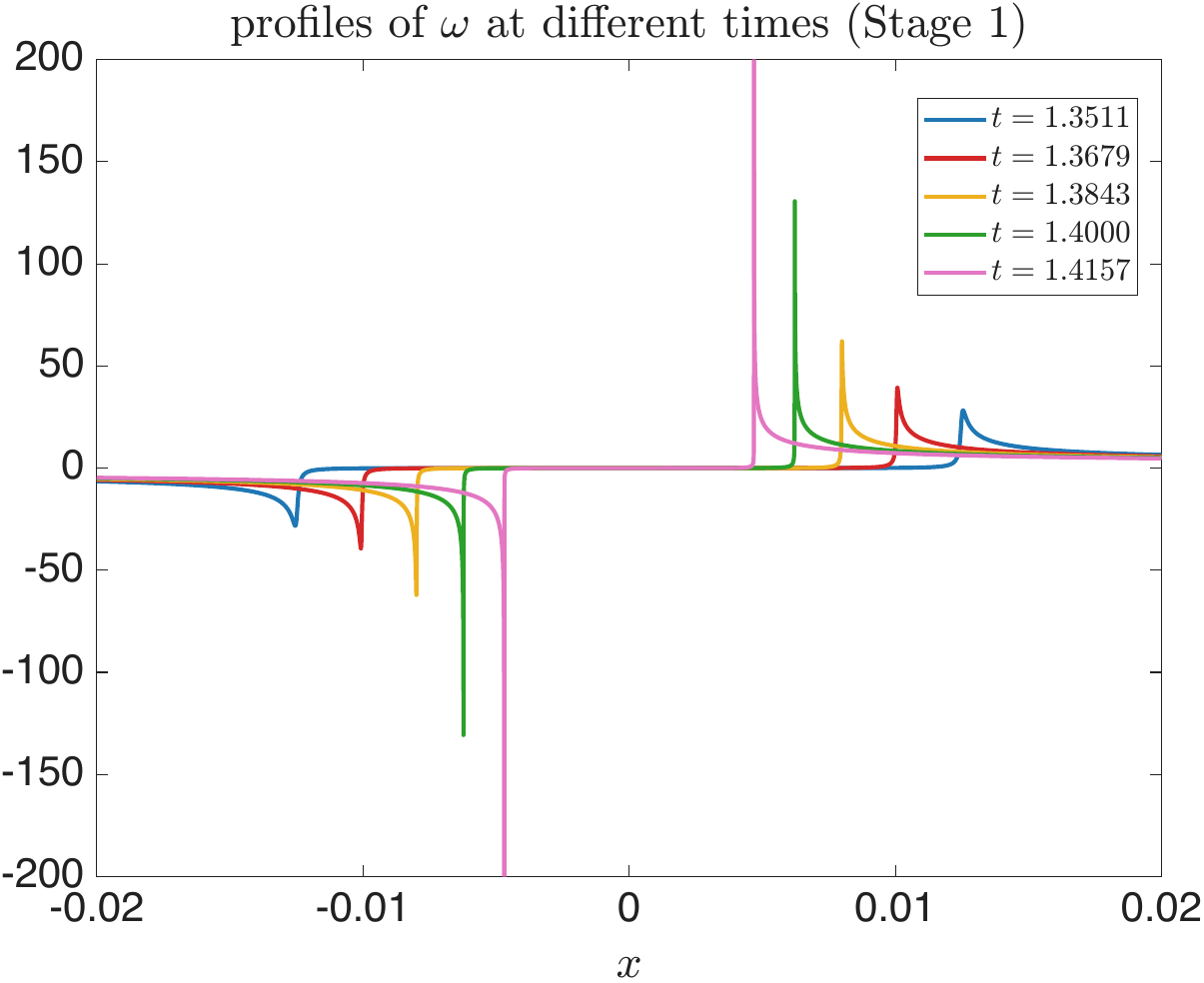}
        \includegraphics[width=0.4\textwidth]{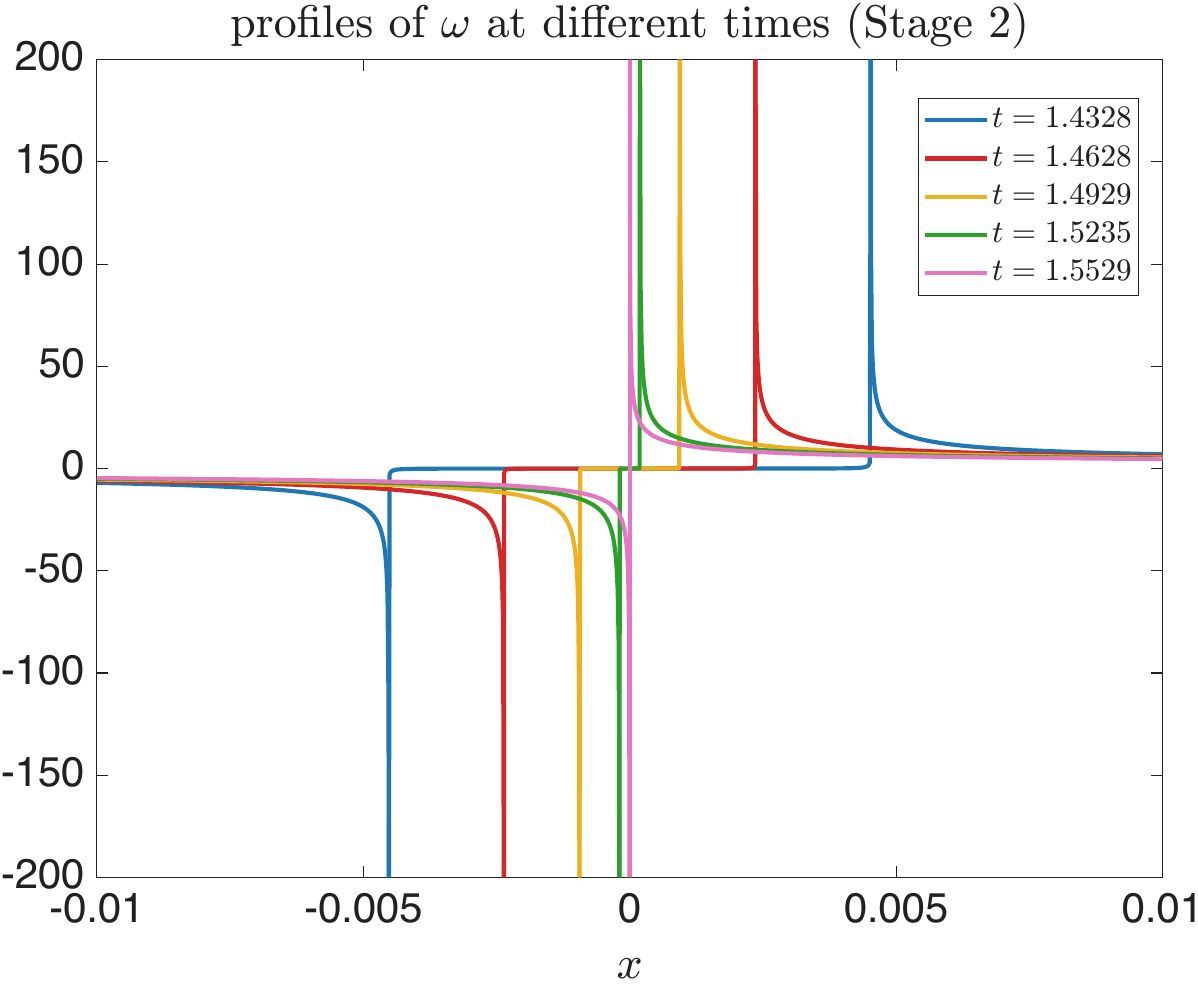}
        \caption{Evolution of $\om(\cdot,t)$ starting from odd-symmetric degenerate initial data. Left (Stage 1): $\om$ undergoes a finite-time $L^{\infty}$ blowup at two symmetric points with respect to $x=0$. This contrasts with the non-degenerate case, where the blowup occurs at the origin. Right (Stage 2): Beyond the first blowup time, the solution continues in the weak sense. The singularities of the weak solution are transported to the origin in finite time, leading to a local $L^p$ blowup near the origin for some $p>0$, where $p$ is determined by the corresponding limiting self-similar profile.}
          \label{fig:HL_physicalspace_profiles_degenerate}
    \end{subfigure}
 \caption[Evolution of $\om(\cdot,t)$.]{Evolution of $\om(\cdot,t)$ with different settings of initial data.}  
     \label{fig:HL_physicalspace_profiles}
\end{figure}

\begin{figure}[!htbp]
\centering
    \begin{subfigure}[b]{0.4\textwidth}
        \includegraphics[width=0.95\textwidth]{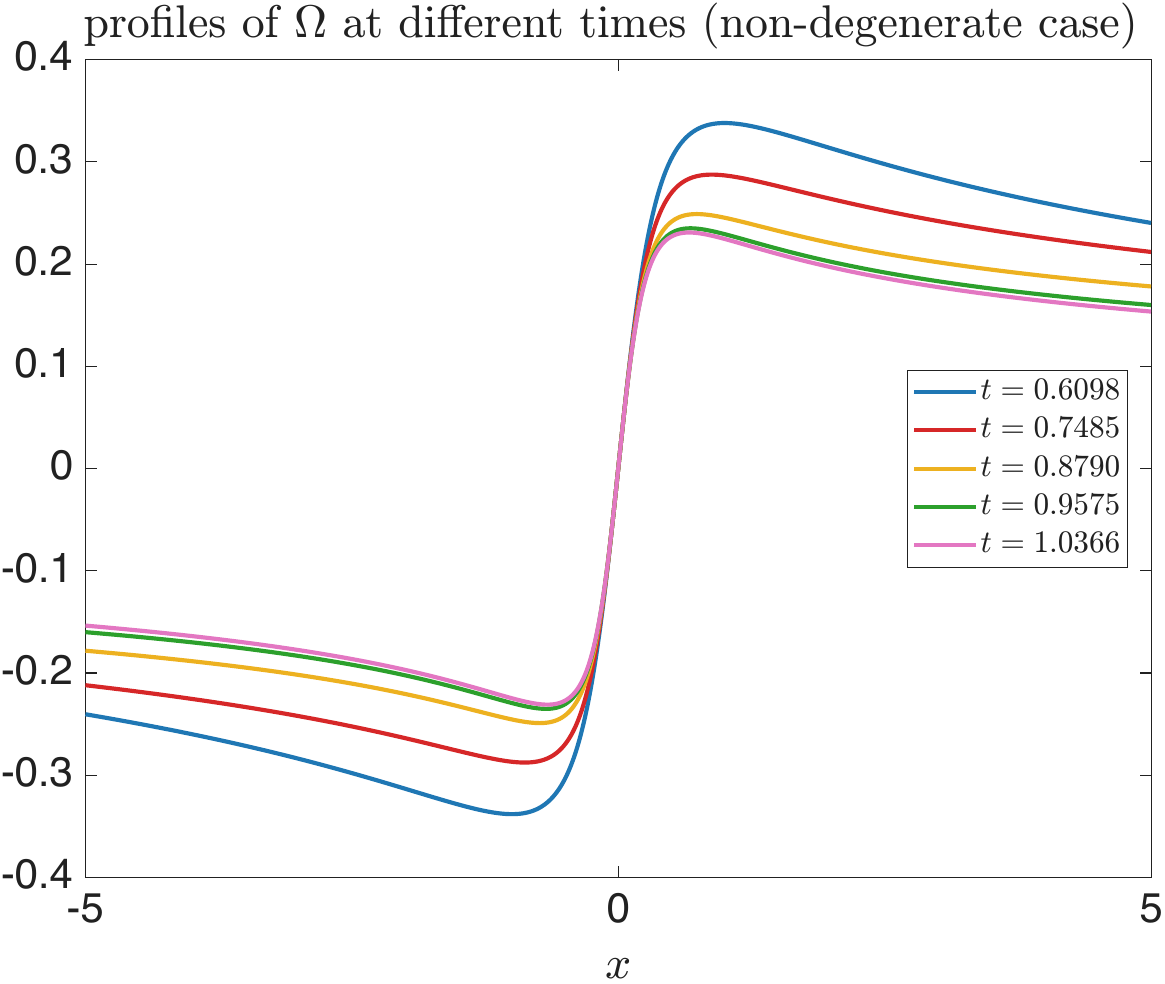}
          \caption{Non-degenerate case.}
         \label{fig:HL_rescaling_profiles_nondegenerate}
    \end{subfigure}
    \begin{subfigure}[b]{0.4\textwidth}
        \includegraphics[width=0.95\textwidth]{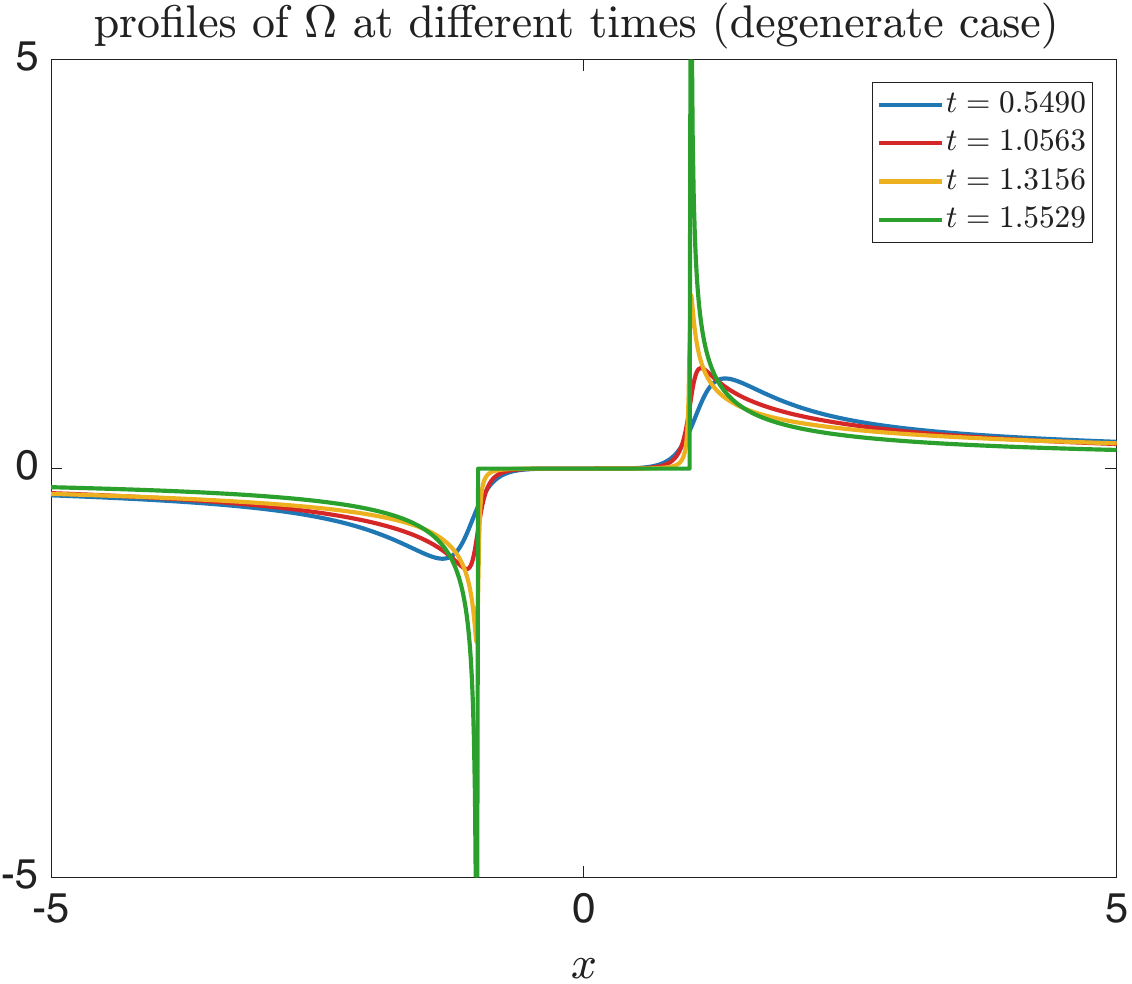}
         \caption{Degenerate case.}
        \label{fig:HL_rescaling_profiles_degenerate}
    \end{subfigure}
 \caption[Evolution of $\Omega(\cdot,t)$.]{Evolution of the profile $\Omega(\cdot,t)$. In the non-degenerate case (left), $\Omega$ converges to a regular profile, while in the degenerate case (right), $\Omega$ converges to a singular profile.}  
     \label{fig:HL_rescaling_profiles}
\end{figure}

Interestingly, we find that this two-stage self-similar blowup persists across a wider class of degenerate initial data. Specifically, if the odd symmetry restriction (which is a common assumption in previous studies) is removed, we can construct exact singular self-similar profiles for the HL model that admit explicit expressions. Furthermore, our numerical results suggest that these explicit singular profiles are asymptotically stable.
That is, starting from suitable smooth degenerate initial data, the rescaled variables $\Omega$ and $\Theta_X$ converge to the explicit singular profiles as $t$ approaches the Stage 2 blowup time $T$. This corroborates the robustness of our novel blowup mechanism, showing that it is not a specialized phenomenon tied to specific symmetric constraints.

It is worth noting that the HL model \eqref{eqt:1Dhouluo} also models the boundary induced singularity formation of the 2D Boussinesq equations \cite{luo2014toward,chen2022asymptotically} in the half-space $(x_1,x_2)\in\R\times\R_+$, 
\begin{equation}\label{eqt:Boussinesq}
\begin{split}
&\om_t + u_1\om_{x_1} + u_2\om_{x_2} = \theta_{x_1},\\
&\theta_t + u_1\theta_{x_1} + u_2\theta_{x_2} = 0,\\
&(u_1,u_2) = \nabla^{\perp}(-\Delta)^{-1}\om,
\end{split}
\end{equation}
for the 2D Boussinesq equations behave similarly to the 3D axisymmetric Euler equations away from the symmetry axis; see e.g. \cite{majda2002vorticity}. 
In fact, the boundary finite-time blowup of the 3D axisymmetric Euler equations can be approximated by the boundary finite-time blowup of the 2D Boussinesq equations up to an asymptotically small perturbation \cite{luo2014toward,liu2017spatial,chen2022stable}. In this paper, we also extend our numerical investigation to the 2D Boussinesq equations starting from degenerate initial data. Our results demonstrate that the 2D Boussinesq equations exhibit blowup behaviors similar to that of the HL model: starting from degenerate initial data, \eqref{eqt:Boussinesq} can develop self-similar finite-time blowups with singular profiles. Furthermore, this blowup phenomenon also exhibits a two-stage structure: the solution first develops an asymptotically self-similar $L^{\infty}$ blowup with a regular profile, and subsequently continues in the weak sense to develop an asymptotically self-similar $L^p$ blowup with a singular profile (Figure \ref{fig:BS_rescaling_profiles}(b)) for some $p>0$. This blowup scenario differs significantly from existing numerical evidence on the singularity formation of the 2D Boussinesq equations \cite{liu2017spatial,chen2022stable,wang2023asymptotic,wang2025discovery,chen2025singularity} (as demonstrated in Figure \ref{fig:BS_rescaling_profiles}(a)). We remark that while Liu \cite{liu2017spatial} conducted a preliminary numerical study of \eqref{eqt:Boussinesq} using degenerate initial data, his results didn't capture the two-stage blowup mechanism reported in this paper.
\begin{figure}[!htbp]
\centering
    \begin{subfigure}[t]{0.42\textwidth}
        \includegraphics[width=1\textwidth]{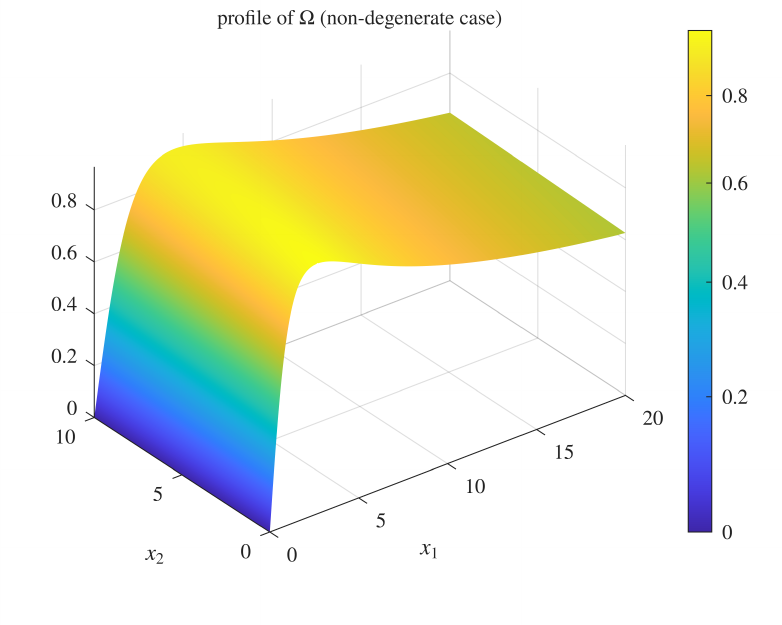}
        \caption{Non-degenerate case.}
         \label{fig:BS_rescaling_profiles_nondegenerate}
    \end{subfigure}
     \begin{subfigure}[t]{0.45\textwidth}
    \centering
    \raisebox{0ex}{%   
        \includegraphics[width=\linewidth]{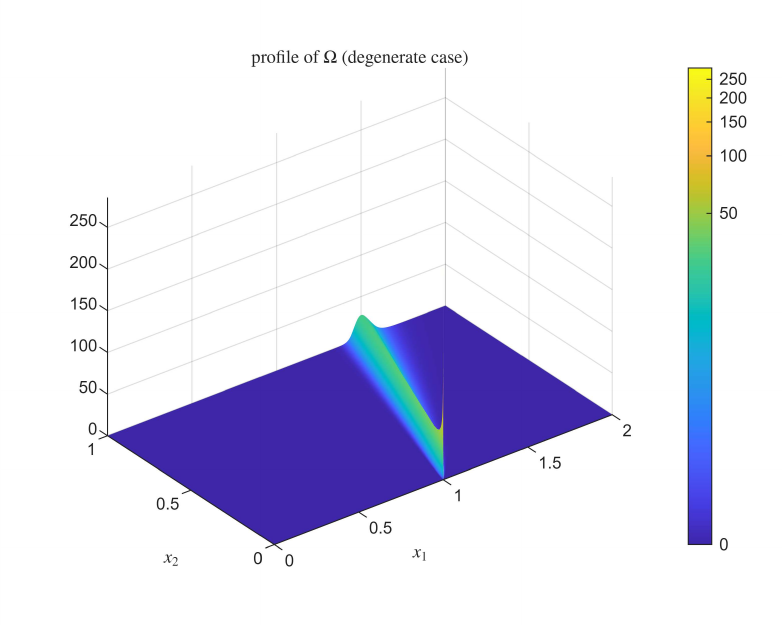}
    }
    \caption{Degenerate case.}
    \label{fig:BS_rescaling_profiles_degenerate}
\end{subfigure}
 \caption[Self-similar profiles of the 2D Boussinesq equations.]{Self-similar profiles of the 2D Boussinesq equations in different settings. The left panel illustrates a regular profile arising from non-degenerate initial data, as studied in the existing literature. The right panel presents a novel singular profile evolved from degenerate initial data.}  
     \label{fig:BS_rescaling_profiles}
\end{figure}

As we know, it remains open whether the 3D incompressible Euler equations on $\R^3$ can develop finite-time blowups from smooth initial data, not to mention self-similar ones. So far, existing results regarding singularity formation on $\R^3$ typically require lower regularity of the initial data. Specifically, Elgindi \cite{elgindi2021finite} first constructed a (stable) self-similar finite-time blowup for the 3D axisymmetric Euler equations on $\mathbb{R}^3$ without swirl, starting from $C^{1,\alpha}$ initial velocity for sufficiently small $\alpha$ (with stability of the blowup discussed in \cite{elgindi2022stability}). Recently, Cordoba et al. \cite{cordoba2025finite} introduced a novel blowup mechanism and established a finite-time singularity for the 3D axisymmetric Euler equations on $\mathbb{R}^3$ without swirl from $C^{1,\alpha}$ initial velocity for sufficiently small $\alpha$. Instead of considering self-similarity, they constructed solutions from infinitely many regions with vorticity, separated by vortex-free regions in between. Given the ongoing efforts to understand the singularity formation of the 3D Euler equations, we hope that our numerical findings of the HL model and the 2D Boussinesq equations can provide some inspiration for future investigations into singularity formation of the 3D Euler equations on $\R^3$ without boundary. It is worth mentioning that the singular profiles reported in this paper share qualitative similarities with those observed in recent numerical simulations by Hou--Huang \cite{hou2023potential}, which suggested a potential two-scale self-similar blowup of 3D axisymmetric Euler equations from smooth initial data in the absence of a boundary. Whether these phenomena are fundamentally linked remains an intriguing question for further exploration.

The rest of this paper is organized as follows. In Section \ref{sec:dynamic_rescaling_of_HL} we introduce the dynamic rescaling formulation of the HL model \eqref{eqt:1Dhouluo}, review previous results on singularity formation of the HL model and propose novel blowup scenarios of the HL model. Section \ref{sec:HL_scenario1} presents numerical evidence demonstrating that the solutions of the HL model starting from degenerate initial data can develop self-similar blowups with singular profiles that exhibit a two-stage structure. We then conduct a deeper investigation into the Stage 1 blowup in Section \ref{sec:HL_scenario2}, providing numerical evidence that this scenario is related to the self-similar blowup of the HL model with positive regular profiles. A family of singular self-similar profiles of the HL model is constructed in Section \ref{sec:HL_steady_state}. Section \ref{sec:dynamic_rescaling_of_BS} extends the numerical study to the 2D Boussinesq equations \eqref{eqt:Boussinesq} and presents similar blowup scenarios as those of the HL model. Finally, the implementation details of the numerical computations are provided in Appendix \ref{sec:HL_numerical method}.

\section{Dynamic Rescaling and Self-similar Blowups of 1-D Hou--Luo Model}\label{sec:dynamic_rescaling_of_HL}
In this section, we introduce the dynamic rescaling formulation for the HL model \eqref{eqt:1Dhouluo}, which transforms the challenge of investigating finite-time blowup into the study of the convergence and stability of some steady state. We then review established results concerning singularity formation of \eqref{eqt:1Dhouluo} and introduce the novel blowup scenarios discovered in this work.

Before we proceed, let us first clarify how to recover the velocity $u$ from the vorticity $\om$ in \eqref{eqt:1Dhouluo}. Note that $u_x$ is determined by the Hilbert transform of $\om$:
\[ u_x(x,t) = \mtx{H}(\om(\cdot,t))(x)= \frac{1}{\pi}\mathrm{P.V.}\int_{\R}\frac{\om(y,t)}{x-y}\idiff y,\]
where $\mathrm{P.V.}$ denotes the Cauchy principal value. To recover $u$ from the relation $u_x=\mtx{H}(\om)$, A direct approach is to compute 
\begin{equation}\label{eqt:1DHL_compute_u}
u(x,t)=-(-\Delta)^{-1/2}(\om(\cdot,t) )(x)=\frac{1}{\pi}\int_{\R}\om(y,t)\ln|x-y|\idiff y.
\end{equation}
When $\omega$ possesses odd symmetry, $u$ can be further simplified to
\[u(x,t)=\frac{1}{\pi}\int_{0}^{+\infty}\om(y,t)\ln\left|\frac{x-y}{x+y}\right|\idiff y. \]
In this case, $u$ is well-defined as long as $\om$ exhibits a slow decay since the kernel $\ln|(x-y)/(x+y)|$ behaves like $O(1/|y|)$ in the far field. However, this is not true for the general case as the kernel $\ln|x-y|$ diverges at infinity. Thus, \eqref{eqt:1DHL_compute_u} is only valid when $\om$ decays sufficiently fast in the far field. To resolve this for general data, we may choose a reference point $x_0\in \R$ where we impose $u(x_0)=0$ and integrate from $x_0$ to $x$ to obtain
\[u(x,t)=\frac{1}{\pi}\int_{\R}\om(y,t)\ln\left|\frac{x-y}{x_0-y}\right|\idiff y. \]
This integral remains well-defined provided that $\om$ satisfies mild decay conditions. Furthermore, the following lemma shows that solutions with the same initial data but different choices of $x_0$ are equivalent modulo a proper time-dependent spatial translation.
\begin{lemma}\label{lem:translation_invariance}
Let $(\om_0,\theta_0)$ be some suitable initial data, and let $x_1(t),x_2(t)$ be two continuous trajectory. Suppose that $(\om_i,\theta_i,u_i)_{i=1,2}$ is the solution of \eqref{eqt:1Dhouluo} that satisfies $u_i(x_i(t))=0$. Let $c(t)$ be a time-dependent quantity that satisfies the following ODE:
\[c'(t)=-u_1(x_2(t)-c(t),t)=-\frac{1}{\pi}\int_{\R}\om_1(y,t)\ln\left|\frac{x_2(t)-c(t)-y}{x_1(t)-y} \right|\idiff y, \quad c(0)=0.\]
Then it holds that
\[(\om_2(x,t),\theta_2(x,t),u_2(x,t))=(\om_1(x-c(t),t),\theta_1(x-c(t),t),u_1(x-c(t),t)-u_1(x_2(t)-c(t),t)).\]
\end{lemma}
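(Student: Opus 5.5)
The plan is to verify directly that the translated triple
\[(\tilde\om,\tilde\theta,\tilde u)(x,t):=\bigl(\om_1(x-c(t),t),\ \theta_1(x-c(t),t),\ u_1(x-c(t),t)-u_1(x_2(t)-c(t),t)\bigr)\]
solves \eqref{eqt:1Dhouluo} with the normalization $\tilde u(x_2(t),t)=0$ and with initial data $(\om_0,\theta_0)$. We then conclude $(\tilde\om,\tilde\theta,\tilde u)=(\om_2,\theta_2,u_2)$ by uniqueness of solutions to \eqref{eqt:1Dhouluo} under the normalization $u(x_2(t))=0$. Uniqueness is taken for granted in the class of ``suitable'' solutions implicit in the statement.

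First, the initial data and the constraint. Since $c(0)=0$, we have $\tilde\om(\cdot,0)=\om_0$ and $\tilde\theta(\cdot,0)=\theta_0$. Evaluating $\tilde u$ at $x=x_2(t)$ gives $\tilde u(x_2(t),t)=0$ immediately.

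Second, the Biot--Savart relation. The Hilbert transform commutes with translations. Hence
\[\tilde u_x(x,t)=(u_1)_x(x-c,t)=\mtx{H}(\om_1(\cdot,t))(x-c)=\mtx{H}(\tilde\om(\cdot,t))(x).\]
The subtracted term $u_1(x_2-c,t)$ depends only on $t$, so it does not affect $\tilde u_x$. Equivalently, writing $u_1$ via the kernel with reference point $x_1$ and substituting $y\mapsto y-c$, one finds
\[\tilde u(x,t)=\frac1\pi\int_\R\tilde\om(y,t)\ln\left|\frac{x-y}{x_2(t)-y}\right|\idiff y,\]
which is exactly the formula defining $u_2$ from $\om_2$ with reference point $x_2$.

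Third, the transport equations. By the chain rule,
\[\tilde\om_t=(\om_1)_t(x-c,t)-c'(t)\,(\om_1)_x(x-c,t).\]
Using the equation for $\om_1$, this becomes
\[\tilde\om_t=-u_1(x-c)(\om_1)_x(x-c)+(\theta_1)_x(x-c)-c'(\om_1)_x(x-c).\]
Inserting $c'(t)=-u_1(x_2-c,t)$ turns the velocity factor into $u_1(x-c)-u_1(x_2-c)=\tilde u$. Therefore $\tilde\om_t+\tilde u\tilde\om_x=\tilde\theta_x$. The $\theta$ equation is handled identically. The key point is that the ODE for $c$ is chosen precisely so that the frame velocity $-c'$ absorbs the constant shift of $u$.

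The main obstacle is rigor rather than algebra. One must ensure that the ODE for $c$ is solvable on the lifespan; this holds since $u_1$ is Lipschitz in $x$ and continuous in $t$ for suitable solutions, and $x_2$ is continuous. One must also ensure that uniqueness holds for the normalized system. For uniqueness I would argue that any two solutions with the same data and the same normalization have velocities whose $x$-derivatives are determined by $\om$. Their difference can then be controlled by a standard energy/Gronwall estimate, as in the local well-posedness theory, which I would assume.
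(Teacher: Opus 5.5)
Your proposal is correct and follows essentially the same route as the paper. You check that the translated triple, with the velocity shifted by the time-dependent constant $u_1(x_2(t)-c(t),t)$ (equivalently, the kernel formula re-centered at $x_2$), solves \eqref{eqt:1Dhouluo} with the $x_2$-normalization exactly because $c'(t)=-u_1(x_2(t)-c(t),t)$, and then you invoke uniqueness for the Cauchy problem; your explicit chain-rule computation just spells out the paper's ``if and only if'' substitution step.
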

The proof of this lemma will be deferred to Appendix \ref{sec:HL_numerical method}.
\subsection{Dynamic rescaling formulation}
With Lemma \ref{lem:translation_invariance}, we can assume that $u(0)=0$ and apply the change of variables  
\begin{equation}\label{eqt:dynamic_rescaling_of_HL_change_of_variables1}
\begin{aligned}
    &\omega(x,t)=C_{\om}(\tau)^{-1}\Omega\left(C_l(\tau)x,\tau(t)\right),\quad \theta(x,t)=C_{\theta}(\tau)^{-1}\Theta\left(C_l(\tau)x,\tau(t)\right),\\ &u(x,t)=C_{\om}(\tau)^{-1}C_{l}(\tau)^{-1}U\left(C_l(\tau)x,\tau(t)\right)
\end{aligned}
\end{equation}
with
\begin{equation}\label{eqt:dynamic_rescaling_of_HL_change_of_variables2}
\begin{aligned}
    C_{\om}(\tau)&=\exp\left(\int_0^\tau c_{\om}(s)\idiff s\right) ,\quad  C_{\theta}(\tau)=\exp\left(\int_0^\tau c_{\theta}(s) \idiff s\right),\\  C_{l}(\tau)&=\exp\left(\int_0^\tau c_l(s)\idiff s\right), \quad  t(\tau)=\int_0^{\tau}C_{\om}(s)\idiff s, \quad c_{\theta}=c_l+2c_{\om}
\end{aligned}
\end{equation}
to reformulate the original equations \eqref{eqt:1Dhouluo} into the following equivalent form: 
\begin{equation}\label{eqt:dynamic_rescaling_of_HL}
    \begin{aligned}
    &\Omega_\tau + (U+c_lX)\Omega_X=c_{\om}\Omega+\Theta_X,\\
    &\Theta_\tau+ (U+c_lX)\Theta_X=(c_l+2c_{\om})\Theta,\\
    &U_X =\mtx{H}(\Omega),\,\ U(0)=0,
    \end{aligned}
\end{equation}
where $X=C_l x$.
We refer to \eqref{eqt:dynamic_rescaling_of_HL} as the dynamic rescaling formulation of \eqref{eqt:1Dhouluo}. We remark that \eqref{eqt:dynamic_rescaling_of_HL} is completely equivalent to \eqref{eqt:1Dhouluo} under the change of variables given by \eqref{eqt:dynamic_rescaling_of_HL_change_of_variables1} and \eqref{eqt:dynamic_rescaling_of_HL_change_of_variables2} with the same initial condition.

We are also particularly interested in steady state solutions to \eqref{eqt:dynamic_rescaling_of_HL} that satisfy the self-similar profile equation
\begin{equation}\label{eqt:HL_steady_state}
    \begin{aligned}
    &(U+c_lX){\Omega}_X=c_{\om}{\Omega}+{\Theta}_X,\\
    &(U+c_lX){\Theta}_X=(c_l+2c_{\om}){\Theta},\\
    &U_X =\mtx{H}({\Omega}),\,\ U(0)=0.
    \end{aligned}
\end{equation}
The reader can easily verify that if $(\bar{\Omega},\bar{\Theta},\bar c_l,\bar c_\omega)$ is a solution to \eqref{eqt:HL_steady_state}, then it corresponds to exact self-similar finite-time blowups of the HL model in form of \eqref{eqt:HL_exact_blowup} with $\gamma=-\bar{c}_l/\bar{c}_\omega$, $\lambda=-1$ and $\mu=\gamma-2$.
Note that if $(\bar{\Omega}(X),\bar{\Theta}(X),\bar{c}_l,\bar{c}_{\om})$ is a steady state solution of \eqref{eqt:dynamic_rescaling_of_HL}, then
\begin{equation}\label{eqt:scaling_invariance}
(\alpha \bar{\Omega}(\beta X),\alpha^2 \bar{\Theta}(\beta X)/\beta,\alpha \bar{c}_l,\alpha \bar{c}_{\om})
\end{equation} is also a steady state solution for any $\alpha\in \R$, $\beta>0$, which indicates that this equation admits two degrees of freedom. Hence, in order to uniquely identify the profile up to rescaling, we need to impose two normalization conditions to fix these two degrees of freedom.

Moreover, if under some normalization conditions the solution tuple $(\Omega,\Theta,c_l,c_{\omega})$ converges to a steady state $(\bar{\Omega},\bar{\Theta},\bar{c_l},\bar{c}_{\om})$ of \eqref{eqt:dynamic_rescaling_of_HL} as $\tau\to\infty$, then the solution would blowup in an asymptotically self-similar manner \eqref{eqt:asymptotic_selfsimilar_HL}. We demonstrate this more precisely by the following Lemma.

\begin{lemma}\label{lem:DLstability_implies_blowup}
    Suppose \eqref{eqt:dynamic_rescaling_of_HL} converges to some steady state $(\bar{\Omega},\bar{\Theta},\bar{c_l},\bar{c}_{\om})$ with $\bar{c}_{\om}<0$. Furthermore, assume the parameters $c_l$ and $c_{\omega}$ converge sufficiently fast in the sense that
   \begin{equation}\label{eqt:convergence_of_clcomega}
    \int_{0}^{+\infty}|c_l(\tau)-\bar{c}_l|\idiff \tau<\infty, \quad \int_{0}^{+\infty}|c_{\om}(\tau)-\bar{c}_{\om}|\idiff \tau<\infty.
   \end{equation}
    Then $\om(x,t)$ develops a self-similar finite-time blowup asymptotically. More specifically, there exists a constant $C_0>0$ such that for any $x\in \R$ near which $\Omega$ converges to $\bar{\Omega}$ uniformly, it holds that
    \[\lim_{t\to T}(T-t)\om\left((T-t)^\gamma x/C_0,t\right)=-\frac{1}{\bar{c}_{\om}}\bar{\Omega}(x),\] 
where \[T=\int_0^{+\infty}C_{\om}(s)\idiff s, \quad \gamma= -\frac{\bar{c}_{l}}{\bar{c}_{\omega}}.\]
\end{lemma}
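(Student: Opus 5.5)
The plan is to undo the change of variables \eqref{eqt:dynamic_rescaling_of_HL_change_of_variables1}--\eqref{eqt:dynamic_rescaling_of_HL_change_of_variables2} explicitly. We track the three quantities $C_\om(\tau)$, $C_l(\tau)$ and $T-t(\tau)$ as $\tau\to\infty$, and show that they are asymptotically related by power laws with constant prefactors.

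\textbf{Step 1: asymptotics of the scaling factors.} Write $c_\om=\bar c_\om+\delta_\om$ and $c_l=\bar c_l+\delta_l$. By \eqref{eqt:convergence_of_clcomega}, the limits
\[
A_\om:=\int_0^\infty\delta_\om\,\diff s,\qquad A_l:=\int_0^\infty\delta_l\,\diff s
\]
exist. Hence
\[
C_\om(\tau)=e^{\bar c_\om\tau+A_\om+o(1)},\qquad C_l(\tau)=e^{\bar c_l\tau+A_l+o(1)}.
\]
Since $\bar c_\om<0$, $C_\om$ decays exponentially. Therefore $T=\int_0^\infty C_\om\,\diff s<\infty$, and $t(\tau)\uparrow T$ as $\tau\to\infty$.

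\textbf{Step 2: the remaining time.} We have
\[
T-t(\tau)=\int_\tau^\infty C_\om(s)\,\diff s=\int_\tau^\infty e^{\bar c_\om s+A_\om+o(1)}\,\diff s .
\]
The $o(1)$ term is uniform for $s\ge\tau$, since it equals $-\int_s^\infty\delta_\om$ and is therefore bounded by $\int_\tau^\infty|\delta_\om|\to0$. This gives
\[
T-t(\tau)=\frac{C_\om(\tau)}{-\bar c_\om}\,(1+o(1)),
\]
and hence $(T-t)\,C_\om(\tau)^{-1}\to-1/\bar c_\om$. This uniform control of the error is the one place where integrability is really needed; mere convergence of $c_\om$ would only give the exponential rate up to subexponential factors.

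Similarly, using $\tau=\bar c_\om^{-1}\log C_\om+O(1)$ asymptotically, we get
\[
C_l(\tau)=e^{A_l+o(1)}\,e^{(\bar c_l/\bar c_\om)(\log C_\om-A_\om-o(1))}.
\]
Therefore
\[
C_l(\tau)\,(T-t)^{\gamma}\to C_0:=e^{A_l-\gamma A_\om}\,(-\bar c_\om)^{-\gamma}\in(0,\infty),
\qquad \gamma=-\bar c_l/\bar c_\om .
\]
Indeed, $C_l=e^{A_l}\,(C_\om e^{-A_\om})^{-\gamma}(1+o(1))$ and $C_\om=-\bar c_\om (T-t)(1+o(1))$.

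\textbf{Step 3: conclusion.} From \eqref{eqt:dynamic_rescaling_of_HL_change_of_variables1},
\[
(T-t)\,\om\big((T-t)^\gamma x/C_0,\,t\big)=\frac{T-t}{C_\om(\tau)}\;\Omega\Big(\frac{C_l(\tau)(T-t)^\gamma}{C_0}\,x,\ \tau\Big).
\]
By Step 2, the prefactor tends to $-1/\bar c_\om$. The spatial argument $X_\tau:=C_l(T-t)^\gamma x/C_0$ tends to $x$. Since $\Omega(\cdot,\tau)\to\bar\Omega$ uniformly near $x$, and $\bar\Omega$ is continuous there (as a uniform limit of continuous functions on a neighborhood), we have
\[
|\Omega(X_\tau,\tau)-\bar\Omega(x)|\le\sup_{|y-x|<\epsilon}|\Omega(y,\tau)-\bar\Omega(y)|+|\bar\Omega(X_\tau)-\bar\Omega(x)|\to0 .
\]
This yields the stated limit.

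The main obstacle is Step 2, which turns integrability of $c_\om-\bar c_\om$ into a precise asymptotic, with a constant prefactor, for the tail integral. The rest is bookkeeping with exponentials.
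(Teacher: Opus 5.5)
Your proof is correct and follows essentially the paper's route: unwind the change of variables, show $(T-t)/C_\om(\tau)\to -1/\bar c_\om$, and use the integrability hypotheses to get $C_l(\tau)(T-t)^\gamma\to C_0\in(0,\infty)$. You also usefully justify $\Omega(X_\tau,\tau)\to\bar\Omega(x)$ via continuity of $\bar\Omega$, which the paper glosses over.

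Two small corrections. First, your displayed $C_0$ has a sign slip. Your own line $C_l=e^{A_l}(C_\om e^{-A_\om})^{-\gamma}(1+o(1))$ gives $C_0=e^{A_l+\gamma A_\om}(-\bar c_\om)^{-\gamma}$, which matches the paper. Second, your remark on where integrability is needed is misplaced. The limit of $(T-t)/C_\om$ needs only $c_\om\to\bar c_\om$; the paper gets it by L'H\^opital. Integrability is really needed for the existence of $C_0$, not in Step 2.
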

\begin{proof}
According to the change of variables \eqref{eqt:dynamic_rescaling_of_HL_change_of_variables1} and \eqref{eqt:dynamic_rescaling_of_HL_change_of_variables2}, we have 
\[(T-t)\om\left((T-t)^\gamma x,t\right)=\frac{T-t}{C_{\om}}\Omega\left(C_l(T-t)^{\gamma}x,\tau\right). \]
We therefore compute that 
\[\lim_{t\to T}\frac{T-t}{C_{\om}}= \lim_{\tau\to +\infty}\frac{\int_{\tau}^{+\infty}C_{\om}(s)\idiff s}{C_{\om}(\tau)}=\lim_{\tau\to +\infty}\frac{-C_{\om}(\tau)}{C_{\om}(\tau)c_{\om}(\tau)}=-\frac{1}{\bar{c}_{\om}},\]
where we have used L'Hospital's rule in the second inequality. Furthermore, we have 
\begin{align*}\lim_{t\to T}\left(T-t\right)^{\gamma}{C_{l}} &=\lim_{t\to T}\left(\frac{-1}{\bar{c}_{\om}}\right)^{\gamma}C_{\om}^{\gamma}C_l=\left(\frac{-1}{\bar{c}_{\om}}\right)^{\gamma}\lim_{\tau\to +\infty}\exp\left(\int_{0}^{\tau}\left(c_l(s)+\gamma c_{\om}(s)\right) \idiff s\right)\\
& =\left(\frac{-1}{\bar{c}_{\om}}\right)^{\gamma}\lim_{\tau\to +\infty}\exp\left(\int_{0}^{\tau}\left(c_l(s)-\bar{c}_l+\gamma (c_{\om}(s)-\bar{c}_{\om})\right) \idiff s\right)\\
& =\left(\frac{-1}{\bar{c}_{\om}}\right)^{\gamma}\exp\left(\int_{0}^{+\infty}\left(c_l(s)-\bar{c}_l+\gamma (c_{\om}(s)-\bar{c}_{\om})\right) \idiff s\right),\\
 \end{align*}
 where the convergence of the last integral is due to the assumption that $c_l(\tau)$ and $c_{\om}(\tau)$ converge sufficiently fast \eqref{eqt:convergence_of_clcomega}. Therefore, defining the constant 
 \[ C_0:=\left(\frac{-1}{\bar{c}_{\om}}\right)^{\gamma}\exp\left(\int_{0}^{+\infty}\left(c_l(s)-\bar{c}_l+\gamma (c_{\om}(s)-\bar{c}_{\om})\right) \idiff s\right),\]
 we complete the proof.
\end{proof}

\subsection{Finite-time singularity of the HL model: existing results} Next, we review existing results on singularity formation of the HL model \eqref{eqt:1Dhouluo}.
\begin{itemize}
    \item Shortly after the original work of Luo and Hou \cite{luo2014toward}, Choi et al. \cite{choi2017finite} used a functional argument to prove the finite-time blowup of the HL model \eqref{eqt:1Dhouluo}. Under mild assumptions on the initial data, they showed that the functional 
    \[\int_0^{+\infty}\theta_x \ln x \idiff x\] blows up in finite time, which implies the finite-time blowup of \eqref{eqt:1Dhouluo}. However, their approach was not able to capture the self-similar nature of the blowup.
    \item Years later, Chen, Hou, and Huang \cite{chen2022asymptotically} established asymptotically self-similar blowups of \eqref{eqt:1Dhouluo} from smooth initial data by proving the asymptotic stability of the dynamic rescaling equations \eqref{eqt:dynamic_rescaling_of_HL} around some steady state. The initial data $(\Omega^0, \Theta_X^0)$ considered in their work are smooth functions with odd symmetry that are non-degenerate at the origin in the sense that $\Omega^0_X(0) \neq 0$ and $\Theta_{XX}^0(0) \neq 0$. To fix the degrees of freedom in the rescaling formulation, the normalization conditions were chosen to ensure that
\[ \partial_{\tau}\Omega_X(0)=\partial_{\tau}\Theta_{XX}(0)\equiv 0.\]
Imposing the above normalization conditions, they first performed numerical computation of \eqref{eqt:dynamic_rescaling_of_HL} to construct an approximate steady state $(\tilde{\Omega},\tilde{\Theta}_X,\tilde{c}_{l},\tilde{c}_{\om})$. They subsequently constructed a weighted $H^1$ norm $\|\cdot\|$ and demonstrated that the solution $(\Omega, \Theta_X, c_l, c_{\omega})$ remains within a small neighborhood of $(\tilde{\Omega},\tilde{\Theta}_X,\tilde{c}_{l},\tilde{c}_{\om})$ in this norm, provided the initial data is sufficiently close to the approximate steady state. Furthermore, they also showed that $(\Omega, \Theta_X, c_l, c_{\omega})$ converges to a true stable steady state $(\bar{\Omega},\bar{\Theta}_X,\bar{c}_l,\bar{c}_{\om})$ that corresponds to exact self-similar profiles of \eqref{eqt:1Dhouluo}, thus establishing the asymptotically self-similar blowup of \eqref{eqt:1Dhouluo} from odd-symmetric non-degenerate initial data.

\item  Recently, the existence of exact self-similar profiles of the HL model was established alternatively by Huang, Qin, Wang, and Wei \cite{huang2025exact} via a fixed-point method which is purely analytic. By performing a detailed study of the fixed-point map, the authors provided finer characterizations of the self-similar profiles. Specifically, they proved the existence of a steady-state solution $(\bar{\Omega}, \bar{\Theta}_X, \bar{c}_l, \bar{c}_{\omega})$ that satisfies the steady-state equations \eqref{eqt:HL_steady_state}. Furthermore, the following properties of self-similar profiles were established:
\begin{enumerate}
    \item Symmetry and non-degeneracy: $\bar{\Omega}$ and $\bar{\Theta}_X$ are odd-symmetric functions that are non-degenerate at the origin.
    \item Regularity: $\bar{\Omega}$ and $\bar{\Theta}_X$ are both infinitely smooth on $\R$. Moreover, $\bar{\Omega}\in L^{\infty}(\R)\cap L^q(\R)\cap \dot{H}^p(\R)$ for any $q>\gamma$ and any $p\geq 1$, and $\bar{\Theta}_X\in L^{\infty}(\R)\cap L^q(\R)\cap \dot{H}^p(\R)$ for any $q>\gamma/2$ and any $p\geq 1$, where $\gamma=-\bar{c}_l/\bar{c}_{\om}$. 
    \item Far-field decay: Both the limits $\lim_{X\to +\infty}x^{1/\gamma}\bar{\Omega}(X)$ and $\lim_{X\to +\infty}X^{2/\gamma}\bar{\Theta}_X(X)$ exist and are finite.
\end{enumerate}
Moreover, the authors also conducted numerical simulations and demonstrated that the steady state obtained via their fixed-point iteration scheme matches perfectly with that constructed in \cite{chen2022asymptotically}. This strongly suggested that both works characterized a robust blowup scenario for the HL model: starting from odd-symmetric non-degenerate initial data, \eqref{eqt:1Dhouluo} can develop self-similar blowups with non-degenerate regular profiles. Here, by ``regular'' we mean that the profile is a bounded continuous function.
\end{itemize}

\subsection{Singular profiles and multi-scale self-similar blowups}
Note that in the non-degenerate case, the self-similar blowups of the HL model have been well understood through the aforementioned works \cite{chen2022asymptotically,huang2025exact}. In contrast, the understanding of the HL model starting from more degenerate (in the sense that $\omega^0_x(0)=\theta_{xx}^0(0)=0$) initial data remains very limited. Early numerical evidence presented in \cite{liu2017spatial} indicated that solutions of \eqref{eqt:1Dhouluo} with degenerate initial data will blow up in a very different way. More specifically, under suitable normalization conditions, Liu's numerical results suggested that the solutions of \eqref{eqt:dynamic_rescaling_of_HL} starting from smooth degenerate initial data converge to singular profiles as $\tau\rightarrow +\infty$. Here, ``singular'' denotes a profile that is locally unbounded at some specific point. In the context of the original HL model \eqref{eqt:1Dhouluo}, this corresponds to finite-time blowups at $x=0$ with singular self-similar profiles.  

It is worth mentioning that a mechanism for the formation of self-similar blowups with singular profiles was recently proposed in \cite{huang2025multiscale}, where the authors established asymptotic self-similar blowups of the Constantin-Lax-Majda (CLM) model for degenerate initial data that exhibit a two-scale feature:
\begin{equation}\label{eqt:multi_scale_blowup}
    \om(x,t)=(T-t)^{\hat{\lambda}}\left(\Omega\left(\frac{x-r(t)(T-t)^\gamma}{(T-t)^{\hat{\gamma}}}\right)+o(1)\right).
\end{equation}
In this formulation, $\Omega$ represents a regular profile. The larger spatial scale $(T-t)^{\gamma}$ (with a smaller power $\gamma>0$) measures the distance between the origin and the center location of the bulk part of the solution, while the smaller spatial scale $(T-t)^{\hat{\gamma}}$ (with a larger power $\hat{\gamma}>0$) measures the size of the bulk part of the solution. $r(t)$ is a continuous function that captures the precise location of the bulk part in the larger scale. It is important to note that for the CLM model, it holds that $\hat{\lambda}<-1$ in \eqref{eqt:multi_scale_blowup}. As a result, if one observes the solution at the larger spatial scale $(T-t)^{\gamma}$ with the usual magnitude scaling $(T-t)^{-1}$(which is equivalent to $C_{\om}^{-1}$ in the change of variables \eqref{eqt:dynamic_rescaling_of_HL_change_of_variables2}), a singular profile emerges.
We refer the readers to \cite{huang2025multiscale} for a comprehensive introduction to multi-scale self-similar blowups and their connection to traveling wave solutions of the original equations. This two-scale blowup phenomenon was also conjectured by Liu \cite{liu2017spatial} for the HL model with degenerate initial data.

Therefore, based on these observations and conjectures, it is natural to pose the following questions regarding the HL model with degenerate initial data:
\begin{itemize}
    \item Will the HL model develop finite-time blowups in a way that is distinct from the non-degenerate case?
    \item If so, will the HL model develop blowups with singular self-similar profiles?
    \item If singular profiles are observed, is their formation related to a two-scale  blowup mechanism?
\end{itemize}

\subsection{Scenario 1: novel asymptotically self-similar blowups with singular profiles.}

To address the questions raised above, in Section \ref{sec:HL_scenario1} we conduct a detailed numerical investigation of the dynamic rescaling equations \eqref{eqt:dynamic_rescaling_of_HL}. Specifically, we consider two distinct classes of initial data exhibiting degeneracy at the origin:
\begin{itemize}
    \item Odd symmetry case: $\Omega_0$ and $\Theta_{0,x}$ are odd symmetric functions which are non-negative on $[0,+\infty)$.
    \item One-sided case: $\Omega_0$ and $\Theta_0$ are non-negative functions such that $\operatorname{supp}(\Omega_0)\subset[0,+\infty)$, $\operatorname{supp}(\Theta_0)\subset[0,+\infty)$. 
\end{itemize}
\begin{figure}[!htbp]
    \centering
        \includegraphics[width=0.4\textwidth]{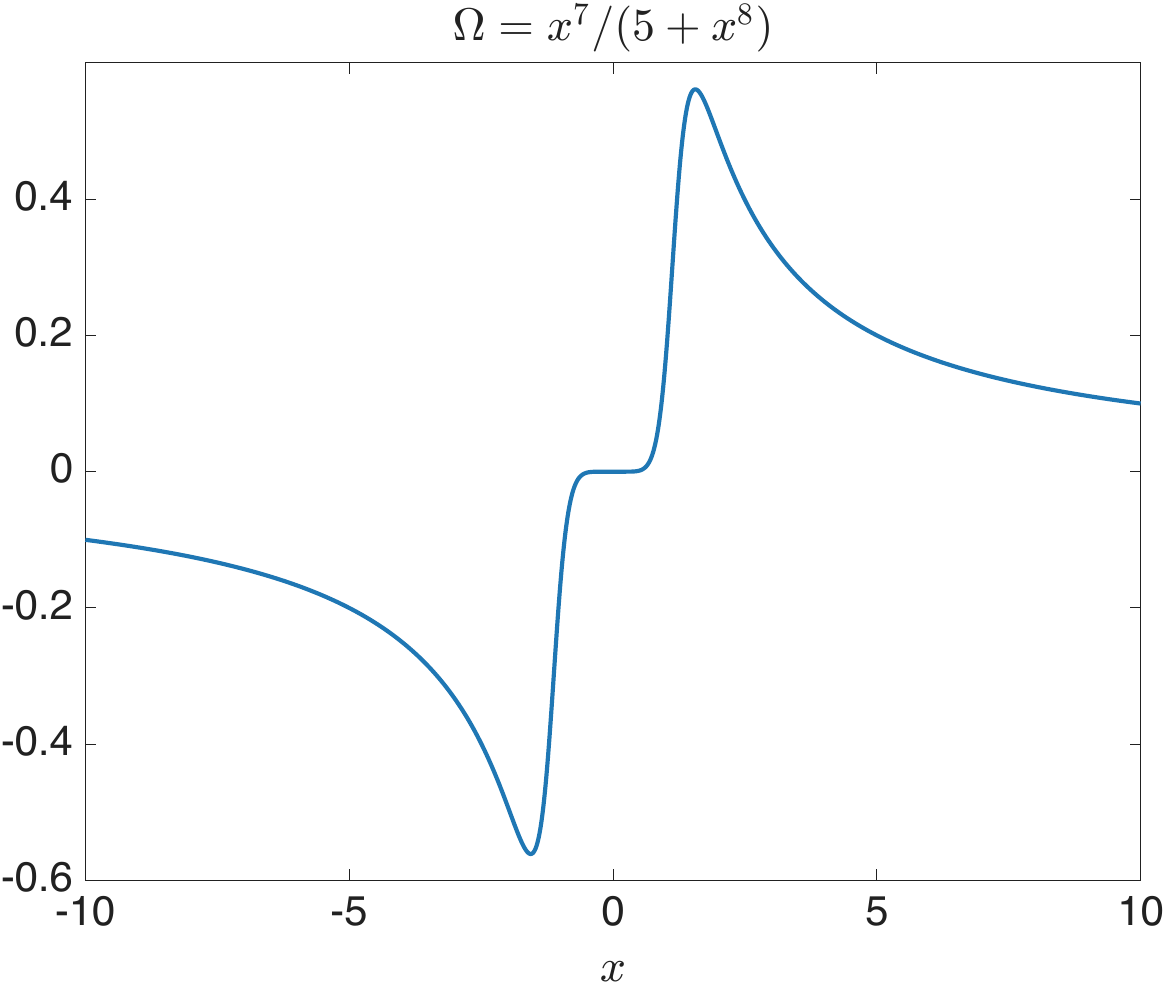}
        \includegraphics[width=0.4\textwidth]{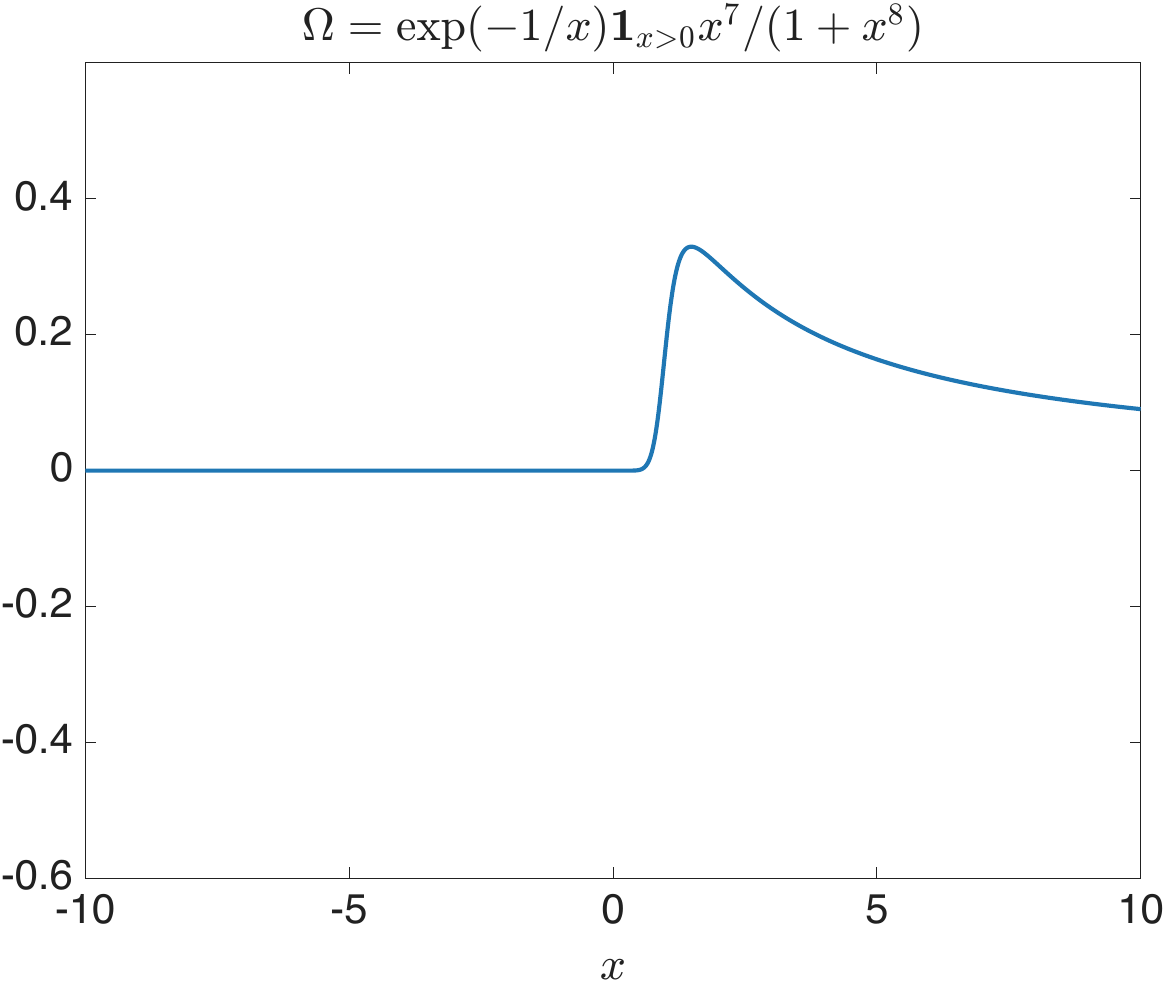}
    \caption[Spatial profiles of the initial data.]{Spatial profiles of the initial data. Left: The odd-symmetric case. Right: The one-sided case.} 
     \label{fig:HL_initiual_data}
\end{figure}
The profiles of the initial data are shown in Figure \ref{fig:HL_initiual_data}. 
Our interest in the one-sided case is motivated by the fact that we can explicitly construct one-sided singular steady solutions for both the HL model \eqref{eqt:1Dhouluo} and its dynamic rescaling counterpart \eqref{eqt:dynamic_rescaling_of_HL}.
\begin{theorem}\label{thm:HL_weak_steady_state_simplified_version}
In the weak sense,
\[(\bar{\om},\bar{\theta})=\left(\frac{\mtx{1}_{\{x>0\}}}{\sqrt{x}},\frac{\pi \mtx{1}_{\{x>0\}}}{2}\right)\] solves the steady state equation of \eqref{eqt:1Dhouluo}. Moreover, \[\left(\bar{\Omega},\bar{\Theta},\bar{c}_l,\bar{c}_{\om}\right)=\left(\frac{\mtx{1}_{\{X>1\}}}{\sqrt{X-1}},\frac{\pi \mtx{1}_{\{X>1\}}}{2},2,-1\right)\] solves \eqref{eqt:HL_steady_state} in the weak sense. 
\end{theorem}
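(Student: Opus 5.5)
The plan is to compute everything explicitly. First I will fix a weak formulation that makes sense for these singular data. Then I will check the physical steady state. Finally I will obtain the dynamic-rescaling steady state by a unit spatial translation, which is the content of Lemma \ref{lem:translation_invariance} in the stationary setting.

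\textbf{Step 1: the velocity.} Take $\bar\om=\mtx{1}_{\{x>0\}}x^{-1/2}$. I compute $\mtx{H}(\bar\om)(x)=\frac1\pi\mathrm{P.V.}\int_0^\infty \frac{y^{-1/2}}{x-y}\idiff y$.
\begin{itemize}
\item For $x<0$ the integral is not singular. The substitution $y=|x|s$ gives $-\frac{1}{\pi\sqrt{|x|}}\int_0^\infty\frac{s^{-1/2}}{1+s}\idiff s=-|x|^{-1/2}$.
\item For $x>0$ the classical formula $\mathrm{P.V.}\int_0^\infty \frac{s^{a-1}}{1-s}\idiff s=\pi\cot(\pi a)$ at $a=1/2$ gives $0$.
\end{itemize}
Hence $\bar u_x=\mtx{H}(\bar\om)=-\mtx{1}_{\{x<0\}}|x|^{-1/2}$. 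With $\bar u(0)=0$ this gives $\bar u=2\sqrt{-x}\,\mtx{1}_{\{x<0\}}$, which is continuous and vanishes on $[0,\infty)$.

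\textbf{Step 2: the weak formulation and the main obstacle.} A naive pointwise check fails. On $x>0$ we have $\bar u\equiv 0$, and on $x<0$ we have $\bar\om\equiv0$, so $\bar u\bar\om_x$ looks like zero. But $\bar\theta_x=\frac{\pi}{2}\delta_0$ is not zero. The delta must come from the interaction of the two square-root singularities across $x=0$.

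I will use the conservative weak form $\bar u\bar\om_x=(\bar u\bar\om)_x-\bar\om\,\mtx{H}(\bar\om)$. The quadratic term is then defined through the Tricomi identity $2\mtx{H}(f\mtx{H}f)=(\mtx{H}f)^2-f^2$. Together with $\mtx{H}^2=-I$, this gives
\[
f\mtx{H}f=-\tfrac12\mtx{H}\big((\mtx{H}f)^2-f^2\big).
\]
This is the standard way one gives meaning to nonlinear terms in such models for data slightly below $L^2$. Here $(\mtx{H}\bar\om)^2-\bar\om^2=\mtx{1}_{\{x<0\}}/|x|-\mtx{1}_{\{x>0\}}/x=-\mathrm{P.V.}\frac1x$, the singularities cancelling in the principal value sense. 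Since $\widehat{\mathrm{P.V.}\,1/x}=-i\pi\,\mathrm{sgn}\,\xi$ and $\mtx{H}$ has symbol $-i\,\mathrm{sgn}\,\xi$, we get $\mtx{H}(\mathrm{P.V.}\frac1x)=-\pi\delta_0$. Therefore $\bar\om\mtx{H}\bar\om=-\frac{\pi}{2}\delta_0$.

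Because $\bar u\bar\om\equiv0$, this yields $\bar u\bar\om_x=\frac\pi2\delta_0=\bar\theta_x$, which is the $\om$-equation. For the $\theta$-equation, $\bar u\bar\theta_x=\frac\pi2\bar u(0)\delta_0=0$, which is legitimate since $\bar u$ is continuous. Alternatively, in conservative form, $(\bar u\bar\theta)_x-\bar u_x\bar\theta=0$ because the supports are disjoint. Justifying this weak formulation, for instance by checking it against a smooth approximation $\bar\om_\epsilon$ and passing to the limit, is the step I expect to need the most care. The rest is computation.

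\textbf{Step 3: the rescaled steady state.} Set $y=X-1$, $\bar\Omega(X)=\bar\om(y)$ and $\bar\Theta(X)=\bar\theta(y)$. The Hilbert transform commutes with translation, so $\bar U_X=\bar u_x(y)$. The normalization $\bar U(0)=0$ forces $\bar U(X)=\bar u(X-1)-\bar u(-1)=\bar u(y)-2$. Then $\bar U+2X=\bar u(y)+2y$.

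For the $\Omega$-equation, the transport term splits as $\bar u\bar\om_y+2y\bar\om_y$. By Step 2 the first part is $\frac\pi2\delta_0$. For the second, $y\bar\om=\sqrt y\,\mtx{1}_{\{y>0\}}$, so $2y\bar\om_y=2(y\bar\om)_y-2\bar\om=\bar\om-2\bar\om=-\bar\om$. Altogether $(\bar U+2X)\bar\Omega_X=-\bar\Omega+\bar\Theta_X$, which is the first equation of \eqref{eqt:HL_steady_state} with $\bar c_\om=-1$.

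For the $\Theta$-equation, $\bar c_l+2\bar c_\om=0$. The left side is $(\bar u(y)+2y)\frac\pi2\delta_0=0$ because the coefficient $\bar u(y)+2y$ is continuous and vanishes at $y=0$. This verifies \eqref{eqt:HL_steady_state} in the weak sense and completes the plan.
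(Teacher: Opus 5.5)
Your formal computation gets the right answer, and its skeleton matches the paper's. It uses the conservative form $u\om_x=(u\om)_x-\om\mtx{H}(\om)$, and Tricomi's identity to write $\int\om\mtx{H}(\om)\phi\idiff x=\frac12\int\big((\mtx{H}\om)^2-\om^2\big)\mtx{H}(\phi)\idiff x$, with limiting value $-\frac12\mathrm{P.V.}\int \mtx{H}(\phi)/x\idiff x=-\frac{\pi}{2}\phi(0)$. It uses $\bar u(0)=0$ for the $\theta$-equation, and for the rescaled problem the shift to $X=1$ with $\bar U=\bar u(X-1)-2$ and $2y\bar\om_y=-\bar\om$. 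Those parts are fine.

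The gap is the phrase ``the singularities cancelling in the principal value sense.'' That is not a computation; it is the whole content of the theorem. Neither $(\mtx{H}\bar\om)^2=\mtx{1}_{\{x<0\}}/|x|$ nor $\bar\om^2=\mtx{1}_{\{x>0\}}/x$ is locally integrable, so neither defines a distribution on $\R$. The function $-1/x$ on $\R\backslash\{0\}$ has extensions $-\mathrm{P.V.}\frac1x+c\,\delta_0$ for every $c\in\R$, and your definition then gives $\bar\om\mtx{H}(\bar\om)=-\frac{\pi}{2}\delta_0-\frac{c}{2\pi}\mathrm{P.V.}\frac1x$. For example, truncating $(\mtx{H}\bar\om)^2$ to $\{|x|>2\varepsilon\}$ and $\bar\om^2$ to $\{|x|>\varepsilon\}$ gives $c=-\ln 2$. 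As written, your weak formulation is either ill-defined or chosen so that the conclusion holds. You can single out $c=0$ formally, as the only value making $\bar\om\mtx{H}(\bar\om)$ vanish on $\R\backslash\{0\}$ like the pointwise product. But that says nothing about how actual smooth approximations behave.

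In the paper, ``weak sense'' means the following. Take any smooth family $\om_s$ converging to $\bar\om$ in $L^p$ away from $0$ and in $L^q([-1,1])$, with $|\om_s|\le C|x|^{-1/2}$ and $|\partial_x\om_s|\le C|x|^{-3/2}$ near $0$. Then $\int\om_s\mtx{H}(\om_s)\phi\idiff x\to-\frac{\pi}{2}\phi(0)$. Proving this is exactly the step you deferred, and it needs two ideas your plan lacks.

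First, you need a uniform pointwise bound $|\mtx{H}(\om_s)(x)|\lesssim|x|^{-1/2}$ near $0$. This is Lemma \ref{lem:pointwise_estimate_of_H}, and it is where the derivative hypothesis enters. It makes $\int_{|x|<\varepsilon}\big((\mtx{H}\om_s)^2-\om_s^2\big)\big(\mtx{H}(\phi)(x)-\mtx{H}(\phi)(0)\big)\idiff x=O(\varepsilon)$ uniformly in $s$.

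Second, you must show that the local mass $\int_{|x|<\varepsilon}\big((\mtx{H}\om_s)^2-\om_s^2\big)\idiff x$ tends to $0$; this mass is precisely your undetermined $c$. Applying Tricomi's identity again, it equals $-2\int\om_s\mtx{H}(\om_s)h_\varepsilon\idiff x$ with $h_\varepsilon=\mtx{H}(\mtx{1}_{\{|x|<\varepsilon\}})=\frac1\pi\ln\left|\frac{x+\varepsilon}{x-\varepsilon}\right|$. Because $h_\varepsilon(0)=0$, the weight $h_\varepsilon/\sqrt{|x|}$ is integrable enough that $L^q$ convergence alone gives the limit $\int\bar\om\mtx{H}(\bar\om)h_\varepsilon\idiff x=0$, which vanishes by disjoint supports. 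A test function with $\phi(0)\neq0$ would fail at exactly this point.

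On $\{|x|>\varepsilon\}$, the $L^p$ and $L^q$ convergence controls $(\mtx{H}\om_s)^2-\om_s^2$ directly. With these ingredients the rest of your plan goes through. You also need $u_s\to\bar u$ locally uniformly, so that $\int u_s\om_s\phi_x\idiff x\to 0$.
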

The rigorous statement of Theorem \ref{thm:HL_weak_steady_state_simplified_version} will be deferred to Section \ref{sec:HL_steady_state}, where we clarify the precise definition of ``weak sense'' and provide the detailed proof. Similar to \eqref{eqt:scaling_invariance}, the steady state equations of \eqref{eqt:1Dhouluo} also exhibit scaling invariance: if $(\bar{\om}(x),\bar{\theta}(x))$ is a steady solution of \eqref{eqt:1Dhouluo}, then $(\alpha \bar{\om}(\beta x),\alpha^2 \bar{\theta}(\beta x)/\beta)$ is also a steady solution for any $\alpha\in \R$, $\beta>0$. Therefore, Theorem \ref{thm:HL_weak_steady_state_simplified_version} actually characterizes two families of one-sided singular steady solutions parameterized by $\alpha$, $\beta$.

Furthermore, it is not hard to see that there exists a direct and readily verifiable correspondence between these two steady state solutions. On one hand, $(\bar{\Omega},\bar{\Theta},\bar{c}_l,\bar{c}_{\om})$ represents exact self-similar profiles of the HL model; physically, this corresponds to the scenario where $\omega$ and $\theta_x$ evolves into the exact steady state of \eqref{eqt:1Dhouluo}—becoming strictly equal to a scaled version of $(\mtx{1}_{x>0}/\sqrt{x},\pi \mtx{1}_{x>0}/2)$ globally—within a finite time and remaining invariant thereafter. On the other hand, from an asymptotic perspective, if the solution of \eqref{eqt:dynamic_rescaling_of_HL} converges to the specific steady state $(\bar{\Omega},\bar{\Theta},\bar{c}_l,\bar{c}_{\om})$ as $\tau\to \infty$, then the corresponding $\om$ in the physical space will asymptotically exhibit a singular behavior of order $x^{-1/2}$ in a right neighborhood of the origin in finite time.   

In our numerical simulations, we first perform a preliminary computation of \eqref{eqt:dynamic_rescaling_of_HL} imposing suitable normalization conditions. Our results indicate that $\Omega$ demonstrates a distinct trend of developing a singularity, which agrees with the early numerical results reported in \cite{liu2017spatial}. To accurately resolve this emerging singularity, we impose suitable normalization conditions to pin the maximizer of $\Omega$ at the rescaled coordinate $X=1$ and proceed with an adaptive mesh strategy that dynamically increases grid density near the peak of $\Omega$. In contrast to the numerical results reported in \cite{liu2017spatial}, our computations provide strong numerical evidence that solutions of \eqref{eqt:dynamic_rescaling_of_HL} for both types of initial conditions develop a finite-time singularity at $X=1$ with respect to the rescaled time $\tau$. Furthermore, by focusing on the singular region at an appropriate spatial scale and normalizing the $L^{\infty}$ norms of $\Omega$ and $\Theta_X$, we observe that the inner profiles of $\Omega$ and $\Theta_X$ near $X=1$ locally settle into regular profiles as $\tau$ approaches the blowup time. Therefore, our results suggest that $\Omega$ and $\Theta_X$ develop asymptotically self-similar blowups with regular profiles at $X=1$. In the context of the original 1D HL model, this corresponds to asymptotically self-similar blowups of $\om$ and $\theta_x$ occurring at a location distinct from $x=0$. To the best of our knowledge, this represents a previously unreported blowup phenomenon.

Recall that we have explicitly constructed a one-sided singular steady solution for the HL model \eqref{eqt:1Dhouluo}, where $\omega$ exhibits a singularity of order $x^{-1/2}$ near the origin. A natural question, therefore, is whether such singular behavior can evolve dynamically from smooth initial data. As previously discussed, this question corresponds to determining the long-time behavior of the dynamic rescaling equations \eqref{eqt:dynamic_rescaling_of_HL}. Since solutions of \eqref{eqt:dynamic_rescaling_of_HL} are observed to develop finite-time blowups, the evolution beyond the blowup time can only be defined in the weak sense. To capture the post-blowup dynamics, we perform simulations on a locally refined fixed mesh employing suitable numerical regularization to maintain a feasible time step size. This approach can be interpreted as approximating the weak solution of \eqref{eqt:numerical_dynamic_rescaling_of_HLscenario1} via the method of vanishing viscosity.

Our numerical results suggest that, for both initial conditions introduced above, $\Omega$ converges to a singular profile which behaves like $1/\sqrt{X-1}$ in a right neighborhood of $X=1$ and decays like $1/\sqrt{X}$ in the far field, and $\Theta$ converges to a constant multiple of the Heaviside step function. In particular, evolving from one-sided initial data, the solutions are observed to converge to profiles that closely match the steady-state solution $(\bar{\Omega},\bar{\Theta},\bar{c}_l,\bar{c}_{\om})=(\mtx{1}_{\{X>1\}}/\sqrt{X-1},\pi \mtx{1}_{\{X>1\}}/2,2,-1)$ (after proper rescaling) introduced in Theorem \ref{thm:HL_weak_steady_state_simplified_version}. Based on these observations, we propose the following conjecture regarding the stability of $(\bar{\Omega},\bar{\Theta},\bar{c}_l,\bar{c}_{\om})$.
\begin{conjecture}
The singular steady state $(\bar{\Omega},\bar{\Theta},\bar{c}_l,\bar{c}_{\om})$ is asymptotically stable. More specifically, under suitable normalization conditions, for any degenerate smooth initial data $(\om^0,\theta^0)$ which is sufficiently close to $(\bar{\omega},\bar{\theta})$ in some norm $\|\cdot\|$, the solution of the Cauchy problem \eqref{eqt:dynamic_rescaling_of_HL} converges to $(\bar{\Omega},\bar{\Theta})$ in $\|\cdot\|$.
\end{conjecture}

In summary, we observe that the solutions of the dynamic rescaling equations \eqref{eqt:dynamic_rescaling_of_HL} with suitable degenerate initial data eventually converge to singular profiles. Consequently, the HL model \eqref{eqt:1Dhouluo} can develop finite-time blowups at $x=0$ with singular self-similar profiles. However, in contrast to the preliminary numerical results reported in \cite{liu2017spatial}, we find no numerical evidence supporting a two-scale blowup mechanism for the HL model analogous to that described in \cite{huang2025multiscale}. Instead, the formation of singular profiles arises because $\Omega$ and $\Theta_x$ develop finite-time blowups with respect to the dynamic rescaling time $\tau$. We thus propose the following two-stage self-similar blowup scenario for the HL model:
\begin{itemize}\item \textbf{Stage 1: Asymptotically self-similar blowups with regular profiles.} Starting from $\tau=0$, the rescaled variables $\Omega$ and $\Theta_X$ develop asymptotically self-similar blowups with regular profiles at $(X,\tau)=(1,T_{\text{dr}})$. These profiles do not possess any symmetry with respect to the blowup point. Correspondingly, in the physical space, $\omega$ and $\theta_x$ develop self-similar singularities at $(x,t)=(\tilde{x},\tilde{T})$ with the same regular self-similar profiles, where $\tilde{x}=1/C_l(T_{\text{dr}})$ and $\tilde{T}=t(T_{\text{dr}})$ are determined via the change of variables \eqref{eqt:dynamic_rescaling_of_HL_change_of_variables2}. This corresponds to local $L^\infty$ blowups of $\omega$ and $\theta_x$ at $\tilde{x}$.
\item \textbf{Stage 2: Asymptotically self-similar blowups with singular profiles.} After the blowup time $\tilde{\tau}$, $\Omega$ and $\Theta_X$ continue in the weak sense and eventually converge to singular profiles as $\tau\to +\infty$. Correspondingly, $\omega$ and $\theta_x$ develop asymptotically self-similar blowups at $(x,t)=(0,T)$ with singular profiles, where $T=t(+\infty)$. This corresponds to a local $L^p$ blowup of $\omega$ at $x=0$ for some $p\in (0,+\infty)$. Moreover, numerical results suggest that $p=2$ as the limiting profiles of $\Omega$ in both cases behave like $\mtx{1}_{\{X>1\}}/\sqrt{X-1}$. 
\end{itemize}

\subsection{Scenario 2: novel self-similar finite-time blowups with positive regular profiles.}
Motivated by the potential self-similar finite-time blowups of the HL model observed during the first stage of Scenario 1, in Section \ref{sec:HL_scenario2} we conduct a further numerical investigation of this phenomenon using a modified dynamic rescaling formulation. 

It is important to note that in existing literature regarding asymptotically self-similar blowups of simplified 1D models for the 3D Euler equations \cite{chen2021finite,chen2022asymptotically}, the solutions of the corresponding dynamic rescaling equations typically exhibit odd symmetry that is preserved under time evolution. Furthermore, the normalization conditions are usually imposed to prescribe the solution's behavior at the symmetry point $x=0$. Under such conditions, perturbation around the steady state are transported from the origin to the far field by the out-pushing velocity field $u+c_l x$ and then damped by the corresponding damping terms, thereby establishing the asymptotically stability of the steady state. Heuristically, the origin always acts as the source of stability. However, the inner profiles observed in the Stage 1 blowup of Scenario 1 do not possess any symmetry with respect to the blowup point. Due to the absence of a fixed symmetry point, the dynamic rescaling equations \eqref{eqt:dynamic_rescaling_of_HL} may fail to converge to a steady state when starting from general initial data. Our numerical results suggest that to capture this non-symmetric dynamics, an additional degree of freedom—corresponding to the spatial translation invariance of the equations—is required to allow the system to dynamically adjust the source of stability. Specifically, we implement this by introducing a time-dependent spatial shift $r(\tau)$ in the following change of variables:
\begin{align*}
    &\omega(x,t)=C_{\om}(\tau)^{-1}\Omega\left(C_l(\tau)x-r(\tau),\tau(t)\right),\quad \theta(x,t)=C_{\theta}(\tau)^{-1}\Theta\left(C_l(\tau)x-r(\tau),\tau(t)\right),\\ &\mtx{u}(x,t)=C_{\om}(\tau)^{-1}C_{l}(\tau)^{-1}\mtx{U}\left(C_l(\tau)x-r(\tau),\tau(t)\right)
\end{align*}
with
\begin{align*}
    C_{\om}(\tau)&=\exp\left(\int_0^\tau c_{\om}(s)\idiff s\right) ,\quad  C_{\theta}(\tau)=\exp\left(\int_0^\tau c_{\theta}(s) \idiff s\right),\quad  C_{l}(\tau)=\exp\left(\int_0^\tau c_l(s)\idiff s\right),\\ r(\tau)&=C_l(\tau)\int_{\tau}^{+\infty}c_r(s)C_l(s)^{-1}\idiff s, \quad  t(\tau)=\int_0^{\tau}C_{\om}(s)\idiff s, \quad c_\theta=c_l+2c_{\om}.
\end{align*}
Then \eqref{eqt:1Dhouluo} is reformulated into the following equivalent form: 
\begin{equation}\label{eqt:dynamic_rescaling_of_HLscenario2}
    \begin{aligned}
&\Omega_\tau + (U+c_lX+c_r)\Omega_X=c_{\om}\Omega+\Theta_X, \\ & \Theta_\tau+ (U+c_lX+c_r)\Theta_X=(c_l+2c_{\omega})\Theta, \\ &U_X=\mtx{H}(\Omega),
    \end{aligned}
\end{equation}
where $X=C_lx-r$. In particular, we are going to simply impose the condition $U(0)=0$ in \eqref{eqt:dynamic_rescaling_of_HLscenario2}. In the physical space, this is equivalent to assuming that the velocity $u$ vanishes at a moving trajectory $x(t)=r(\tau(t))/C_l(\tau(t))$, which will not affect the solution's dynamics as guaranteed by the translation invariance established in Lemma \ref{lem:translation_invariance}. Moreover, it is not hard to see that the asymptotic stability of a steady state of the above equations implies the asymptotically self-similar blowup of the original model \eqref{eqt:1Dhouluo}. Indeed, this conclusion can be established via a shifted version of Lemma \ref{lem:DLstability_implies_blowup}, the detailed proof of which is omitted for brevity.

In Section \ref{sec:HL_scenario2} we perform numerical simulation of \eqref{eqt:dynamic_rescaling_of_HLscenario2} under suitable normalization conditions. Our numerical results suggest that, starting from appropriate initial data, the solution of \eqref{eqt:dynamic_rescaling_of_HLscenario2} eventually converges to a steady state $(\bar{\Omega},\bar{\Theta}_X,\bar{c}_l,\bar{c}_{\om},\bar{c}_r)$, where $\bar{c}_{\om}<0$, $\bar{c}_l>0$, $\bar{\Omega}$ and $\bar{\Theta}_X$ are positive regular profiles that do not possess symmetry. Moreover, after proper rescaling, $\bar{\Omega}$ and $\bar{\Theta}_X$ match perfectly with the inner self-similar profiles observed during the Stage 1 blowup of Scenario 1, providing strong numerical evidence that the modified dynamic rescaling equations \eqref{eqt:dynamic_rescaling_of_HLscenario2} accurately capture the mechanism underlying the Stage 1 self-similar blowup of Scenario 1. 

\section{Numerical Results of the HL Model: Scenario 1}\label{sec:HL_scenario1} 
In this section, we present numerical evidence supporting the two-stage self-similar blowups of the HL model with degenerate initial data. To simplify the notation in Section \ref{sec:HL_scenario1} and Section \ref{sec:HL_scenario2}, we denote the space time variable $(X,\tau)$ by $(x,t)$ throughout the discussion of the dynamic rescaling equations \eqref{eqt:dynamic_rescaling_of_HL} and \eqref{eqt:dynamic_rescaling_of_HLscenario2}.

For computational purposes, it is preferable to work with quantities that exhibit sufficient decay in the far field. Accordingly, we introduce the change of variable $V:=\Theta/x$ and rewrite \eqref{eqt:dynamic_rescaling_of_HL} as 
\begin{equation}\label{eqt:numerical_dynamic_rescaling_of_HLscenario1}
    \begin{aligned}
    &\Omega_t+(U+c_lx)\Omega_x=c_{\om}\Omega+xV_x+V,\\
    &V_t+(U+c_lx)V_x=(2c_{\om}-U/x)V,\\
    & U_x=\mtx{H}(\Omega), \,\ U(0)=0.
    \end{aligned}
\end{equation}

In view of the scaling invariance property \eqref{eqt:scaling_invariance}, we have two degrees of freedom in choosing $c_l$, $c_{\om}$, and we do this by imposing two normalization conditions. To pin the singularity of $\Omega$ at $x=1$, a natural choice of the normalization conditions is to enforce that \[U(1,t)+c_l\equiv 0,\quad U_x(0,t)\equiv U_x(0,0).\]
If we assume $U_x(0,0)=-1$, then $c_l$ and $c_{\om}$ are given by
\begin{equation}\label{eqt:HLscenario1_normlization_1}
    \begin{cases}
     c_l =-U(1,t),\\
     c_{\om}=\mtx{H}(\Theta_x-(U+c_lx)\Omega_x )|_{x=0}=\mtx{H}(xV_x+V-(U+c_lx)\Omega_x )|_{x=0},
    \end{cases}
\end{equation}
Alternatively, when employing an adaptive mesh strategy, it is more advantageous to enforce that $\Omega$ attains its global maximum exactly at $x=1$. This ensures that the finest grid resolution is consistently concentrated near $x=1$. We therefore propose the following alternative choice of the normalization condition:
 \[\Omega_x(1,t)\equiv 0,\quad U_x(0,t)\equiv U_x(0,0).\]
Correspondingly, under the assumption that $U_x(0,0)=-1$ and $\Omega_x(1,0)=0$, the expressions for $c_l$ and $c_{\om}$ becomes
\begin{equation}\label{eqt:HLscenario1_normlization_2}
    \begin{cases}
     c_l =-U(1,t)+\Theta_{xx}(1,t)/\Omega_{xx}(1,t)=-U(1,t)+(V_{xx}(1,t)+2V_x(1,t))/\Omega_{xx}(1,t),\\
     c_{\om}=\mtx{H}(\Theta_x-(U+c_lx)\Omega_x )|_{x=0}=\mtx{H}(xV_x+V-(U+c_lx)\Omega_x )|_{x=0}.
    \end{cases}
\end{equation}

\subsection{Case 1: odd data}
In this subsection, we investigate the evolution of solution profiles starting from odd degenerate initial data. We begin by performing a preliminary numerical simulation of \eqref{eqt:numerical_dynamic_rescaling_of_HLscenario1} imposing the normalization condition \eqref{eqt:HLscenario1_normlization_1}. Figure \ref{fig:HL_odd_initial_profile} illustrates the early-stage evolution of $(\Omega,\Theta)$ alongside the corresponding  physical space counterpart $(\om,\theta)$. As shown, in the physical space, the peak of $\omega$ moves towards the origin ($x=0$) while becoming increasingly sharp. Concurrently, the rescaled variable $\Omega$ demonstrates a distinct trend of developing a singularity at $x=1$, which agrees with the early numerical results reported in \cite{liu2017spatial}. Due to the rapid focusing of the profile, we terminate the preliminary computation when the characteristic width of the singular spike shrinks below a prescribed threshold. Specifically, let $x_1<x_2$ be the points satisfying $\Omega(x_1)=\Omega(x_2)=0.3\|\Omega\|_{L^{\infty}}$. Throughout the simulation, we numerically track the width $x_2-x_1$ and terminate the iteration once $x_2-x_1<0.1$.

\begin{figure}[!htbp]
\centering
    \begin{subfigure}[b]{1\textwidth}
        \centering
        \includegraphics[width=0.4\textwidth]{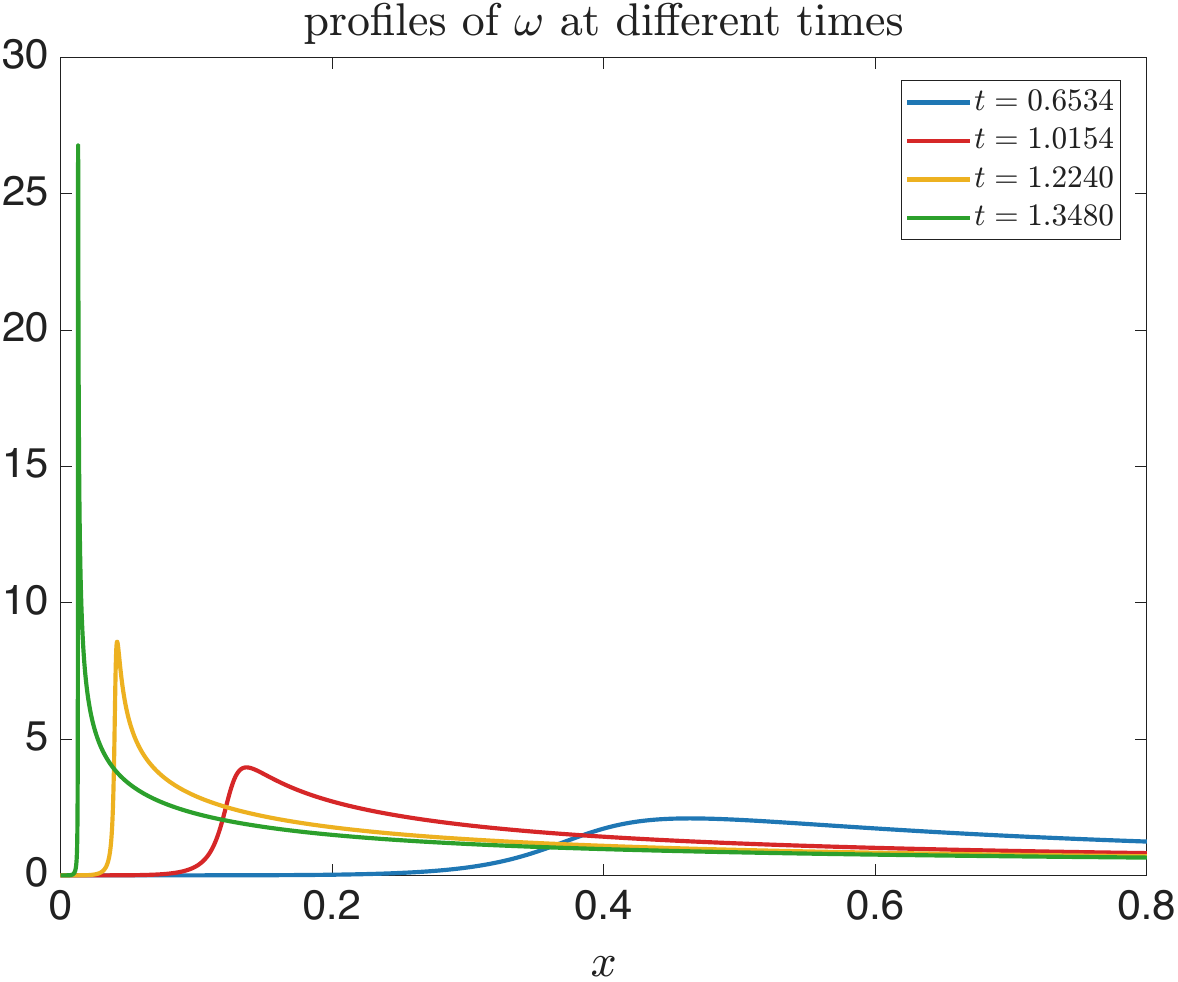}
        \includegraphics[width=0.4\textwidth]{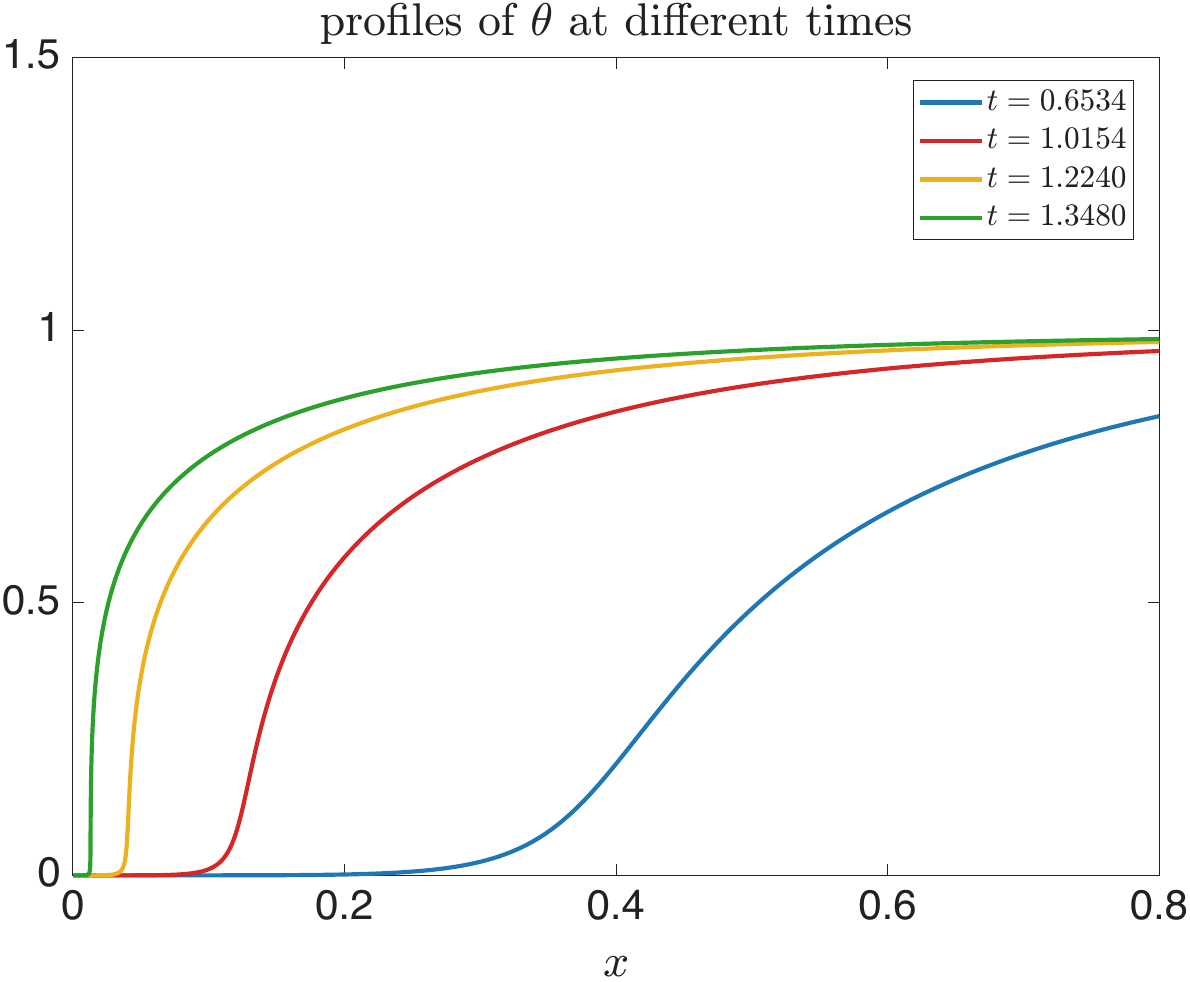}
        \caption{Spatial profiles of $\om$ (left) and $\theta$ (right) in the physical space.}
    \end{subfigure}
    \begin{subfigure}[b]{1\textwidth}
        \centering
        \includegraphics[width=0.4\textwidth]{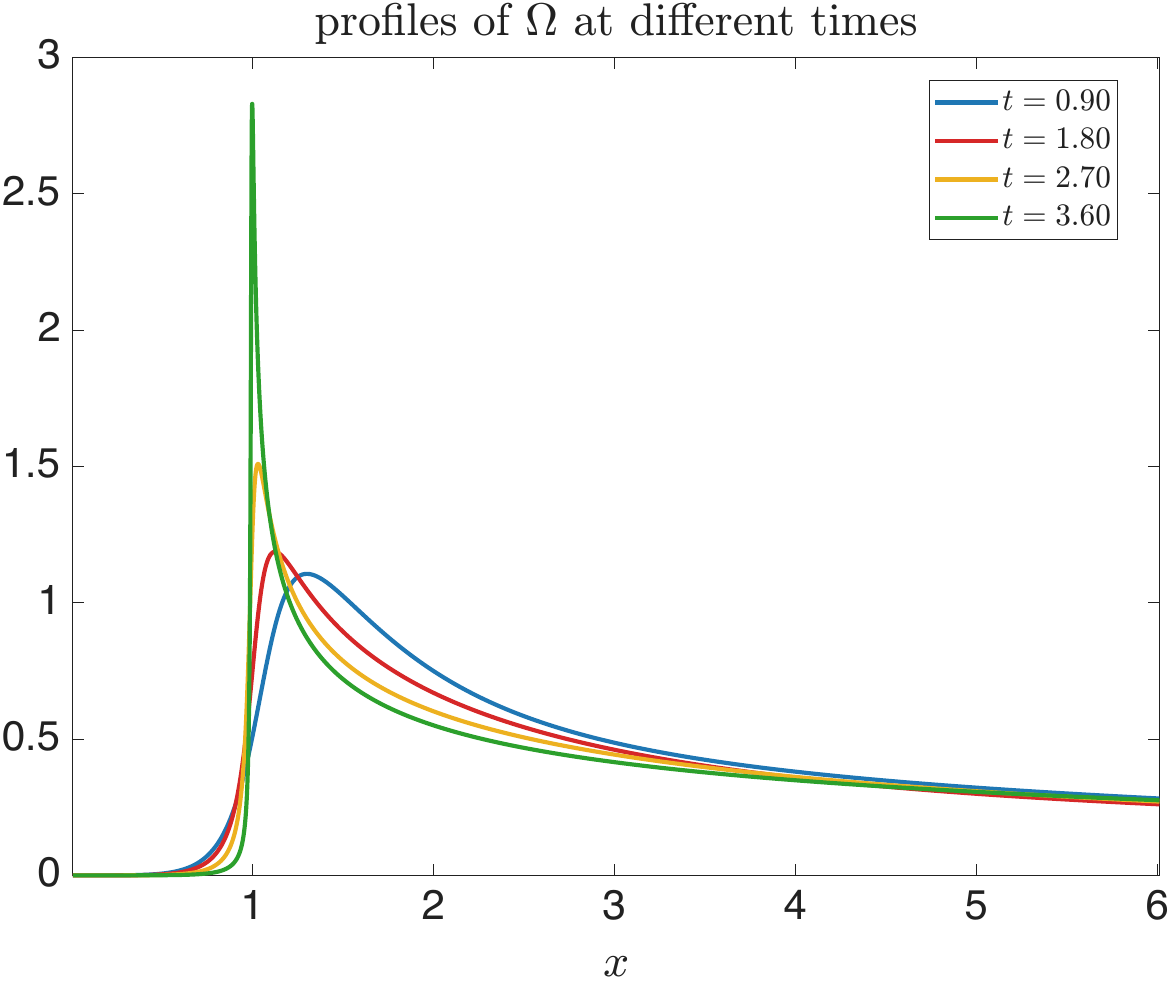}
        \includegraphics[width=0.4\textwidth]{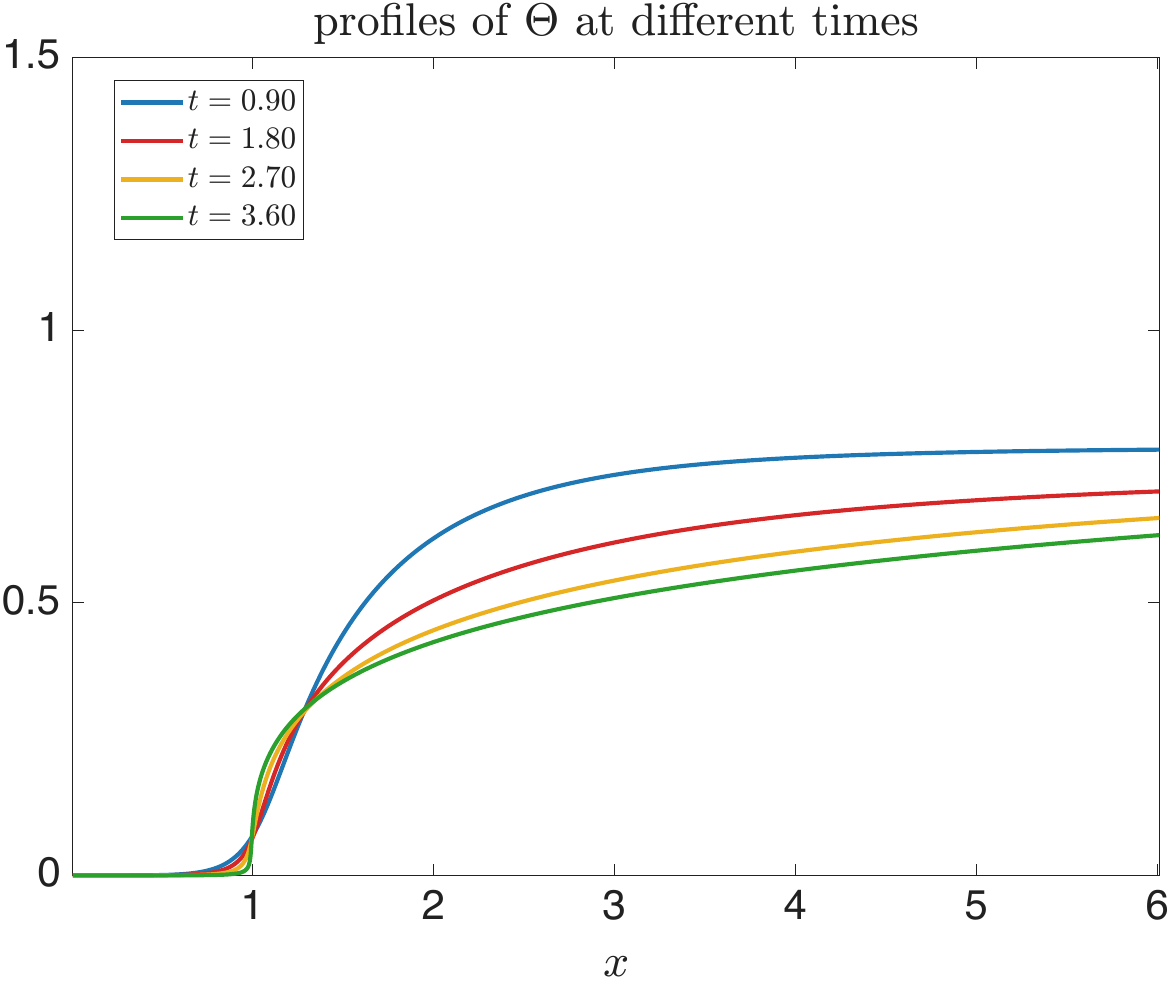}
        \caption{Spatial profiles of $\Omega$ (left) and $\Theta$ (right) in the dynamic rescaling space.}
    \end{subfigure}
 \caption[Early evolution of the solution in the odd symmetry case in Scenario 1.]{Early evolution of the solution in the odd symmetry case in Scenario 1. In the top row, $(x,t)$ represents the physical space variables, while in the bottom row, $(x,t)$ represents the dynamic rescaling space variables. The two coordinate systems are related via the change of variables introduced in Section \ref{sec:dynamic_rescaling_of_HL}.}  
     \label{fig:HL_odd_initial_profile}
\end{figure}

To study the singular behavior of the solutions, we proceed with an adaptive mesh strategy that dynamically increases grid density near the peak of $\Omega$. For computational convenience, we prefer to pin the location where $\Omega$ attains its global maximum during the simulation. We thus rescale the terminal profile $(\Omega(x),V(x))$ obtained from the preliminary computation to $(\tilde{\Omega}(x),\tilde{V}(x))=(\Omega(xx^*),V(xx^*))$, where $x^*$ is the point where $\Omega$ attains its maximum, and then impose the normalization conditions \eqref{eqt:HLscenario1_normlization_2} instead of \eqref{eqt:HLscenario1_normlization_1} to enforce that $\Omega$ attains its global maximum exactly at $x=1$. We want to remark that, using $(\tilde{\Omega},\tilde{V})$ as the initial data for \eqref{eqt:numerical_dynamic_rescaling_of_HLscenario1} instead of $(\Omega,V)$ is equivalent to modifying the scaling parameter $C_l$ in the change of variables \eqref{eqt:dynamic_rescaling_of_HL_change_of_variables1}, which will not bring any essential change to the solution's dynamics.

In contrast to the numerical results reported in \cite{liu2017spatial}, our findings suggest that solutions of \eqref{eqt:numerical_dynamic_rescaling_of_HLscenario1} develop asymptotically self-similar finite-time blowups. This phenomenon is corroborated by Figures \ref{fig:HL_odd_Linfty} and \ref{fig:HL_odd_inner profile}. The first row of Figure \ref{fig:HL_odd_Linfty} illustrates the rapid growth of the maximums of $\Omega$ and $\Theta_x$ in logarithmic coordinates. In particular, in the singular region, $\Theta_x$ is observed to scale like $\Omega^2$, which implies that $\Omega$ blows up at the rate of $(T_{\text{dr}}-t)^{-1}$, where $T_{\text{dr}}$ denotes the blowup time of the dynamic rescaling equations. This blowup rate is further verified by the second row of the figure, which demonstrates the linear dependence of $\|\Omega\|_{L^{\infty}}^{-1}$ and $\|\Theta_x\|_{L^\infty}^{-1/2}$ on time $t$. Moreover, Figure \ref{fig:HL_odd_inner profile} displays the spatial profiles of $(\Omega,\Theta_x)$ in the singular region. As evident from the figure, the inner profiles of $\Omega$ and $\Theta_x$, after proper rescaling, are observed to approach regular limiting profiles, providing strong evidence for the existence of a self-similar blowup mechanism. In the context of the original HL model \eqref{eqt:1Dhouluo}, this corresponds to asymptotically self-similar blowups of $\omega$ and $\theta_x$ occurring at a location distinct from the origin. We refer to this phenomenon as the Stage 1 blowup.

\begin{figure}[!htbp]
    \centering
        \includegraphics[width=0.4\textwidth]{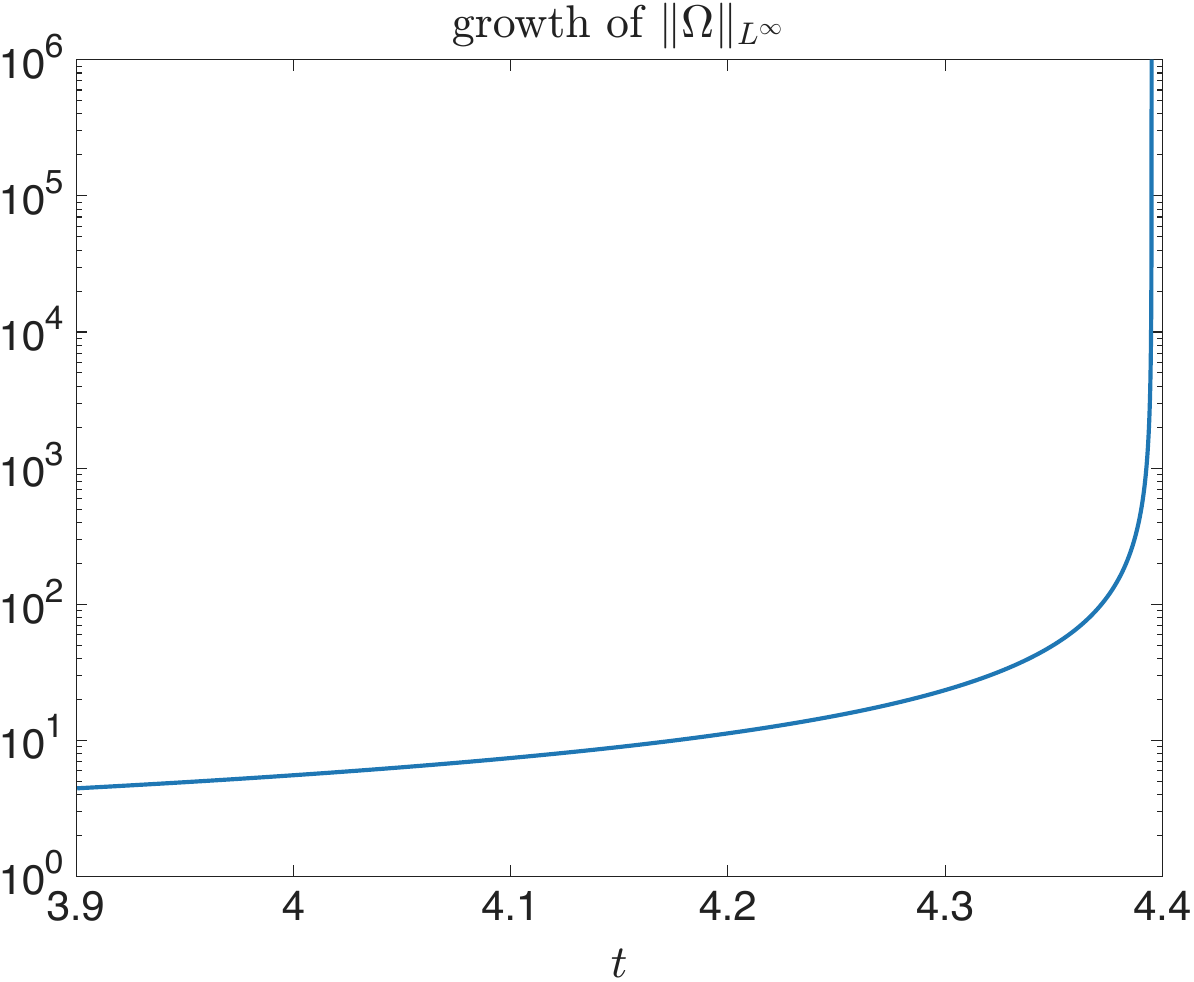}
        \includegraphics[width=0.4\textwidth]{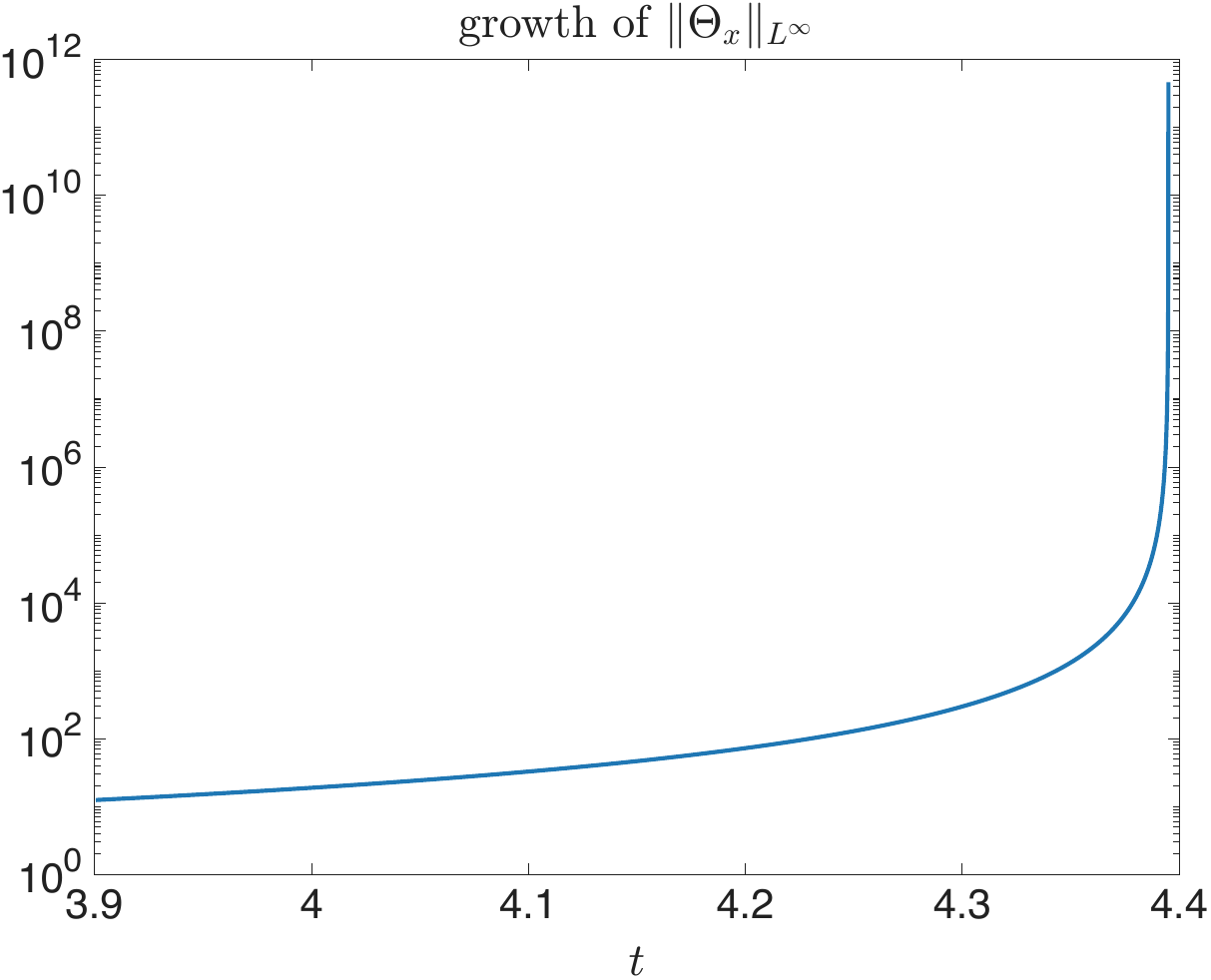}
        \includegraphics[width=0.4\textwidth]{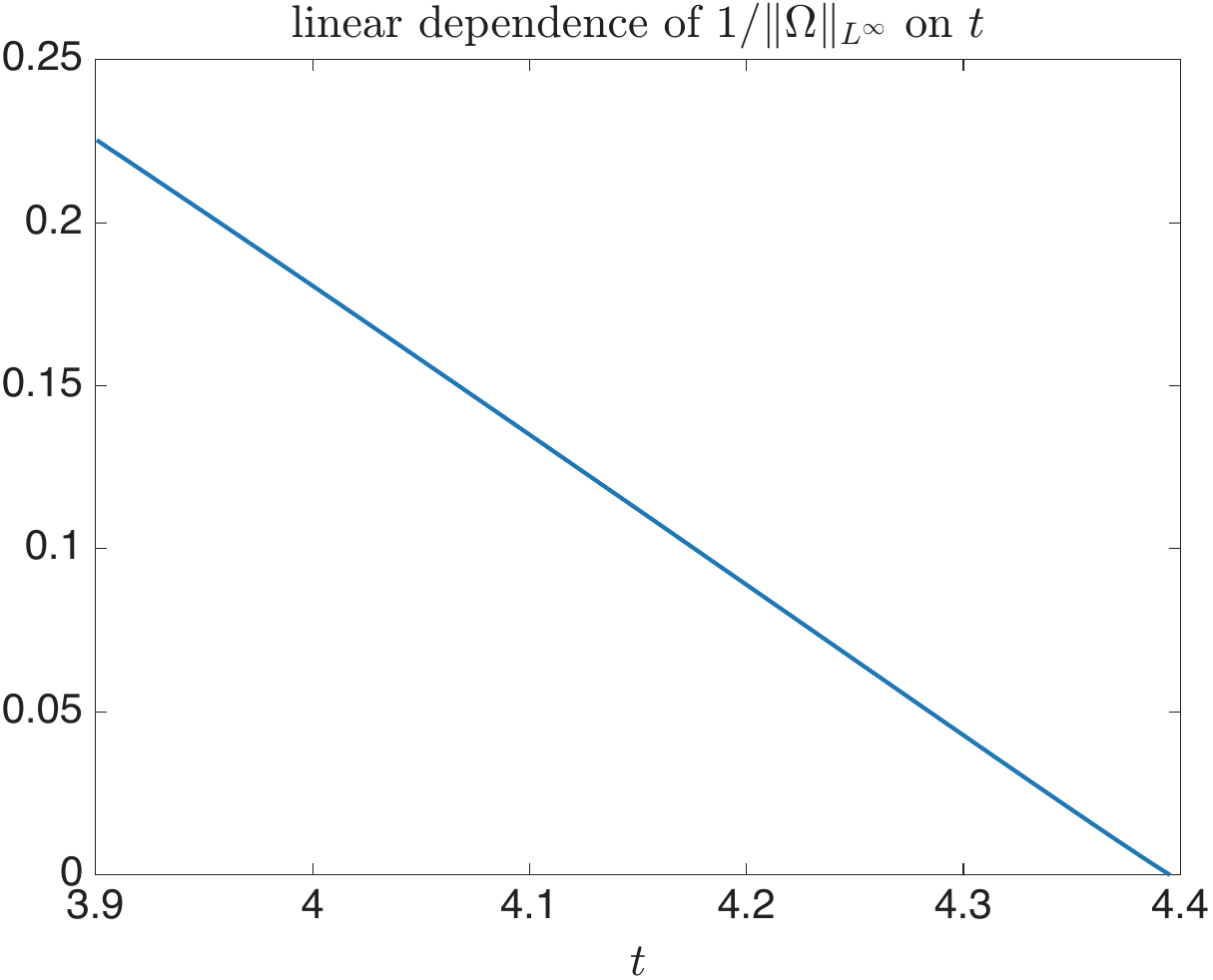}
        \includegraphics[width=0.4\textwidth]{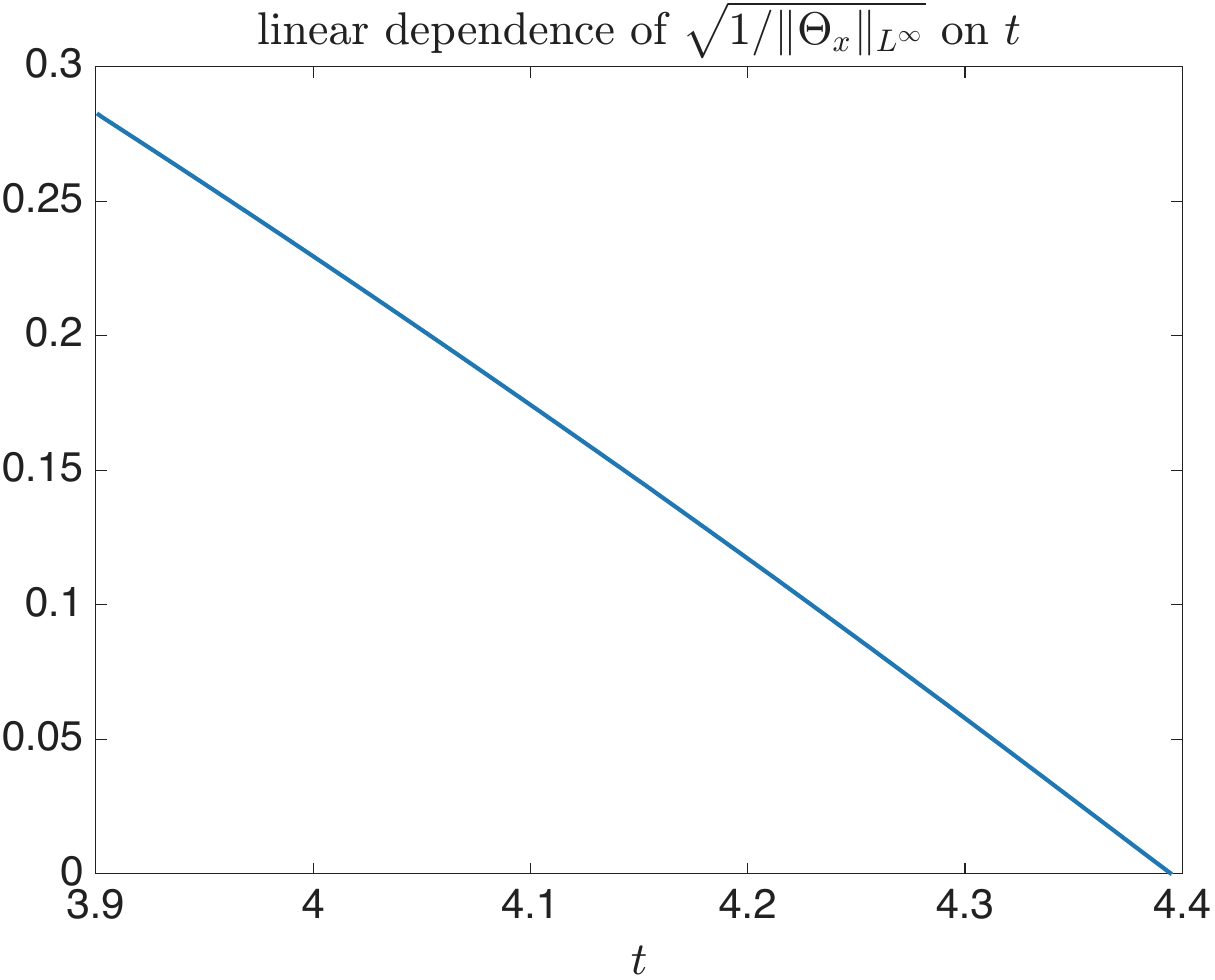}
    \caption[Time evolution of $\|\Omega\|_{L^{\infty}}$ and $\|\Theta_x\|_{L^\infty}$ in the odd symmetry case computed on an adaptive mesh in Scenario 1.]{Time evolution of $\|\Omega\|_{L^{\infty}}$ and $\|\Theta_x\|_{L^\infty}$ in the odd symmetry case computed on an adaptive mesh in Scenario 1. The top row displays the rapid growth of $\|\Omega\|_{L^{\infty}}$ and $\|\Theta_x\|_{L^\infty}$, while the bottom row illustrates that both $\|\Omega\|_{L^{\infty}}^{-1}$ and ${\|\Theta_x\|_{L^\infty}}^{-1/2}$ decay linearly with time $t$.} 
     \label{fig:HL_odd_Linfty}
\end{figure}

\begin{figure}[!htbp]
    \centering
        \includegraphics[width=0.9\textwidth]{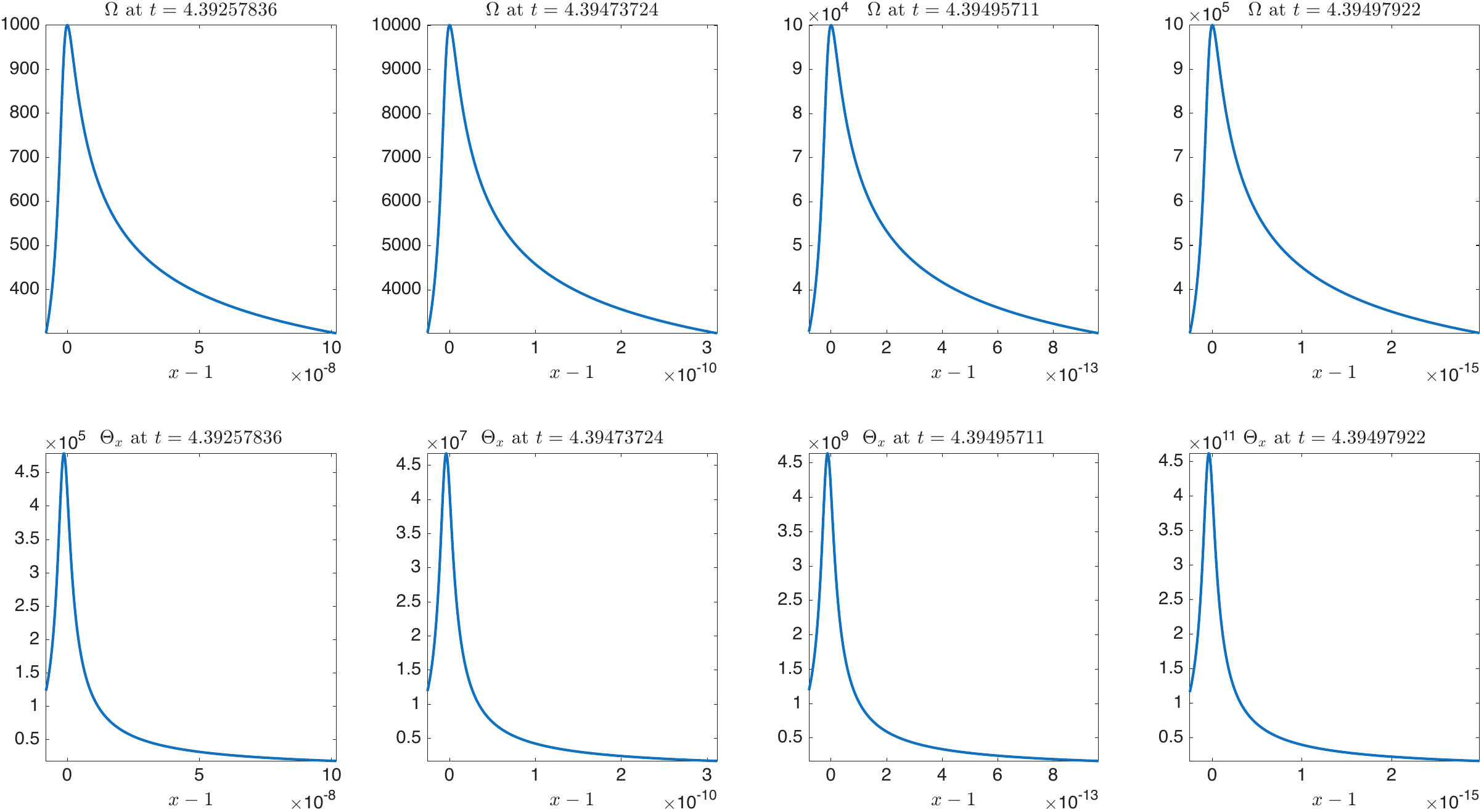}
    \caption[Inner profiles of $\Omega$ and $\Theta_x$ in the odd symmetry case computed on an adaptive mesh in Scenario 1.]{Inner profiles of $\Omega$ (top row) and $\Theta_x$ (bottom row) in the odd symmetry case computed on an adaptive mesh in Scenario 1. After proper rescaling, the inner profiles eventually stabilize at regular profiles as $t$ approaches the blowup time.} 
     \label{fig:HL_odd_inner profile}
\end{figure}

Since solutions of \eqref{eqt:numerical_dynamic_rescaling_of_HLscenario1} are observed to develop finite-time blowups, the evolution beyond the blowup time $T_{\text{dr}}$ can only be defined in the weak sense. Restricted by the CFL stability condition, it is computationally infeasible to extend the adaptive mesh simulations beyond $T_{\text{dr}}$. Therefore, to capture the post-blowup dynamics of the weak solution to \eqref{eqt:numerical_dynamic_rescaling_of_HLscenario1}, we perform simulations on a locally refined fixed mesh with a maximum resolution of $2\times 10^{-8}$ near $x=1$. During computation, we employ suitable numerical regularization to suppress the growth of $\|\Omega\|_{L^{\infty}}$ and $\|\Theta_x\|_{L^{\infty}}$, thereby ensuring that the simulation proceeds with an acceptable time step size. This approach can be interpreted as approximating the weak solution of \eqref{eqt:numerical_dynamic_rescaling_of_HLscenario1} via the method of vanishing viscosity.

Due to the CFL restriction, the discrete time step is small (approximately $10^{-6}$). To efficiently track the evolution of the solution, we therefore record the values of $\Omega$ and $V$ at relatively large time intervals (approximately $10^{-2}$) and use the rate of change between consecutive recordings to quantify the convergence of the numerical solution. More specifically, the computation is terminated once 
\begin{equation}\label{eqt:HLscenario1_stopping_criterion}
    \left\|\left(\frac{\Omega^{k_i}-\Omega^{k_{i-1}}}{t_{k_i}-t_{k_{i-1}}}\right)\phi\right\|_{L^{3}(0,+\infty)}<10^{-7},
\end{equation} 
where $\{\Omega^{k_i}\}_{i\in\N}$ and $\{t_{k_i}\}_{i\in\N}$ denote the recorded values of $\Omega$ and the corresponding times. The weight function $\phi$, given by
\begin{equation}\label{eqt:weight_function}
    \phi(x):=\frac{\sqrt{|x-1}}{1+\sqrt{|x-1|}},
\end{equation}
is employed to suppress the singularity at $x=1$. 

Figures \ref{fig:HL_odd_omega} and \ref{fig:HL_odd_theta} illustrate the time evolution of the solution to \eqref{eqt:numerical_dynamic_rescaling_of_HLscenario1} under the normalization condition \eqref{eqt:HLscenario1_normlization_1}. As time progresses, the stopping criterion \eqref{eqt:HLscenario1_stopping_criterion} is eventually satisfied, indicating that the solution has stabilized at a steady state. Figure \ref{fig:HL_odd_omega} displays the evolution of $\Omega$ across different scales, suggesting that $\Omega$ converges to a singular limiting profile $\bar{\Omega}$ that satisfies the following properties:
\begin{itemize}
  \item $\bar{\Omega}$ vanishes on the interval $(0,1)$;
  \item $\bar{\Omega}$ exhibits a singularity of the type $1/\sqrt{x-1}$ in a right neighborhood of $x=1$,
  \item $\bar{\Omega}$ decays like $1/\sqrt{x}$ in the far field.
\end{itemize}
Figure \ref{fig:HL_odd_theta} displays the spatial profiles of $\Theta$ at different times. As shown, $\Theta$ finally stabilizes to a limiting profile $\bar{\Theta}$ that agrees with a constant multiple of the Heaviside step function. Thus, in the odd symmetry case, the limiting profile $(\bar{\Omega},\bar{\Theta})$ is observed to fully capture the structural features of the one-sided steady state $(\mtx{1}_{\{x>1\}}/\sqrt{x-1},\pi \mtx{1}_{\{x>1\}}/2)$ introduced in Theorem \ref{thm:HL_weak_steady_state_simplified_version}.

\begin{figure}[!htbp]
\centering
        \includegraphics[width=0.32\textwidth]{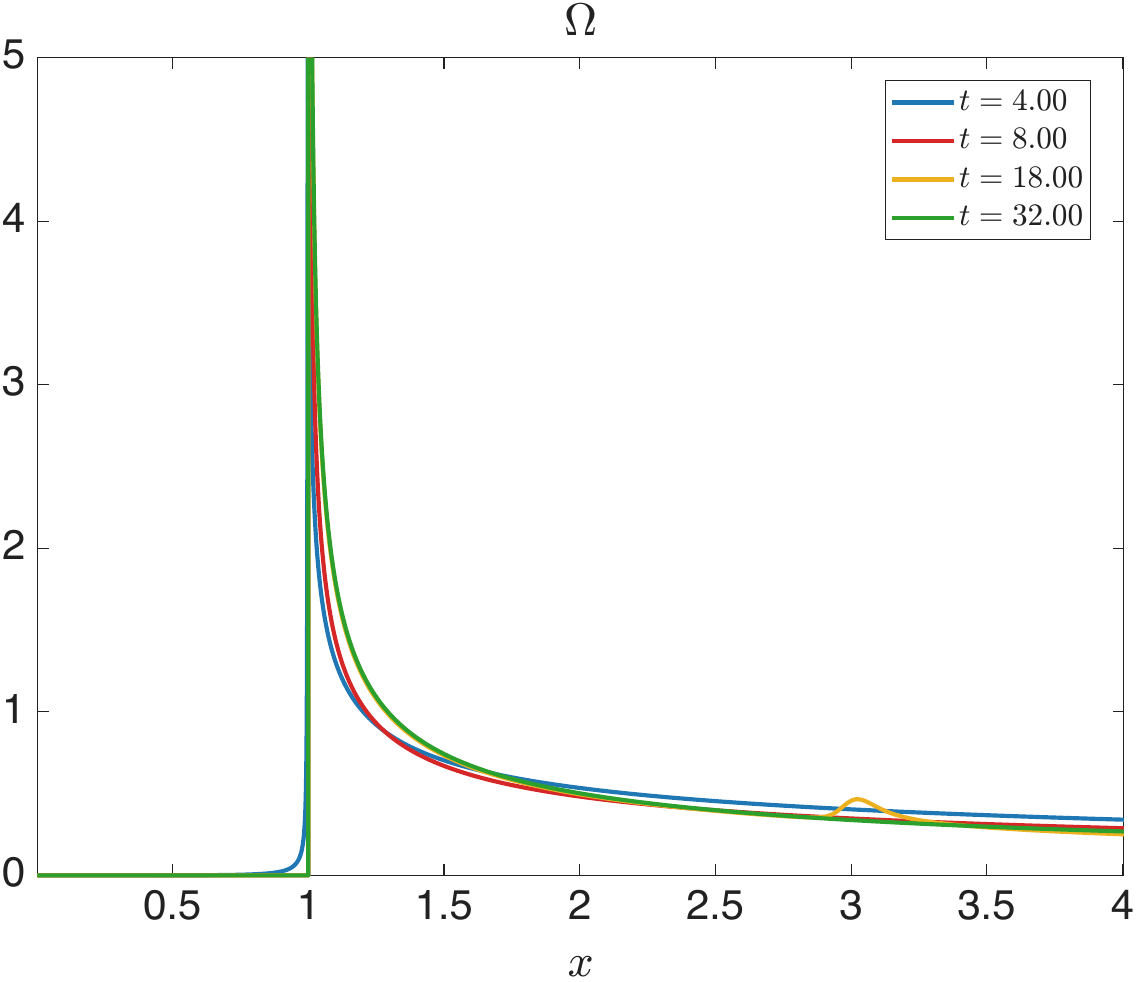}
        \includegraphics[width=0.32\textwidth]{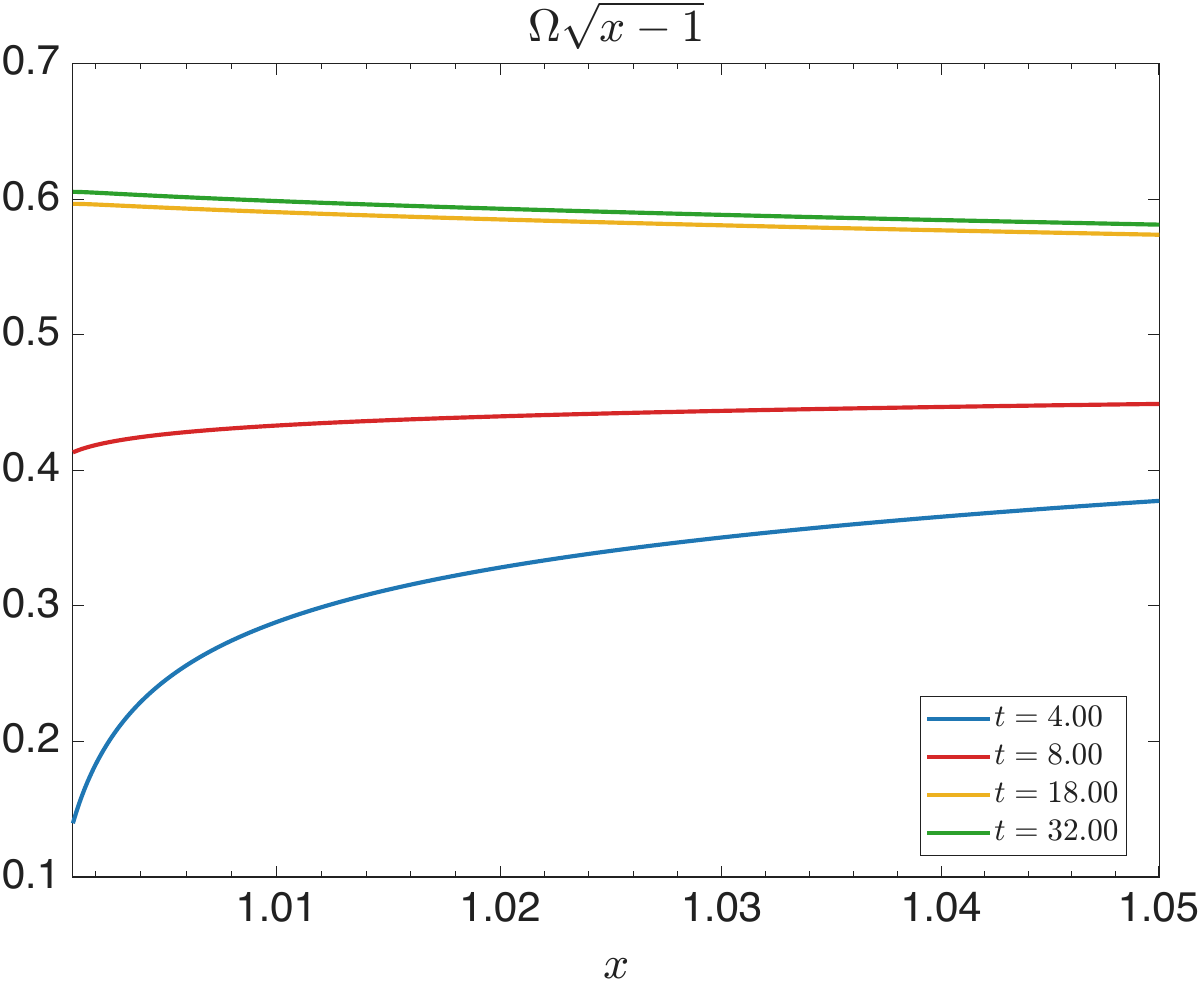}
        \includegraphics[width=0.32\textwidth]{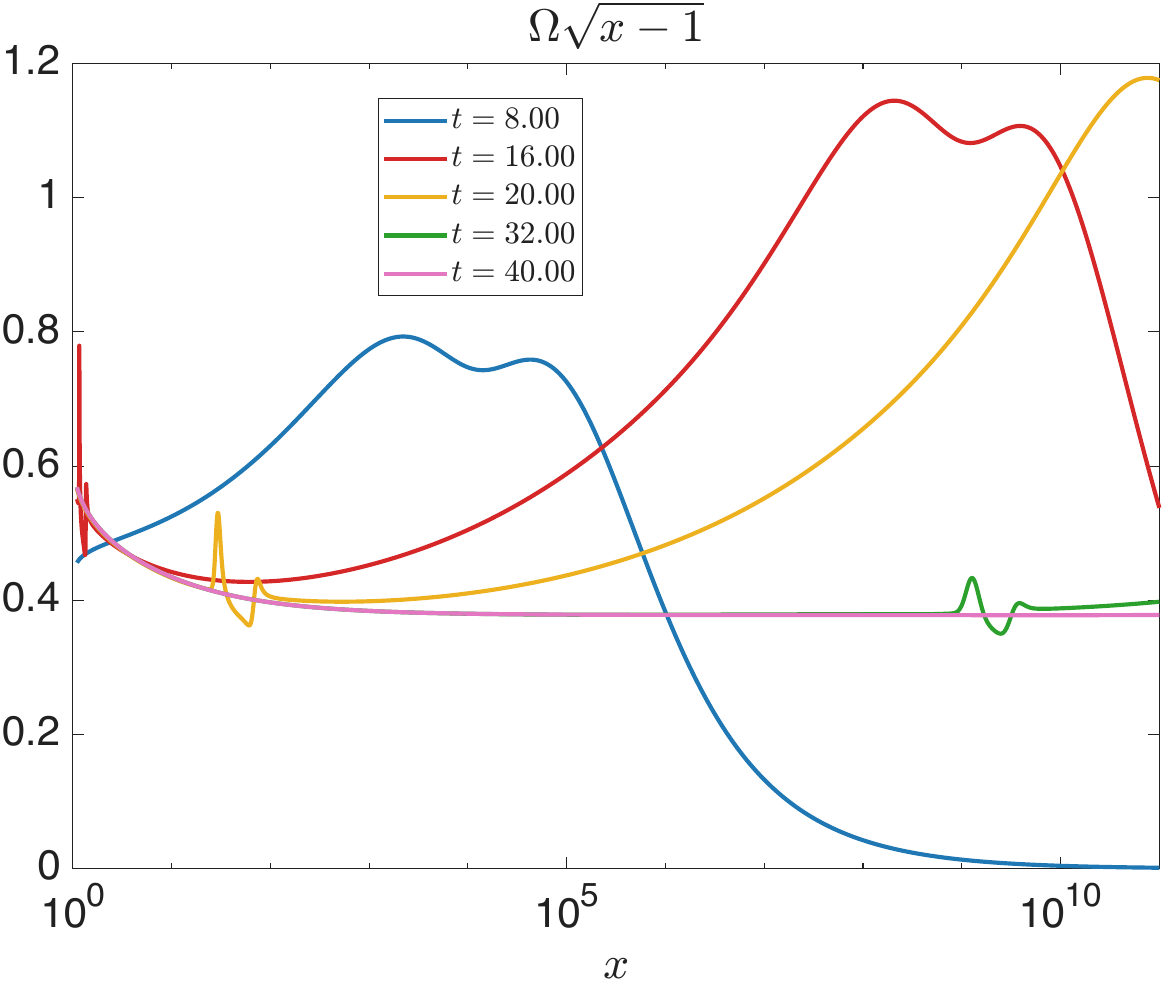}
    \caption[Evolution of spatial profiles of $\Omega$ in different regions (odd symmetry case, Scenario 1, computed on a fixed mesh).]{Evolution of spatial profiles of $\Omega$ in different regions (odd symmetry case, Scenario 1, computed on a fixed mesh). Left: $\Omega$ in the $O(1)$ region. Middle: $\Omega\sqrt{x-1}$ near the singularity at $x=1$. Right: $\Omega\sqrt{x-1}$ in the far field. As illustrated, $\Omega$ eventually approaches a singular limiting profile.}
     \label{fig:HL_odd_omega}
\end{figure}

\begin{figure}[!htbp]
\centering
        \includegraphics[width=0.4\textwidth]{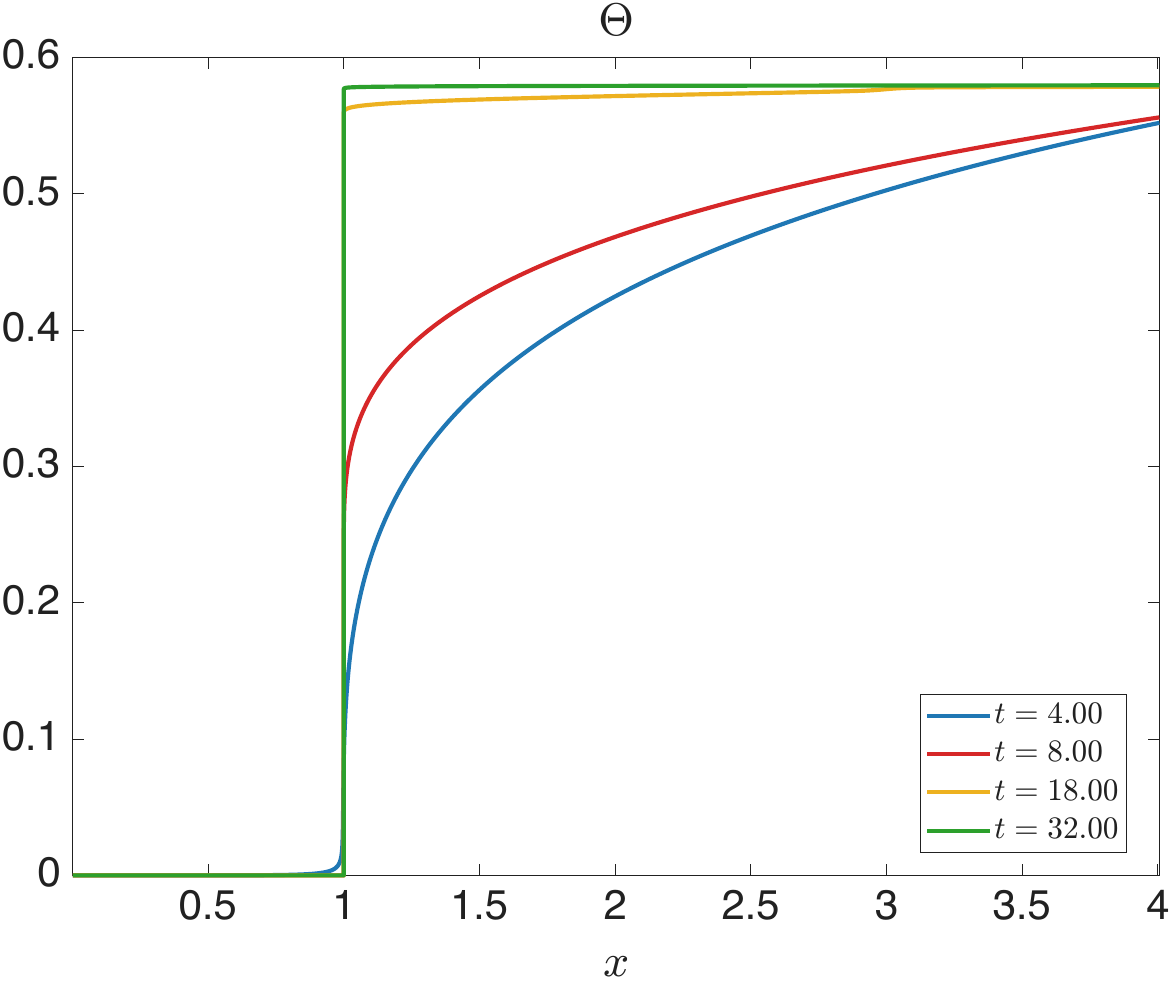}
        \includegraphics[width=0.4\textwidth]{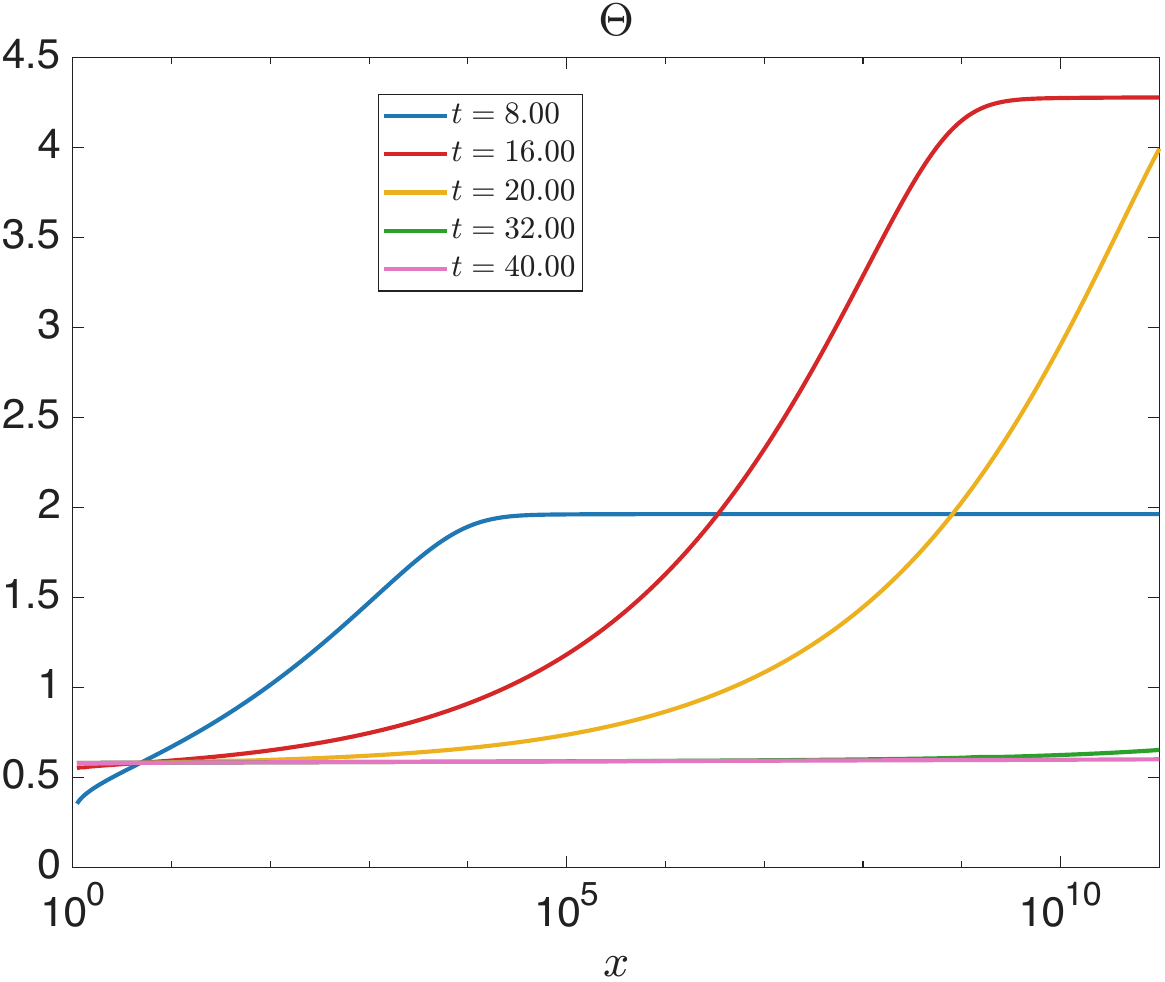}
    \caption[Evolution of spatial profiles of $\Theta$ in different regions (odd symmetry case, Scenario 1, computed on a fixed mesh).]{Evolution of spatial profiles of $\Theta$ in different regions (odd symmetry case, Scenario 1, computed on a fixed mesh). Left: The $O(1)$ region. Right: The far field. As illustrated, $\Theta$ eventually approaches a step-like profile.}  
      \label{fig:HL_odd_theta}
\end{figure}

The evolution of $U$ and the corresponding velocity field $U+c_lx$ is presented in Figure \ref{fig:HL_odd_u}. We observe that the velocity field $U+c_lx$ remains positive on $(0,1)$, while it vanishes at $x=1$ as enforced by the normalization condition \eqref{eqt:HLscenario1_normlization_1}. Furthermore, both $U$ and $U+c_lx$ develop cusps at $x=1$ due to the singularity formation of $U_x=\mtx{H}(\Omega)$. Heuristically, during the evolution of \eqref{eqt:numerical_dynamic_rescaling_of_HLscenario1}, the velocity field $U+c_lx$ is positive on $(0,1)$, and the advection terms in \eqref{eqt:numerical_dynamic_rescaling_of_HLscenario1} dominate the dynamics in this region. As a result, the degeneracy of the initial data at the origin is transported rightward, effectively suppressing both $\Omega$ and $\Theta$ to zero for $x<1$. Meanwhile, at the stagnation point $x=1$ where the advection velocity vanishes, $\Theta$ develops a shock-like structure, thereby leading to the singularity formation in $\Omega$ and $\Theta_x$.      

\begin{figure}[!htbp]
\centering
        \includegraphics[width=0.4\textwidth]{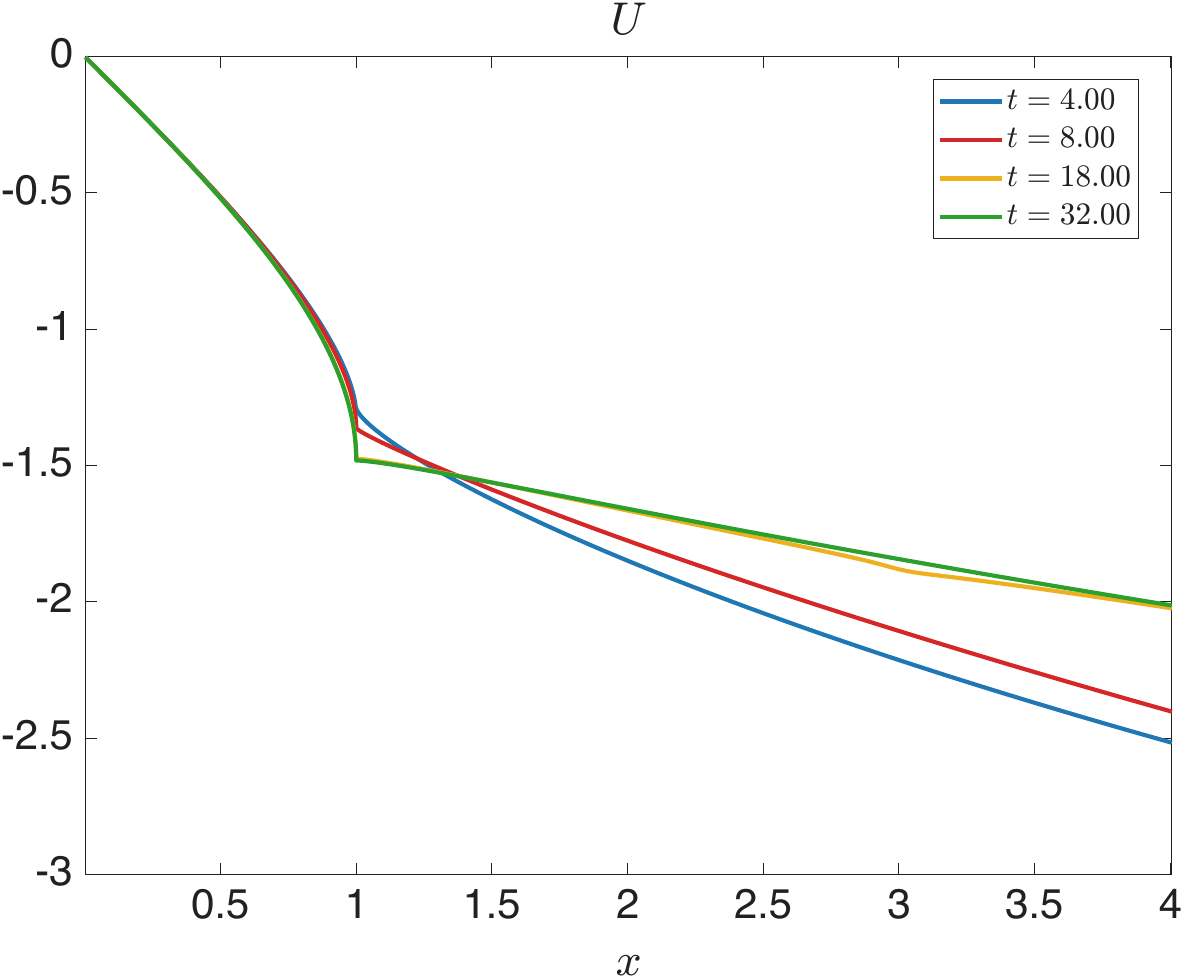}
        \includegraphics[width=0.4\textwidth]{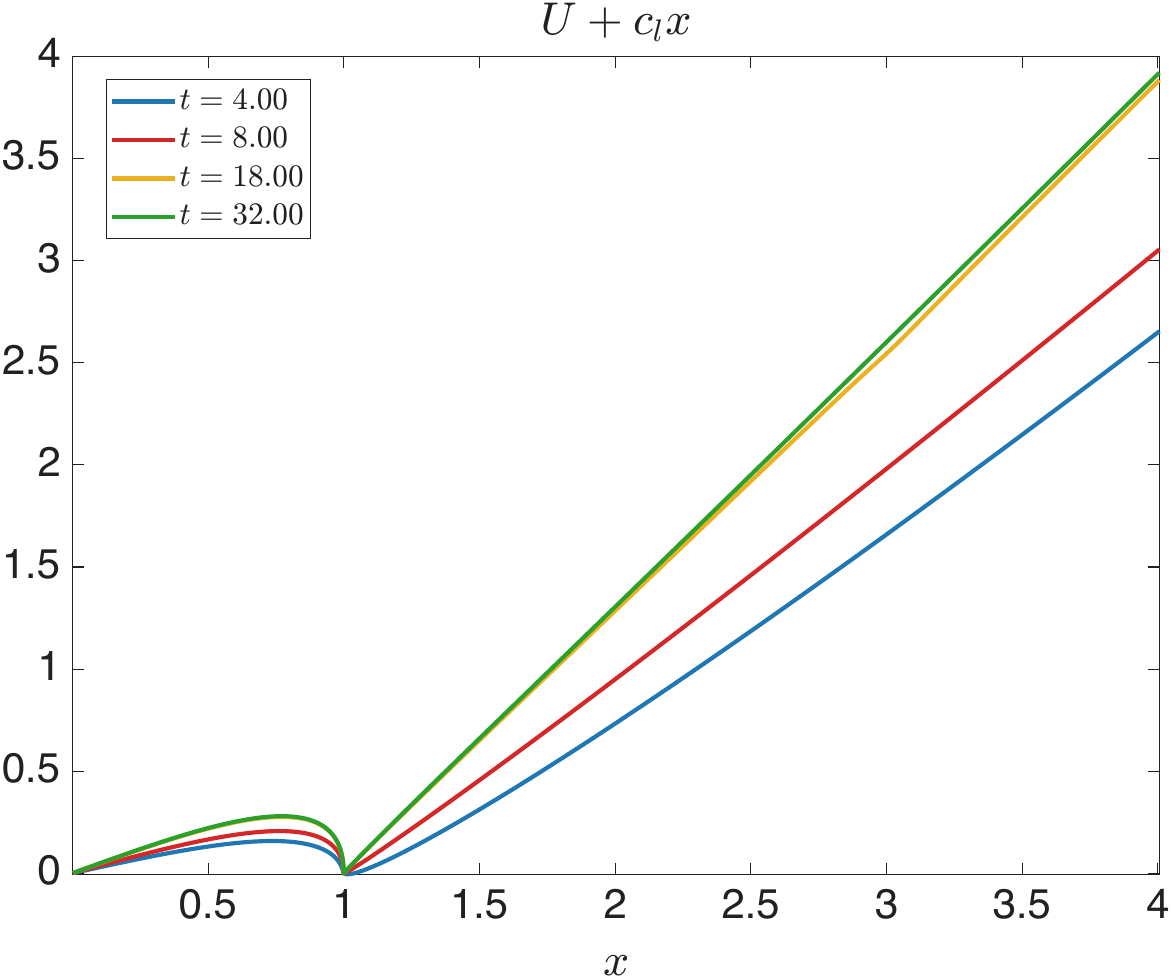}
    \caption[Evolution of spatial profiles of $U$ (left) and $U+c_l x$ (right) in the odd symmetry case computed on a fixed mesh in Scenario 1. ]{Evolution of spatial profiles of $U$ (left) and $U+c_l x$ (right) in the odd symmetry case computed on a fixed mesh in Scenario 1. As illustrated, both $U$ and $U+c_l x$ develop cusps at $x=1$.}  
      \label{fig:HL_odd_u}
\end{figure}

Figure \ref{fig:HL_odd_clcomega} displays the convergence of $c_l$, $c_{\om}$ and $c_l/c_{\om}$, respectively. The limiting value of the ratio obtained from our numerical computations is approximately $-2.00095$, which is remarkably close to $-2$. This result is consistent with the observation that the limiting profile $\bar{\Omega}$ decays as $x^{-1/2}$ in the far field. Therefore, our results strongly suggest that beyond the Stage 1 blowup, the weak solution of \eqref{eqt:dynamic_rescaling_of_HL} eventually converges to a steady state $(\bar{\Omega},{\bar{\Theta}},\bar{c}_{l},\bar{c}_{\om})$ where $\bar{\Omega}$ is singular at $x=1$ and $\bar{c}_{l}/\bar{c}_{\om}=-2$. In the context of the original HL model \eqref{eqt:1Dhouluo}, this corresponds to a local $L^2$ blowup of $\omega$ at the origin with a singular self-similar profile. We refer to this subsequent phenomenon as the Stage 2 blowup. 

\begin{figure}[!htbp]
\centering
        \includegraphics[width=0.32\textwidth]{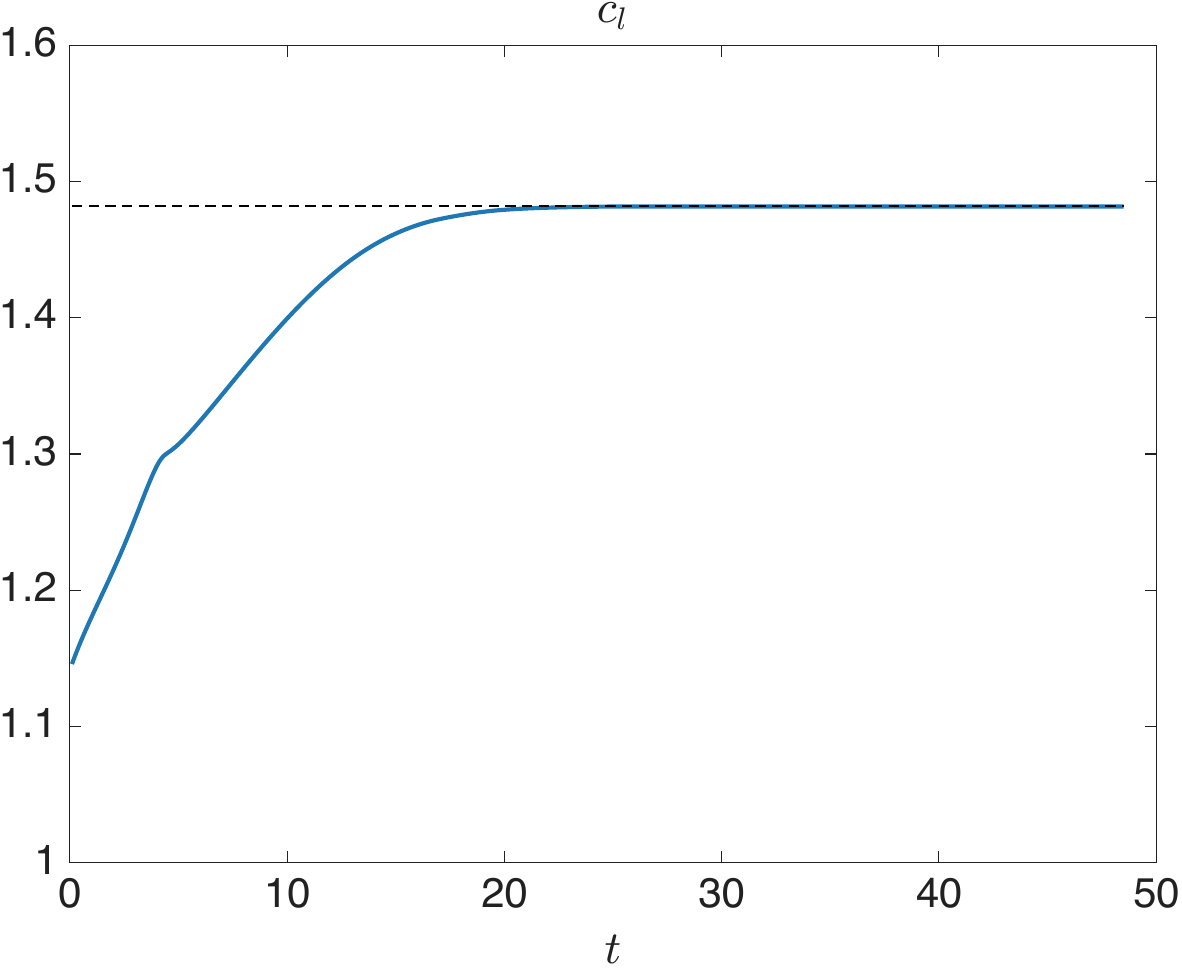}
        \includegraphics[width=0.32\textwidth]{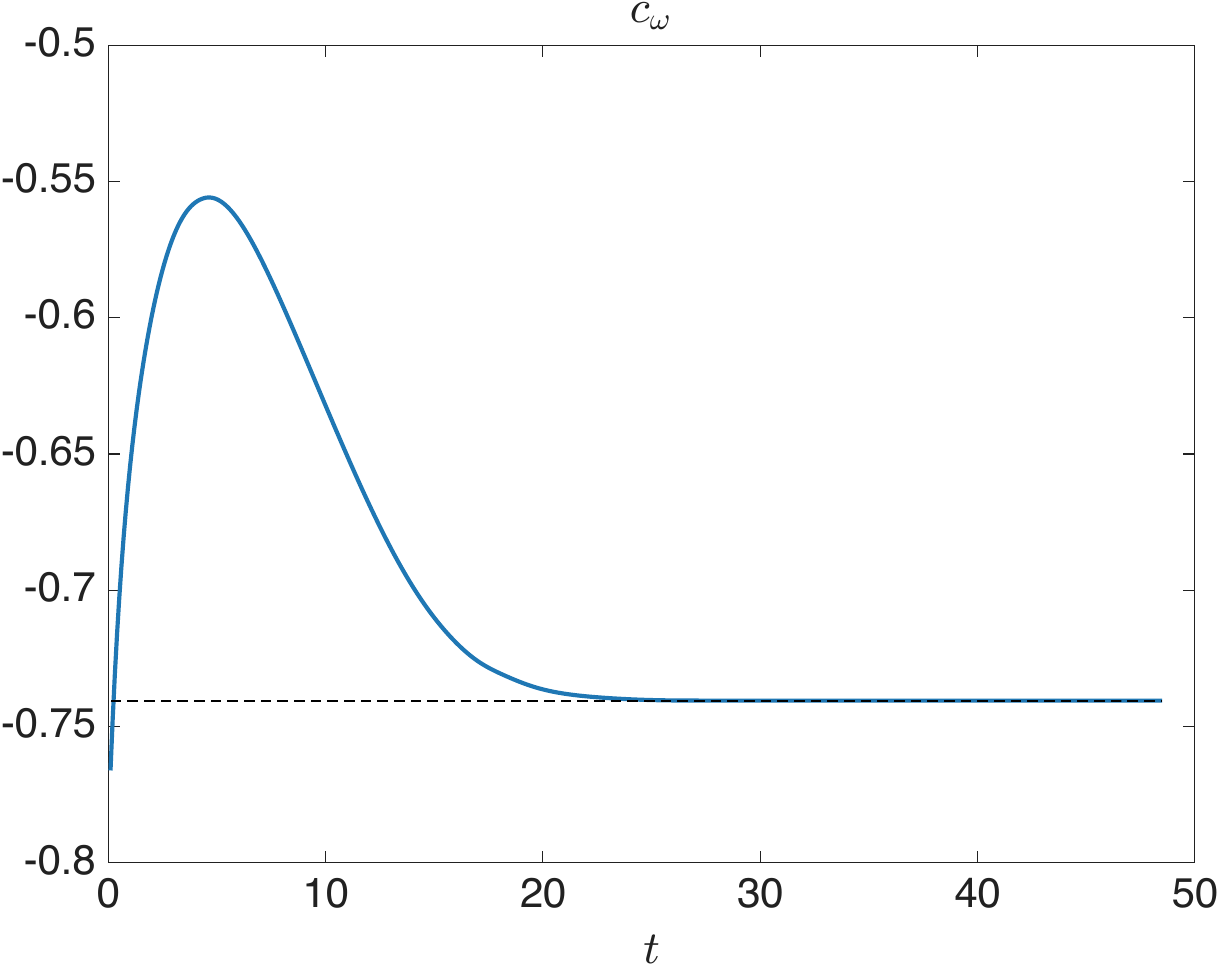}
        \includegraphics[width=0.32\textwidth]{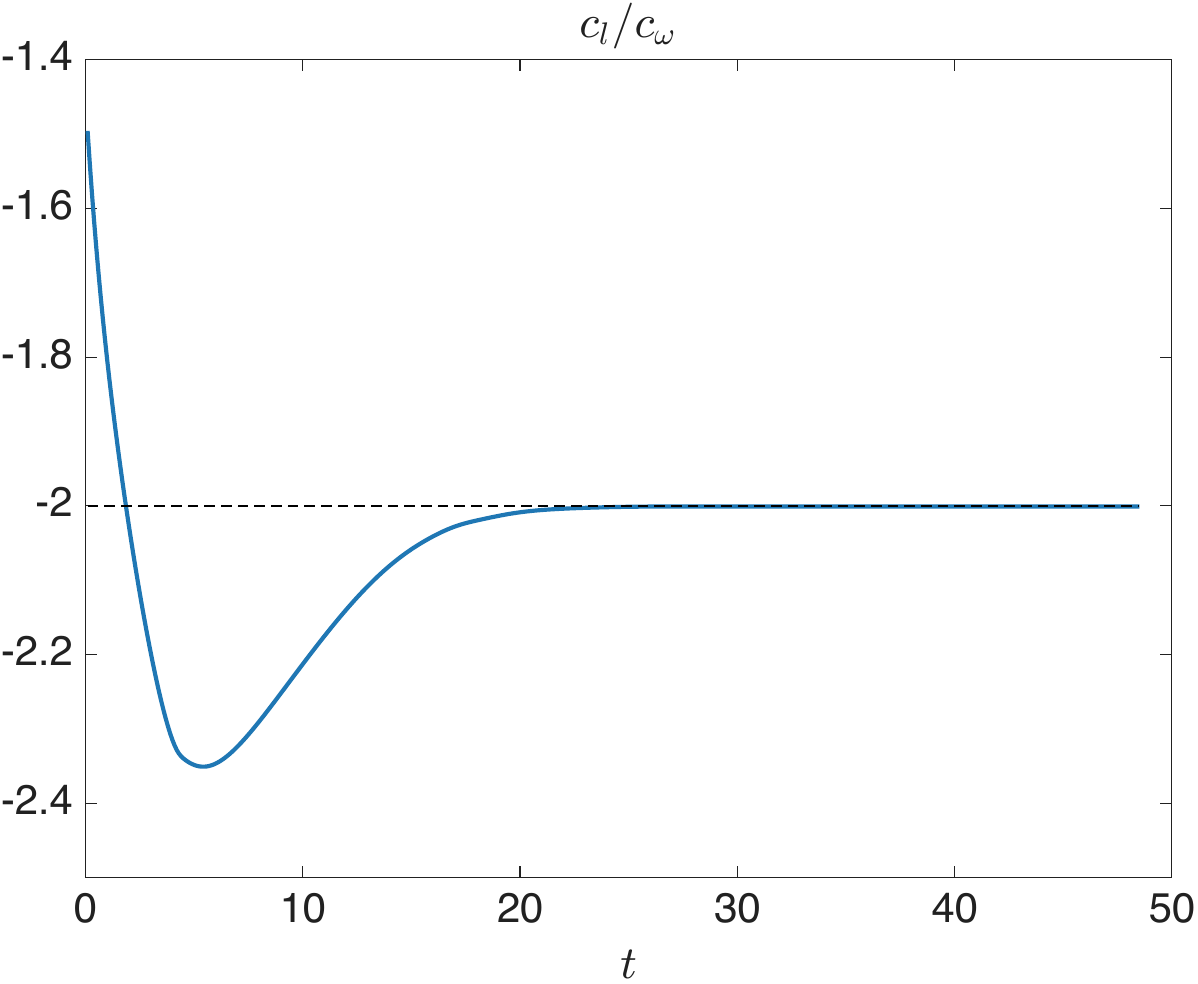}
    \caption[Time evolution of $c_l$, $c_{\om}$ and $c_l/c_{\om}$ in the odd symmetry case computed on a fixed mesh in Scenario 1.]{Time evolution of $c_l$ (left), $c_{\om}$ (middle) and $c_l/c_{\om}$ (right) in the odd symmetry case computed on a fixed mesh in Scenario 1. The black dashed lines represent the computed limiting values $(c_l,c_{\om},c_l/c_{\om})\approx(1.48166,-0.74048,-2.00095)$, respectively. }  
     \label{fig:HL_odd_clcomega}
\end{figure}

\subsection{Case 2: one-sided data}
In this subsection, we investigate the evolution of solution profiles starting from one-sided degenerate initial data. Analogous to the odd symmetry case, we begin by performing a preliminary numerical simulation of \eqref{eqt:numerical_dynamic_rescaling_of_HLscenario1} using the normalization condition \eqref{eqt:HLscenario1_normlization_1}. Subsequently, we implement an adaptive mesh strategy combined with the normalization condition \eqref{eqt:HLscenario1_normlization_2} to accurately resolve the singular behavior. Our results indicate that the solution develops an asymptotically self-similar finite-time blowup (Stage 1 blowup) analogous to the odd symmetry case. Figure \ref{fig:HL_singleside_initial_profile} illustrates the early-stage evolution of the profiles $(\Omega,\Theta)$ alongside their physical space counterparts $(\omega,\theta)$. As shown, within the dynamic rescaling variables, $\Omega$ exhibits a distinct tendency to form a singularity at $x=1$. Figure \ref{fig:HL_singleside_Linfty} tracks the evolution of $\|\Omega\|_{L^{\infty}}$ and $\|\Theta_x\|_{L^{\infty}}$, which corroborates the blowup rate of $(T_{\text{dr}}-t)^{-1}$. Furthermore, Figure \ref{fig:HL_singleside_inner profile} displays the spatial structures of $(\Omega,\Theta_x)$ in the singular region. It is evident that the inner profiles of $\Omega$ and $\Theta_x$, after proper rescaling, converge to regular limiting profiles. Moreover, these limiting profiles coincide with those observed in the odd symmetry case. These observations provide strong evidence for the existence of a robust self-similar blowup mechanism that does not depend on the symmetry of the initial data. We will numerically investigate this mechanism further in Section \ref{sec:HL_scenario2}.

\begin{figure}[!htbp]
\centering
    \begin{subfigure}[b]{1\textwidth}
        \centering
        \includegraphics[width=0.4\textwidth]{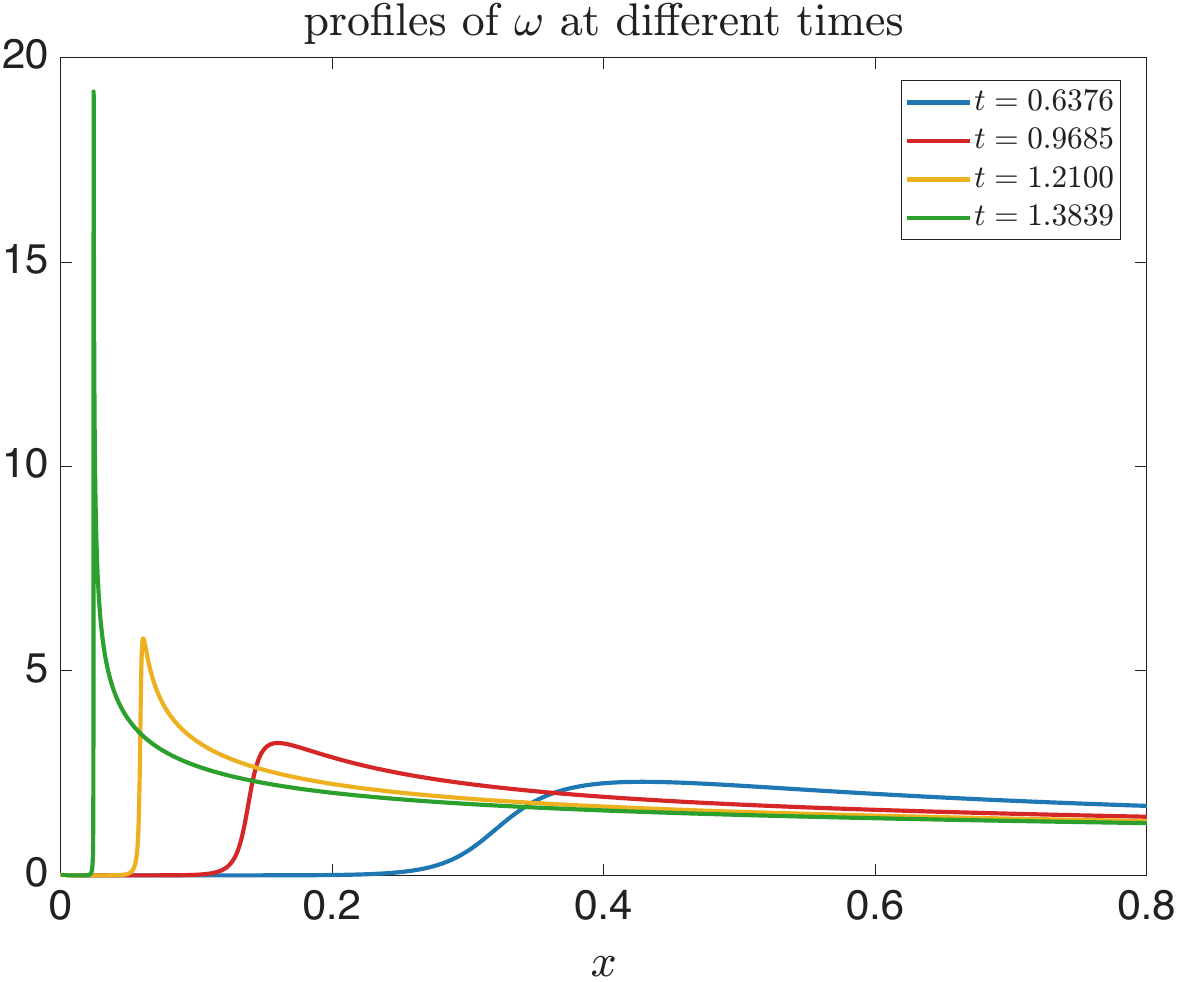}
        \includegraphics[width=0.4\textwidth]{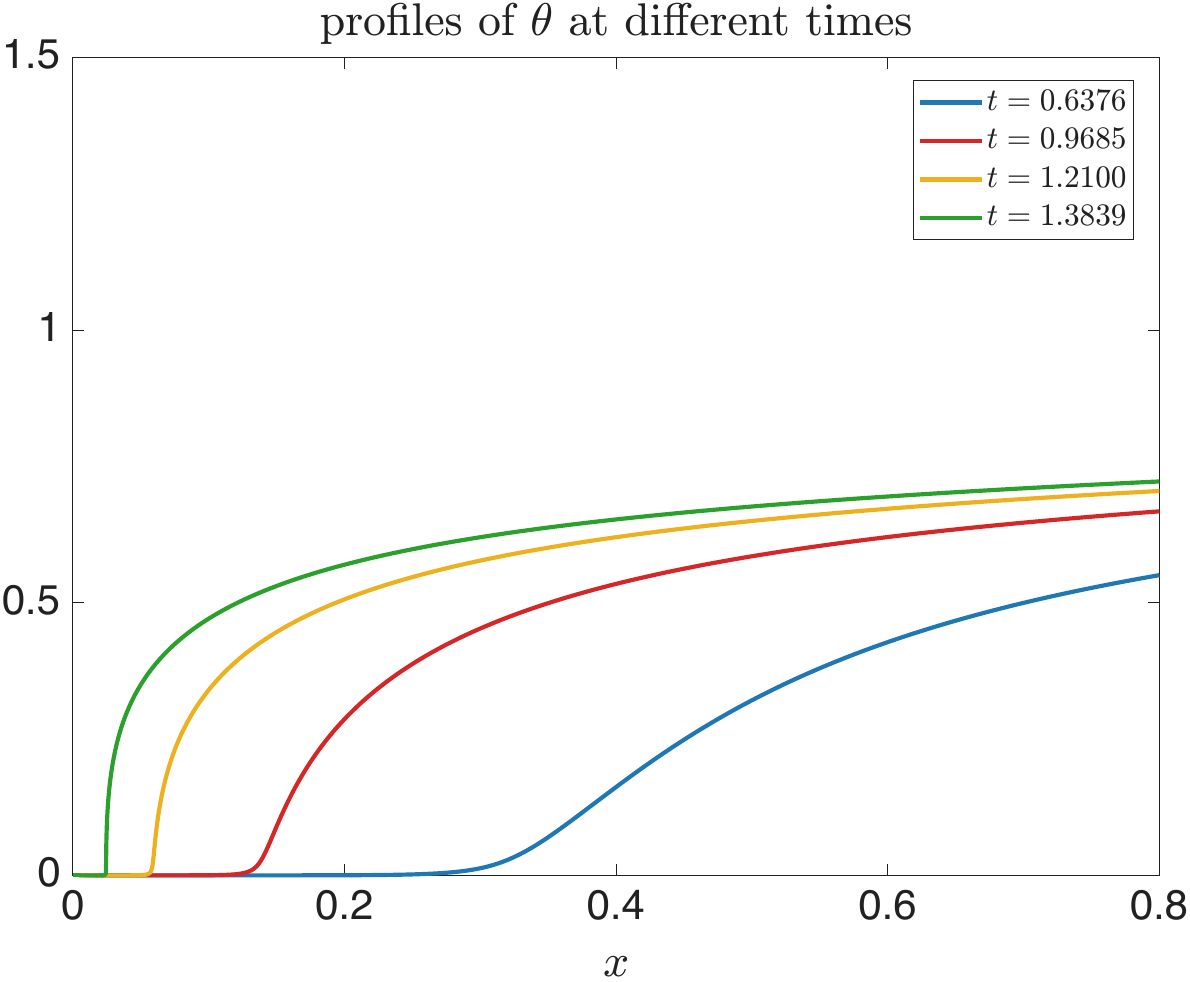}
        \caption{Spatial profiles of $\omega$ (left) and $\theta$ (right) in the physical space.}
    \end{subfigure}
    \begin{subfigure}[b]{1\textwidth}
        \centering
        \includegraphics[width=0.4\textwidth]{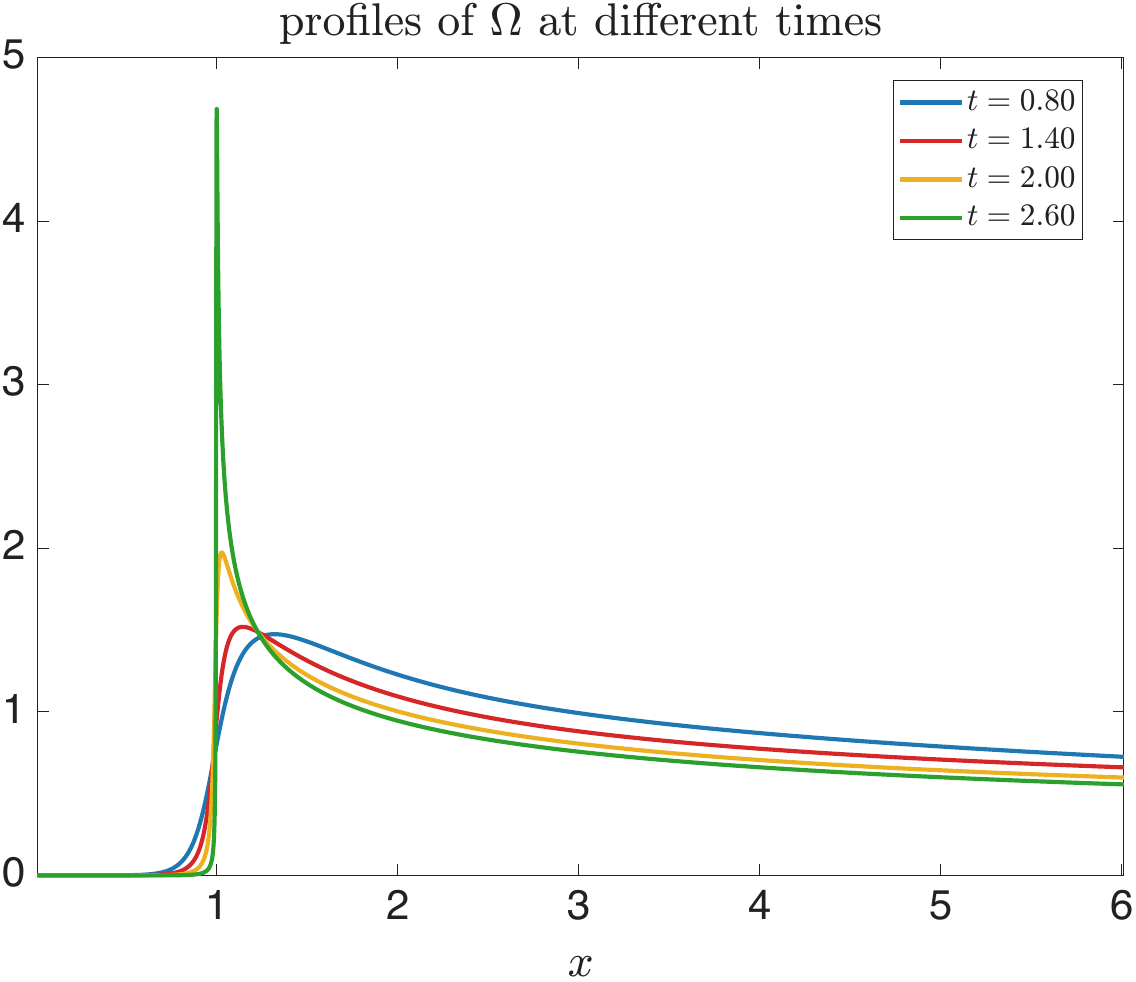}
        \includegraphics[width=0.4\textwidth]{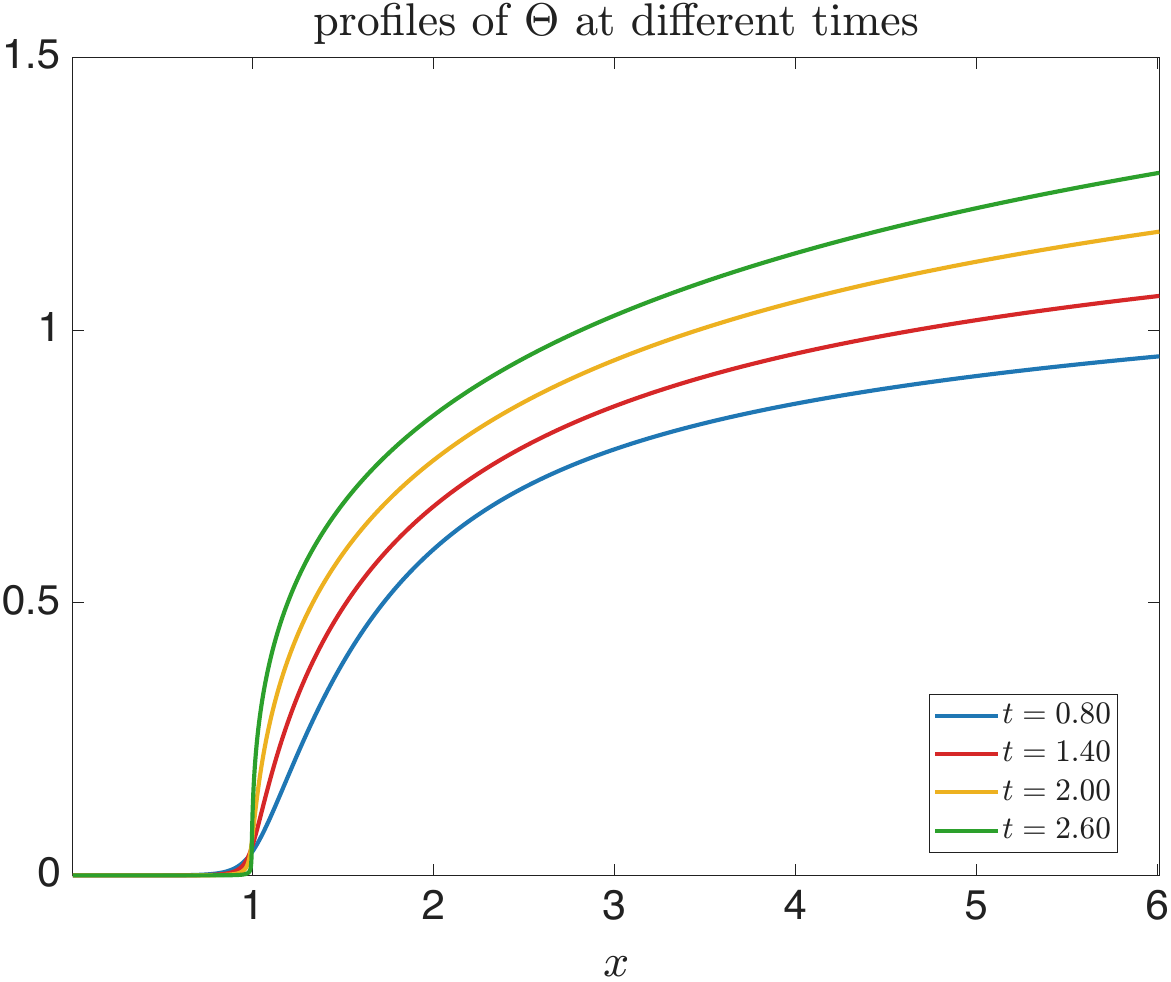}
        \caption{Spatial profiles of $\Omega$ (left) and $\Theta$ (right) in the dynamic rescaling space.}
    \end{subfigure}
 \caption[Early evolution of the solution in the one-sided case in Scenario 1.]{Early evolution of the solution in the one-sided case in Scenario 1. In the top row, $(x,t)$ represents the physical space variables, while in the bottom row, $(x,t)$ represents the dynamic rescaling space variables. The two coordinate systems are related via the change of variables introduced in Section \ref{sec:dynamic_rescaling_of_HL}.}  
     \label{fig:HL_singleside_initial_profile}
\end{figure}

\begin{figure}[!htbp]
    \centering
        \includegraphics[width=0.4\textwidth]{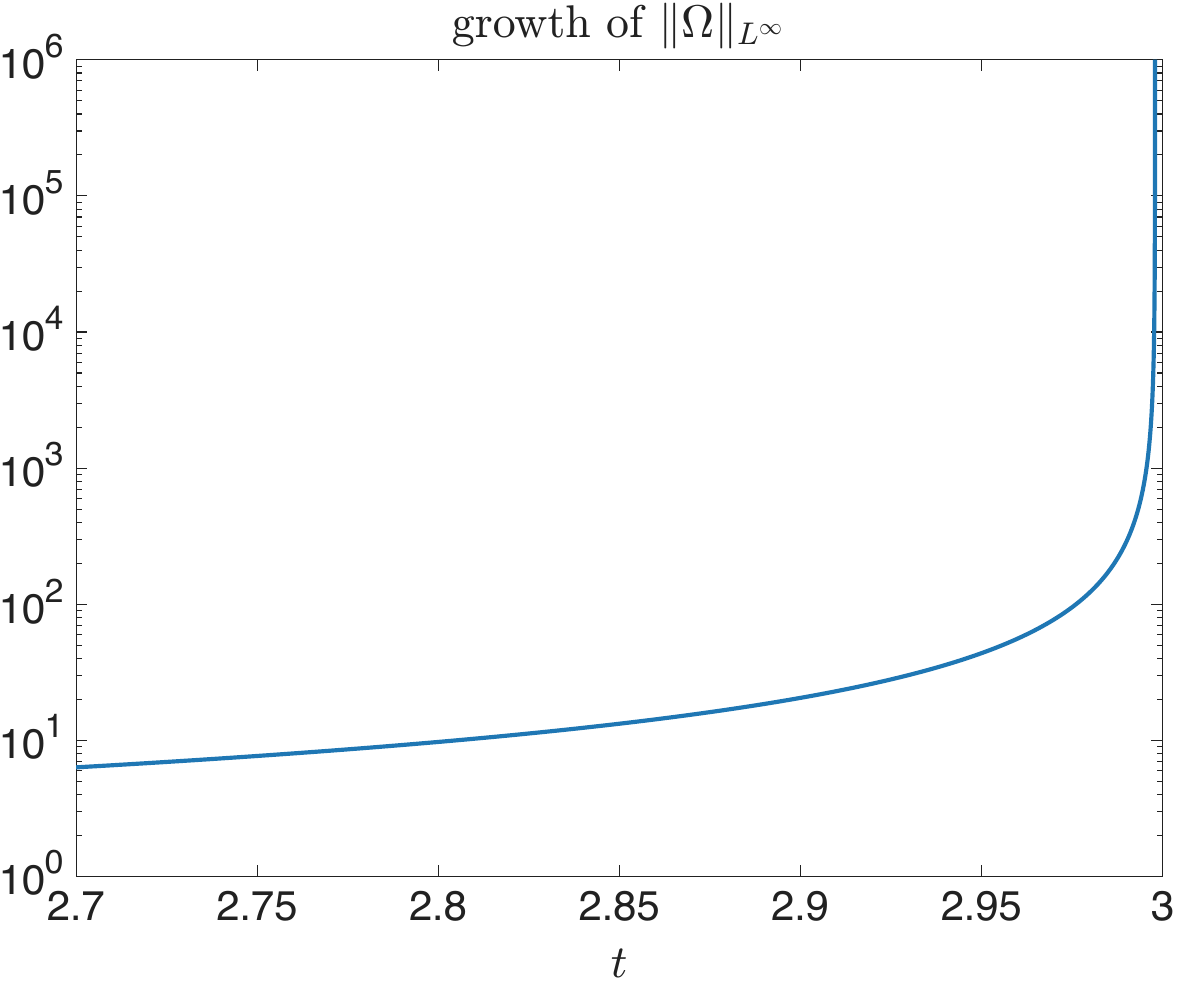}
        \includegraphics[width=0.4\textwidth]{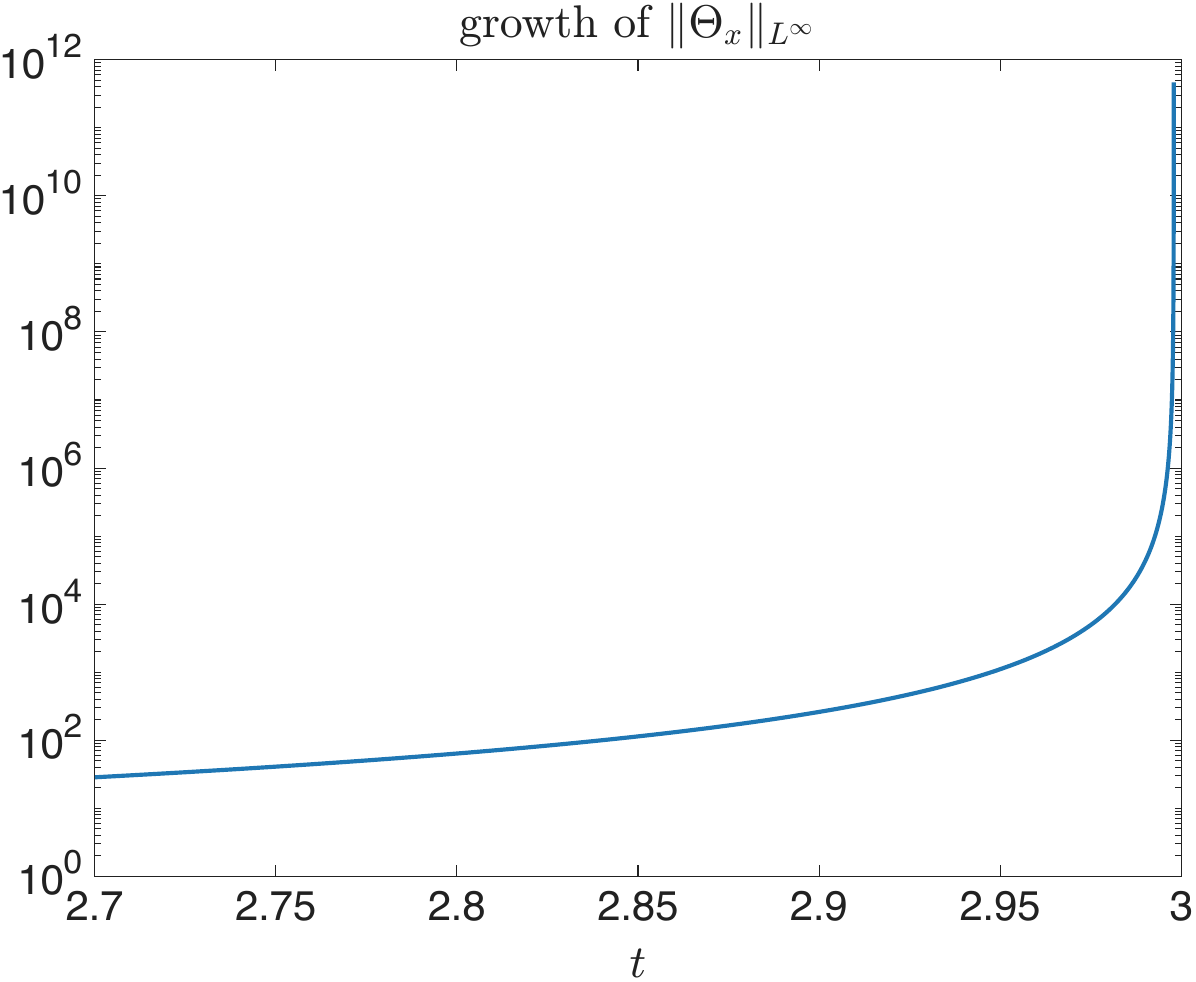}
        \includegraphics[width=0.4\textwidth]{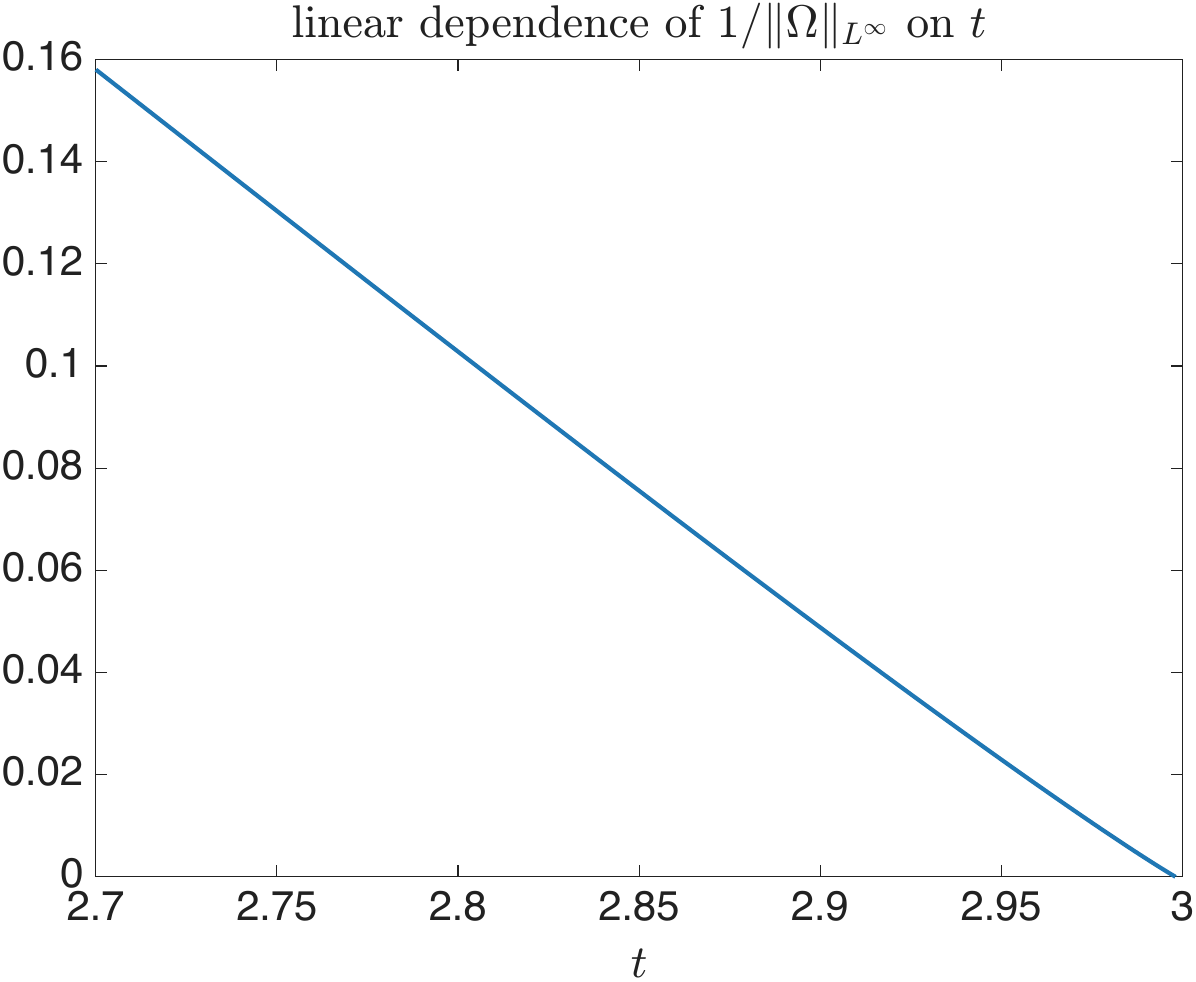}
        \includegraphics[width=0.4\textwidth]{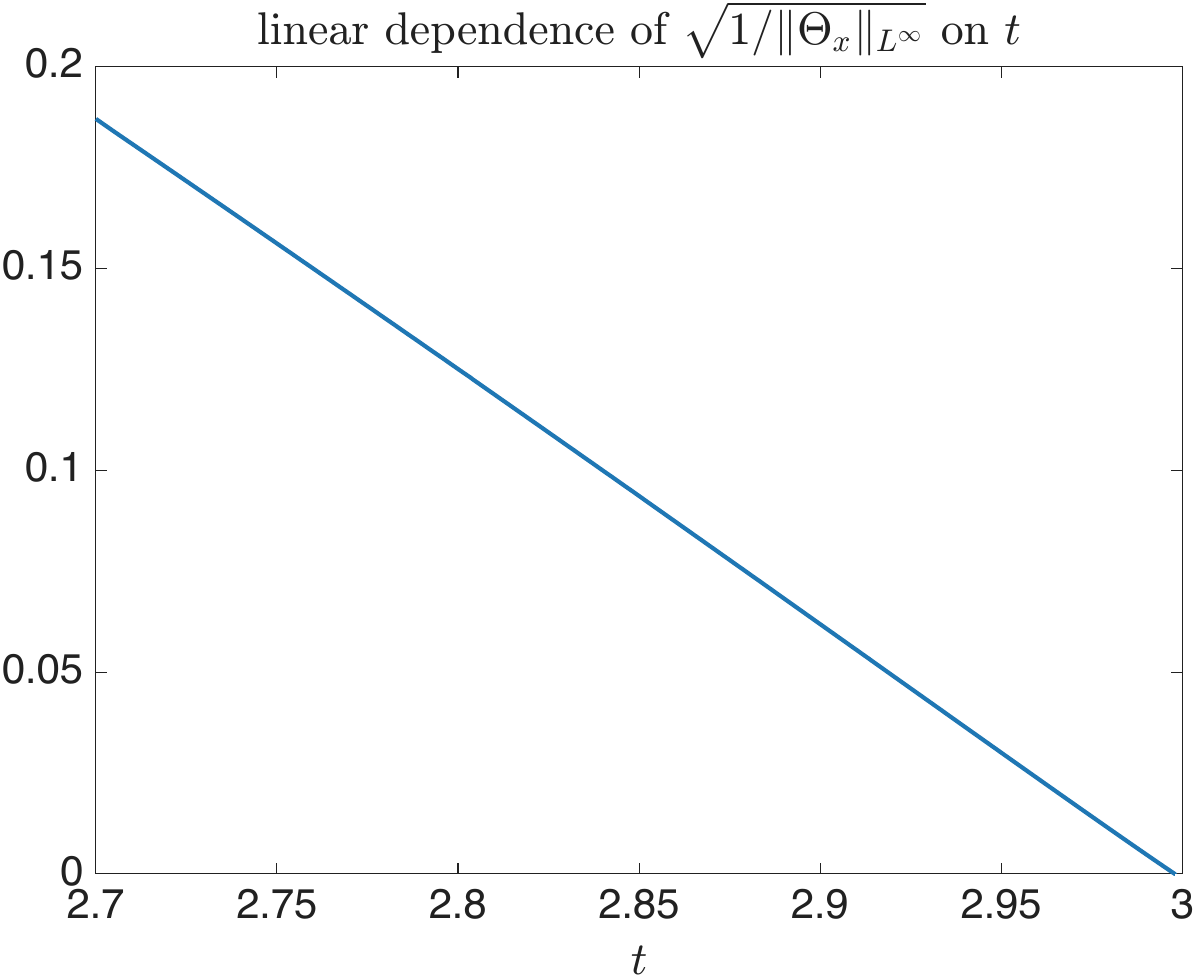}
    \caption[Time evolution of $\|\Omega\|_{L^{\infty}}$ and $\|\Theta_x\|_{L^\infty}$ in the one-sided case computed on an adaptive mesh in Scenario 1.]{Time evolution of $\|\Omega\|_{L^{\infty}}$ and $\|\Theta_x\|_{L^\infty}$ in the one-sided case computed on an adaptive mesh in Scenario 1. The top row displays the rapid growth of $\|\Omega\|_{L^{\infty}}$ and $\|\Theta_x\|_{L^\infty}$, while the bottom row illustrates that both $\|\Omega\|_{L^{\infty}}^{-1}$ and ${\|\Theta_x\|_{L^\infty}}^{-1/2}$ decay linearly with time $t$.} 
     \label{fig:HL_singleside_Linfty}
\end{figure}

\begin{figure}[!htbp]
    \centering
        \includegraphics[width=0.9\textwidth]{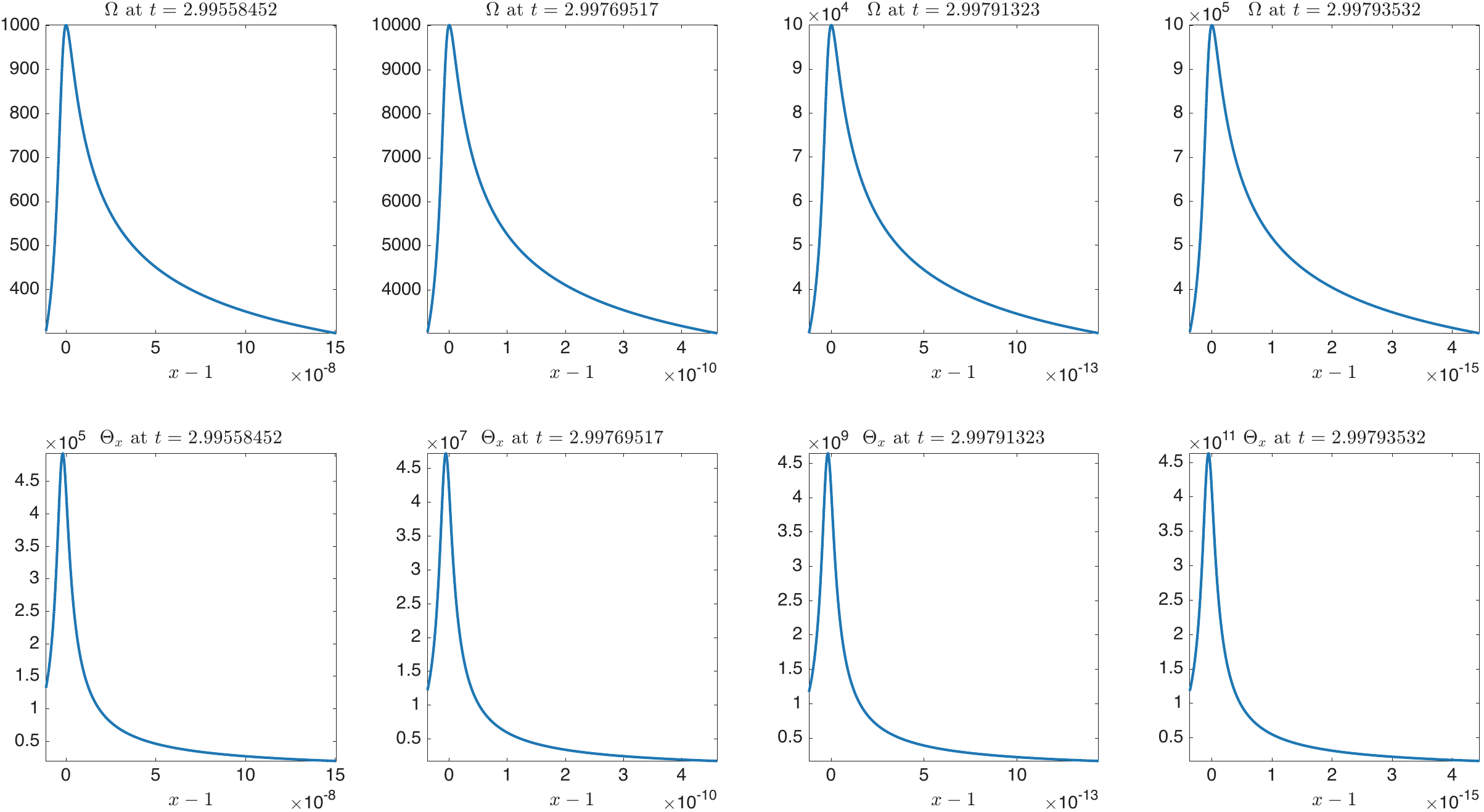}
    \caption[Inner profiles of $\Omega$ and $\Theta_x$ in the one-sided case computed on an adaptive mesh in Scenario 1.]{Inner profiles of $\Omega$ (top row) and $\Theta_x$ (bottom row) in the one-sided case computed on an adaptive mesh in Scenario 1. After proper rescaling, the inner profiles eventually stabilize at regular profiles as $t$ approaches the blowup time.} 
     \label{fig:HL_singleside_inner profile}
\end{figure}

We also investigate the long-time behavior of \eqref{eqt:numerical_dynamic_rescaling_of_HLscenario1} by conducting simulations using the same locally refined fixed mesh and stopping criterion. Recall that in the one-sided case, an explicit singular steady state solution was identified in Theorem \ref{thm:HL_weak_steady_state_simplified_version} as $(\bar{\Omega},\bar{\Theta},\bar{c}_l,\bar{c}_{\omega}) = (\mtx{1}_{\{x>1\}}/\sqrt{x-1}, \pi \mtx{1}_{\{x>1\}}/2, 2, -1)$, which serves as a rigorous reference for our study. Our numerical findings indicate that the solution to \eqref{eqt:numerical_dynamic_rescaling_of_HLscenario1} converges towards this steady state, as illustrated in Figures \ref{fig:HL_singleside_omega}, \ref{fig:HL_singleside_theta}, \ref{fig:HL_singleside_u}, and \ref{fig:HL_singleside_clcomega}.
Figures \ref{fig:HL_singleside_omega}, \ref{fig:HL_singleside_theta} and \ref{fig:HL_singleside_u} display the time evolution of the spatial profiles of $\Omega$, $\Theta$, $U$ and $U+c_lx$. Over time, the solutions are observed to approach profiles that closely match the steady state solution, which is plotted as a black dashed line in each figure for direct comparison. It is important to note that since the numerically computed $\Omega$ is bounded, it cannot perfectly resolve the singularity at $x=1$. However, outside a small neighborhood of the singularity, the agreement is remarkable. In fact, by the end of the simulation, it holds that 
\[\|(\Omega-\bar{\Omega})\phi\|_{L^{\infty}(\R\backslash[0.999,1.001])} < 10^{-4},\] 
where $\phi$ is the weight function defined by \eqref{eqt:weight_function}. Furthermore, Figure \ref{fig:HL_singleside_clcomega} tracks the time evolution of $c_l$, $c_{\om}$, and $c_l/c_{\om}$. The computed limiting values are $(c_l, c_{\om},c_l/c_{\om})=(1.9998,-0.9995,-2.0008)$, which are very close to the theoretical expectations $(\bar{c}_l, \bar{c}_{\om},\bar{c}_l/\bar{c}_{\om})=(2,-1,-2)$. In conclusion, these numerical observations suggest that beyond the Stage 1 blowup, the weak solution of \eqref{eqt:1Dhouluo} undergoes a Stage 2 blowup with singular self-similar profiles  constructed in Theorem \ref{thm:HL_weak_steady_state_simplified_version}.

\begin{figure}[!htbp]
\centering
        \includegraphics[width=0.32\textwidth]{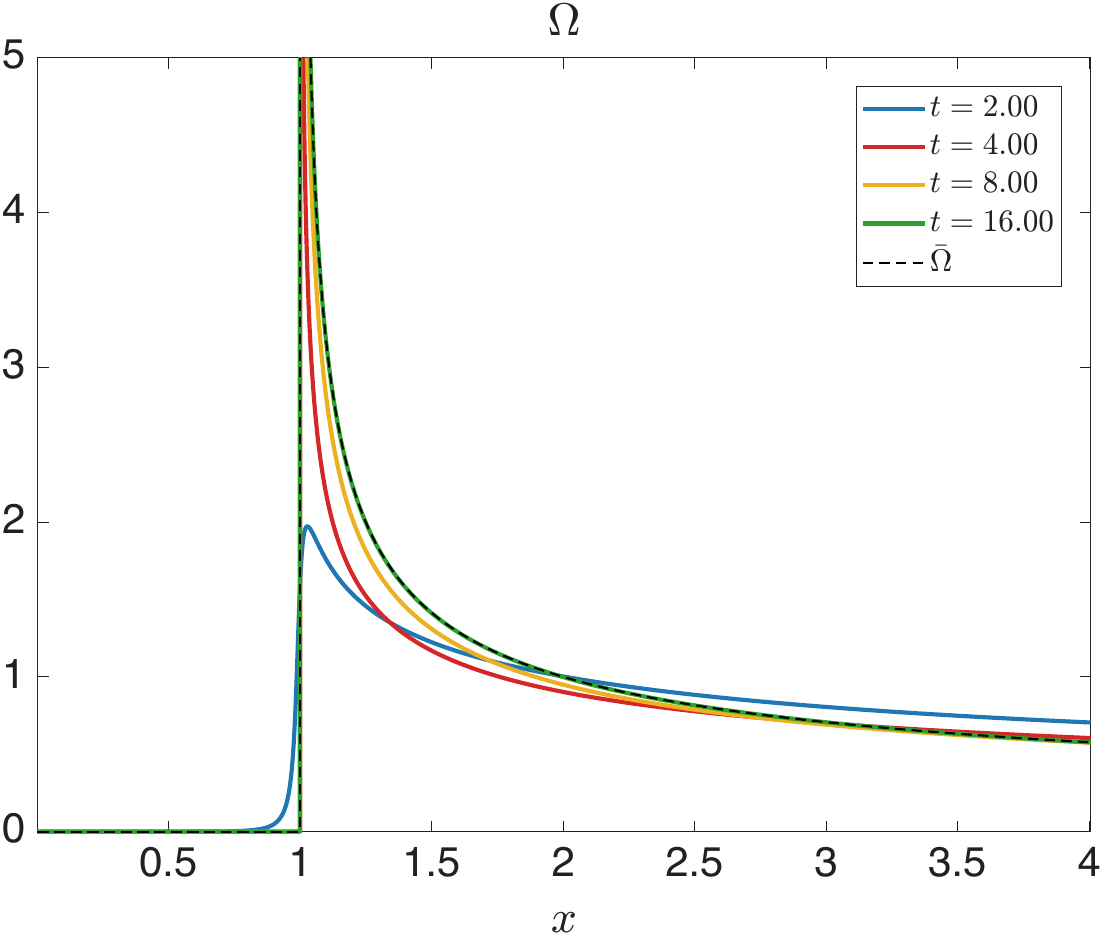}
        \includegraphics[width=0.32\textwidth]{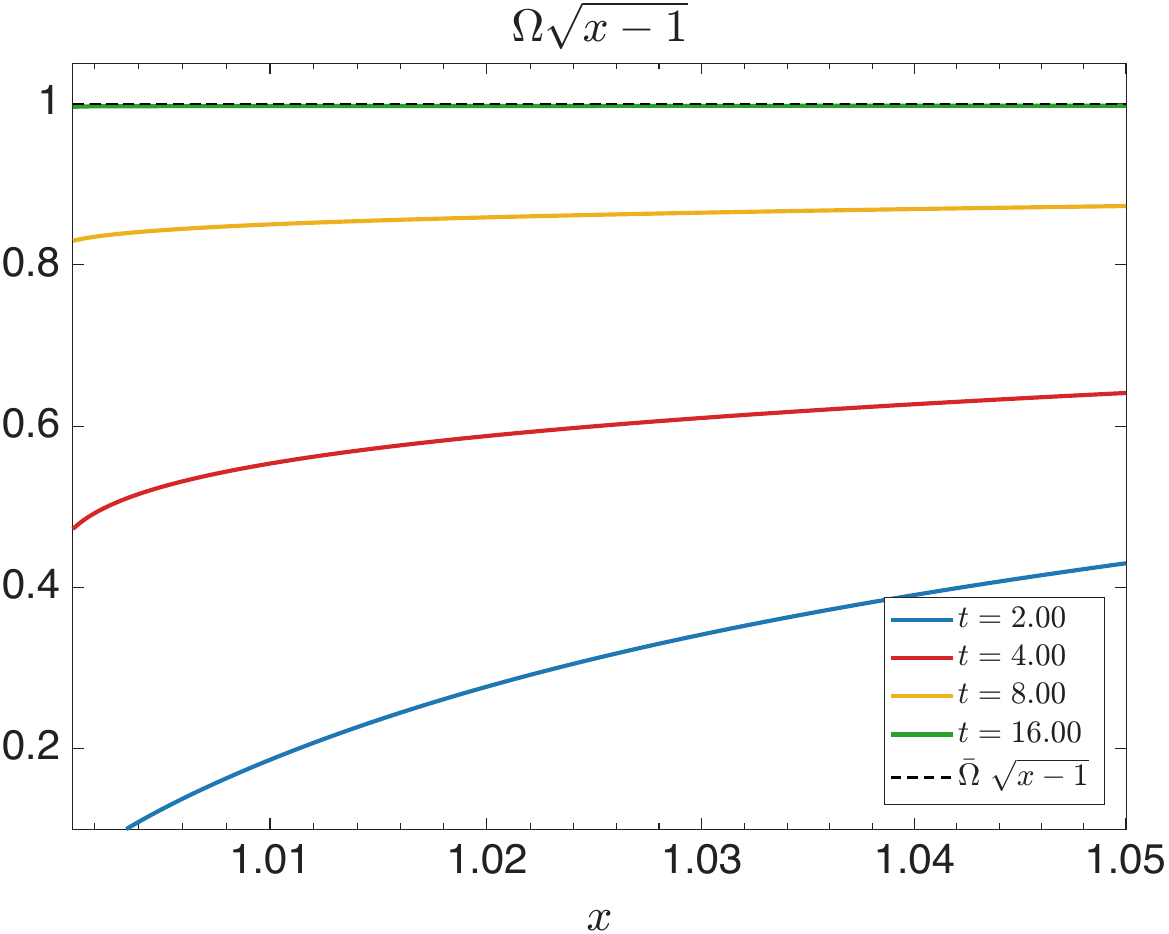}
        \includegraphics[width=0.32\textwidth]{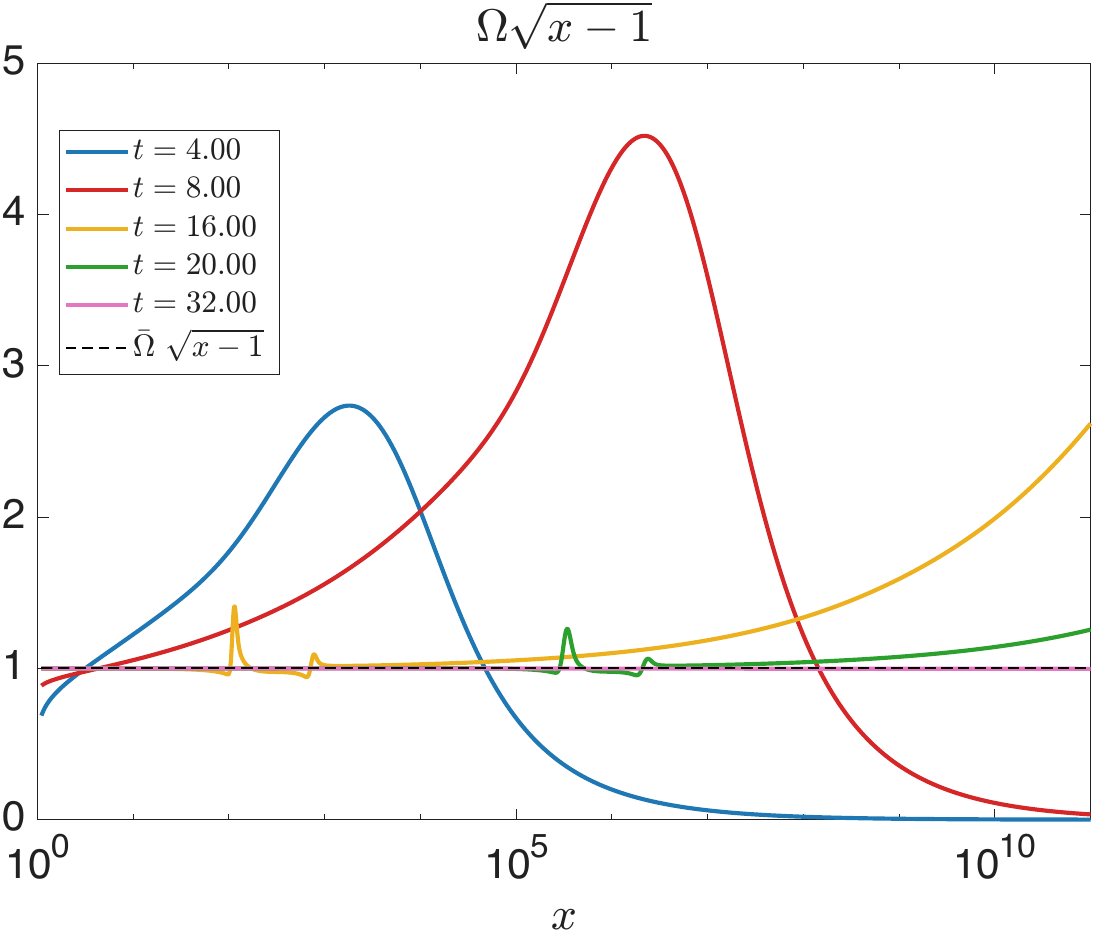}
    \caption[Evolution of spatial profiles of $\Omega$ in different regions (one-sided case, Scenario 1, computed on a fixed mesh).]{Evolution of spatial profiles of $\Omega$ in different regions (one-sided case, Scenario 1, computed on a fixed mesh). Left: $\Omega$ in the $O(1)$ region. Middle: $\Omega\sqrt{x-1}$ near the singularity at $x=1$. Right: $\Omega\sqrt{x-1}$ in the far field. The black dashed lines represent the steady state solution $\bar{\Omega}=\mtx{1}_{\{x>1\}}/\sqrt{x-1}$. As observed, $\Omega$ eventually shows excellent agreement with $\bar{\Omega}$. }  
     \label{fig:HL_singleside_omega}
\end{figure}

\begin{figure}[!htbp]
\centering
        \includegraphics[width=0.4\textwidth]{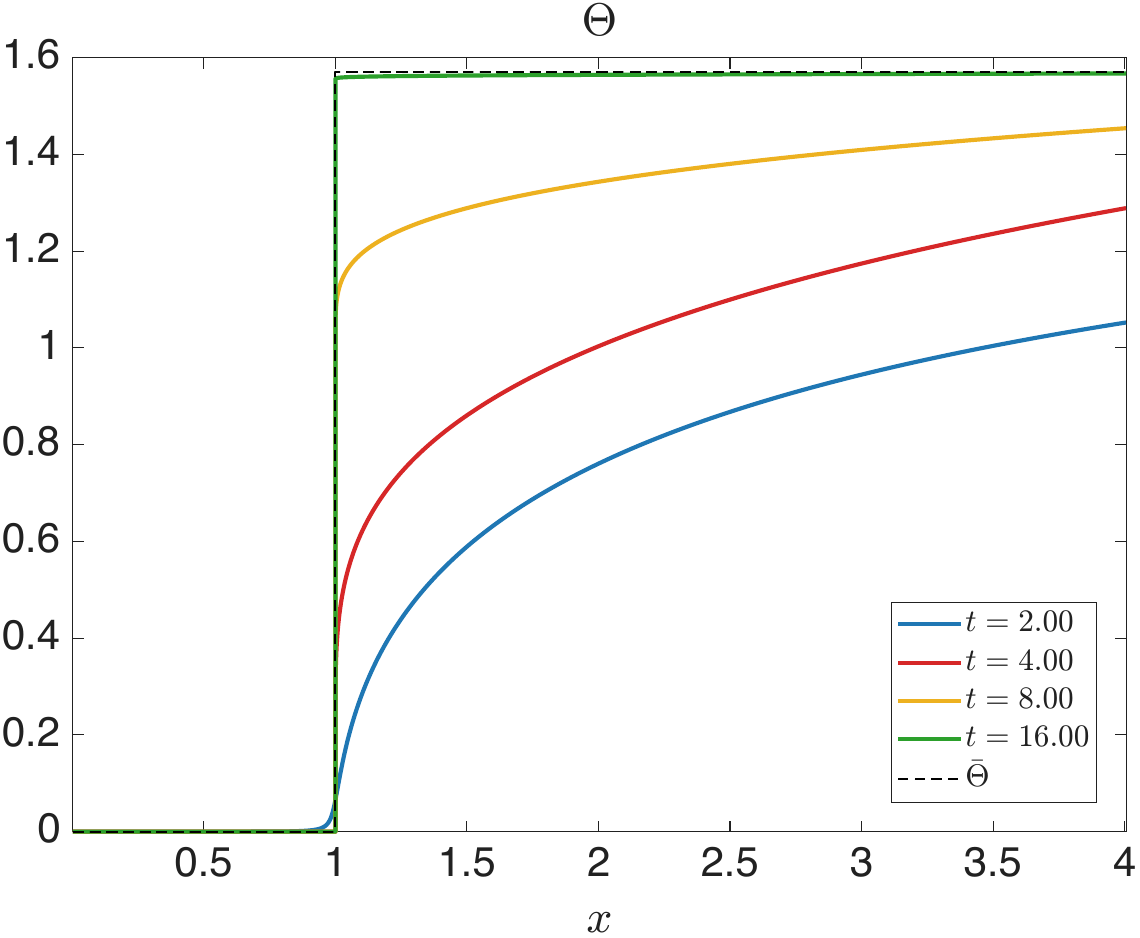}
        \includegraphics[width=0.4\textwidth]{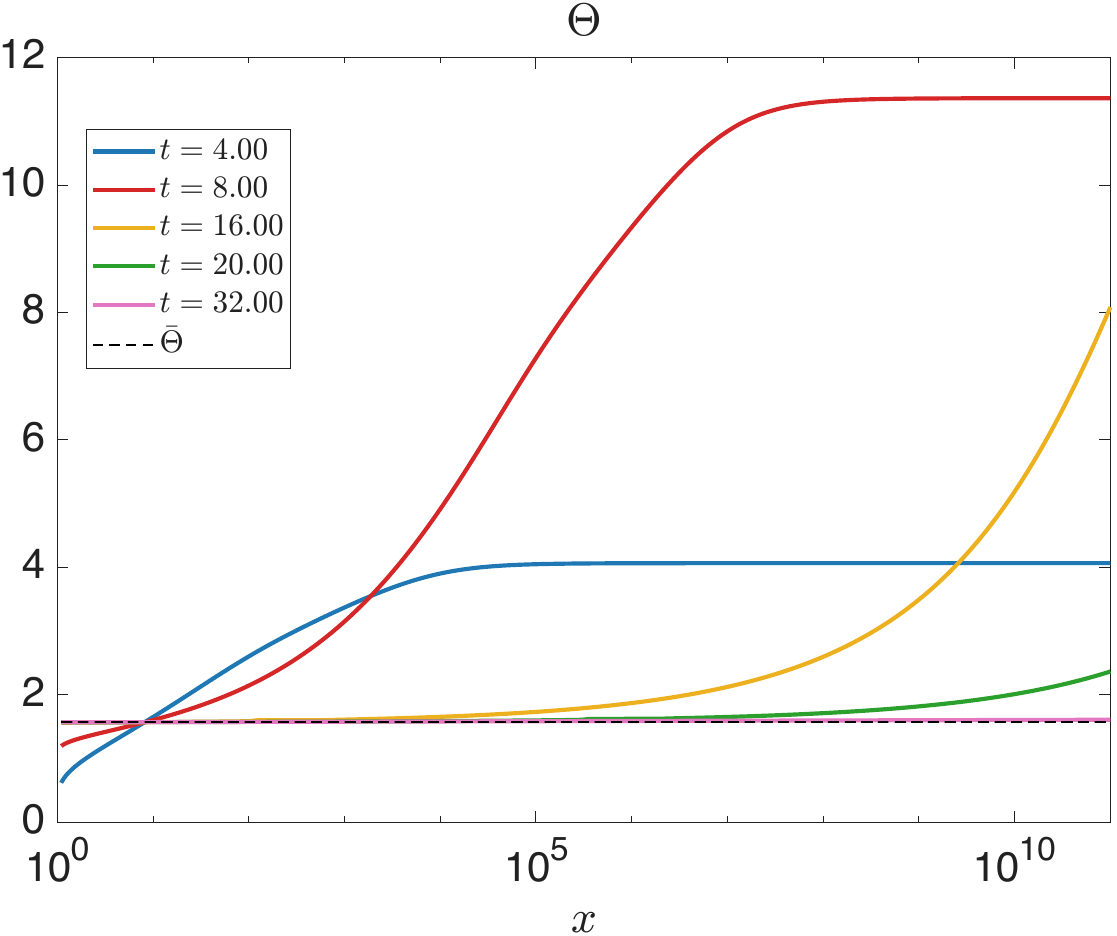}
    \caption[Evolution of spatial profiles of $\Theta$ in different regions (one-sided case, Scenario 1, computed on a fixed mesh).]{Evolution of spatial profiles of $\Theta$ in different regions (one-sided case, Scenario 1, computed on a fixed mesh). Left: The $O(1)$ region. Right: The far field. The steady state solution $\bar{\Theta}=\pi \mtx{1}_{\{x>1\}}/2$ is marked in black dashed lines. As shown, $\Theta$ closely matches $\bar{\Theta}$ at later times.}
      \label{fig:HL_singleside_theta}
\end{figure}

\begin{figure}[!htbp]
\centering
        \includegraphics[width=0.4\textwidth]{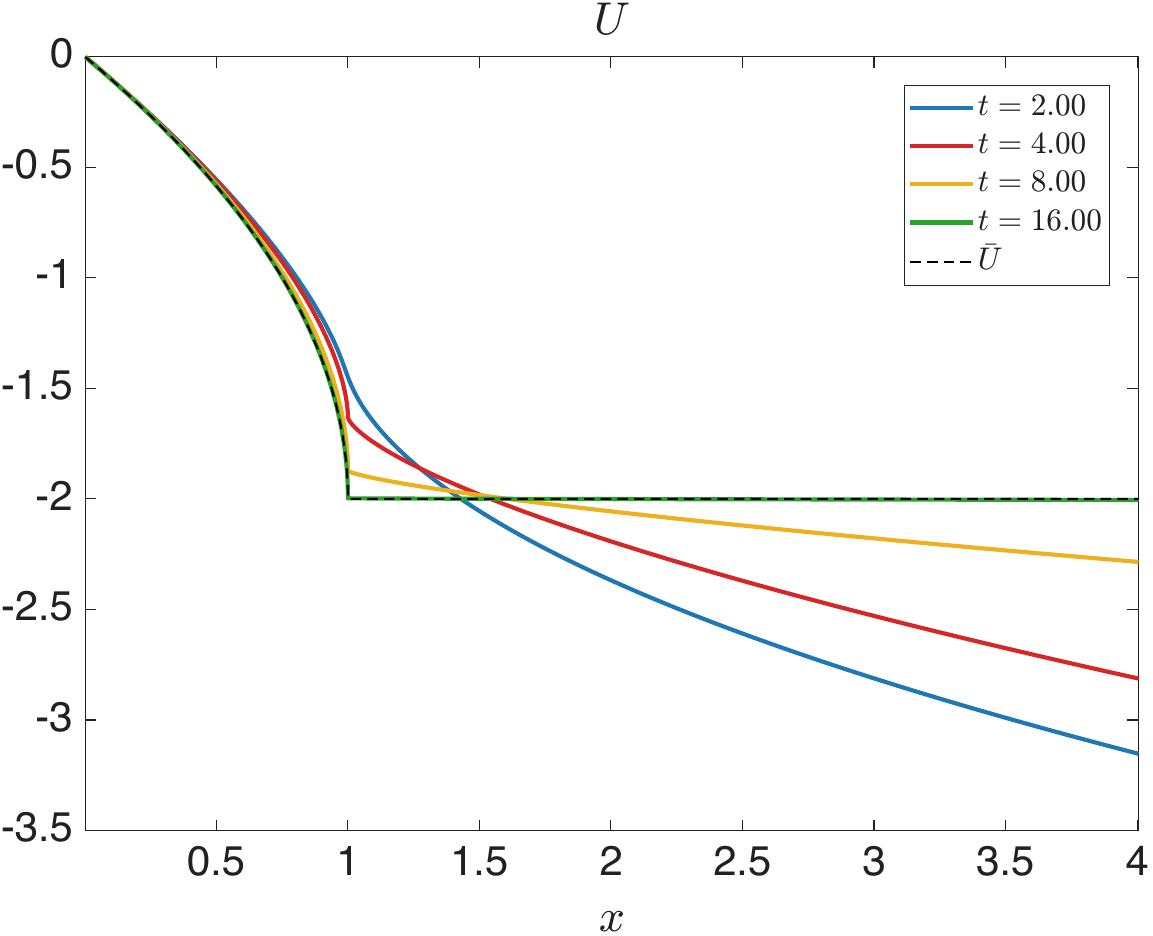}
        \includegraphics[width=0.4\textwidth]{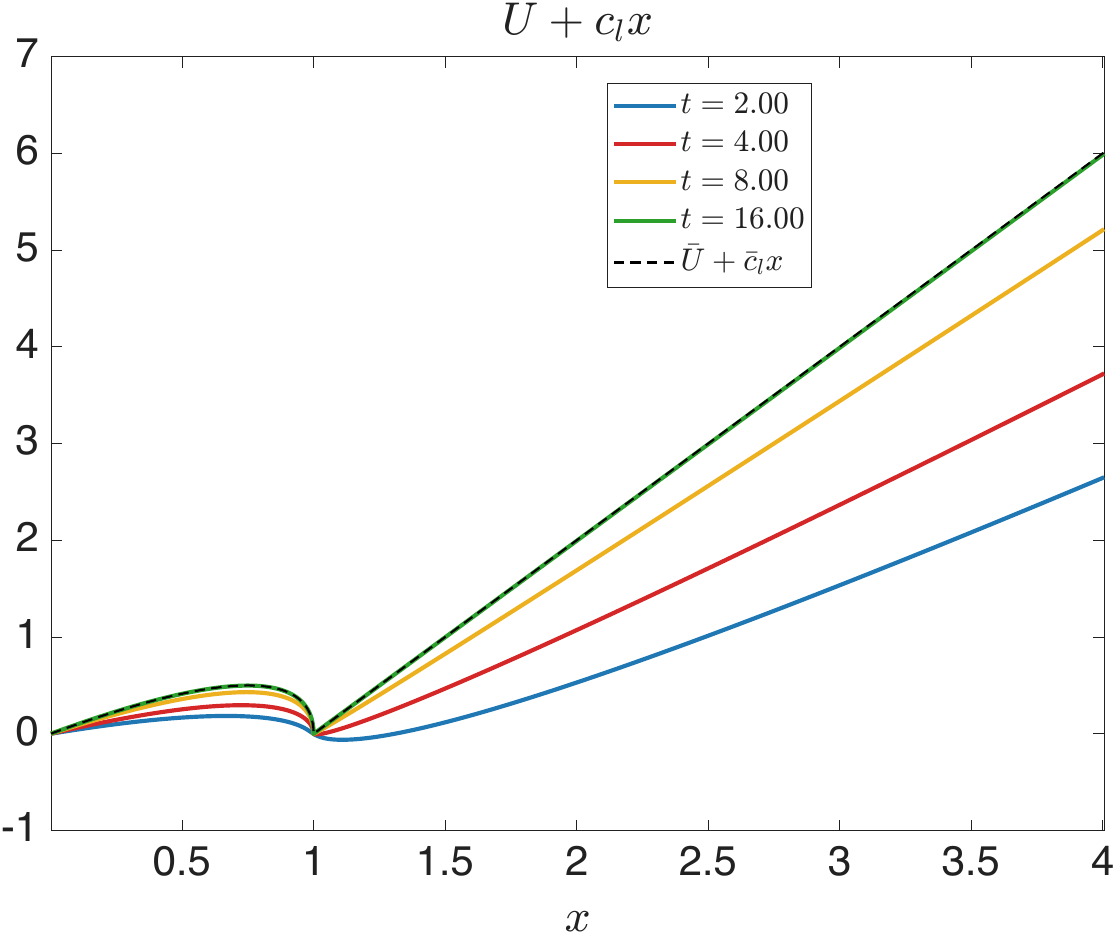}
    \caption[Evolution of spatial profiles of $U$ (left) and $U+c_l x$ (right) in the one-sided case computed on a fixed mesh in Scenario 1. ]{Evolution of spatial profiles of $U$ (left) and $U+c_l x$ (right) in the one-sided case computed on a fixed mesh in Scenario 1. The black dashed lines denote the steady state solutions $\bar{U}=2\sqrt{1-x}\mtx{1}_{\{x<1\}}-2$ and $\bar{U}+ \bar{c}_l x=2\sqrt{1-x}\mtx{1}_{\{x<1\}}+2x-2$. As illustrated, $U$ and $U+c_l x$ approach $\bar{U}$ and $\bar{U}+ \bar{c}_l x$, respectively.}
      \label{fig:HL_singleside_u}
\end{figure}

\begin{figure}[!htbp]
\centering
        \includegraphics[width=0.32\textwidth]{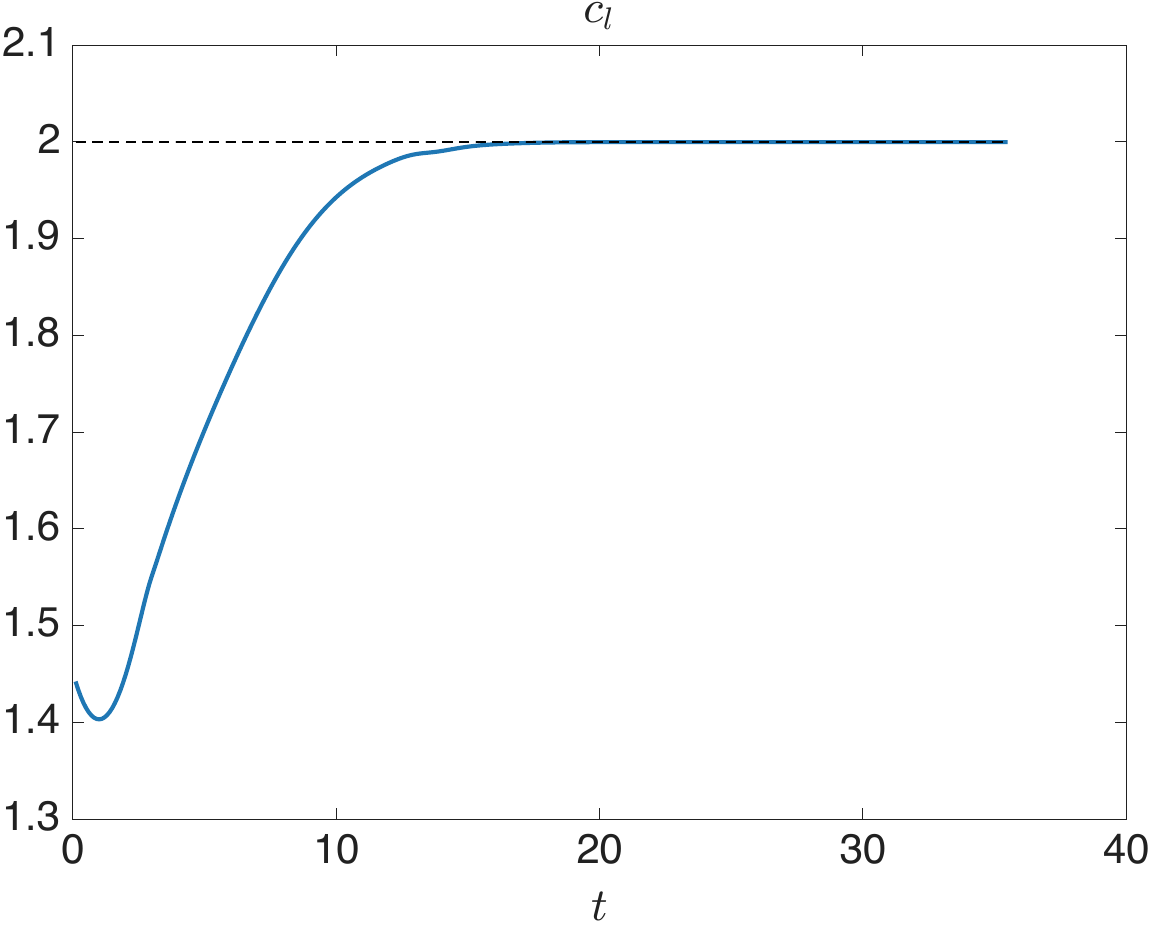}
        \includegraphics[width=0.32\textwidth]{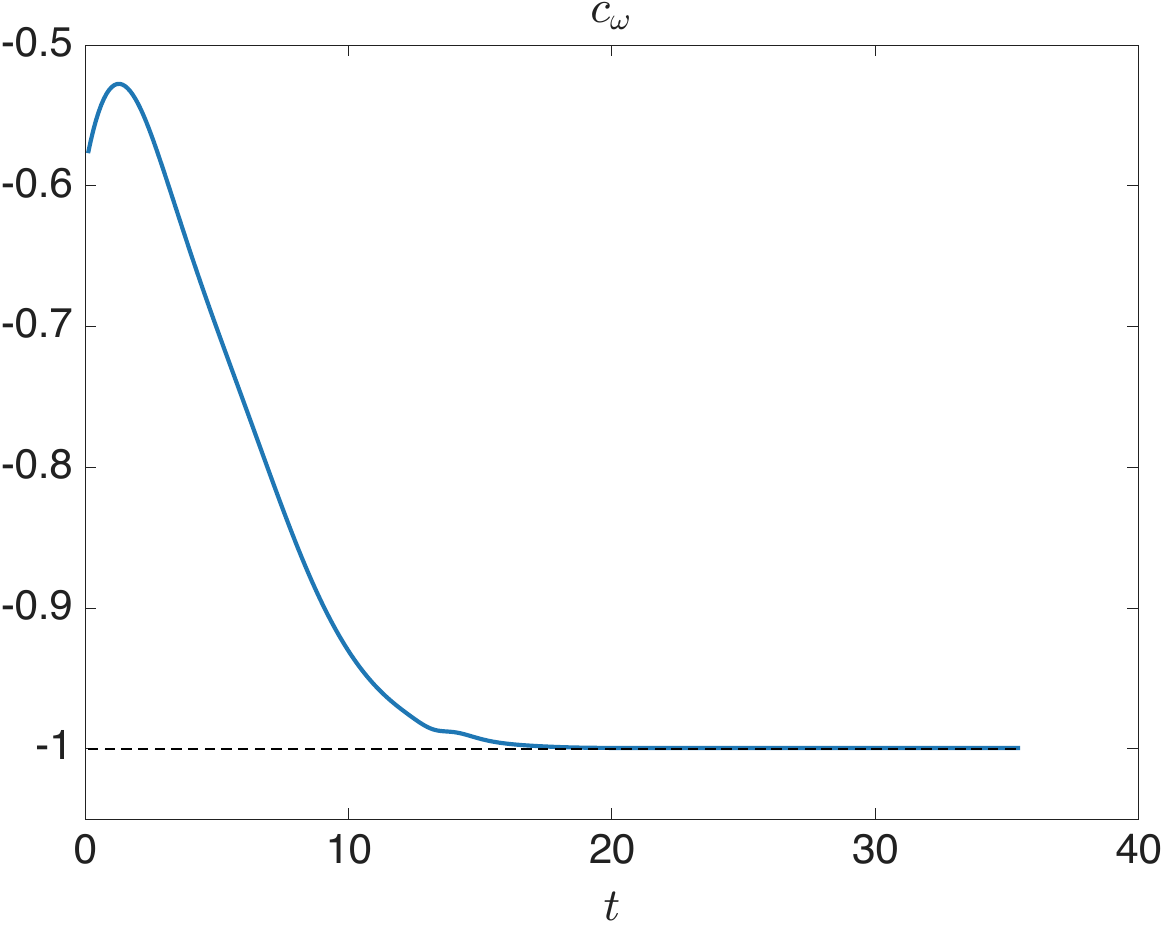}
        \includegraphics[width=0.32\textwidth]{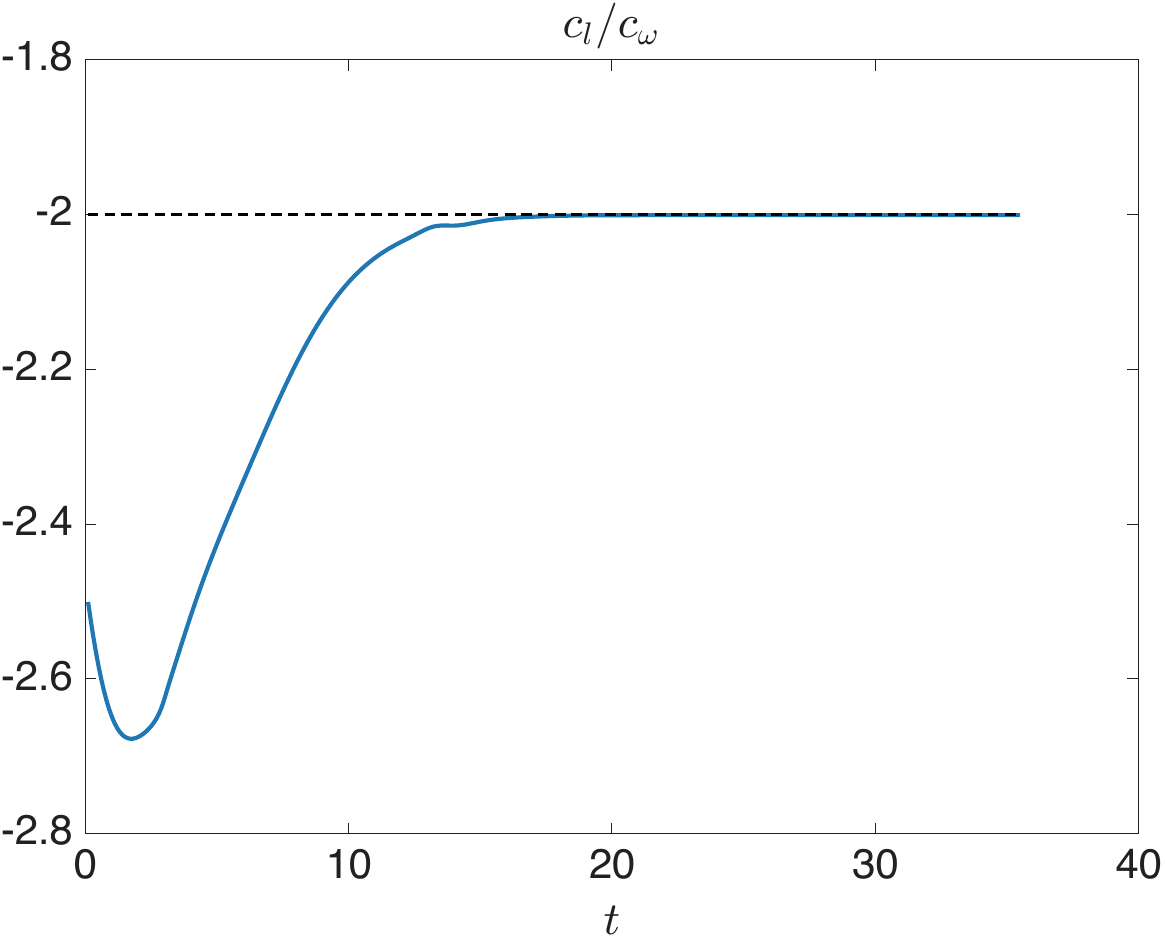}
    \caption[Time evolution of $c_l$, $c_{\om}$ and $c_l/c_{\om}$ in the one-sided case computed on a fixed mesh in Scenario 1.]{Time evolution of $c_l$ (left), $c_{\om}$ (middle) and $c_l/c_{\om}$ (right) in the one-sided case computed on a fixed mesh in Scenario 1. The black dashed lines represent the reference values $(\bar{c}_l,\bar{c}_{\om},\bar{c}_l/\bar{c}_{\om})=(2,-1,-2)$, respectively.}  
     \label{fig:HL_singleside_clcomega}
\end{figure}

\section{Numerical Results of the HL Model: Scenario 2}\label{sec:HL_scenario2}
Motivated by the Stage 1 blowup observed in Section \ref{sec:HL_scenario1}, in this section we investigate this phenomenon using the modified dynamic rescaling formulation \eqref{eqt:dynamic_rescaling_of_HLscenario2}.

Similar to the previous numerical simulations, instead of studying the evolution of $\Theta$ directly, we prefer to work with quantities about $\Theta$ that exhibit suitable decay in the far field. We therefore consider the change of variable $V:=\Theta_x$ and rewrite \eqref{eqt:dynamic_rescaling_of_HLscenario2} as 
 \begin{equation}\label{eqt:dynamic_rescaling_of_HL_scenario2_numerical}
    \begin{aligned}
    &\Omega_t+(U+c_lx+c_r)\Omega_x=c_{\om}\Omega+V,\\
    &V_t+(U+c_lx+c_r)V_x=(2c_{\omega}-U_x)V,\\
    & U_x=\mtx{H}(\Omega),\quad U(0)=0.
    \end{aligned}
\end{equation}

Again, we need to determine the values of $c_l$, $c_{\omega}$ and $c_r$ by imposing 3 normalization conditions. More specifically, we choose $c_l$, $c_{\omega}$ and $c_r$ to enforce that 
\[ \partial_t\Omega(0,t)=\partial_t\Omega_x(0,t)=\partial_tV(0,t)\equiv 0.\] Correspondingly, $c_l$, $c_{\omega}$ and $c_r$ are obtained by solving the following linear equations:
\begin{equation}\label{eqt:HLscenario2_normalization}
    \begin{cases} \Omega_x(0)c_r -\Omega(0)c_{\omega}=V(0),\\
                V_x(0)c_r-2V(0)c_{\omega}=-U_x(0)V(0),\\
                \Omega_{xx}(0)c_r-\Omega_x(0)c_{\omega}+\Omega_x(0)c_l=V_x(0)-U_x(0)\Omega_x(0).
                \end{cases}
    \end{equation}
We perform numerical simulations of \eqref{eqt:dynamic_rescaling_of_HL_scenario2_numerical} imposing the normalization condition \eqref{eqt:HLscenario2_normalization} and terminate the iteration when the residuals satisfy \[\max\{\|\Omega_t\|_{L^{\infty}},\|V_t\|_{L^{\infty}}\}<10^{-6},\] 
where the $L^{\infty}$ norm is evaluated over all computational grid points.  

By the end of our simulation, the above stopping criterion is eventually satisfied, suggesting that the solutions of \eqref{eqt:dynamic_rescaling_of_HL_scenario2_numerical} converge to regular limiting profiles. The evolution of the spatial profiles of $\Omega$ and $\Theta_x$ over time is illustrated in Figure \ref{fig:HL_scenario2_profiles_evolution}. As observed, $\Omega$ and $\Theta_x$ approach non-symmetric regular profiles that remain strictly positive throughout the computational domain. Figure \ref{fig:HL_scenario2_parameters_evolution} tracks the convergence of the parameters $c_l$, $c_{\om}$, and $c_r$, as well as the ratio $c_l/c_{\om}$. The computed limiting value of $c_l/c_{\om}$ is $-2.5114$. Notably, this value is distinct from the ratio of approximately $-2.9987$ reported in \cite{chen2022asymptotically} for the odd-symmetric non-degenerate case. Thus, our results confirms the existence of a self-similar blowup solution of the HL model with non-symmetric regular profiles, which possesses a spatial contraction rate distinct from the existing literature.

\begin{figure}[!htbp]
\centering
        \includegraphics[width=0.4\textwidth]{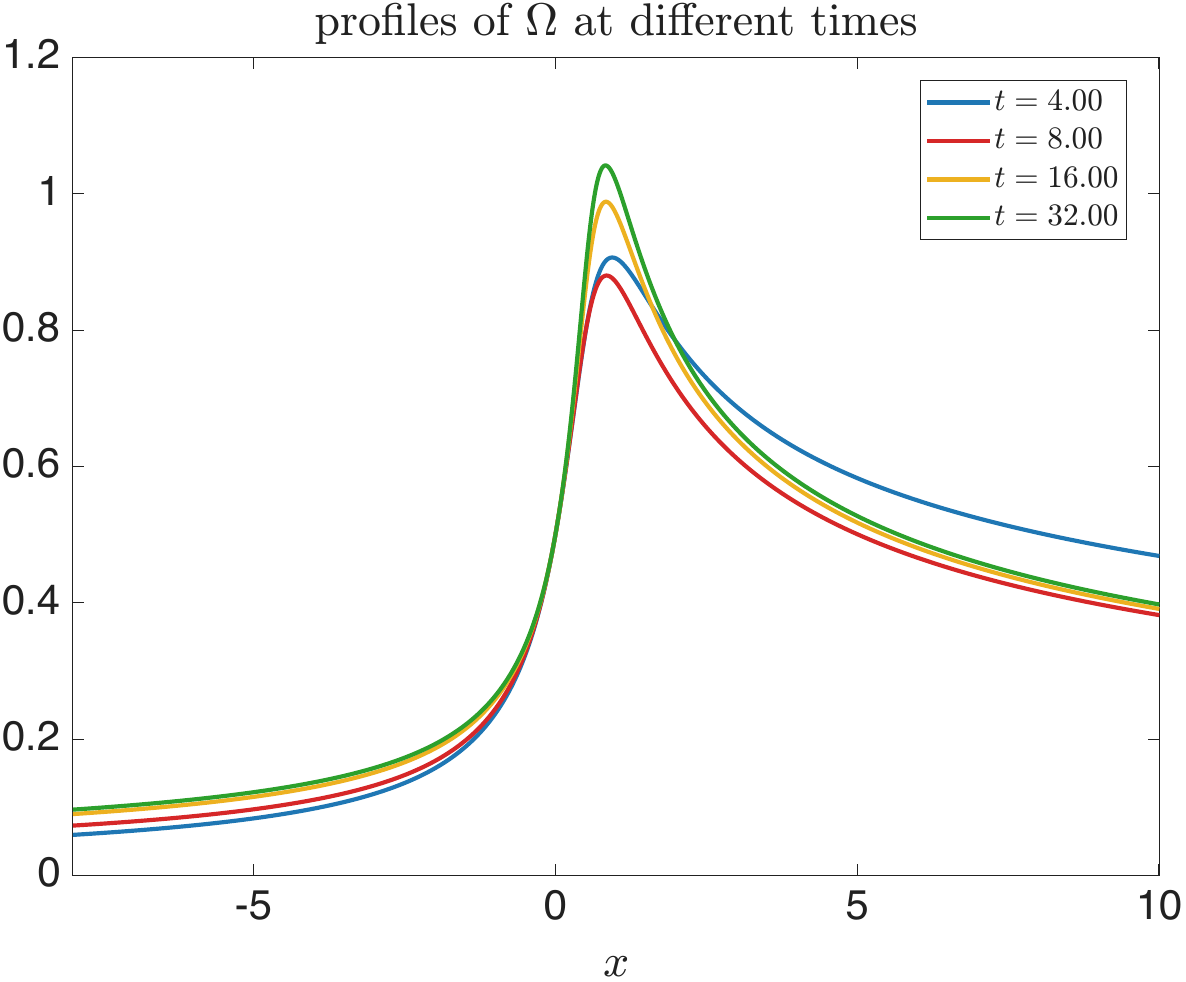}
        \includegraphics[width=0.4\textwidth]{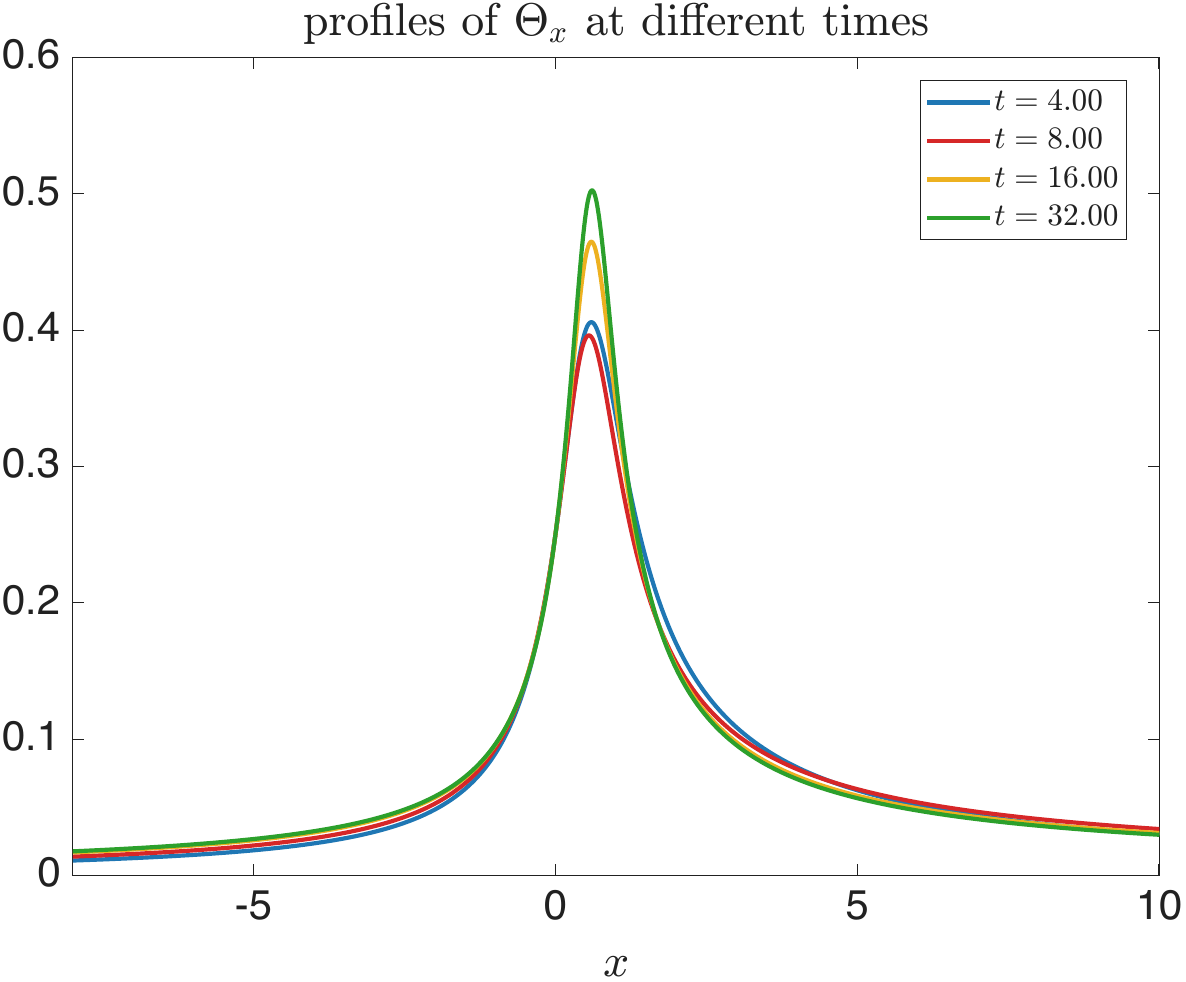}
    \caption[Evolution of spatial profiles of $\Omega$ and $\Theta_x$ in Scenario 2.]{Evolution of spatial profiles of $\Omega$ (left) and $\Theta_x$ (right) in Scenario 2. As illustrated, $\Omega$ and $\Theta_x$ eventually approach non-symmetric regular limiting profiles.}  
      \label{fig:HL_scenario2_profiles_evolution}
\end{figure}

\begin{figure}[!htbp]
\centering
        \includegraphics[width=0.38\textwidth]{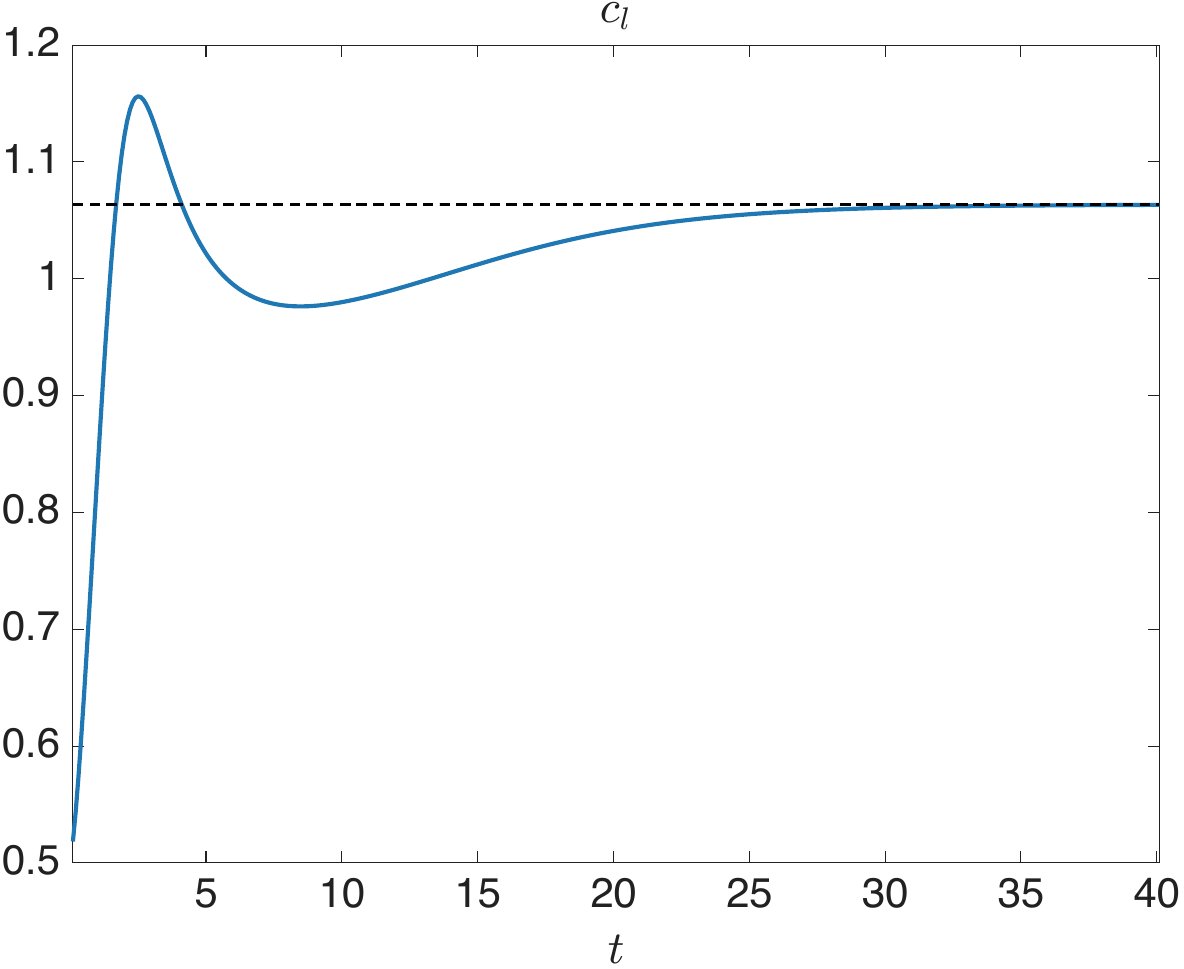}
        \includegraphics[width=0.38\textwidth]{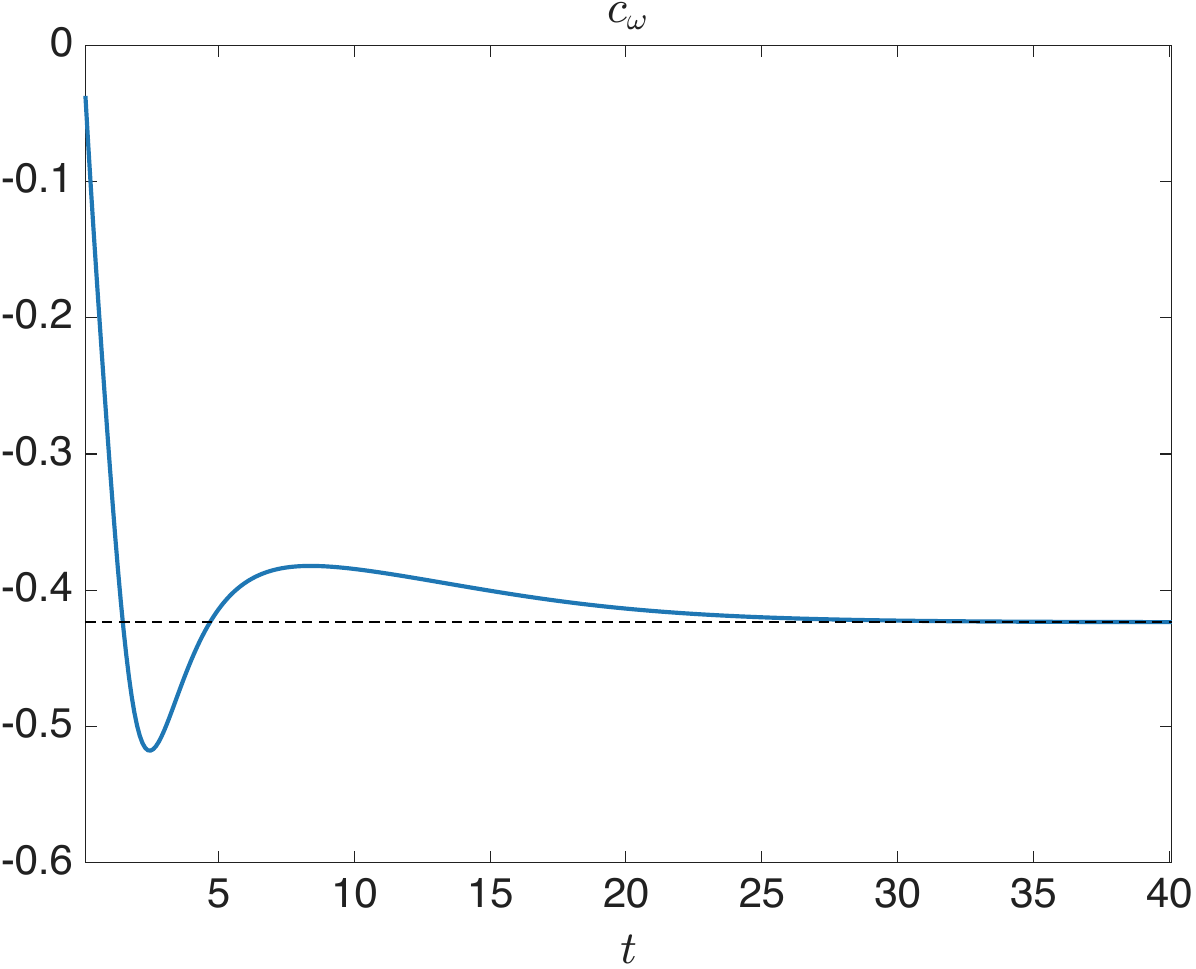}
         \includegraphics[width=0.38\textwidth]{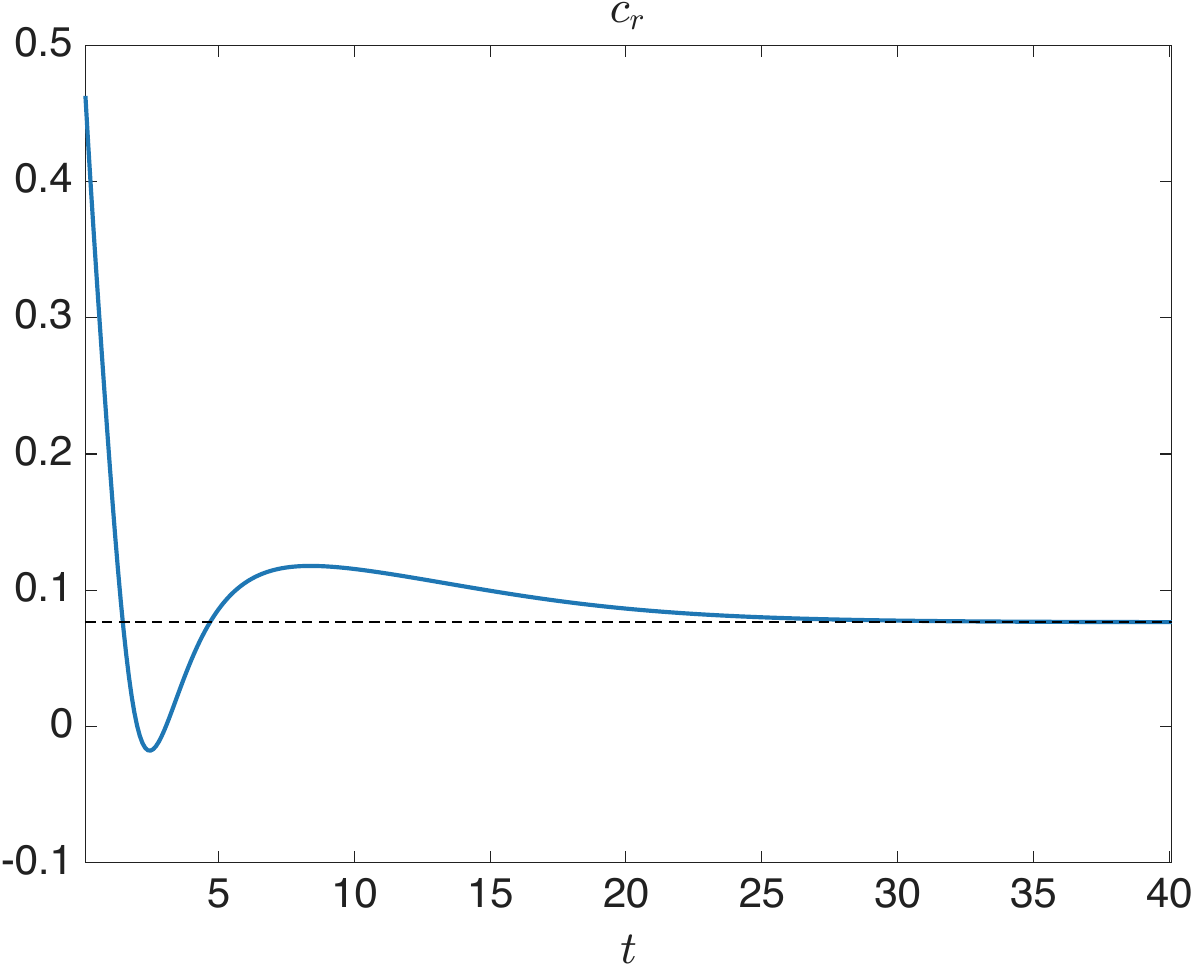}
        \includegraphics[width=0.38\textwidth]{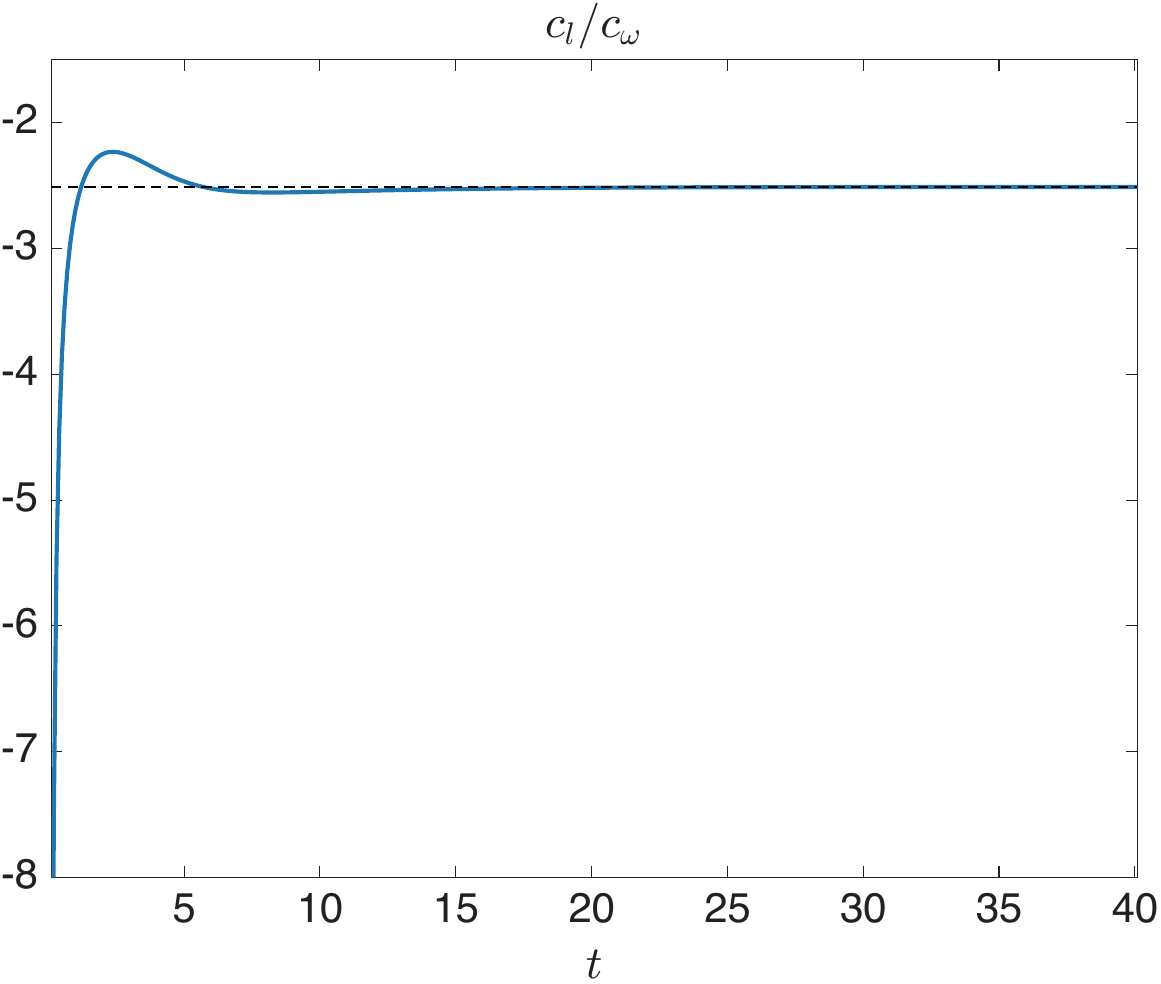}
    \caption[Evolution of $c_l$, $c_{\om}$, $c_r$ and $c_l/c_{\om}$ in Scenario 2.]{Evolution of $c_l$ (left,top), $c_{\om}$ (right,top), $c_r$ (left,bottom) and $c_l/c_{\om}$ (right,bottom) in Scenario 2. The black dashed lines represent the computed limiting values $(c_l,c_{\om},c_r,c_l/c_{\om})\approx(1.0636,-0.4235,0.0765,-2.5114)$, respectively.} 
      \label{fig:HL_scenario2_parameters_evolution}
\end{figure}

Furthermore, we compare the inner self-similar profiles of $\Omega$ and $\Theta_x$  observed during the Stage 1 blowup of Scenario 1 presented in Section \ref{sec:HL_scenario1} with the limiting profiles obtained from the numerical simulations of \eqref{eqt:dynamic_rescaling_of_HL_scenario2_numerical}. As illustrated in Figure \ref{fig:HL_scenario2_profiles_comparison}, after appropriate rescaling, the inner profiles of Scenario 1 exhibit excellent agreement with those of Scenario 2. This strongly suggests that the modified dynamic rescaling equations \eqref{eqt:dynamic_rescaling_of_HL_scenario2_numerical} indeed capture the intrinsic mechanism governing the Stage 1 self-similar blowup described in Section \ref{sec:HL_scenario1}.

\begin{figure}[!htbp]
\centering
        \includegraphics[width=0.4\textwidth]{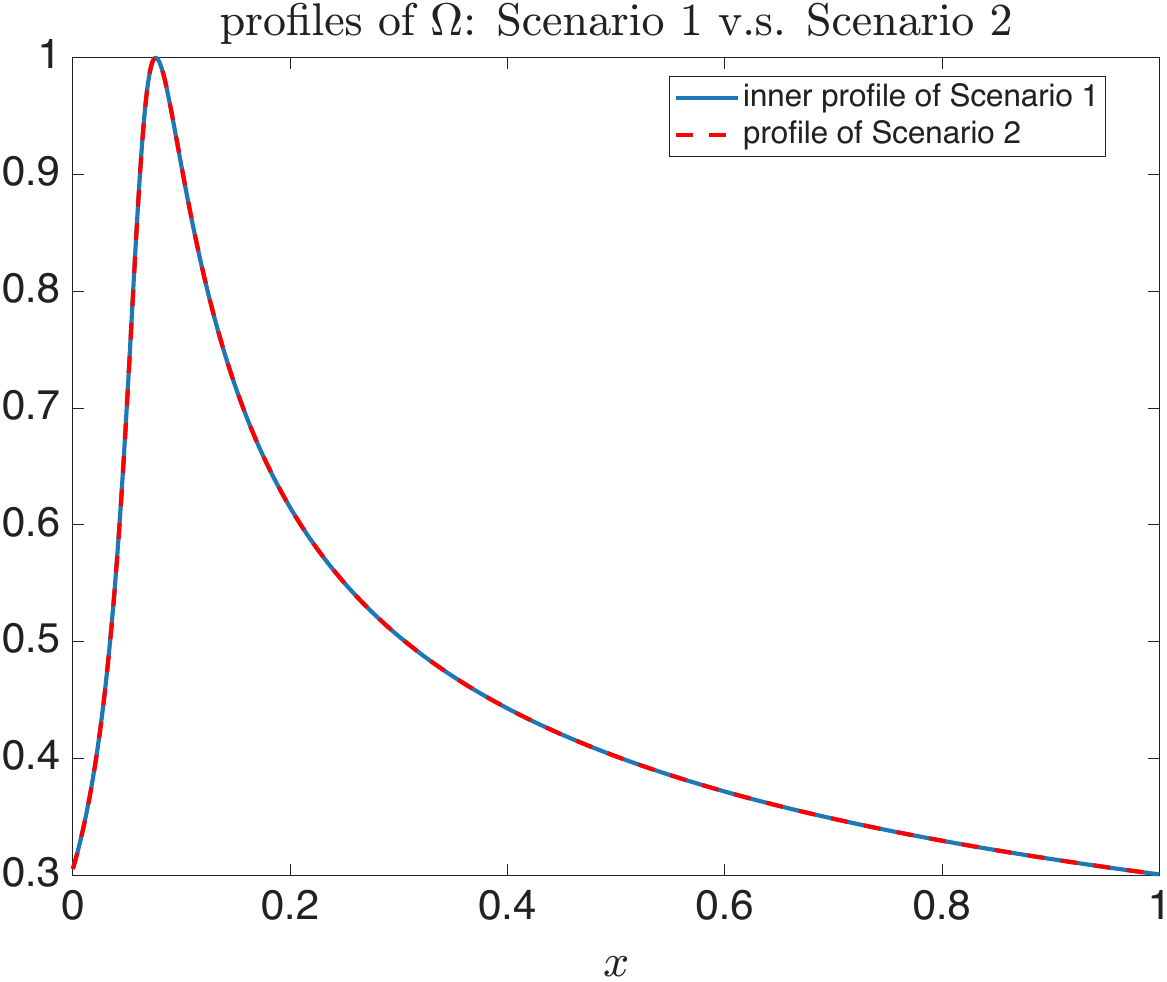}
        \includegraphics[width=0.4\textwidth]{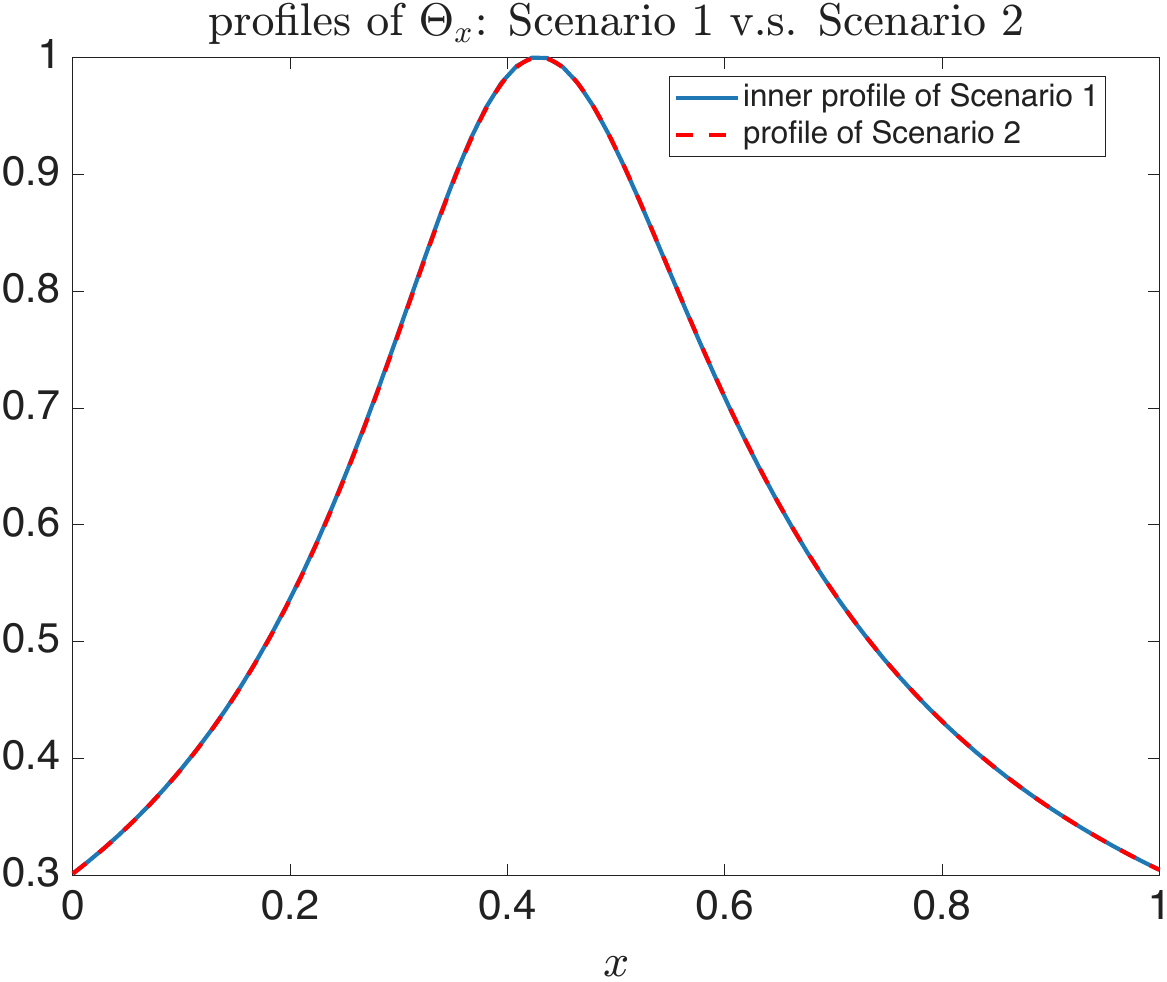}
    \caption[Comparison between the inner profile in the odd symmetry case in Scenario 1 at $t=4.3949$ and the limiting profile in Scenario 2 for the 1D HL model.]{Comparison between the inner profile in the odd symmetry case in Scenario 1 at $t=4.3949$ (blue solid line) and the limiting profile in Scenario 2 (red dashed line) for the 1D HL model. Left figure: $\Omega$; Right figure: $\Theta_x$. The profiles are compared after appropriate rescaling. }  
      \label{fig:HL_scenario2_profiles_comparison}
\end{figure}

\section{Singular Steady Solutions of The HL Model}\label{sec:HL_steady_state}
In this section, we present a complete version of Theorem \ref{thm:HL_weak_steady_state_simplified_version}, which gives singular steady solutions to the HL model \eqref{eqt:1Dhouluo} and its dynamic rescaling counterpart \eqref{eqt:dynamic_rescaling_of_HL}. To start, it is not hard to check the following lemma through direct calculation.
\begin{lemma}
    Let $\bar{\om}:=\mtx{1}_{\{x>0\}}/\sqrt{x}$, then it holds that 
    \[\bar{u}(x):=\int_{\R}\bar{\om}(y)\ln \left| \frac{x-y}{y}\right|\idiff y=2\sqrt{-x}\mtx{1}_{\{x<0\}},  \quad \bar{u}_x(x)=\mtx{H}(\bar{\om})(x)=-\frac{\mtx{1}_{\{x<0\}} }{\sqrt{-x}}.\]
\end{lemma}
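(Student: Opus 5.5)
The plan is to compute $\bar u$ directly, separately for $x<0$ and $x>0$, by exploiting the homogeneity of $\bar\om$, and then to differentiate to obtain $\bar u_x$. One normalization point comes first. The representation of $u$ in the paper carries the factor $1/\pi$, and so does the Hilbert transform $\mtx{H}$. The raw integral without the prefactor turns out to equal $2\pi\sqrt{-x}\,\mtx{1}_{\{x<0\}}$. I will therefore read the identity for $\bar u$ with the same $\frac1\pi$ prefactor as in the formula for $u$ with reference point $x_0$, taking $x_0=0$. This is also the normalization under which $\bar u_x=\mtx{H}(\bar\om)$ holds, which I will check as a consistency test.

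\emph{Case $x=-a<0$.} Here $\ln|(x-y)/y|=\ln\bigl((a+y)/y\bigr)$, and the substitution $y=as$ gives
\[
\int_0^\infty y^{-1/2}\ln\frac{a+y}{y}\idiff y=\sqrt a\int_0^\infty s^{-1/2}\ln\Bigl(1+\frac1s\Bigr)\idiff s .
\]
I integrate by parts with $2\sqrt s$ as the antiderivative of $s^{-1/2}$. The boundary terms $2\sqrt s\ln(1+1/s)$ vanish at both ends: near $0$ because $\sqrt s\ln(1/s)\to0$, and near $\infty$ because the term behaves like $2s^{-1/2}$. What remains is
\[
\int_0^\infty \frac{2\sqrt s}{s(s+1)}\idiff s=2\int_0^\infty\frac{s^{-1/2}}{1+s}\idiff s=2\pi,
\]
using the substitution $s=t^2$, which turns the last integral into $2\int_0^\infty \frac{dt}{1+t^2}$. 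Hence $\frac1\pi\int=2\sqrt{-x}$ for $x<0$.

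\emph{Case $x>0$.} The substitution $y=xs$ gives $\sqrt x\int_0^\infty s^{-1/2}\ln|1-1/s|\idiff s$, and I must show this integral vanishes. I will integrate by parts on $(0,1-\epsilon)$ and on $(1+\epsilon,\infty)$ separately. At $s=1\pm\epsilon$ the boundary terms $2\sqrt s\ln|1-1/s|$ are asymptotically $2\ln\epsilon$ from either side, so they cancel as $\epsilon\to0$. The endpoints $0$ and $\infty$ contribute nothing, exactly as in the first case. The remaining integral is
\[
-\,\mathrm{P.V.}\int_0^\infty \frac{2\sqrt s}{s(s-1)}\idiff s=-4\,\mathrm{P.V.}\int_0^\infty\frac{dt}{t^2-1},
\]
again with $s=t^2$. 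Using the partial fractions $\frac{1}{t^2-1}=\frac12\bigl(\frac1{t-1}-\frac1{t+1}\bigr)$, the principal value is $\frac12\lim\ln\bigl|\frac{t-1}{t+1}\bigr|$ evaluated between the endpoints, and this equals $0$. This case is the only delicate step: the logarithmic singularity at $s=1$ must be handled carefully, and the cancellation of the $\ln\epsilon$ boundary terms is exactly what makes the principal-value computation legitimate. Continuity of $\bar u$ at $x=0$, where $\bar u(0)=0$, follows from the formula on each side.

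For the derivative, $\bar u=2\sqrt{-x}\,\mtx{1}_{\{x<0\}}$ is $C^1$ away from $0$ with $\bar u_x=-\mtx{1}_{\{x<0\}}/\sqrt{-x}$. Since $\bar u$ is continuous with an integrable derivative, this is also its distributional derivative on $\R$. To identify it with $\mtx{H}(\bar\om)$, I compute the transform directly. For $x=-a<0$,
\[
\frac1\pi\int_0^\infty\frac{y^{-1/2}}{x-y}\idiff y=-\frac1{\pi\sqrt a}\int_0^\infty\frac{s^{-1/2}}{1+s}\idiff s=-\frac1{\sqrt a}.
\]
For $x>0$, the same substitution reduces $\frac1\pi\,\mathrm{P.V.}\int_0^\infty\frac{y^{-1/2}}{x-y}\idiff y$ to a multiple of the vanishing principal value computed above, so the transform is $0$. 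This confirms $\bar u_x=\mtx{H}(\bar\om)$, in agreement with the general relation $u_x=\mtx{H}(\om)$.
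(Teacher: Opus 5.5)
Your proof is correct and is the direct calculation the paper invokes without writing out; your treatment of the principal value at $s=1$, where the $\ln\epsilon$ boundary terms cancel, and of the endpoint terms is sound. Your normalization remark is also right: as literally displayed, the integral equals $2\pi\sqrt{-x}\,\mtx{1}_{\{x<0\}}$, so the intended statement needs the factor $\frac{1}{\pi}$ from the paper's general formula for $u$. With that factor, $\bar u_x=\mtx{H}(\bar\om)$ holds, and the result is consistent with $\bar U(1)=-2$ and $\bar U=2\sqrt{1-x}\,\mtx{1}_{\{x<1\}}-2$ used later in the paper.
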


Before presenting the main theorem, we need a pointwise estimate of the Hilbert transform near the singularity.
\begin{lemma}\label{lem:pointwise_estimate_of_H}
    Let $f\in C^1(\R\backslash[-\varepsilon,\varepsilon])$ for any $\varepsilon>0$. Suppose $|f(x)|\leq|x|^{-1/2}$ for any $x\in \R$ and $|f_x(x)|\leq C|x|^{-3/2}$ for any $x\in [-\varepsilon_0,\varepsilon_0]$, where $\varepsilon_0>0$ is a given constant. Then for any $x\in [-\varepsilon_0/2,\varepsilon_0/2]$, it holds that 
    \[|\mtx{H}(f)(x)|\leq \frac{C_1}{\sqrt{|x|}},\]
    where $C_1$ is a constant that only depends on $C$.
\end{lemma}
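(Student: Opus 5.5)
Fix $x\in[-\varepsilon_0/2,\varepsilon_0/2]$ with $x\neq 0$ (at $x=0$ the bound is vacuous). The plan is to split the principal value integral at the natural scale $|x|/2$ around the singular point $y=x$:
\[
\pi\mtx{H}(f)(x)=\mathrm{P.V.}\int_{|y-x|<|x|/2}\frac{f(y)}{x-y}\idiff y+\int_{|x|/2\le |y-x|\le \varepsilon_0/2}\frac{f(y)}{x-y}\idiff y+\int_{|y-x|>\varepsilon_0/2}\frac{f(y)}{x-y}\idiff y=:I_1+I_2+I_3 .
\]

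\textbf{Term $I_1$.} Here $|y|\in[|x|/2,3|x|/2]$, so $y$ stays away from the origin and also lies in $[-\varepsilon_0,\varepsilon_0]$; hence $f$ is $C^1$ there. By oddness of the kernel on the symmetric interval, $I_1=\int_{|y-x|<|x|/2}\frac{f(y)-f(x)}{x-y}\idiff y$. The mean value theorem together with $|f_x|\le C|y|^{-3/2}\le C2^{3/2}|x|^{-3/2}$ bounds the integrand by $C'|x|^{-3/2}$. Integrating over an interval of length $|x|$ gives $|I_1|\le C'|x|^{-1/2}$.

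\textbf{Term $I_2$.} Bound crudely by $\int_{|x|/2\le|y-x|\le\varepsilon_0/2}\frac{|y|^{-1/2}}{|x-y|}\idiff y$. This requires one more split of the region:
\begin{itemize}
\item For $|y|\le 2|x|$, we have $|x-y|\ge|x|/2$, so this piece is at most $(2/|x|)\int_{|y|\le 2|x|}|y|^{-1/2}\idiff y\lesssim|x|^{-1/2}$.
\item For $|y|>2|x|$, we have $|x-y|\ge|y|/2$, so this piece is at most $2\int_{|y|>2|x|}|y|^{-3/2}\idiff y\lesssim|x|^{-1/2}$.
\end{itemize}
Both constants are absolute.

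\textbf{Term $I_3$.} In this region $|y-x|>\varepsilon_0/2$. The pieces with $|y|\gtrsim\varepsilon_0$ are again controlled by $\int|y|^{-3/2}$ as in $I_2$. The piece with $|y|$ small is bounded by $(2/\varepsilon_0)\int_{|y|\le\varepsilon_0}|y|^{-1/2}\idiff y\lesssim\varepsilon_0^{-1/2}$. Since $|x|\le\varepsilon_0/2$, we have $\varepsilon_0^{-1/2}\le|x|^{-1/2}$, so $|I_3|\lesssim|x|^{-1/2}$ with an absolute constant.

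Combining the three terms yields $|\mtx{H}(f)(x)|\le C_1|x|^{-1/2}$ with $C_1$ depending only on $C$ (via $I_1$), plus absolute constants.

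The only delicate step is $I_1$: the principal value cancellation must be exploited, and the derivative hypothesis is needed on the whole interval $[x/2,3x/2]$. This is exactly why the result is restricted to $|x|\le\varepsilon_0/2$, since then that interval lies inside $[-\varepsilon_0,\varepsilon_0]\setminus\{0\}$. Everything else consists of routine size estimates.
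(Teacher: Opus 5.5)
Your proof is correct and uses the same scheme as the paper: principal-value cancellation on an interval symmetric about $x$, controlled by the derivative bound, plus size estimates from $|f(y)|\le|y|^{-1/2}$ on the remainder. The paper instead takes the cancellation interval $(0,2x)$, which reaches the origin, and bounds the difference quotient there by $\frac{2C}{\sqrt{x}\sqrt{y}(\sqrt{x}+\sqrt{y})}\le\frac{2C}{x\sqrt{y}}$, which is integrable; your radius $|x|/2$ keeps $|f_x|\lesssim|x|^{-3/2}$ uniform at the cost of an extra annulus, and your separate term $I_3$ is unnecessary since the $I_2$ argument already covers all of $\{|y-x|\ge|x|/2\}$.
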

The proof of Lemma \ref{lem:pointwise_estimate_of_H} will be deferred to the end of this section. We are now ready to present the main theorem of this section. Note that the steady state equations of the HL model are given by 
\begin{equation}\label{eqt:steady_state_equation}
    \begin{aligned}
    &u\om_x=\theta_x,\\
    &u\theta_x=0,\\
    &u_x=\mtx{H}(\om), \quad u(0)=0.
    \end{aligned}
\end{equation}
In the follows, we are going to show that     
\[(\bar{\omega},\bar{\theta})=\left(\frac{\mtx{1}_{\{x>0\}}}{\sqrt{x}},\frac{\pi \mtx{1}_{\{x>0\}}}{2}\right)\] solves \eqref{eqt:steady_state_equation} weakly by demonstrating that 
\[\bar{\om}\mtx{H}(\bar{\om})=-\frac{\pi}{2}\delta(x),\]
where $\delta$ denotes the Dirac delta distribution centered at $x=0$. Since $\bar{\om}$ and $\mtx{H}(\bar{\om})$ are both singular at $x=0$, the above equation shall be understood in the sense of distributions. However, there is no general theory that guarantees the well-definedness of the product of two distributions.
To overcome this difficulty, we approximate $\bar{\om}$ by a family of smooth functions and establish the weak convergence through careful estimates.

\begin{theorem}\label{thm:HL_steady_state}
Suppose $\{\om_s\}_{s>0}$ is a family of smooth functions that satisfies:
    \begin{itemize}
        \item There exist $p\in(2,+\infty)$ and $q\in(1,2)$ such that for any $\varepsilon>0$, $\om_s\to \bar{\om}$ in $L^p(\R\backslash [-\varepsilon,\varepsilon])$ and $\om_s\to \bar{\om}$ in $L^q([-1,1])$ as $s\to +\infty$.
        \item  There exists an absolute constant $C$ and $\varepsilon_0>0$ such that $|\om_s(x)|\leq C|x|^{-1/2}$ for any $x\in \R$ and $|\partial_x\om_s(x)|\leq C|x|^{-3/2}$ for any $x\in [-\varepsilon_0,\varepsilon_0]$.
    \end{itemize}
     Then for any $\phi\in C_c^{\infty}(\R)$, it holds that 
    \begin{equation}\label{eqt:HL_steady_state_equation}
        \lim_{s\to +\infty} \int_{\R} \om_s \mtx{H} (\om_s) \phi \idiff x =-\frac{\pi}{2}\phi(0)=-\frac{\pi}{2} \la \delta,\phi \ra. 
    \end{equation}
    As a result, 
    \[(\bar{\omega},\bar{\theta})=\left(\frac{\mtx{1}_{\{x>0\}}}{\sqrt{x}},\frac{\pi \mtx{1}_{\{x>0\}}}{2}\right)\]
    solves \eqref{eqt:steady_state_equation} weakly in the sense that 
\begin{equation}\label{eqt:HL_steady_state_equation1}
    \lim_{s\to +\infty} \la u_s\om_s,\phi \ra =\frac{\pi}{2}\phi(0)=\la \bar{\theta}_x,\phi \ra,
    \end{equation}
 \begin{equation}\label{eqt:HL_steady_state_equation2}
      \lim_{s\to +\infty} \la u_s\bar{\theta}_x,\phi \ra =0.
    \end{equation}
    where $\la f,g \ra:= \int_{\R} f(x)g(x)\idiff x$, and $u_s$ is determined by
    \[\partial_xu_s=\mtx{H}(\om_s),\quad  u_s(x)=\int_{\R} \om_s(y)\ln\left|\frac{x-y}{y}  \right|\idiff y.\]
\end{theorem}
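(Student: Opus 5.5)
The plan is to avoid computing $\om_s\mtx{H}(\om_s)$ pointwise. Instead I would rewrite $\int_{\R}\om_s\mtx{H}(\om_s)\phi\idiff x$ as a bilinear form with a bounded kernel, pass to the limit inside that form, and evaluate the limit for $\bar{\om}$ with a scaling argument plus one explicit computation.

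\textbf{Step 1 (symmetrization).} For nice $f$, antisymmetry of $1/(x-y)$ gives
\[
\int_{\R} f\,\mtx{H}(f)\,\phi\idiff x=\frac{1}{2\pi}\int_{\R}\int_{\R} f(x)f(y)K_\phi(x,y)\idiff x\idiff y,\qquad K_\phi(x,y)=\frac{\phi(x)-\phi(y)}{x-y}.
\]
The kernel $K_\phi$ is smooth and bounded. It vanishes when both $x,y\notin\operatorname{supp}\phi$, and it is $O(1/(1+|y|))$ when $x$ stays in a compact set. Applying this to each smooth $\om_s$ removes the principal value.

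\textbf{Step 2 (passing to the limit).} Set $B(f,g)=\frac{1}{2\pi}\iint f(x)g(y)K_\phi$ and write
\[
B(\om_s,\om_s)-B(\bar\om,\bar\om)=B(\om_s-\bar\om,\om_s)+B(\bar\om,\om_s-\bar\om).
\]
I would split $\R^2$ into three regions: $|x|,|y|\le 1$, one variable far, and both far (where $K_\phi=0$ once $\operatorname{supp}\phi\subset[-R,R]$).
\begin{itemize}
\item On the near region, $K_\phi$ is bounded and the uniform bound $|\om_s|\le C|x|^{-1/2}$ places the partner factor in $L^{q'}([-1,1])$ for $q$ close to $2$. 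So the $L^q([-1,1])$ convergence suffices by Hölder. If $q'$ is too large, I would first cut out $|x|<\varepsilon$, which contributes $O(\varepsilon^{1/2})$ uniformly in $s$, and then use $L^p$ convergence on the rest.
\item On the mixed region, $|K_\phi|\lesssim (1+|y|)^{-1}$ lies in $L^{p'}$ for every $p'>1$, and the $x$-factor is integrable. Hence the $L^p(\R\backslash[-\varepsilon,\varepsilon])$ convergence with $p>2$ suffices.
\end{itemize}
Together these show $B(\om_s,\om_s)\to B(\bar\om,\bar\om)$, and the latter is absolutely convergent.

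\textbf{Step 3 (evaluating $B(\bar\om,\bar\om)$).} Since $\bar\om$ is supported in $x>0$, the value depends only on $\phi|_{(0,\infty)}$. I would argue in three parts.
\begin{itemize}
\item If $\phi$ vanishes near $0$, then $B(\bar\om,\bar\om)=\int\bar\om\,\mtx{H}(\bar\om)\phi=0$, because $\bar\om\,\mtx{H}(\bar\om)=0$ a.e. by the preceding lemma ($\mtx{H}(\bar\om)$ is supported on $x<0$).
\item The functional $\phi\mapsto B(\bar\om,\bar\om)$ is therefore a distribution supported at the origin, hence a finite combination $\sum_k c_k\delta^{(k)}$.
\item Under $\phi\mapsto\phi(\lambda\cdot)$, the factors $(xy)^{-1/2}\idiff x\idiff y/(x-y)$ scale to degree $0$. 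So only the $\delta$ term survives, and $B(\bar\om,\bar\om)=c\,\phi(0)$.
\end{itemize}
To find $c$, I would test with $\phi=e^{-x}$ on $(0,\infty)$, justified by a cutoff approximation controlled through the estimates of Step 2. Using
\[
\frac{e^{-x}-e^{-y}}{x-y}=-\int_0^1 e^{-tx-(1-t)y}\idiff t
\quad\text{and}\quad
\iint_{x,y>0}(xy)^{-1/2}e^{-tx-(1-t)y}\idiff x\idiff y=\frac{\pi}{\sqrt{t(1-t)}},
\]
together with $\int_0^1 dt/\sqrt{t(1-t)}=\pi$, I get $c=\frac{1}{2\pi}(-\pi^2)=-\frac{\pi}{2}$. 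This proves \eqref{eqt:HL_steady_state_equation}.

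\textbf{Step 4 (the weak equations).} I read \eqref{eqt:HL_steady_state_equation1} as the distributional form of $u\om_x=\theta_x$. Writing $u_s\partial_x\om_s=\partial_x(u_s\om_s)-\mtx{H}(\om_s)\om_s$ and pairing with $\phi$ gives $-\la u_s\om_s,\phi_x\ra-\la \om_s\mtx{H}(\om_s),\phi\ra$. The second term tends to $\frac{\pi}{2}\phi(0)=\la\bar\theta_x,\phi\ra$ by Step 3. For the first term I need $u_s\to\bar u$ locally uniformly near the origin, which gives $\la u_s\om_s,\phi_x\ra\to\la\bar u\bar\om,\phi_x\ra=0$ because $\bar u=2\sqrt{-x}\,\mtx{1}_{\{x<0\}}$ vanishes on $\operatorname{supp}\bar\om$. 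To get that convergence I would use Lemma \ref{lem:pointwise_estimate_of_H}: it gives $|\partial_x u_s|\le C_1|x|^{-1/2}$ uniformly in $s$, hence uniform Hölder-$1/2$ equicontinuity, and combined with convergence of the log-kernel integrals this yields local uniform convergence. Equation \eqref{eqt:HL_steady_state_equation2} is immediate: $\bar\theta_x=\frac{\pi}{2}\delta$, so $\la u_s\bar\theta_x,\phi\ra=\frac{\pi}{2}u_s(0)\phi(0)=0$ since $u_s(0)=\int\om_s\ln 1=0$.

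I expect the main obstacle to be Step 2 near the origin, where the $L^q$ hypothesis with $q<2$ is barely compatible with the $|x|^{-1/2}$ singularity. The fallback there is the uniform bound $|\om_s|\le C|x|^{-1/2}$, which keeps the $\varepsilon$-cutoff error uniformly small. A secondary obstacle is justifying the locally uniform convergence of $u_s$ in Step 4.
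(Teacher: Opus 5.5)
Your proof is correct, and for the main identity \eqref{eqt:HL_steady_state_equation} it takes a genuinely different route from the paper.

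\textbf{The paper's route.} The paper uses the $L^2$ isometry and Tricomi's identity to rewrite the pairing as $\int\frac{(\mtx{H}\om_s)^2-\om_s^2}{2}\mtx{H}(\phi)\idiff x$. The constant then comes for free from $\frac{(\mtx{H}\bar{\om})^2-\bar{\om}^2}{2}=-\frac{1}{2x}$ and $\phi(0)=\frac{1}{\pi}\mathrm{P.V.}\int\frac{\mtx{H}(\phi)}{x}\idiff x$. The price is that $\om_s^2$ and $(\mtx{H}\om_s)^2$ are each only $O(|x|^{-1})$ at the origin. So the piece $|x|<\varepsilon$ needs the derivative hypothesis, Lemma \ref{lem:pointwise_estimate_of_H}, and a second Tricomi argument with $h_\varepsilon$.

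\textbf{Your route.} Your symmetrization puts the singularity in the product $|\om_s(x)\om_s(y)|\lesssim|xy|^{-1/2}$, which is locally integrable in two variables, against the bounded kernel $K_\phi$. The limit passage then needs only $L^1_{\mathrm{loc}}$ convergence plus $L^p$ convergence away from the origin. It never uses $|\partial_x\om_s|\le C|x|^{-3/2}$ or Lemma \ref{lem:pointwise_estimate_of_H}, so it works under weaker hypotheses.

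\textbf{The anticipated obstacle is not one.} Cut at the support radius $R$ of $\phi$ rather than at $1$. The near region is then bounded by $\|\phi'\|_{L^\infty}\|\om_s-\bar{\om}\|_{L^1([-R,R])}\|\om_s\|_{L^1([-R,R])}$. No $\varepsilon$-cutoff and no condition on $q$ is needed.

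\textbf{Evaluating the constant.} Your route costs a separate evaluation of $B(\bar{\om},\bar{\om})$. Your support-plus-scaling argument and the $e^{-x}$ computation are valid; the cutoff limit goes through by dominated convergence. A shorter way exists, though. Substitute $y=tx$ and use Frullani's integral $\int_0^\infty\frac{\phi(x)-\phi(tx)}{x}\idiff x=\phi(0)\ln t$. This gives $B(\bar{\om},\bar{\om})=\frac{\phi(0)}{2\pi}\int_0^\infty\frac{t^{-1/2}\ln t}{1-t}\idiff t=-\frac{\pi}{2}\phi(0)$ in one line.

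\textbf{Step 4.} This matches the paper. For the locally uniform convergence $u_s\to\bar{u}$, the paper's bound $|u_s(x)-\bar{u}(x)|\le\bigl|\int_0^x|\mtx{H}(\om_s-\bar{\om})|\idiff y\bigr|$, with $\mtx{H}$ bounded on $L^q$ and $L^p$, is simpler than an equicontinuity argument. Lemma \ref{lem:pointwise_estimate_of_H} would in any case only control $\partial_x u_s$ on $[-\varepsilon_0/2,\varepsilon_0/2]$.
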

\begin{remark}
    In fact, one can directly check that $(\bar{\omega},\bar{\theta})$ solves \eqref{eqt:steady_state_equation} in the strong sense at any $x\ne 0$. The weak formulation is employed here primarily to characterize the singular behavior of the solution at $x=0$.
\end{remark}
\begin{proof}[Proof of Theorem \ref{thm:HL_steady_state}]
 We use the $L^2$ isometry of the Hilbert transform and the Tricomi's identity to compute that 
\[\int_{\R} \om_s \mtx{H} (\om_s) \phi \idiff x = \int_{\R} \mtx{H}(\om_s \mtx{H} (\om_s)) \mtx{H}( \phi) \idiff x = \int_{\R}  \frac{\mtx{H} (\om_s)^2-\om_s^2}{2} \mtx{H}( \phi) \idiff x.\]
Note that \[\varphi(0)=\frac{1}{\pi}\mathrm{P.V.}\int_{\R}\frac{\mtx{H}(\phi)}{x}\idiff x. \]
Our goal is to show   
\[\int_{\R} \frac{\mtx{H} (\om_s)^2-\om_s^2}{2} \mtx{H}(\phi) \idiff x\to -\frac{1}{2} \mathrm{P.V.}\int_{\R}\frac{\mtx{H}(\phi)}{x}\idiff x.\]
For any $\varepsilon>0$, we have 
\begin{align*}
    &\,\ \int_{\R}  \frac{\mtx{H} (\om_s)^2-\om_s^2}{2} \mtx{H}( \phi) \idiff x-\left(-\frac{1}{2} \mathrm{P.V.}\int_{\R}\frac{\mtx{H}(\phi)}{x}\right)\idiff x\\
    &=\frac{-1}{2}\int_{|x|>\varepsilon}\left(\om_s^2-\frac{\mtx{1}_{\{x>0\}}}{x}\right)\mtx{H}(\phi)\idiff x+\frac{1}{2}\int_{|x|>\varepsilon}\left((\mtx{H}\om_s)^2-\frac{-\mtx{1}_{\{x<0\}}}{x}\right)\mtx{H}(\phi)\idiff x\\ & \,\ +\int_{|x|<\varepsilon}  \frac{\mtx{H} (\om_s)^2-\om_s^2}{2} \mtx{H}( \phi) \idiff x+\frac{1}{2}\int_{|x|<\varepsilon}\frac{\mtx{H}(\phi)(x)-\mtx{H}(\phi)(0)}{x}\idiff x\\
    &=:I_1+I_2+I_3+I_4.
\end{align*}
We first estimate $I_1$ as follows:
\begin{equation}\label{eqt:weak_convergence:positive_part}
\begin{aligned}
    |I_1|&=\frac{1}{2}\left|\int_{|x|>\varepsilon}\left(\om_s^2-\bar{\om}^2\right)\mtx{H}(\phi)\idiff x\right|\\ & \leq \frac{1}{2}\|\om_s-\bar{\om}\|_{L^p(\R\backslash[-\varepsilon,\varepsilon])}\|\om_{s}+\bar{\om}\|_{L^p(\R\backslash[-\varepsilon,\varepsilon])} \left\|\mtx{H}(\phi)\right\|_{L^{\frac{p}{p-2}}} \\
    & \lesssim_{\varepsilon,p} \|\om_s-\bar{\om}\|_{L^p(\R\backslash[-\varepsilon,\varepsilon])}.
\end{aligned}
\end{equation}
To control $I_2$, for any $s>0$, we decompose $\om_s$ as $\om_s=\om_s \mtx{1}_{\{|x|<\varepsilon/2\} }+\om_s\mtx{1}_{\{|x|>\varepsilon/2\}}=:\om_{s,1}+\om_{s,2}$ and decompose $\bar{\om}$ similarly. Then for any $x$ such that $|x|>\varepsilon$, 
\begin{align*} 
    |\mtx{H}(\om_{s,1}-\bar{\om}_{1})(x)|&=\left|\int_{-\varepsilon/2}^{\varepsilon/2} \frac{\om_{s,1}(y)-\bar{\om}_{1}(y)}{x-y} \idiff y\right|\\ &\leq \| \om_{s,1}-\bar{\om}_{1}\|_{L^q([-\varepsilon/2,\varepsilon/2])} \left\|\frac{1}{x-\cdot} \right\|_{L^{\frac{q}{q-1}}([-\varepsilon/2,\varepsilon/2])}\\ &\lesssim_{\varepsilon,q} \frac{\| \om_{s,1}-\bar{\om}_{1}\|_{L^q([-\varepsilon/2,\varepsilon/2])} }{|x|}.
\end{align*}
We thus deduce that
\begin{align*} \|\mtx{H}(\om_s)-\mtx{H}(\bar{\om})\|_{L^p(\R\backslash [-\varepsilon,\varepsilon])}&\leq \|\mtx{H} (\om_{s,1}-\bar{\om}_{1}) \|_{L^p(\R\backslash [-\varepsilon,\varepsilon])}+\|\mtx{H} (\om_{s,2}-\bar{\om}_{2}) \|_{L^p(\R\backslash [-\varepsilon,\varepsilon])}\\
    &\lesssim_{\varepsilon,q} \| \om_{s,1}-\bar{\om}_{1}\|_{L^q([-\varepsilon/2,\varepsilon/2])}\left\|\frac{1}{x}\right\|_{L^p(\R\backslash [-\varepsilon,\varepsilon])}+\|\mtx{H} (\om_{s,2}-\bar{\om}_{2}) \|_{L^p}\\
    & \lesssim_{\varepsilon,p,q} \| \om_{s,1}-\bar{\om}_{1}\|_{L^q([-\varepsilon/2,\varepsilon/2])}+\| \om_{s,2}-\bar{\om}_{2} \|_{L^p}\\
    &= \| \om_{s}-\bar{\om}\|_{L^q([-\varepsilon/2,\varepsilon/2])}+\| \om_{s}-\bar{\om} \|_{L^p(\R\backslash [-\varepsilon/2,\varepsilon/2])}.\\
\end{align*}
We have used the fact that Hilbert transform is strong $(p,p)$ in the third inequality. 
As a result, similar to \eqref{eqt:weak_convergence:positive_part}, we have
\[|I_2|=\frac{1}{2}\left|\int_{|x|>\varepsilon}\left((\mtx{H}\om_s)^2-\mtx{H}\bar{\om}^2\right)\mtx{H}(\phi)\idiff x\right| \lesssim_{\varepsilon,p,q} \| \om_{s}-\bar{\om}\|_{L^q([-\varepsilon/2,\varepsilon/2])}+\| \om_{s}-\bar{\om} \|_{L^p(\R\backslash [-\varepsilon/2,\varepsilon/2])}.
\]
Since $\mtx{H}(\phi)$ is smooth, we write $\mtx{H}(\phi)(x)=\mtx{H}(\phi)(0)+r(x)$, where $|r(x)|\lesssim |x|$. We immediately have $|I_4|\lesssim \varepsilon$. To bound $I_3$, we further decompose it as follows
    \[I_3 =  \int_{|x|<\varepsilon}  \frac{\mtx{H} (\om_s)^2-\om_s^2}{2} \mtx{H}( \phi)(0) \idiff x+  \int_{|x|<\varepsilon}  \frac{\mtx{H} (\om_s)^2-\om_s^2}{2} r(x) \idiff  x\\
     =:I_{3,1}+I_{3,2}.\]
For any $\varepsilon<\varepsilon_0/2$, we can bound $I_{3,2}$ directly using the uniform bound $\om_s\leq C|x|^{-1/2}$, $\partial_x\om_s\sqrt{|x|}\leq C|x|^{-3/2}$ and Lemma \ref{lem:pointwise_estimate_of_H}:
\[|I_{3,2}|\lesssim \int_{|x|<\varepsilon}  (\om_s^2+\mtx{H}(\om_s)^2)|x|\idiff x \lesssim \int_{|x|<\varepsilon} 1 \idiff x\lesssim \varepsilon.\]
To estimate $I_{3,1}$, we use the $L^2$ isometry of Hilbert transform and the Tricomi's identity again to deduce
\begin{align*}
I_{3,1}&= \mtx{H}( \phi)(0) \int_{\R} \frac{\mtx{H} (\om_s)^2-\om_s^2}{2}\mtx{1}_{\{|x|<\varepsilon\}}\idiff{x}\\
 &= \mtx{H}( \phi)(0) \int_{\R} \mtx{H}\left(\frac{\mtx{H} (\om_s)^2-\om_s^2}{2}\right) \mtx{H}\left(\mtx{1}_{\{|x|<\varepsilon\}}\right)\idiff{x}\\
 &= -\mtx{H}( \phi)(0) \int_{\R} \om_s\mtx{H}(\om_s)h_{\varepsilon}(x)\idiff{x},
\end{align*}
where $h_\varepsilon(x):=\ln(|(x+\varepsilon)/(x-\varepsilon)|)/\pi$. It follows that
\begin{align*}
|I_{3,1}|&\lesssim  \left|\int_{\R} \om_s\mtx{H}(\om_s)h_{\varepsilon}(x)\idiff{x}\right|\\
& \leq \int_{\R} \left(\left|(\om_s-\bar{\om})\mtx{H}(\bar{\om})h_{\varepsilon}(x)\right|+ \left|\bar{\om}\mtx{H}(\om_s-\bar{\om})h_{\varepsilon}(x)\right|+\left|(\om_s-\bar{\om})\mtx{H}(\om_s-\bar{\om})h_{\varepsilon}(x)\right|\right) \idiff{x}\\
&\leq  \int_{\R} \left|(\om_s-\bar{\om})\frac{h_{\varepsilon}(x)}{\sqrt{|x|}}\right|\idiff{x}+\int_{\R}\left| \mtx{H}(\om_s-\bar{\om})\frac{h_{\varepsilon}(x)}{\sqrt{|x|}}\right| (1+|\om_s-\bar{\om}|\sqrt{|x|} )\idiff{x}\\
&\lesssim_{p,q} \|\om_s-\bar{\om}\|_{L^p(\R\backslash[-1,1])}\left\|\frac{h_{\varepsilon}(x)}{\sqrt{|x|}}\right\|_{L^{\frac{p}{p-1}}}+ \|\om_s-\bar{\om}\|_{L^q([-1,1])}\left\|\frac{h_{\varepsilon}(x)}{\sqrt{|x|}}\right\|_{L^{\frac{q}{q-1}}}.
\end{align*}
We have used the identity $\bar{\om}\mtx{H}(\bar{\om})h_{\varepsilon}=0$ in the second inequality. The fourth inequality follows from the uniform bound $\om_s\sqrt{|x|}\leq C$ together with the fact that $h_{\varepsilon}/{\sqrt{|x|}}\in L^r(\R)$ for any $r>0$.
In summary, we have proved that for any $\varepsilon\in[0,\varepsilon_0/2]$,
\[\left|\int_{\R}\om_s\mtx{H}(\om_s)\phi \idiff x-\left(-\frac{\pi}{2}\phi(0)\right)\right|
   \leq C_{\varepsilon,p,q}\left(\| \om_{s}-\bar{\om}\|_{L^q([-1,1])}+\| \om_{s}-\bar{\om} \|_{L^p(\R\backslash [-\varepsilon/2,\varepsilon/2])}\right)+C_1\varepsilon,\]
where $C_{\varepsilon,p,q}$ is a constant that depends on $\varepsilon,p,q$, and $C_1$ is an absolute constant. As a result, for any $\varepsilon\in[0,\varepsilon_0/2]$, there exists $S>0$ such that for any $s>S$, 
  \[ \left|\int_{\R} \om_s\mtx{H}(\om_s)\phi \idiff x-\left(-\frac{\pi}{2}\phi(0)\right)\right|\leq (C_1+1)\varepsilon,
\]
which implies \eqref{eqt:HL_steady_state_equation}. Note that \eqref{eqt:HL_steady_state_equation2} holds trivially since $u_s(0)=0$. To prove \eqref{eqt:HL_steady_state_equation1}, we use integration by parts to compute that
    \[\int_{\R} u_s\partial_x \om_s\phi \idiff{x}= -\int_{\R} \partial_x u_s \om_s \phi\idiff{x}- \int_{\R}u_s \om_s\phi_x \idiff{x} =:I_1+I_2.\]
As shown before, we have $I_1\to \pi \phi(0)/2$ as $s\to +\infty$. Suppose that the support of $\phi$ is contained in $[-L,L]$ for some $L>0$. For any $x\in [-L,L]$,
\[|u_s(x)-\bar{u}(x)|\leq \int_0^x |\mtx{H}\om_s-\mtx{H}\bar{\om}| \idiff{y}\lesssim_{L,p,q} \|\om_s-\bar{\om}\|_{L^p(\R\backslash[-1,1])}+\|\om_s-\bar{\om}\|_{L^q([-1,1])},\]
and therefore $\|u_s\|_{L^{\infty}([-L,L])}\lesssim_{L,p,q}1$. We therefore have
\begin{align*} 
    |I_2| & = \left|\int_{\R}u_s (\om_s-\bar{\om})\phi_x \idiff{x}+ \int_{\R}(u_s-\bar{u}) \bar{\om}\phi_x \idiff{x}\right|\\
    & \leq \left(\|u_s\|_{L^{\frac{q}{q-1}}([-L,L])}\|\om_s-\bar{\om}\|_{L^q([-L,L])}+\|u_s-\bar{u}\|_{L^{\frac{q}{q-1}}([-L,L])}\|\bar{\om}\|_{L^q([-L,L])}\right) \|\phi_x\|_{L^{\infty}}\\
    & \lesssim_L \|u_s\|_{L^{\infty}([-L,L])}\|\om_s-\bar{\om}\|_{L^q([-L,L])}+\|u_s-\bar{u}\|_{L^{\infty}([-L,L])}\|\bar{\om}\|_{L^q([-L,L])}\\
    & \lesssim_{L,p,q} \|\om_s-\bar{\om}\|_{L^p(\R\backslash[-1,1])}+\|\om_s-\bar{\om}\|_{L^q([-1,1])}
\xrightarrow{s \to +\infty} 0.
\end{align*}
The proof is thus completed.
\end{proof}

Next, we turn to study the singular steady solutions of the dynamic rescaling equations \eqref{eqt:dynamic_rescaling_of_HL}. Note that the corresponding steady state equations are given by
\begin{equation}\label{eqt:dynamic_rescaling_steady_state_equation}
    \begin{aligned}
    &(U+c_lX)\Omega_X=c_{\om}\Omega+\Theta_X,\\
    &(U+c_lX)\Theta_X=(c_l+2c_{\om})\Theta,\\
    &U_X=\mtx{H}(\Omega), \quad U(0)=0.
    \end{aligned}
\end{equation}
In fact, one can establish results analogous to Theorem \ref{thm:HL_steady_state} for the dynamic rescaling equations \eqref{eqt:dynamic_rescaling_of_HL}, and the presence of $c_l$ and $c_{\om}$ in \eqref{eqt:dynamic_rescaling_steady_state_equation} will not introduce significant new difficulties as it only moves the singularity of the solutions from the origin to a location away from the origin. Specifically, we have the following theorem. 

\begin{theorem}
The tuple
\[(\bar{\Omega},\bar{\Theta},\bar{c}_l,\bar{c}_{\om})=\left(\frac{\mtx{1}_{\{X>1\}}}{\sqrt{X-1}},\frac{\pi \mtx{1}_{\{X>1\}}}{2},2,-1\right)\] solves \eqref{eqt:dynamic_rescaling_steady_state_equation} in the weak sense.
More specifically, suppose $\{\Omega_s\}_{s>0}$ is a family of smooth functions that satisfies:
    \begin{itemize}
        \item There exist $p\in(2,+\infty)$ and $q\in(1,2)$ such that for any $\varepsilon>0$, $\Omega_s\to \bar{\Omega}$ in $L^p(\R\backslash [1-\varepsilon,1+\varepsilon])$ and $\Omega_s\to \bar{\Omega}$ in $L^q([0,2])$ as $s\to +\infty$.
        \item  There exists an absolute constant $C$ and $\varepsilon_0>0$ such that $|\Omega_s(X)|\leq C|X-1|^{-1/2}$ for any $X\in \R$ and $|\partial_X\Omega_s(x)|\leq C|X-1|^{-3/2}$ for any $X\in [-\varepsilon_0,\varepsilon_0]$.
    \end{itemize}
     Then for any $\phi\in C_c^{\infty}(\R)$,
\begin{equation}\label{eqt:HL_dssteady_state_equation1}
    \lim_{s\to +\infty} \la(U_s+\bar{c}_lX)\Omega_s-\bar{c}_{\om}\Omega_s,\phi\ra =\frac{\pi}{2}\phi(1)=\la \bar{\Theta}_x,\phi \ra,
\end{equation}
\begin{equation}\label{eqt:HL_dssteady_state_equation2}
     \lim_{s\to +\infty} \la(U_s+\bar{c}_lX)\bar{\Theta}_x,\phi \ra =0= \la(\bar{c}_l+2\bar{c}_{\om})\bar{\Theta},\phi\ra,
\end{equation}
    where 
    \[\partial_XU_s=\mtx{H}(\Omega_s),\quad  U_s(X)=\int_{\R} \Omega_s(Y)\ln\left|\frac{X-Y}{Y}  \right|\idiff Y.\]
\end{theorem}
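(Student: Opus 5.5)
The plan is to reduce everything to Theorem \ref{thm:HL_steady_state} by translating the singularity from $X=1$ to the origin. I read \eqref{eqt:HL_dssteady_state_equation1} as the weak form of the first equation of \eqref{eqt:dynamic_rescaling_steady_state_equation}, namely $\lim_{s}\la (U_s+\bar c_lX)\partial_X\Omega_s-\bar c_\om\Omega_s,\phi\ra=\la\bar\Theta_X,\phi\ra$. I read the derivative bound as holding for $X\in[1-\varepsilon_0,1+\varepsilon_0]$, the natural translate of the hypothesis in Theorem \ref{thm:HL_steady_state}.

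\textbf{Step 1: translation.} Set $\om_s(x):=\Omega_s(x+1)$ and $\bar\om(x)=\bar\Omega(x+1)=\mtx{1}_{\{x>0\}}/\sqrt{x}$. The assumptions on $\Omega_s$ become exactly the assumptions of Theorem \ref{thm:HL_steady_state} on $\om_s$; note that $L^q([0,2])$ maps to $L^q([-1,1])$. The Hilbert transform commutes with translations, so $\mtx{H}(\Omega_s)(X)=\mtx{H}(\om_s)(X-1)$. With $u_s$ normalized by $u_s(0)=0$ as in Theorem \ref{thm:HL_steady_state}, the normalization $U_s(0)=0$ gives
\[
U_s(X)=u_s(X-1)-u_s(-1).
\]
The proof of Theorem \ref{thm:HL_steady_state} already shows $\|u_s-\bar u\|_{L^\infty([-L,L])}\to0$ for every $L$, with $\bar u(x)=2\sqrt{-x}\,\mtx{1}_{\{x<0\}}$ (up to the $1/\pi$ convention). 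Hence $U_s\to\bar U:=2\sqrt{1-X}\,\mtx{1}_{\{X<1\}}-2$ locally uniformly. In particular $U_s(1)\to-2$.

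\textbf{Step 2: the first equation.} Integrate the transport term by parts:
\[
\la (U_s+2X)\partial_X\Omega_s+\Omega_s,\phi\ra=-\la \mtx{H}(\Omega_s)\Omega_s,\phi\ra-\la (U_s+2X)\Omega_s,\phi_X\ra-\la\Omega_s,\phi\ra .
\]
\begin{itemize}
\item For the first term, apply \eqref{eqt:HL_steady_state_equation} to $\om_s$ with the test function $\phi(\cdot+1)$; it converges to $\frac{\pi}{2}\phi(1)$.
\item For the other two terms, use $\Omega_s\to\bar\Omega$ in $L^1_{loc}$ (from the $L^q$ and $L^p$ convergence) together with the local uniform convergence of $U_s$, splitting as in the estimate of $I_2$ in the proof of Theorem \ref{thm:HL_steady_state}. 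They tend to $-\int_1^\infty 2\sqrt{X-1}\,\phi_X\,\diff X-\int_1^\infty \phi/\sqrt{X-1}\,\diff X$, since $\bar U+2X=2(X-1)$ on $X>1$ and $\bar\Omega/\sqrt{\cdot}$ combine to $2\sqrt{X-1}$.
\item One more integration by parts shows this limit is $0$: the boundary term $2\sqrt{X-1}\,\phi$ vanishes at $X=1$, and $\partial_X(2\sqrt{X-1})=1/\sqrt{X-1}$ cancels the second integral.
\end{itemize}
This yields \eqref{eqt:HL_dssteady_state_equation1}, since $\bar\Theta_X=\frac{\pi}{2}\delta_1$.

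\textbf{Step 3: the second equation.} Because $\bar\Theta_X=\frac{\pi}{2}\delta_1$ and $U_s+2X$ is continuous,
\[
\la (U_s+2X)\bar\Theta_X,\phi\ra=\frac{\pi}{2}\bigl(U_s(1)+2\bigr)\phi(1)\to0
\]
by Step 1. Also $\bar c_l+2\bar c_\om=0$, which gives \eqref{eqt:HL_dssteady_state_equation2}.

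\textbf{Main obstacle.} The only delicate point is the nonlinear term $\la\mtx{H}(\Omega_s)\Omega_s,\phi\ra$, which is handled entirely by Theorem \ref{thm:HL_steady_state}. Beyond that, the care needed is bookkeeping: checking that the shifted hypotheses match, and that the normalization constant $u_s(-1)$ converges to $\bar u(-1)=2$. The latter is exactly what makes $\bar U(1)+\bar c_l=0$, i.e.\ places the stagnation point at the singularity.
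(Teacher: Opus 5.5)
Your proof is correct and follows essentially the same route as the paper. You integrate the transport term by parts, reduce the nonlinear term $\la \mtx{H}(\Omega_s)\Omega_s,\phi\ra$ to Theorem~\ref{thm:HL_steady_state} by translating $X=1$ to the origin, show the remaining linear terms vanish in the limit because $\bar\Omega$ annihilates them weakly, and use $U_s(1)\to-2$ for the second equation. The only difference is bookkeeping: the paper splits $U_s+2X=(U_s+2)+2(X-1)$ and integrates by parts only the first piece, which regroups into exactly your two linear terms. Your Step~1 also supplies the justification of $U_s(1)\to\bar U(1)=-2$, via local uniform convergence of $u_s$, which the paper only asserts.
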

\begin{proof}
It is not hard to show 
\[\lim_{s\to +\infty}U_s(1)=\bar{U}(1)=\int_{\R} \bar{\Omega}(Y)\ln\left|\frac{1-Y}{Y}  \right|\idiff Y=-2 .\]
 Notice that $\bar{\Theta}_X(X)= \pi \delta(X-1)/2$, we therefore have
\[ \la(U_s+\bar{c}_lX)\bar{\Theta}_X,\phi \ra=\frac{\pi}{2}(U(1)+2)\phi(1)\to 0 \,\  \text{as }s\to +\infty.\]
which implies \eqref{eqt:HL_dssteady_state_equation2}. On the other hand, direct computation shows that
\begin{align*}
    &\,\ \,\ \int_{\R} ((U_s+\bar{c}_lX)\partial_X \Omega_s-\bar{c}_{\om}\Omega_s)\phi \idiff X= \int_{\R}\left((U_s+2X)\partial_X \Omega_s+\Omega_s\right)\phi \idiff X \\
    &= \int_{\R}(U_s+2)\partial_X \Omega_s\phi \idiff X +  \int_{\R} \left((2X-2)\partial_X\Omega_s+\Omega_s\right)\phi\idiff X\\
    &= -\int_{\R}\partial_X U_s\Omega_s\phi \idiff X -\int_{\R}(U_s+2)\Omega_s\phi_X \idiff X +\int_{\R} \left((2X-2)\partial_X\Omega_s+\Omega_s\right)\phi\idiff X\\
    & =: I_1+I_2+I_3,
\end{align*}
where we have used integration by parts in the third inequality. Using Theorem \ref{thm:HL_steady_state}, we know that $I_1\to \pi\phi(1)/2$ as $s\to +\infty$. And it is not hard to see that $I_2\to 0$ as $s\to +\infty$ using similar arguments as in the proof of Theorem \ref{thm:HL_steady_state}. We further control $I_3$ as follows:
\begin{align*}
|I_3|& = \left|\int_{\R} \left((2X-2)\partial_X\left(\Omega_s-\bar{\Omega}\right)+\Omega_s-\bar{\Omega}\right)\phi\idiff X\right|\\
     & =\left| \int_{\R} \left(\Omega_s-\bar{\Omega}\right)\left(\partial_X((2-2X)\phi)+\phi\right)\idiff X\right|\\
     &\lesssim \left\|\Omega_s-\bar{\Omega} \right\|_{L^p(\R\backslash[0,2])}+\left\|\Omega_s-\bar{\Omega} \right\|_{L^q([0,2])}\xrightarrow{s \to +\infty} 0,
\end{align*}
which implies \eqref{eqt:HL_dssteady_state_equation1}. This concludes the proof.
\end{proof}

Finally, we conclude this section by proving Lemma \ref{lem:pointwise_estimate_of_H}.
\begin{proof}[Proof of Lemma \ref{lem:pointwise_estimate_of_H}]
    Without loss of generality we assume $x>0$. By the definition of the Hilbert transform, we have
\[\mtx{H}(f)(x)=\int_{-\infty}^0 \frac{f(y)}{x-y}\idiff{y}+\int_{0}^{2x} \frac{f(y)-f(x)}{x-y}\idiff{y}+\int_{2x}^{+\infty} \frac{f(y)}{x-y}\idiff{y}=:I_1+I_2+I_3.\]
We first bound $I_1$ as follows:
\[|I_1|\leq C\int_{0}^{+\infty} \frac{\idiff y}{\sqrt{y}(x+y)}=C\int_{0}^{+\infty} \frac{\idiff {xt}}{\sqrt{tx}(x+tx)}= \frac{C}{\sqrt{x}}\int_{0}^{+\infty} \frac{\idiff {t}}{\sqrt{t}(1+t)}.\]
We can  control $I_3$ similarly:
\[|I_3|\leq C\int_{2x}^{+\infty} \frac{\idiff y}{\sqrt{y}(y-x)}= \frac{C}{\sqrt{x}}\int_{2}^{+\infty} \frac{\idiff {t}}{\sqrt{t}(t-1)}.\]
Note that for any $y\in (0,2x)$, \[\left|\frac{f(y)-f(x)}{y-x}\right|\leq \frac{1}{|x-y|}\int_{x}^{y}|f'(t)|\idiff t \leq \frac{C}{|x-y|}\int_{x}^{y}|t|^{-3/2}\idiff t =\frac{2C}{\sqrt{y}\sqrt{x}(\sqrt{y}+\sqrt{x})}\leq \frac{C}{x\sqrt{y}}.\]
We thus have 
\[|I_2|\leq \int_{0}^{2x} \frac{C}{x\sqrt{y}}\idiff{y}=\frac{2\sqrt{2}C}{\sqrt{x}}.\]
Combining the above estimates, we obtain the desired result.
\end{proof}

\section{Novel Self-similar Finite-time Blowup of the 2D Boussinesq Equations}\label{sec:dynamic_rescaling_of_BS}
In this section, we report novel self-similar finite-time blowup scenarios for the 2D Boussinesq equations in the half-space $\mtx{x}=(x_1,x_2)\in\R \times \R_+$,
\begin{equation}\label{eqt:Boussinesq_v2}
    \begin{aligned}
    &\om_{t} + \mtx{u}\cdot \nabla \om=\theta_{x_1},\\
    &\theta_t+\mtx{u}\cdot \nabla \theta=0,\\
    &\mtx{u} = \nabla^{\perp}(-\Delta)^{-1}\omega,
    \end{aligned}
\end{equation}
where $(-\Delta)^{-1}$ denotes the inverse of the negative Laplacian operator on the half-space subject to zero Dirichlet boundary conditions, and $\nabla^{\perp}=(-\partial_{x_2}, \partial_{x_1})^{\top}$. The novel blowup scenarios of \eqref{eqt:Boussinesq_v2} are very similar to those of the 1D HL model reported in previous section. Our numerical simulation shows that solutions to the 2D Boussinesq equations that initially satisfy certain derivative degeneracy condition can also develop asymptotically self-similar finite-time blowups with singular self-similar profiles. Moreover, this blowup phenomenon also exhibits a two-stage feature: the solution first undergoes a local $L^{\infty}$ blowup at some time $\tilde{T}$, then continues in the weak sense beyond $\tilde{T}$ and develops a local $L^p$ blowup at a later time $T>\tilde{T}$ for some $p>0$. We also conduct a further numerical investigation into the Stage 1 blowup discussed earlier and show that it corresponds to an asymptotically self-similar blowup of \eqref{eqt:Boussinesq_v2} with regular profiles.

\subsection{Scenario 1: novel asymptotically self-similar blowups with singular profiles.}\label{subsec:BS_scenario1}
Before we present the numerical results regarding the potential two-stage self-similar blowup of \eqref{eqt:Boussinesq_v2}, let us first introduce the dynamic rescaling formulation of \eqref{eqt:Boussinesq_v2}. We consider the following change of variables
\begin{equation}\label{eqt:dynamic_rescaling_of_BS_change_of_variables1}
\begin{aligned}
    &\omega(\mtx{x},t)=C_{\omega}(\tau)^{-1}
    \Omega\left(\frac{\mtx{x}}{C_l(\tau)},\tau(t)\right),\\
    &\theta(\mtx{x},t)=C_{\theta}(\tau)^{-1}
    \Theta\left(\frac{\mtx{x}}{C_l(\tau)},\tau(t)\right),\\
    &\mtx{u}(\mtx{x},t)=C_{\omega}(\tau)^{-1}C_{l}(\tau)^{-1}\mtx{U}\left(\frac{\mtx{x}}{C_l(\tau)},\tau(t)\right)
\end{aligned}
\end{equation}
with the time-dependent scaling factors defined by
\begin{equation}\label{eqt:dynamic_rescaling_of_BS_change_of_variables2}
\begin{aligned}
    C_{\omega}(\tau)&=\exp\left(\int_0^\tau c_{\omega}(s)\idiff s\right),
    \quad C_{\theta}(\tau)=\exp\left(\int_0^\tau c_{\theta}(s) \idiff s\right),\\
    C_{l}(\tau)&=\exp\left(\int_0^\tau c_l(s)\idiff s\right),
    \quad t(\tau)=\int_0^{\tau}C_{\omega}(s)\idiff s.
\end{aligned}
\end{equation}
For consistency with the scaling properties of the Boussinesq equations, we enforce the relation
\[ c_{\theta} = c_l + 2c_{\omega}. \]
Substituting these ansatzes into \eqref{eqt:Boussinesq_v2} yields the following equivalent form in the rescaled variables $(\mtx{X},\tau)$:
\begin{equation}\label{eqt:dynamic_rescaling_of_BS}
    \begin{aligned}
    &\Omega_{\tau} + (\mtx{U}+c_l\mtx{X})\cdot\nabla\Omega = c_{\omega}\Omega + \Theta_{X_1},\\
    &\Theta_{\tau} + (\mtx{U}+c_l\mtx{X})\cdot\nabla\Theta = (c_l+2c_{\omega})\Theta,\\
    &\mtx{U} = \nabla^{\perp}(-\Delta)^{-1}\Omega,
    \end{aligned}
\end{equation}
where $\mtx{X} = \mtx{x}/C_l(\tau)$. 
We refer to \eqref{eqt:dynamic_rescaling_of_BS} as the dynamic rescaling formulation of the 2D Boussinesq equations. In particular, the steady state equation to \eqref{eqt:dynamic_rescaling_of_BS} is given by
\begin{equation}\label{eqt:BS_steady_state}
    \begin{aligned}
    &(\mtx{U}+c_l\mtx{X})\cdot\nabla\Omega = c_{\omega}\Omega + \Theta_{X_1},\\
    &(\mtx{U}+c_l\mtx{X})\cdot\nabla\Theta = (c_l+2c_{\omega})\Theta,\\
    &\mtx{U} = \nabla^{\perp}(-\Delta)^{-1}\Omega,
    \end{aligned}
\end{equation}
The reader can easily verify that if $(\bar{\Omega},\bar{\Theta},\bar c_l,\bar c_\omega)$ is a solution to \eqref{eqt:BS_steady_state}, then it corresponds to exact self-similar finite-time blowups of \eqref{eqt:Boussinesq_v2} that take the form 
\[
\omega(\mtx{x},t)=(T-t)^{\lambda}
\tilde{\Omega}\left(\frac{\mtx{x}}{(T-t)^{\gamma}}\right),
\quad
\theta(\mtx{x},t)=(T-t)^{\mu}
\tilde{\Theta}\left(\frac{\mtx{x}}{(T-t)^{\gamma}}\right),
\]
where $\tilde{\Omega}$ and $\tilde{\Theta}$ are rescaled versions of $\bar{\Omega}$ and $\bar{\Theta}$, $\gamma=-\bar{c}_l/\bar{c}_\omega$, $\lambda=-1$ and $\mu=\gamma-2$.

Note that if $(\bar{\Omega}(\mtx{X}),\bar{\Theta}(\mtx{X}),\bar{c}_l,\bar{c}_{\om})$ is a steady state solution of \eqref{eqt:dynamic_rescaling_of_BS}, then
\begin{equation}\label{eqt:scaling_invariance_BS}
(\alpha \bar{\Omega}(\beta \mtx{X}),\alpha^2 \bar{\Theta}(\beta \mtx{X})/\beta,\alpha \bar{c}_l,\alpha \bar{c}_{\om})
\end{equation} is also a steady state solution for any $\alpha\in \R$, $\beta>0$, which indicates that this equation admits two degrees of freedom. Hence, in order to uniquely identify the profile up to rescaling, we need to impose two normalization conditions to fix these two degrees of freedom.

Moreover, if under some normalization conditions the solution tuple $(\Omega,\Theta,c_l,c_{\omega})$ converges to a steady state $(\bar{\Omega},\bar{\Theta},\bar{c_l},\bar{c}_{\om})$ of \eqref{eqt:dynamic_rescaling_of_HL} as $\tau\to\infty$, then the solution would blowup in an asymptotically self-similar manner. We demonstrate this more precisely by the following Lemma.

\begin{lemma}
    Suppose \eqref{eqt:dynamic_rescaling_of_BS} converges to some
    steady state $(\bar{\Omega},\bar{\Theta},\bar{c}_l,\bar{c}_{\omega})$
    with $\bar{c}_{\omega}<0$. Furthermore, assume the parameters
    $c_l$ and $c_{\omega}$ converge sufficiently fast in the sense that
   \begin{equation}\label{eqt:convergence_of_clcomega_2D}
    \int_{0}^{+\infty}|c_l(\tau)-\bar{c}_l|\idiff \tau<\infty, \quad \int_{0}^{+\infty}|c_{\omega}(\tau)-\bar{c}_{\omega}|\idiff \tau<\infty .
   \end{equation}
    Then the solution $\omega(\mtx{x},t)$ to the original system
    \eqref{eqt:Boussinesq_v2} develops a self-similar finite-time blowup
    asymptotically. More specifically, there exists a constant $C_0>0$
    such that for any $\mtx{x}\in \R^2_+$ near which $\Omega$
    converges to $\bar{\Omega}$ uniformly, it holds that
    \[\lim_{t\to T}(T-t)\omega\left((T-t)^\gamma \mtx{x}/C_0,t\right)=-\frac{1}{\bar{c}_{\omega}}\bar{\Omega}(\mtx{x}),\] 
where \[T=\int_0^{+\infty}C_{\omega}(s)\idiff s, \quad \gamma= -\frac{\bar{c}_{l}}{\bar{c}_{\omega}}.\]
\end{lemma}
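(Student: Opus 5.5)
This is the 2D analogue of Lemma \ref{lem:DLstability_implies_blowup}, and I will follow that proof almost verbatim. The vector structure of $\mtx{x}$ plays no role, since the rescaling acts isotropically. The one point needing care is the convention in \eqref{eqt:dynamic_rescaling_of_BS_change_of_variables1}, where $\mtx{X}=\mtx{x}/C_l$ instead of $X=C_lx$. This only swaps $C_l$ for $C_l^{-1}$ in the spatial factor; correspondingly $\dot C_l=-c_l C_l$ should be read consistently with the sign convention that makes $\gamma=-\bar c_l/\bar c_\omega>0$. I will set up the bookkeeping so that the spatial argument of $\Omega$ is $\kappa(\tau)\mtx{x}$ with $\kappa=\exp(\int_0^\tau c_l)$, as in the 1D proof.

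\textbf{Step 1: rewrite the rescaled quantity.} From the change of variables,
\[(T-t)\,\omega\big((T-t)^\gamma \mtx{x},t\big)=\frac{T-t}{C_\omega(\tau)}\,\Omega\big(\kappa(\tau)(T-t)^\gamma\mtx{x},\tau\big).\]

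\textbf{Step 2: the amplitude factor.} Since $T-t=\int_\tau^\infty C_\omega(s)\,ds$, and $C_\omega\to0$ exponentially because $c_\omega\to\bar c_\omega<0$ (so $T<\infty$), L'Hospital's rule gives
\[\frac{T-t}{C_\omega}\to-\frac{1}{\bar c_\omega}.\]

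\textbf{Step 3: the length factor.} Write
\[(T-t)^\gamma\kappa=\Big(\frac{T-t}{C_\omega}\Big)^\gamma C_\omega^\gamma\kappa=\Big(\frac{T-t}{C_\omega}\Big)^\gamma\exp\!\Big(\int_0^\tau (c_l+\gamma c_\omega)\Big).\]
Since $\bar c_l+\gamma\bar c_\omega=0$, the integrand equals $(c_l-\bar c_l)+\gamma(c_\omega-\bar c_\omega)$. By \eqref{eqt:convergence_of_clcomega_2D} this integrand is absolutely integrable, so the expression converges to a positive constant $C_0$, defined exactly as in the 1D proof.

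\textbf{Step 4: conclude.} Replacing $\mtx{x}$ by $\mtx{x}/C_0$, the argument of $\Omega$ becomes $\mtx{x}+o(1)$. Uniform convergence $\Omega(\cdot,\tau)\to\bar\Omega$ in a neighborhood of $\mtx{x}$ then gives $\Omega(\mtx{x}+o(1),\tau)\to\bar\Omega(\mtx{x})$. Here I assume $\bar\Omega$ is continuous there; as in 1D, this is implicit in "converges uniformly near $\mtx{x}$". The boundary case $x_2=0$ causes no trouble, because the scaling preserves the half-space.

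The only potentially delicate point is Step 4: passing from uniform convergence on a neighborhood to convergence at a moving point. This needs continuity of $\bar\Omega$ at $\mtx{x}$, or equicontinuity of $\Omega(\cdot,\tau)$, and I would state that explicitly. Everything else is identical to the 1D argument.
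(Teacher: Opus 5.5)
Your proposal is correct and follows the paper's route: the paper omits this proof as identical to the 1D Lemma \ref{lem:DLstability_implies_blowup}, which uses L'Hospital to get $(T-t)/C_\omega\to-1/\bar c_\omega$ and the integrability hypothesis to get $(T-t)^\gamma C_l\to C_0$. Two points the paper leaves implicit are handled well. First, reading the change of variables as $\mtx{X}=C_l\mtx{x}$ is necessary, since it is the version consistent with the $+c_l\mtx{X}\cdot\nabla$ term in the rescaled system and with $\gamma=-\bar c_l/\bar c_\omega$. Second, you correctly note that Step 4 needs continuity of $\bar\Omega$ at $\mtx{x}$.
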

The proof of this lemma is identical to that of the 1D case, and we omit it here for brevity. With this lemma, one can transform the problem of studying the asymptotically self-similar blowup of the original equations into establishing the asymptotic stability of the dynamic rescaling equations around some steady state. Following this general strategy, Chen and Hou \cite{chen2022stable} established the asymptotic stability of \eqref{eqt:dynamic_rescaling_of_BS} around some steady state using a powerful computer-assisted approach, thus finally settling the conjecture on the Hou--Luo scenario.   

Note that in the work of Chen and Hou \cite{chen2022stable}, the initial data considered are odd with respect to $x_1$ and satisfy a non-degeneracy condition in the $x_1$-partial derivatives at $x_1=0$. In contrast, the remainder of this subsection is devoted to studying the self-similar blowup scenarios of the 2D Boussinesq equations starting from odd initial data that exhibit degeneracy in the $x_1$-partial derivatives at $x_1=0$.  To the best of our knowledge, the only existing effort in this direction is a preliminary numerical study in the thesis of Liu \cite{liu2017spatial}, which suggested that in the degenerate case, the solutions to \eqref{eqt:Boussinesq_v2} may develop two-scale self-similar blowups analogous to \eqref{eqt:multi_scale_blowup}. Departing from Liu's two-scale hypothesis, we conduct a thorough numerical investigation of the 2D Boussinesq equations within the dynamic rescaling framework and report potential two-stage self-similar blowup scenarios of \eqref{eqt:Boussinesq_v2}, which are remarkably similar to those observed in the 1D HL model.

For notational simplicity, we denote the rescaled space-time variables $(\mtx{X},\tau)$ by $(\mtx{x},t)$ in the rest of this subsection. Furthermore, in the practical numerical simulations, we introduce the change of variables $V := \Theta/x_1$ to ensure that the solution variable exhibits better far-field decay properties. Consequently, the rescaled system \eqref{eqt:dynamic_rescaling_of_BS} takes the form
\begin{equation}\label{eqt:dynamic_BS_scenario1}
\begin{aligned}
&\Omega_t + (\mtx{U}+c_l \mtx{x})\cdot\nabla\Omega = c_{\omega}\Omega + x_1V_{x_1} + V,\\
&V_t + (\mtx{U}+c_l \mtx{x})\cdot\nabla V = \big(2c_{\omega}-U_1/x_1\big)V,\\
&\mtx{U}=\nabla^{\perp}(-\Delta)^{-1}\Omega,
\end{aligned}
\end{equation}
where $U_1$ denotes the first component of $\mtx{U}$.

In view of the scaling invariance property \eqref{eqt:scaling_invariance_BS}, we have two degrees of freedom in choosing $c_l$, $c_{\om}$, and we do this by imposing two normalization conditions. More specifically, the normalization conditions are imposed to enforce that
\begin{equation}\label{eqt:numerically_compute_cl_2D}
U_1(1,0,t) + c_l \equiv 0, \quad \partial_{x_1} U_1(0,0,t) \equiv \partial_{x_1} U_1(0,0,0).
\end{equation}

In the following, we present the numerical results of \eqref{eqt:dynamic_BS_scenario1} imposing above normalization conditions that support the potential two-stage self-similar blowup scenarios of \eqref{eqt:Boussinesq_v2}. The initial data are chosen as
\[ \Omega(x_1,x_2) = \frac{10 x_1^9}{x_1^{10} + x_2^{10} + 16}, \quad V(x_1,x_2) = \frac{10 x_1^{9}}{x_1^{10} + x_2^{10} + 16}, \]
which are odd with respect to $x_1$ and exhibit degeneracy in the $x_1$-partial derivatives at $x_1=0$. Figure \ref{fig:Bs_sc1_mesh_evo_om_bu} illustrates the early-stage evolution of the spatial profile of $\Omega$. As shown, $\Omega$ demonstrates a distinct trend of developing a singularity at $(1,0)$, which partially confirms Liu's early observations \cite{liu2017spatial}.

\begin{figure}[!htbp]
\centering
        \includegraphics[width=0.39\textwidth]{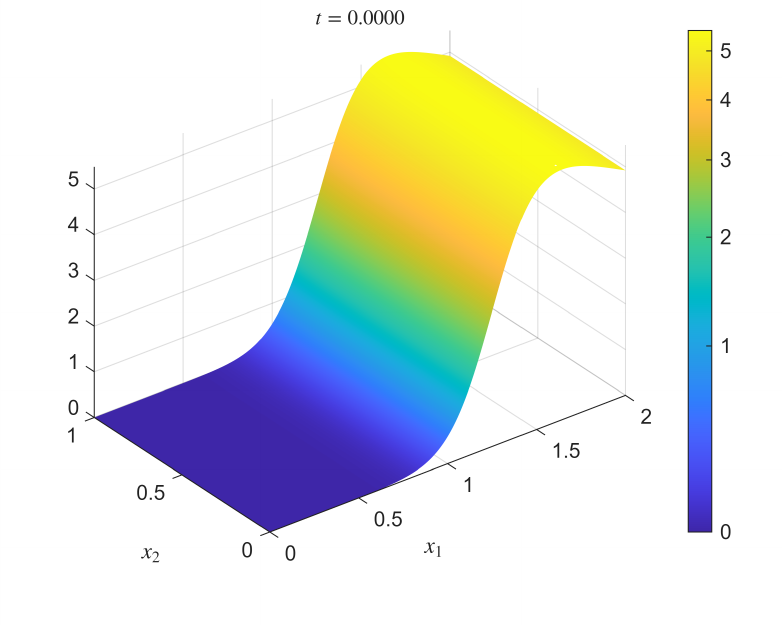}
        \includegraphics[width=0.39\textwidth]{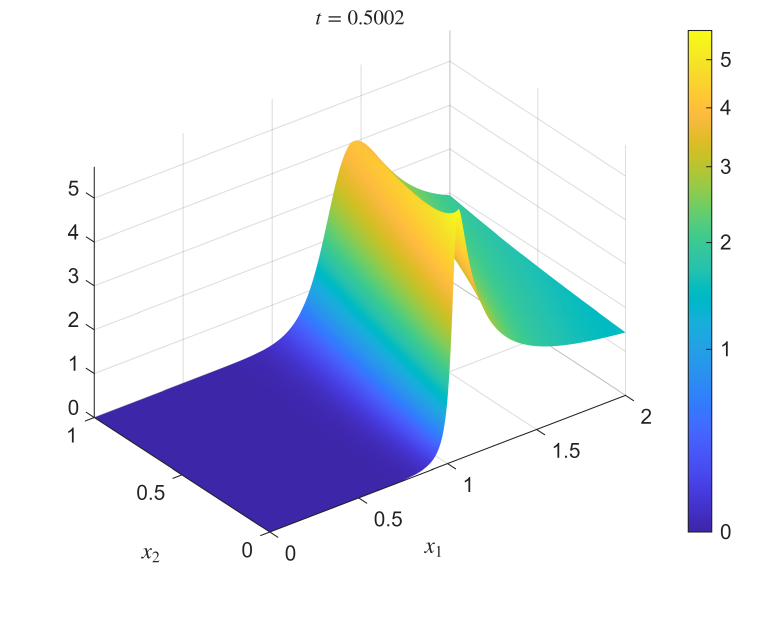}
        \includegraphics[width=0.39\textwidth]{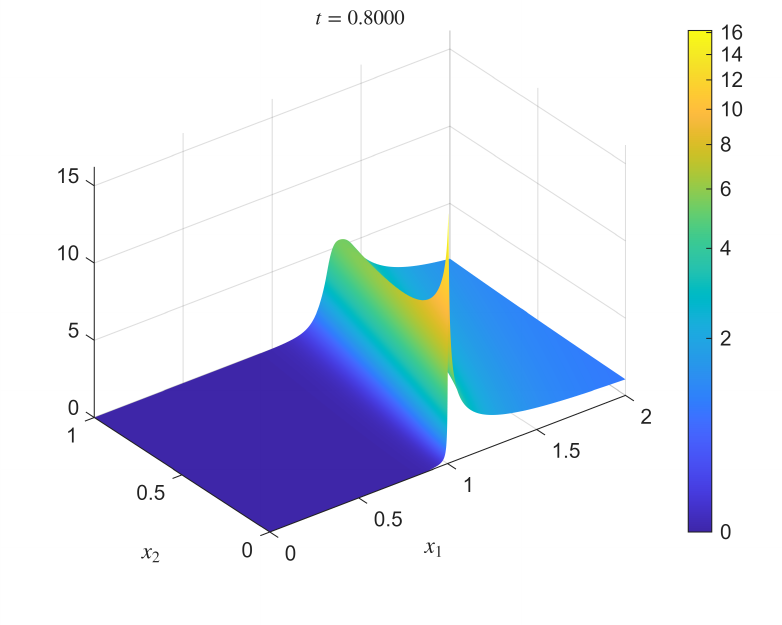}
        \includegraphics[width=0.39\textwidth]{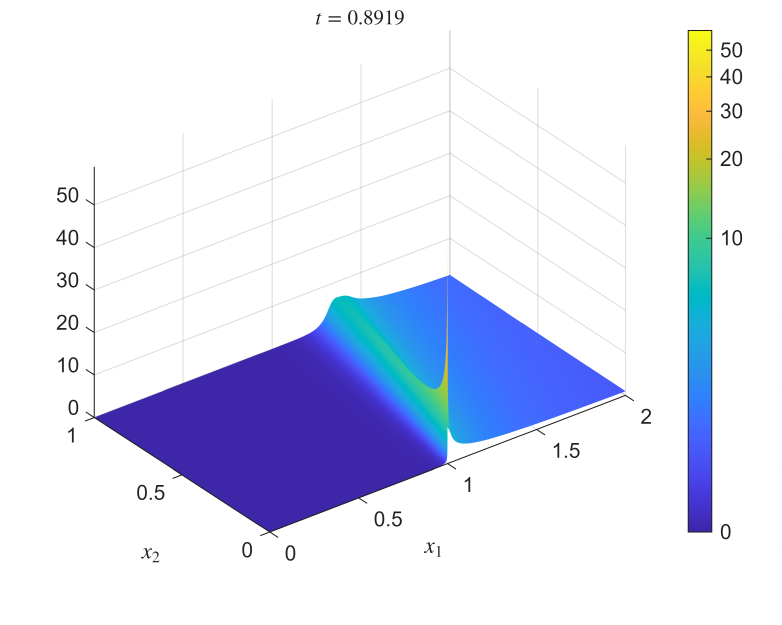}
    \caption[Early evolution of 3D spatial profiles of $\Omega$ (Scenario 1).]{Early evolution of 3D spatial profiles of $\Omega$ (Scenario 1). As time progresses, the profile concentrates rapidly near the focus point
    $(1,0)$.}
    \label{fig:Bs_sc1_mesh_evo_om_bu}
\end{figure}

In further numerical simulations, we employ an adaptive mesh strategy that dynamically clusters grid points near the developing singularity to ensure sufficient resolution. Our results suggest that solutions of \eqref{eqt:dynamic_BS_scenario1} develop asymptotically self-similar finite-time blowups with respect to the dynamic rescaling time. This phenomenon is corroborated by Figures \ref{fig:Bs_sc1_blowup}, \ref{fig:BS_scenario1_boundary_convergence_compare}, \ref{fig:BS_scenario1_contour_convergence_compare}, and \ref{fig:BS_scenario1_surf_convergence_compare}. The top row of Figure \ref{fig:Bs_sc1_blowup} illustrates the rapid growth of $\| \Omega \|_{L^\infty}$ and $\|\Theta_{x_1}\|_{L^\infty}$. In particular, in the singular region, $\Theta_{x_1}$ is observed to scale like $\Omega^2$, which implies that $\Omega$ blows up at the rate of $(T_{\text{dr}}-t)^{-1}$, where $T_{\text{dr}}$ denotes the blowup time with respect to the dynamic rescaling equations \eqref{eqt:dynamic_BS_scenario1}. Note that $T_{\text{dr}}$ corresponds to the first blowup time $\tilde T$ (mentioned earlier at the beginning of Section \ref{sec:dynamic_rescaling_of_BS}) with respect to the physical equation \eqref{eqt:Boussinesq_v2}. This blowup rate is further supported by the bottom row of Figure \ref{fig:Bs_sc1_blowup}, which demonstrates the linear dependence of $\|\Omega\|_{L^{\infty}}^{-1}$ and $\|\Theta_{x_1}\|_{L^\infty}^{-1/2}$ on time $t$. Moreover, Figures \ref{fig:BS_scenario1_boundary_convergence_compare}, \ref{fig:BS_scenario1_contour_convergence_compare}, and \ref{fig:BS_scenario1_surf_convergence_compare} display the spatial profiles of $(\Omega,\Theta_{x_1})$ in the singular region. As evident from these figures, the inner profiles of $\Omega$ and $\Theta_{x_1}$, after proper rescaling, are observed to approach regular limiting profiles, providing strong evidence for the existence of a self-similar blowup mechanism. In the context of the original 2D Boussinesq equations \eqref{eqt:Boussinesq_v2}, this corresponds to asymptotically self-similar boundary blowups of $\omega$ and $\theta_{x_1}$ occurring at a location distinct from the origin. We refer to this phenomenon as the Stage 1 blowup. We will study this phenomenon in more detail in the next subsection.

\begin{figure}[!htbp]
\centering
        \includegraphics[width=0.38\textwidth]{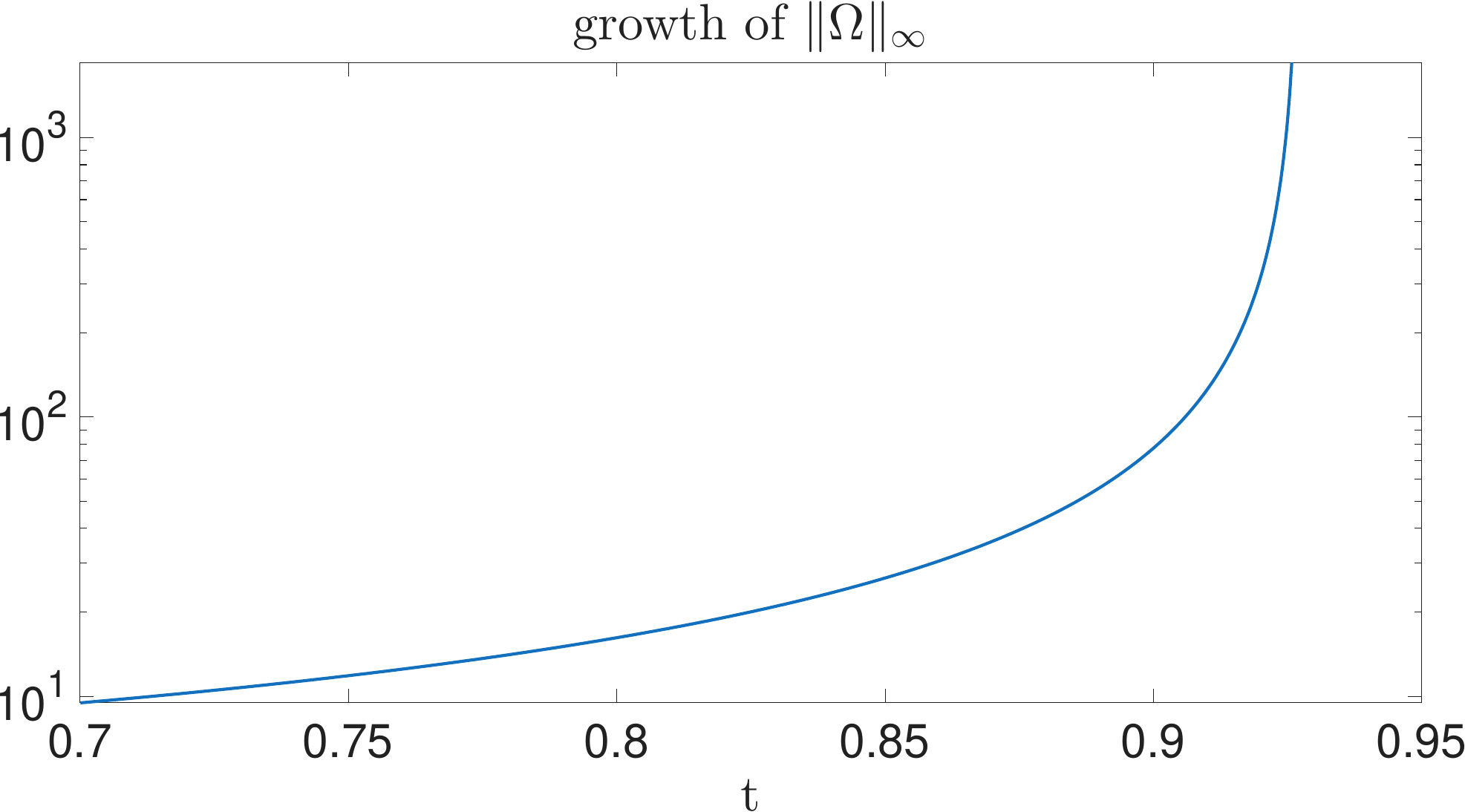}
        \includegraphics[width=0.38\textwidth]{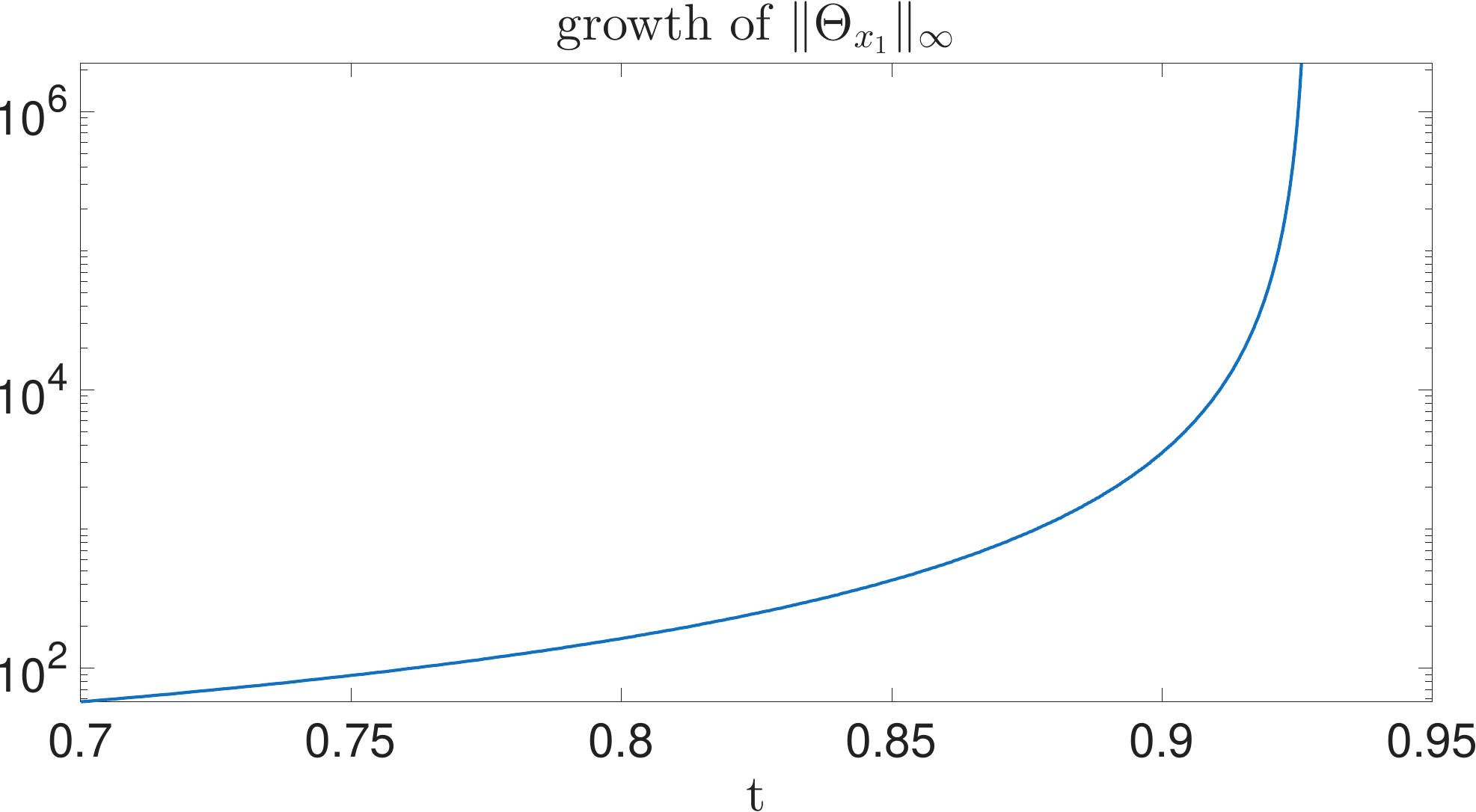}
        \includegraphics[width=0.38\textwidth]{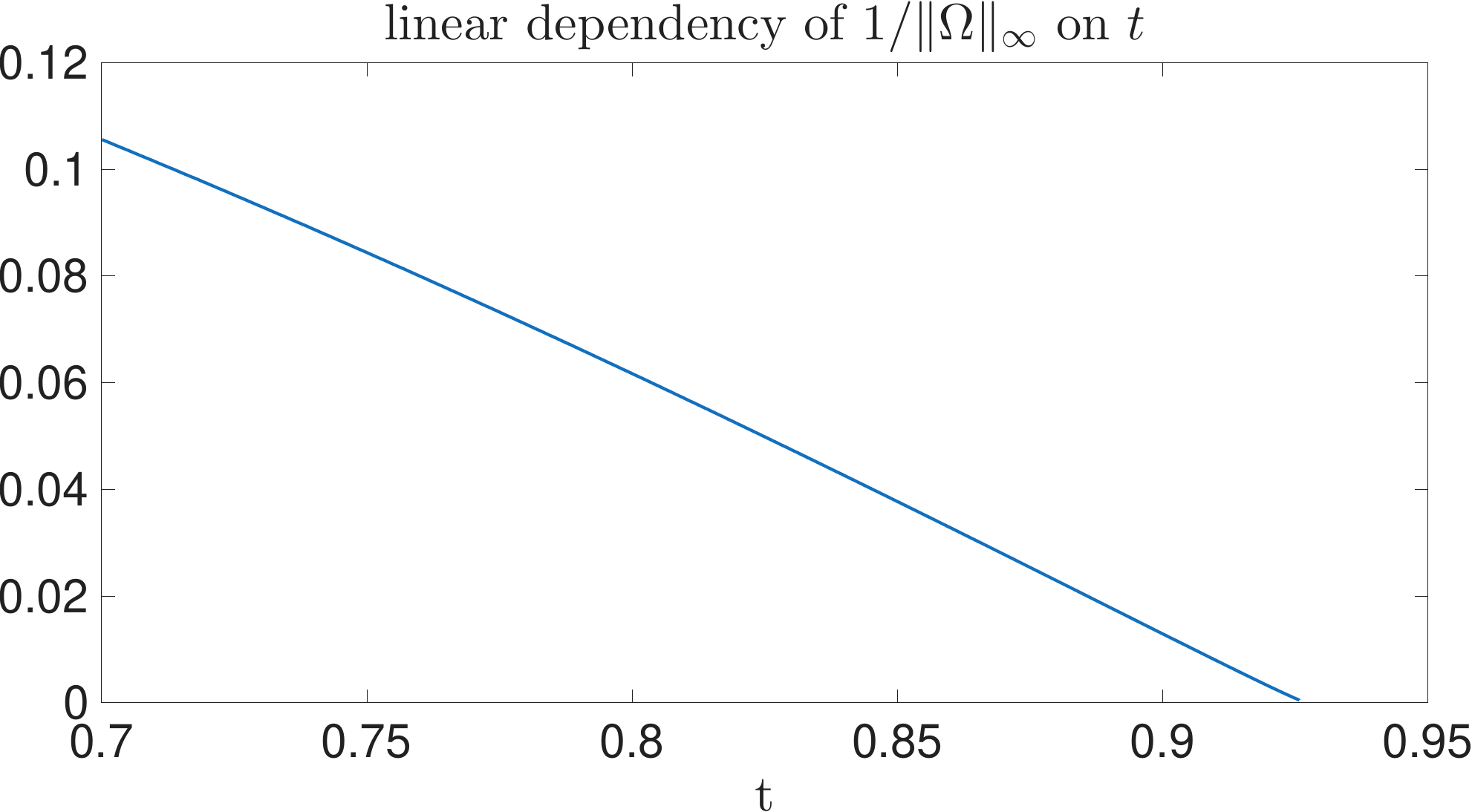}
        \includegraphics[width=0.38\textwidth]{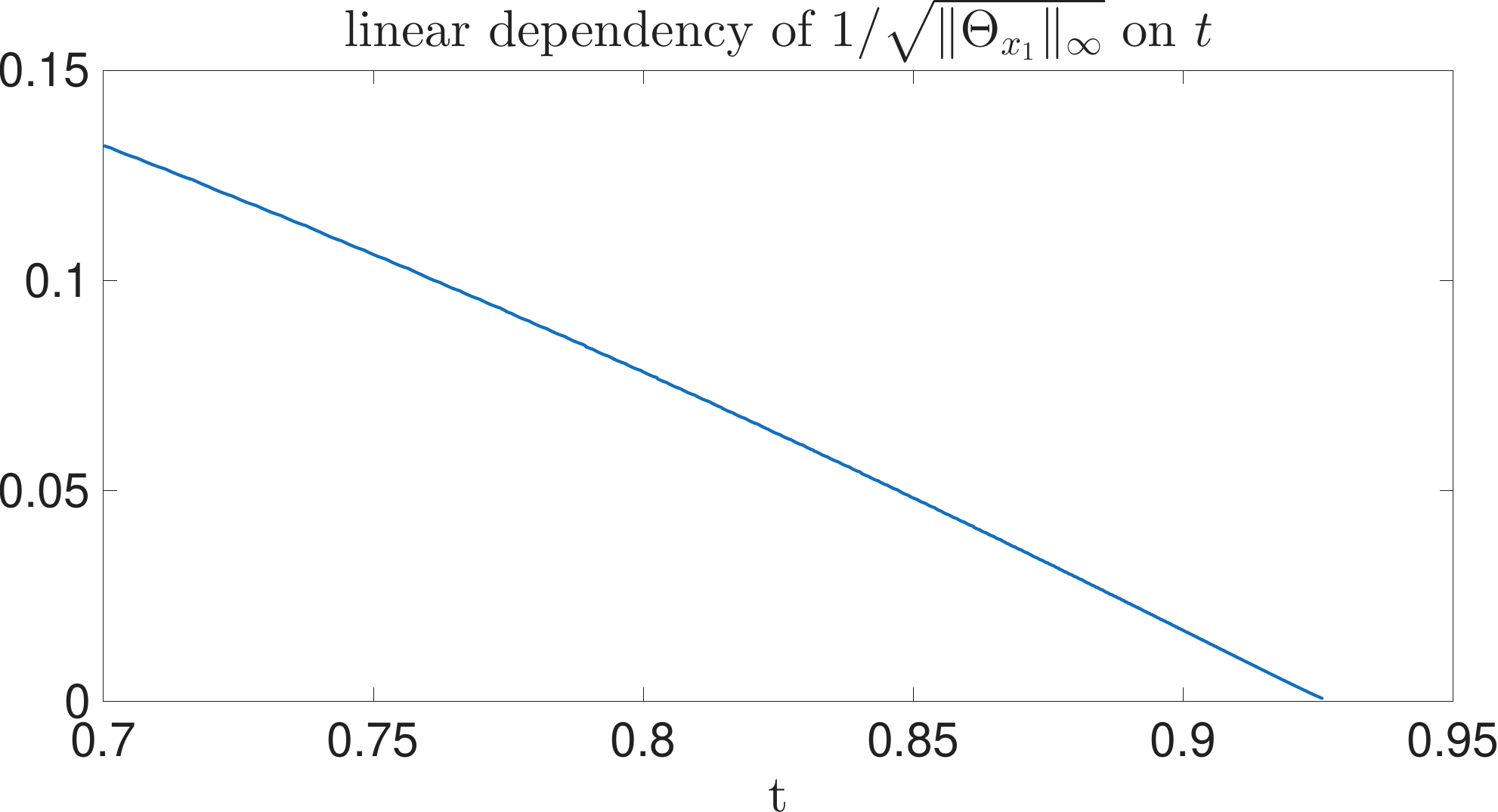}
    \caption[Time evolution of $\|\Omega\|_{L^{\infty}}$ and $\|\Theta_{x_1}\|_{L^{\infty}}$ (Scenario 1, computed on an adaptive mesh).]{Time evolution of $\|\Omega\|_{L^{\infty}}$ and $\|\Theta_{x_1}\|_{L^{\infty}}$ (Scenario 1, computed on an adaptive mesh).
    The top row shows the rapid growth of $\|\Omega\|_{L^{\infty}}$ and
    $\|\Theta_{x_1}\|_{L^{\infty}}$.
    The bottom row shows that $\|\Omega\|_{L^{\infty}}^{-1}$ and
    $\|\Theta_{x_1}\|_{L^{\infty}}^{-1/2}$ are approximately linear in $t$,
    which is consistent with the usual self-similar blowup rate.} 
    \label{fig:Bs_sc1_blowup}
\end{figure}

\begin{figure}[!htbp]
\centering
         \includegraphics[width=0.3\textwidth]{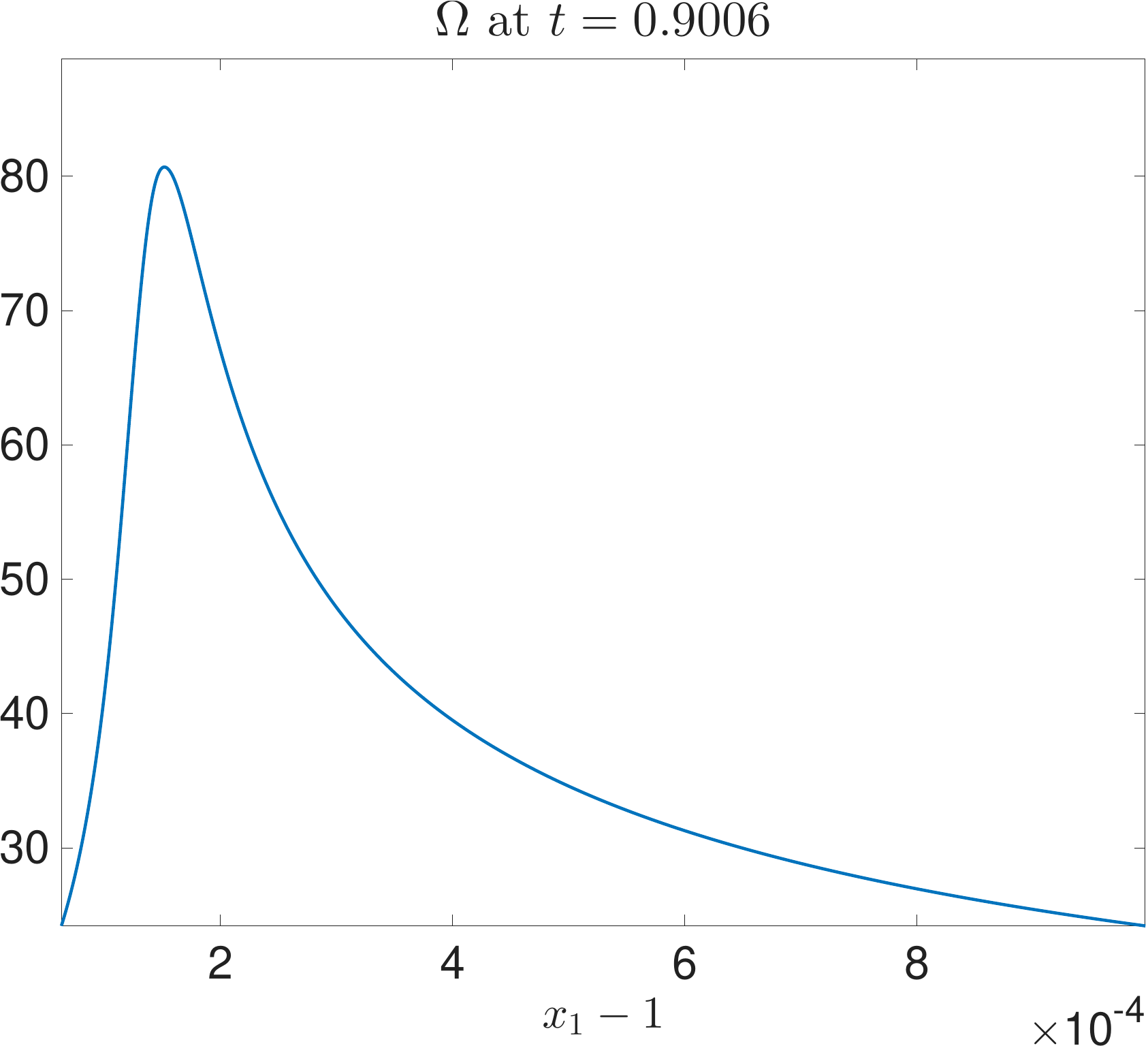}
         \includegraphics[width=0.3\textwidth]{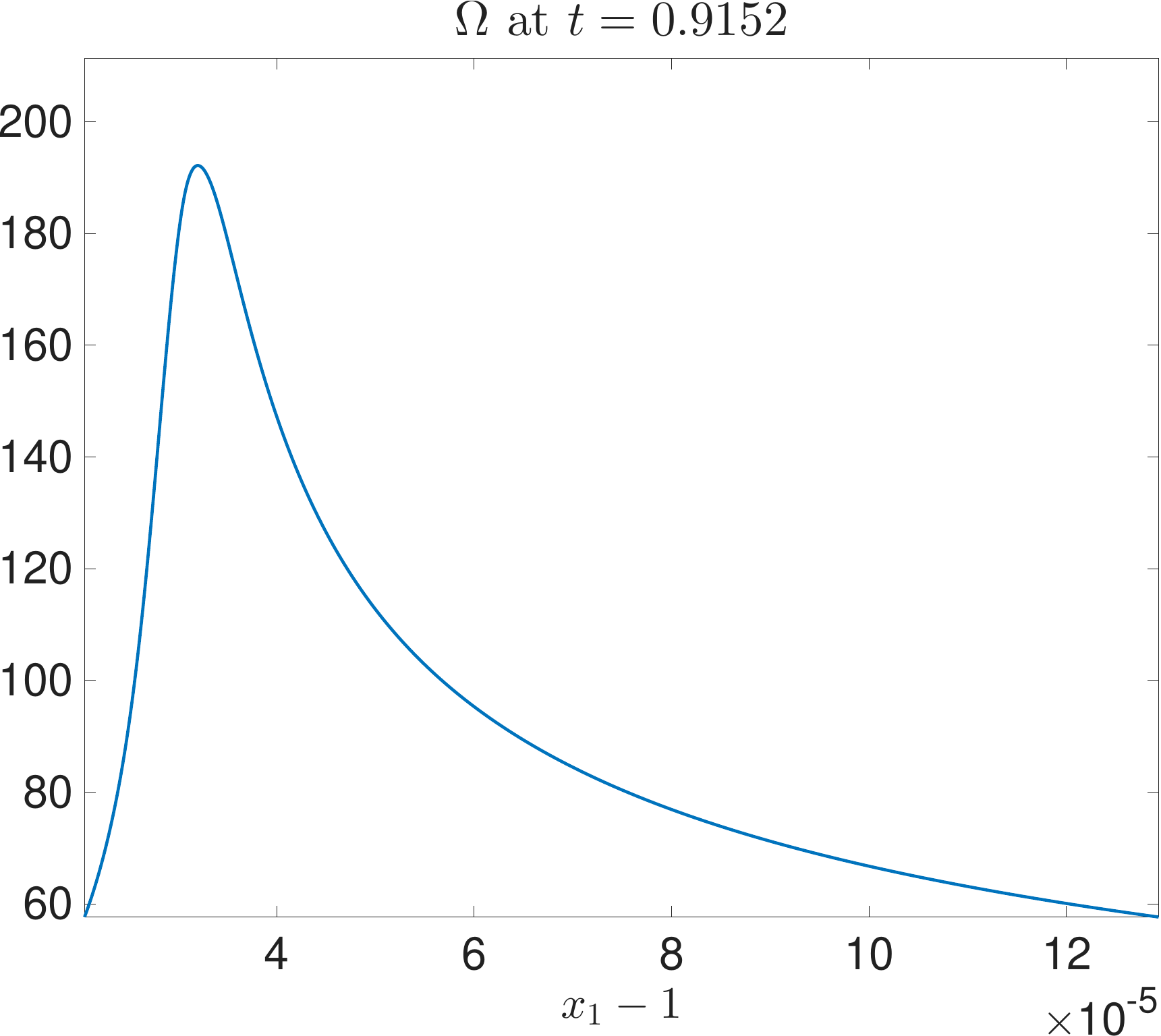}
         \includegraphics[width=0.3\textwidth]{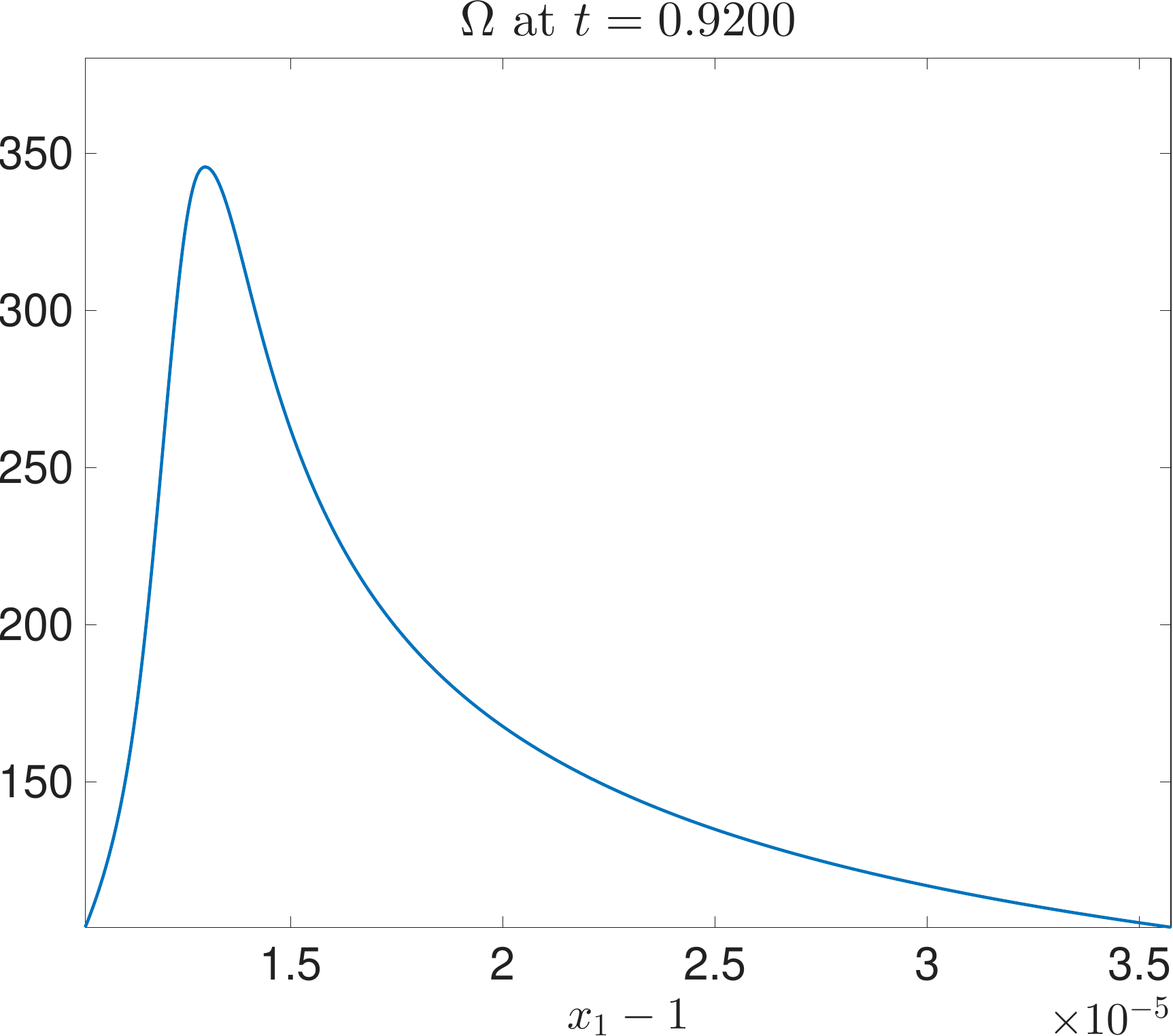}
         \includegraphics[width=0.3\textwidth]{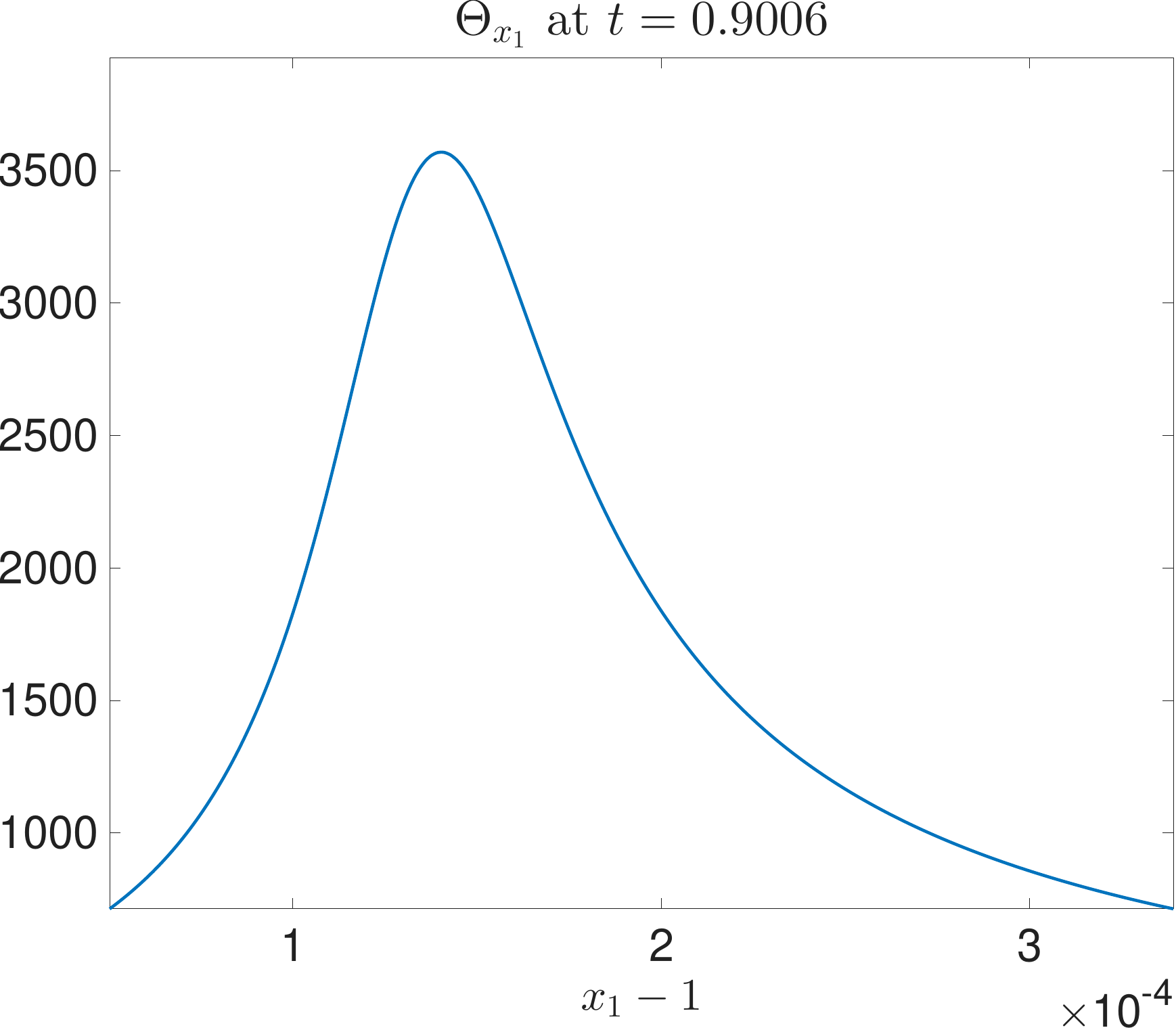}
         \includegraphics[width=0.3\textwidth]{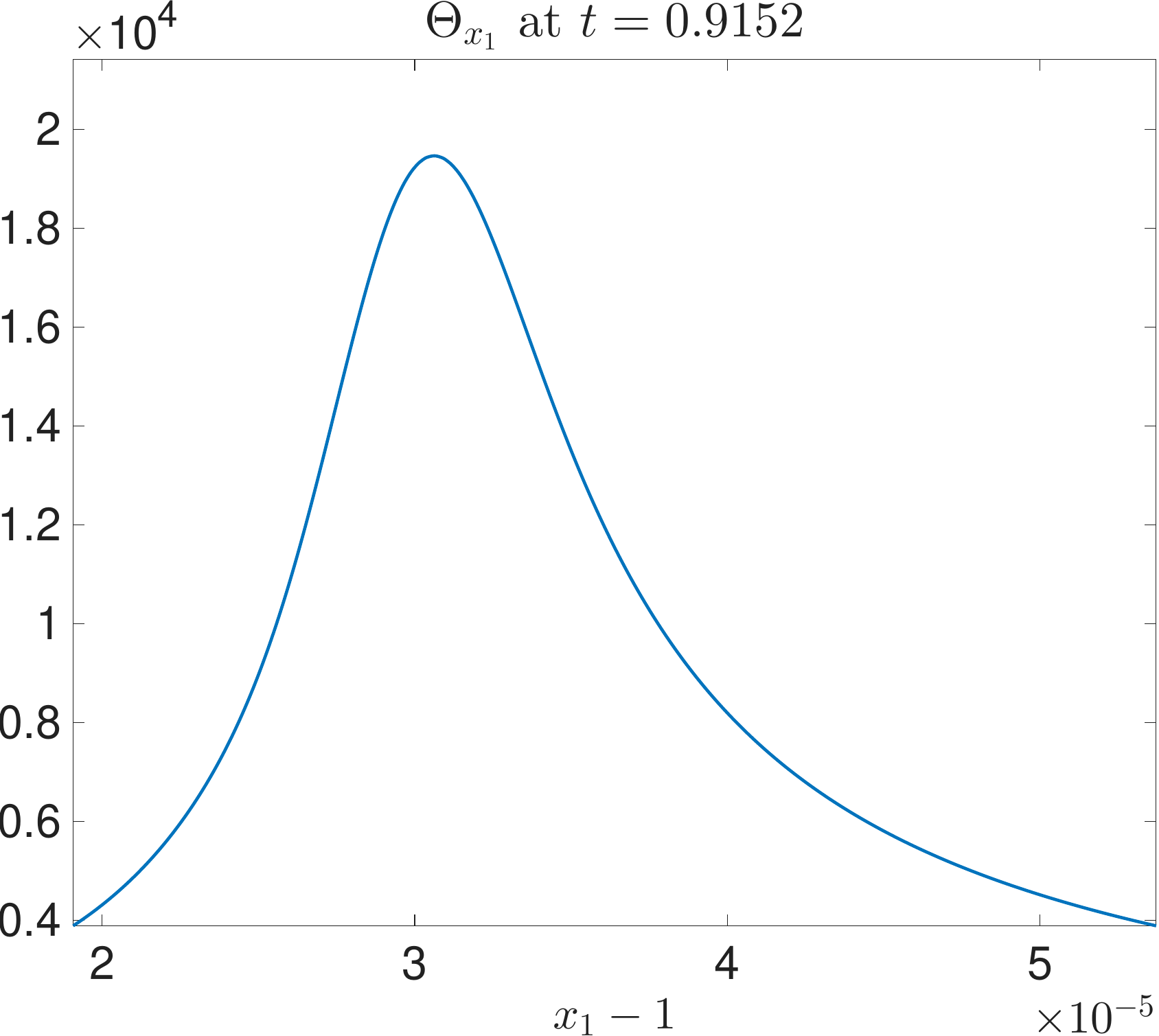}
         \includegraphics[width=0.3\textwidth]{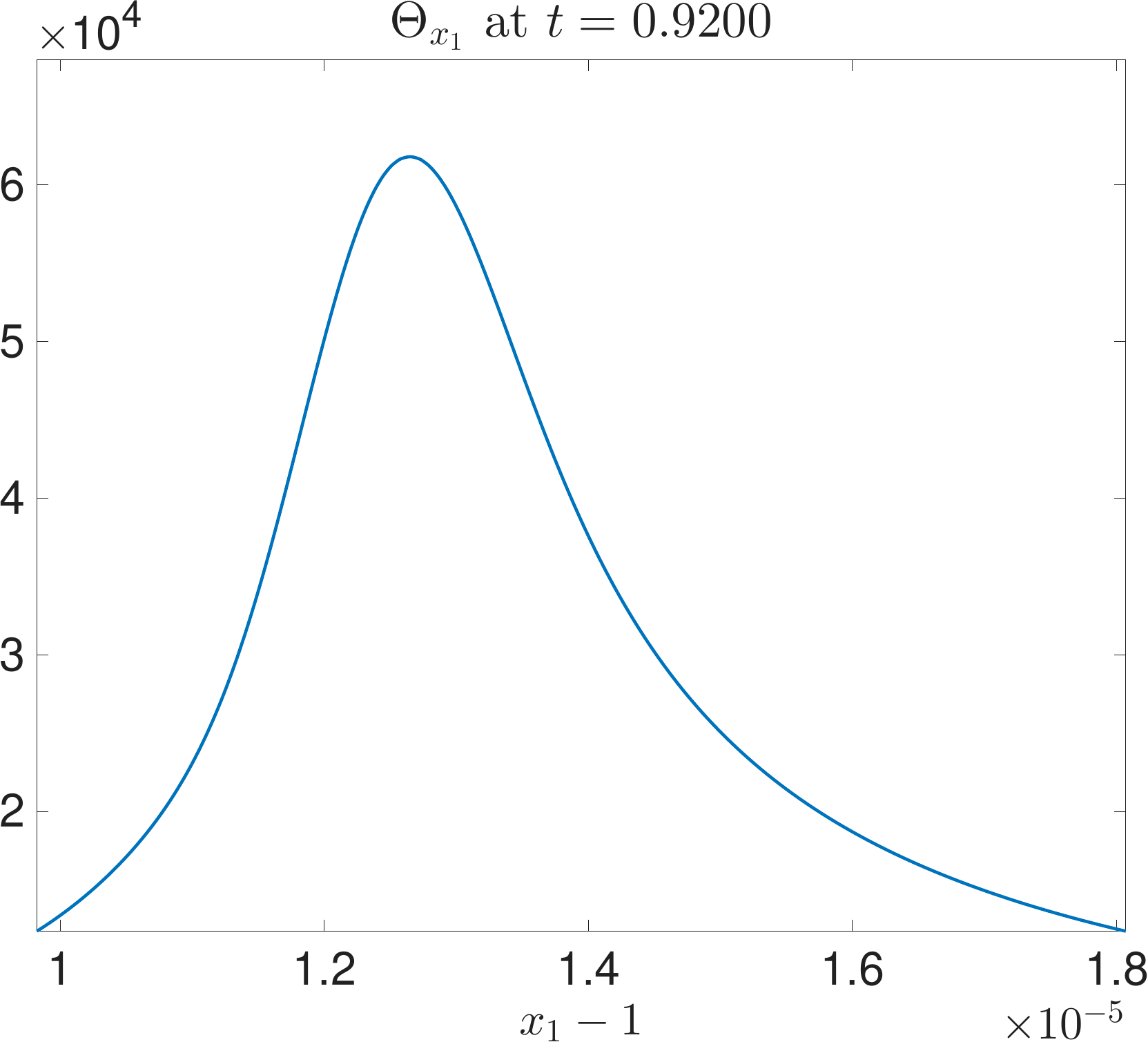}
    \caption[Inner boundary profiles of $\Omega$ and $\Theta_{x_1}$ in Scenario 1.]{Inner boundary profiles of $\Omega$ (top row) and $\Theta_{x_1}$ (bottom row) in Scenario 1. After proper rescaling, the inner profiles eventually stabilize at regular profiles as $t$ approaches the blowup time.}
    \label{fig:BS_scenario1_boundary_convergence_compare}
\end{figure}

\begin{figure}[!htbp]
\centering
         \includegraphics[width=0.32\textwidth]{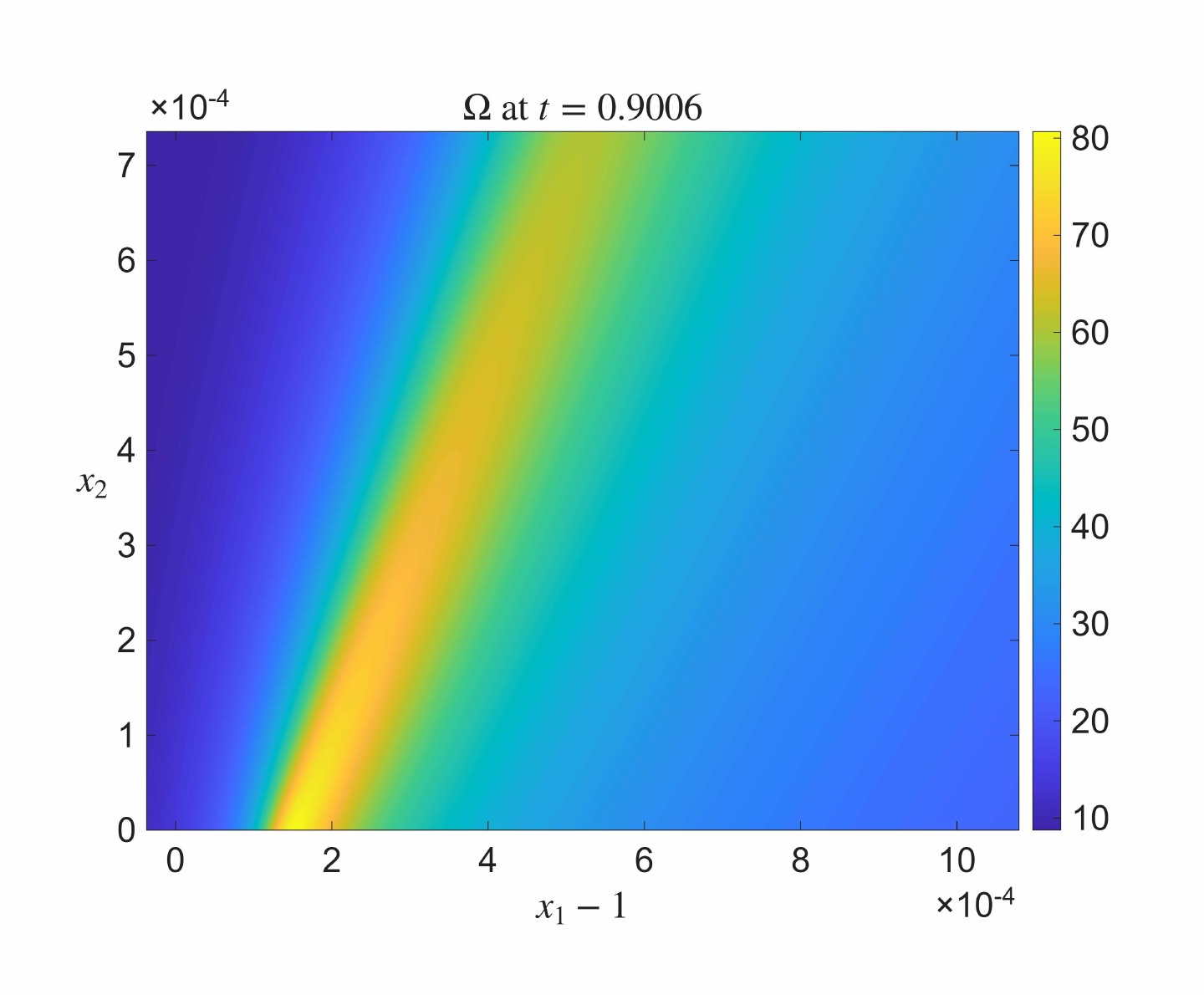}
         \includegraphics[width=0.32\textwidth]{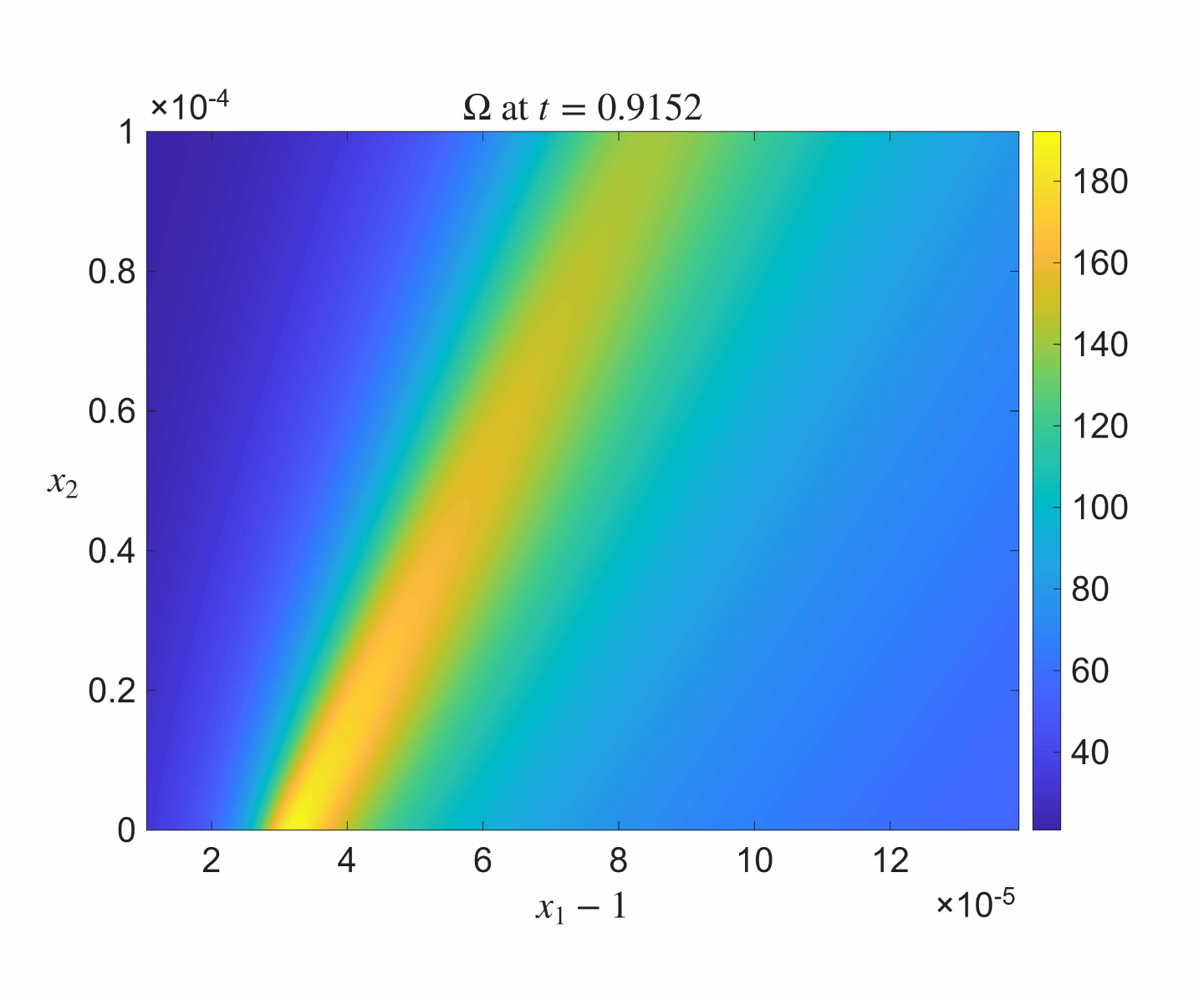}
         \includegraphics[width=0.32\textwidth]{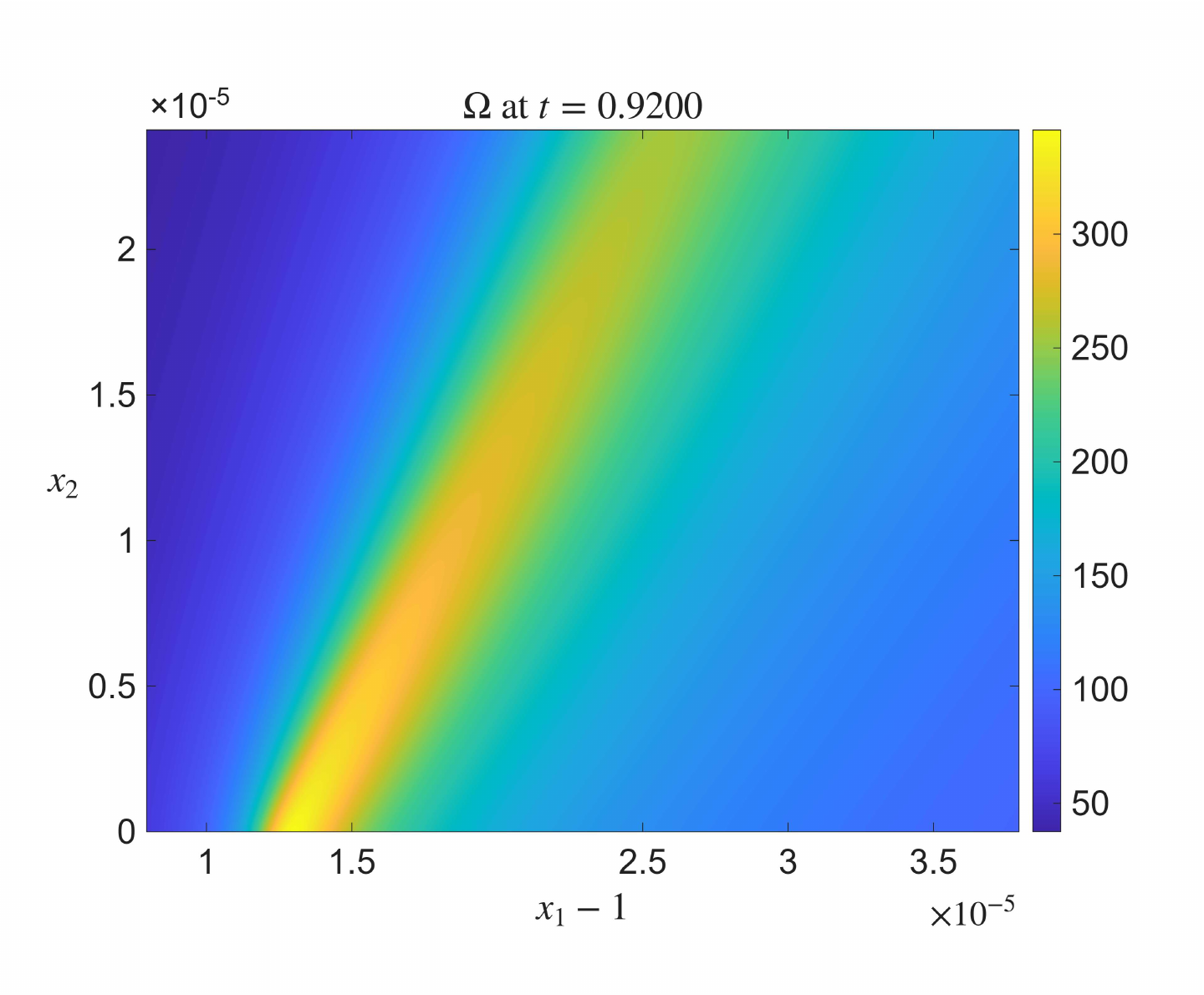}
         \includegraphics[width=0.32\textwidth]{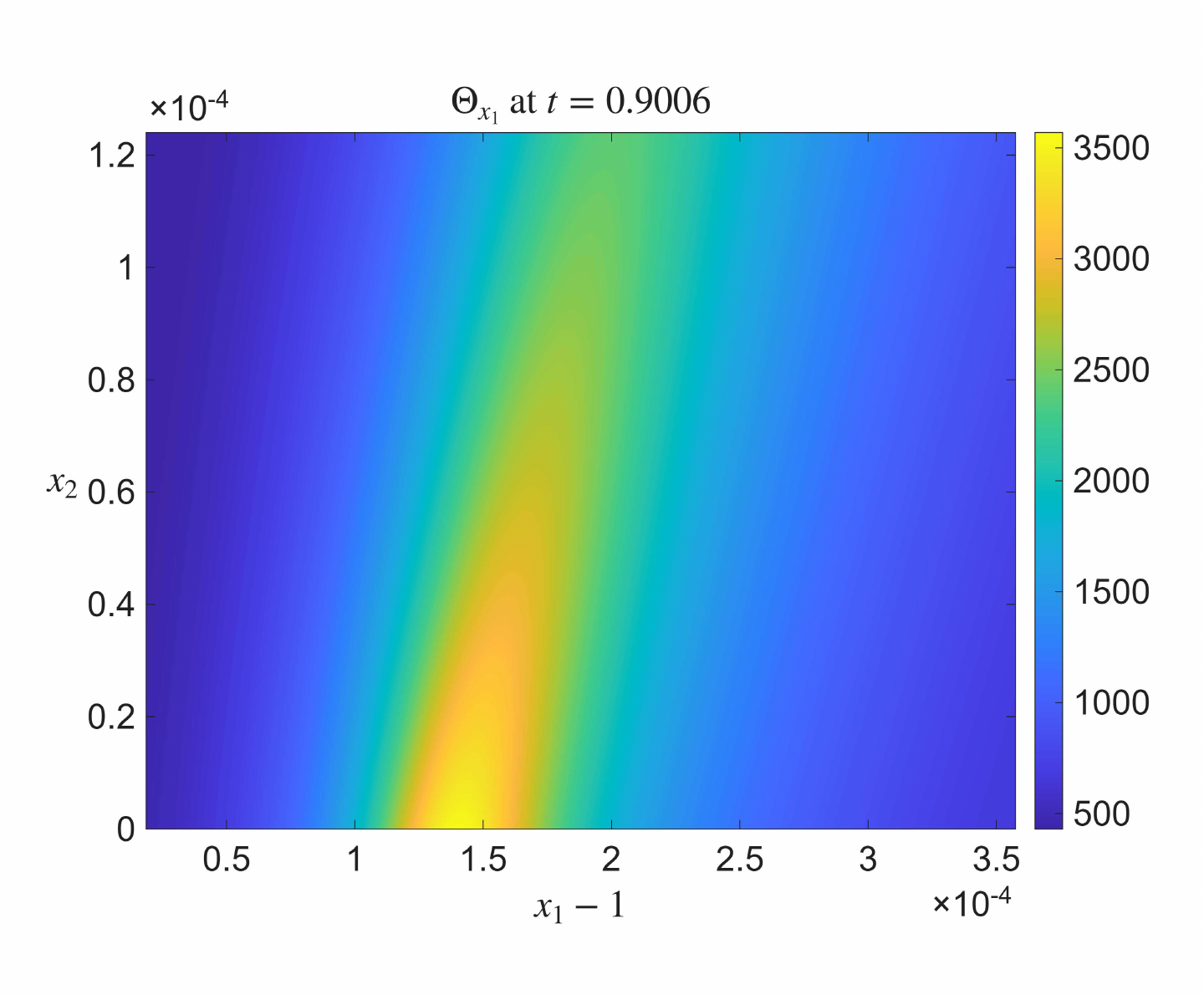}
         \includegraphics[width=0.32\textwidth]{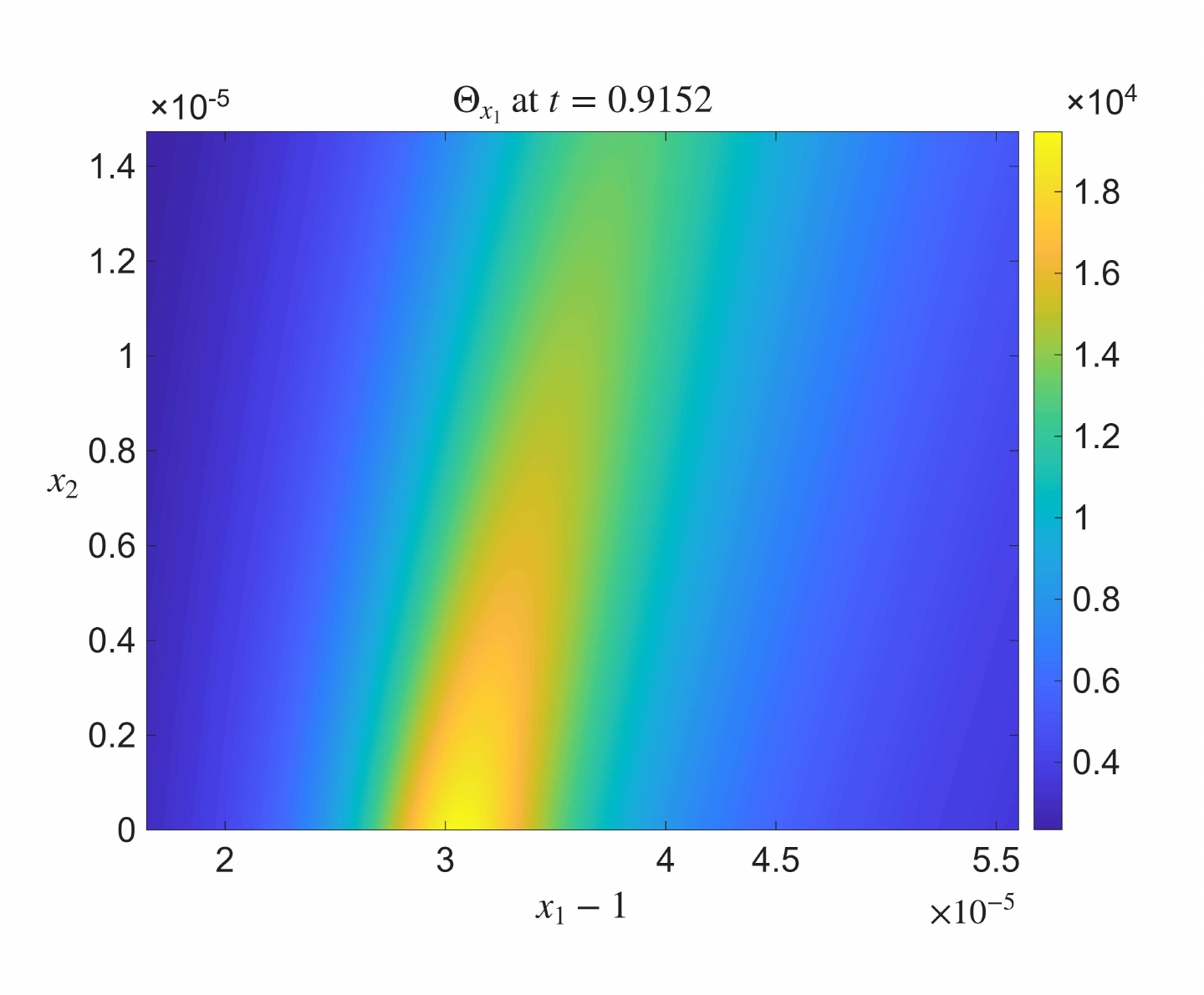}
         \includegraphics[width=0.32\textwidth]{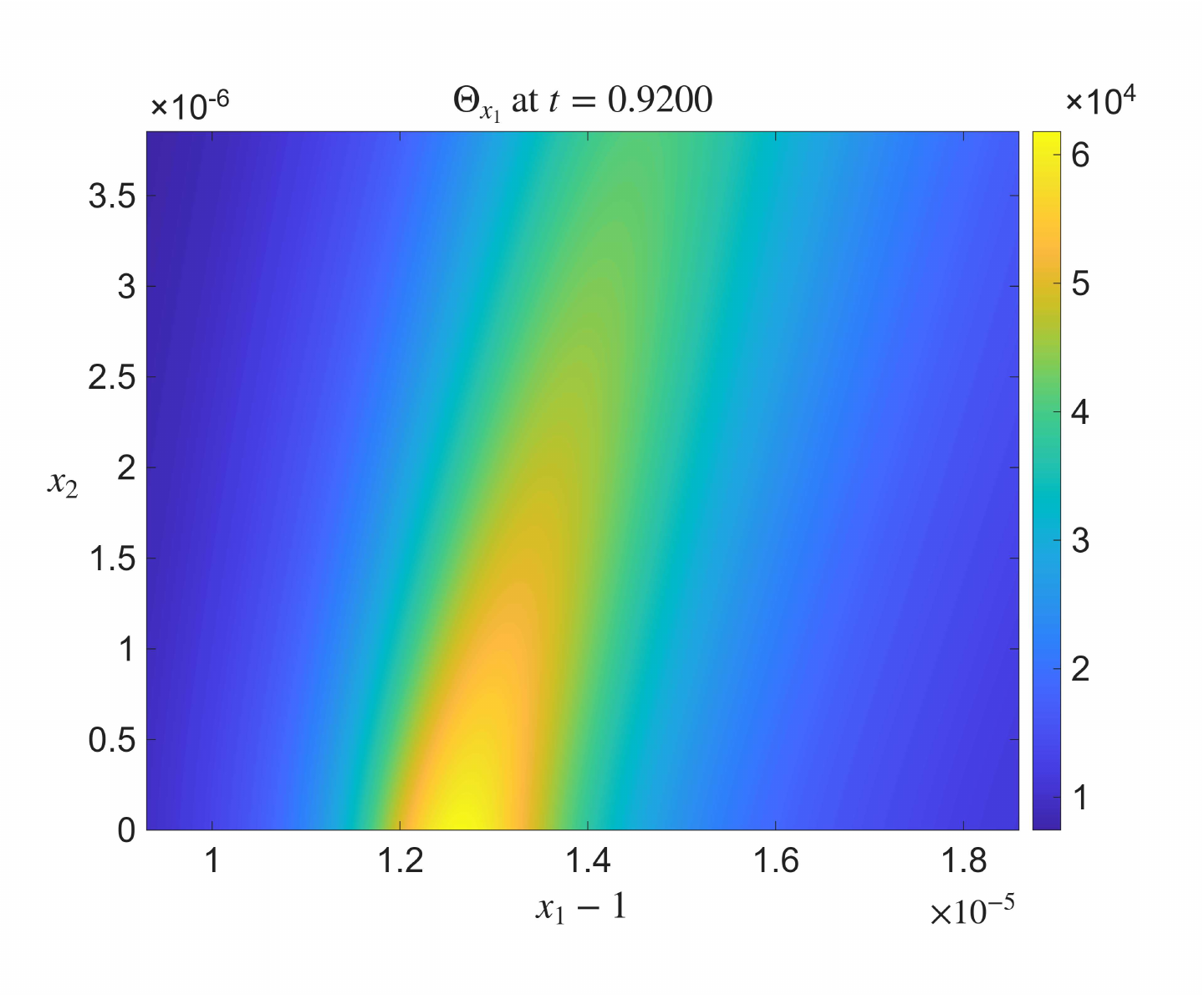}
    \caption[Inner contour profiles of $\Omega$ and $\Theta_{x_1}$ in Scenario 1.]{Inner contour profiles of $\Omega$ (top row) and $\Theta_{x_1}$ (bottom row) in Scenario 1. After proper rescaling, the inner profiles eventually stabilize at regular profiles as $t$ approaches the blowup time.}
    \label{fig:BS_scenario1_contour_convergence_compare}
\end{figure}

\begin{figure}[!htbp]
\centering
         \includegraphics[width=0.32\textwidth]{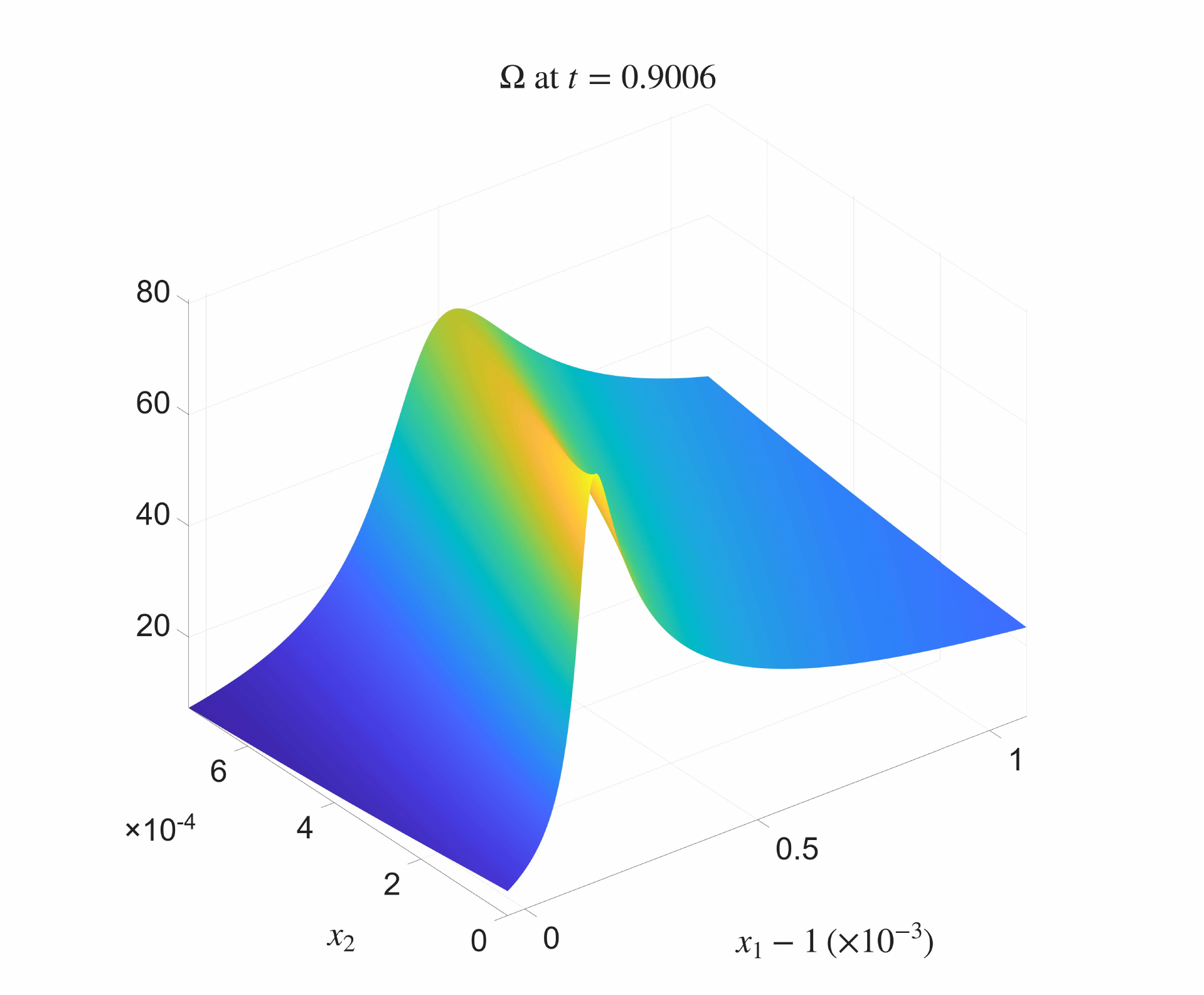}
         \includegraphics[width=0.32\textwidth]{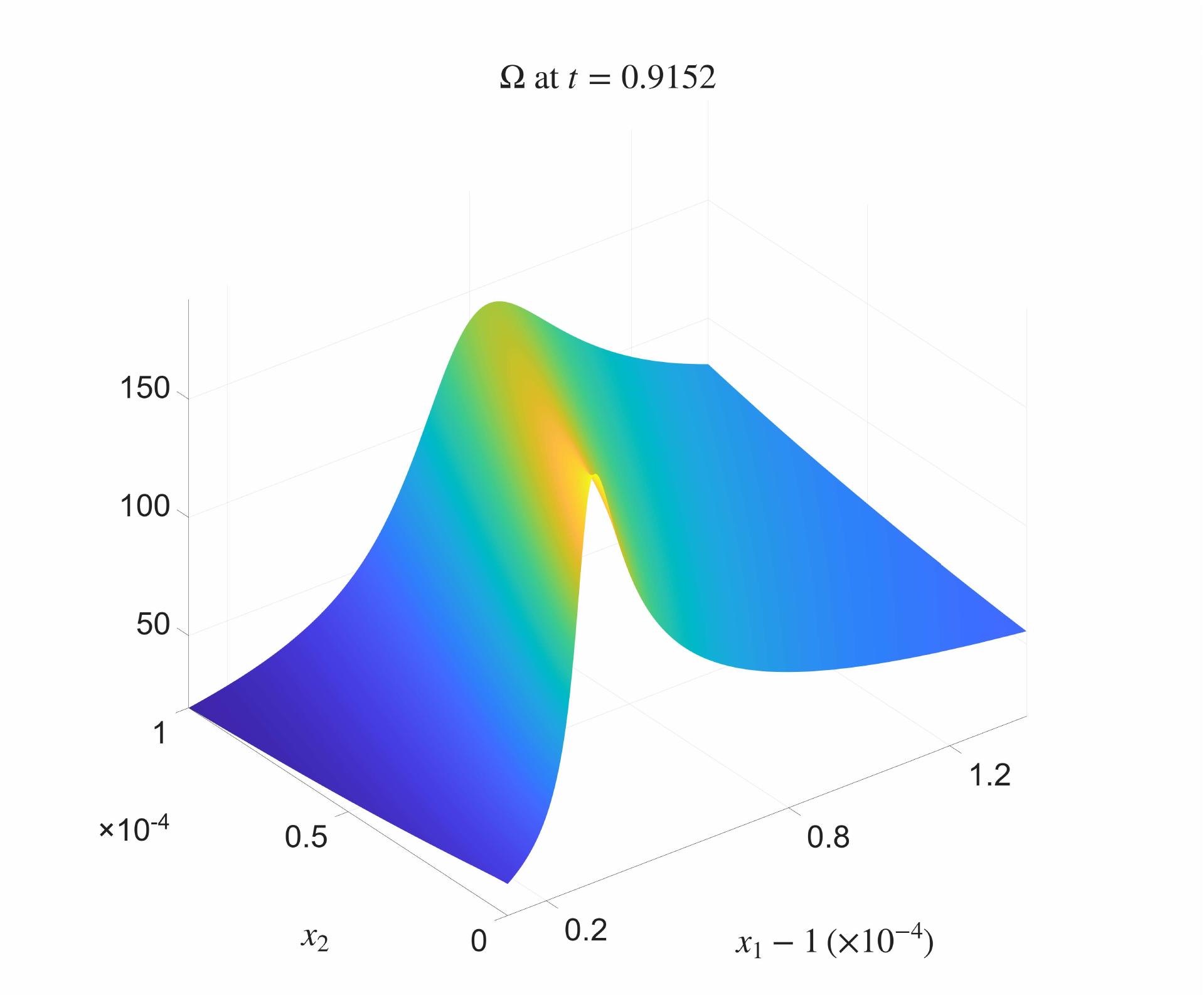}
         \includegraphics[width=0.32\textwidth]{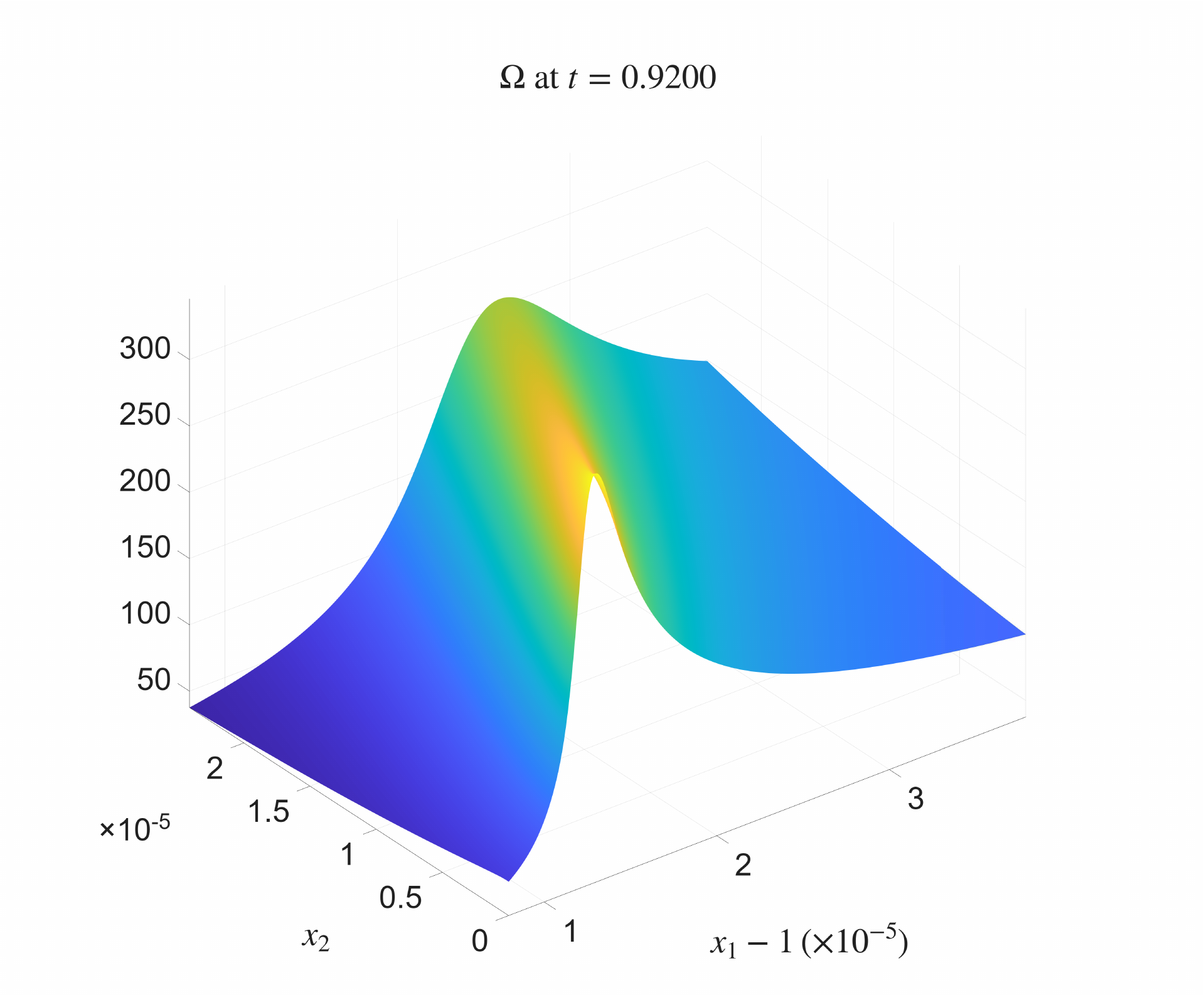}
       
          \includegraphics[width=0.32\textwidth]{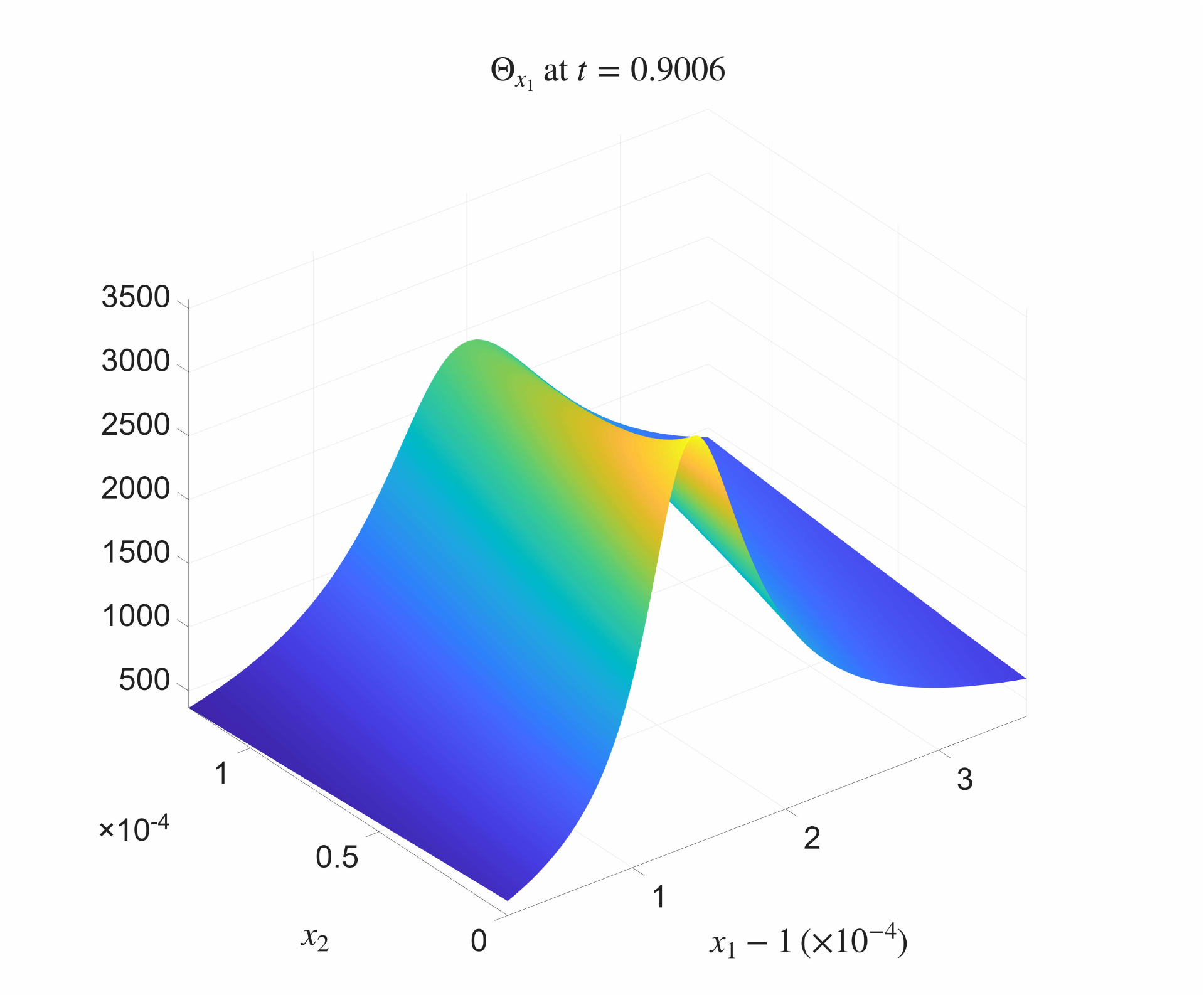}
         \includegraphics[width=0.32\textwidth]{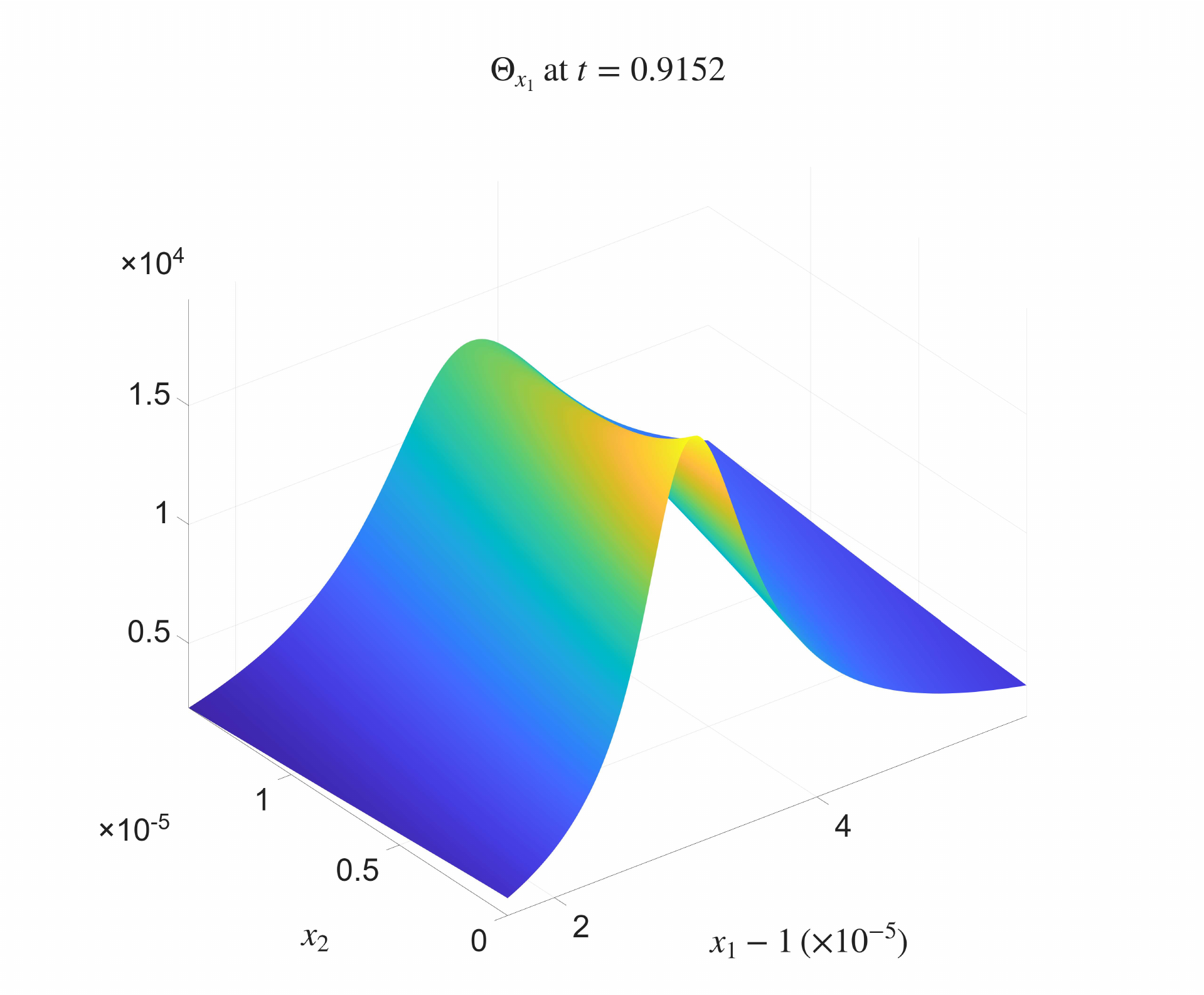}
         \includegraphics[width=0.32\textwidth]{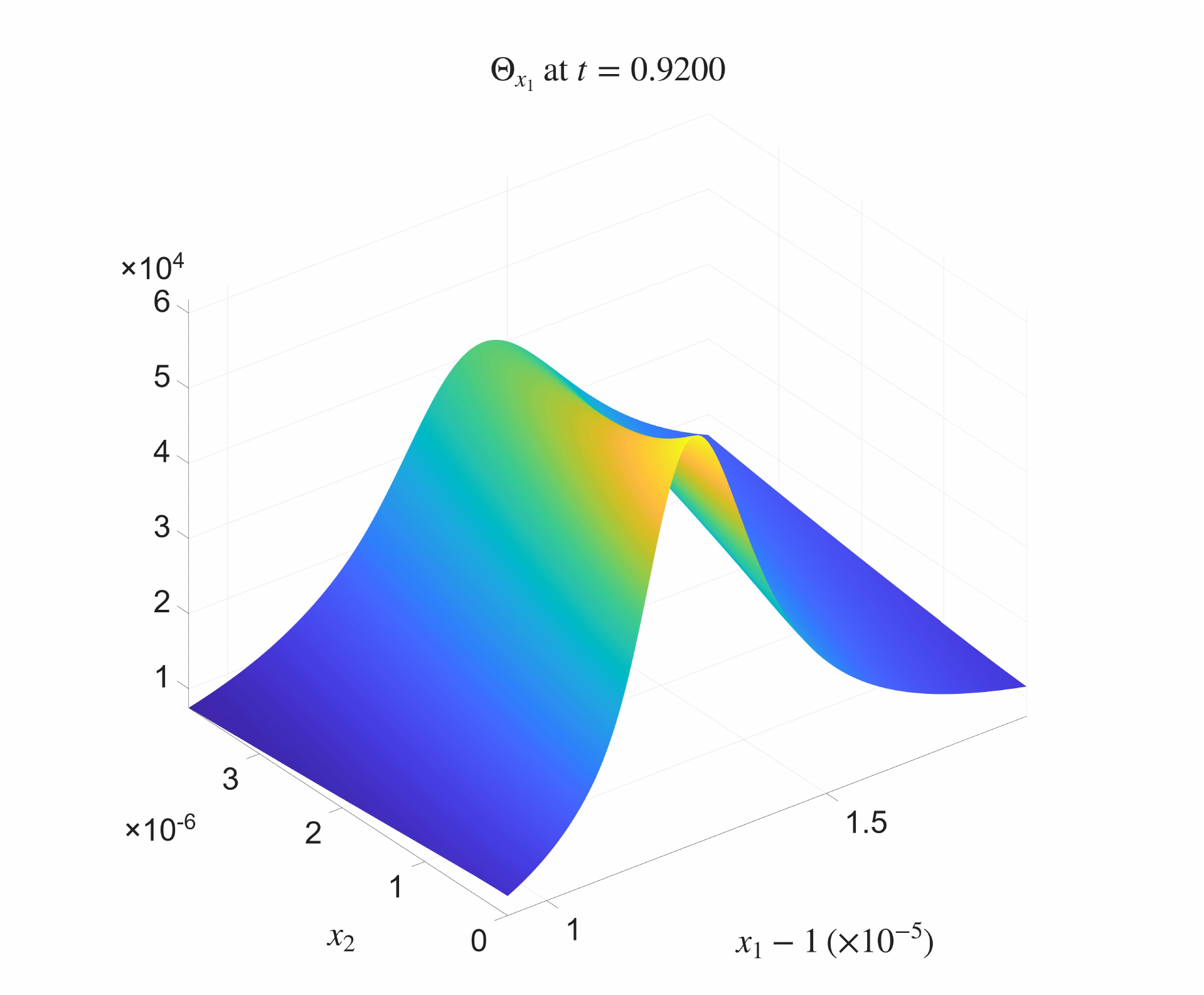}
    \caption[Inner mesh profiles of $\Omega$ and $\Theta_{x_1}$ in Scenario 1.]{Inner mesh profiles of $\Omega$ (top row) and $\Theta_{x_1}$ (bottom row) in Scenario 1. After proper rescaling, the inner profiles eventually stabilize at regular profiles as $t$ approaches the blowup time.}
    \label{fig:BS_scenario1_surf_convergence_compare}
\end{figure}

Motivated by the numerical evidence of the 1D HL model, we are also interested in the long-time behavior of the weak solution to \eqref{eqt:dynamic_rescaling_of_BS} beyond the Stage 1 blowup. Again, to capture the post-blowup dynamics, we perform simulations on a fixed computational mesh that is locally refined near $(1,0)$. During the computations, we employ suitable numerical regularization. This approach can be interpreted as approximating the weak solution to \eqref{eqt:dynamic_rescaling_of_BS} via the method of vanishing viscosity. We remark that, compared to the 1D HL model, the numerical simulations of the 2D Boussinesq equations are substantially more challenging for two main reasons. First, the computational cost is significantly higher in the 2D case, which makes it exceedingly difficult to perform long-time simulations. Second, unlike the HL model, we are currently unable to obtain a steady singular profile of \eqref{eqt:dynamic_rescaling_of_BS} with explicit expressions that acts as a reference solution to validate our numerical scheme.

Despite these challenges, we can still obtain numerical evidence suggesting that the weak solution of \eqref{eqt:dynamic_rescaling_of_BS} exhibits a strong tendency to approach singular profiles analogous to the 1D case. Figure \ref{fig:Bs_sc1_time_evolution} tracks the evolution of the scaling parameters $c_l/c_{\omega}$ and $c_{\theta}=c_l+2c_{\om}$. Interestingly, the ratio $c_l/c_{\omega}$ eventually stabilizes at a value remarkably close to $-2$ ($c_l/c_{\omega} \approx -2.000048$), which is consistent with the 1D case. Figures \ref{fig:Bs_sc1_mesh_evo_om}, \ref{fig:Bs_sc1_mesh_evo_th}, \ref{fig:Bs_sc1_con_evo_om}, and \ref{fig:Bs_sc1_con_evo_th} illustrate the evolution of the spatial profiles of $\Omega$ and $\Theta$ from different perspectives. As suggested in Figures \ref{fig:Bs_sc1_mesh_evo_om} and \ref{fig:Bs_sc1_con_evo_om}, $\Omega$ exhibits a distinct tendency to settle to a limiting profile that is singular at $(1,0)$ as time progresses. Meanwhile, Figures \ref{fig:Bs_sc1_mesh_evo_th} and \ref{fig:Bs_sc1_con_evo_th} reveal that $\Theta$ develops a sharp transition layer along a right-leaning ray emanating from $(1,0)$. Notably, when restricted to the boundary $x_2=0$, this transition manifests as a strict Heaviside-type jump across $(1,0)$, as illustrated in Figure \ref{fig:Bs_sc1_bd_evo}. Based on these observations, we conjecture that the weak solution to \eqref{eqt:dynamic_rescaling_of_BS} eventually converges to a steady state $(\bar{\Omega}, \bar{\Theta}, \bar{c}_{l}, \bar{c}_{\omega})$, where $\bar{\Omega}$ is singular at $(1,0)$ and $\bar{c}_{l}/\bar{c}_{\omega} = -2$. In the context of the original 2D Boussinesq equations, this corresponds to a local $L^4$ blowup of $\omega$ at the origin with a singular self-similar profile. We refer to this subsequent phenomenon as the Stage 2 blowup.

\begin{figure}[!htbp]
\centering
        \includegraphics[width=0.4\textwidth]{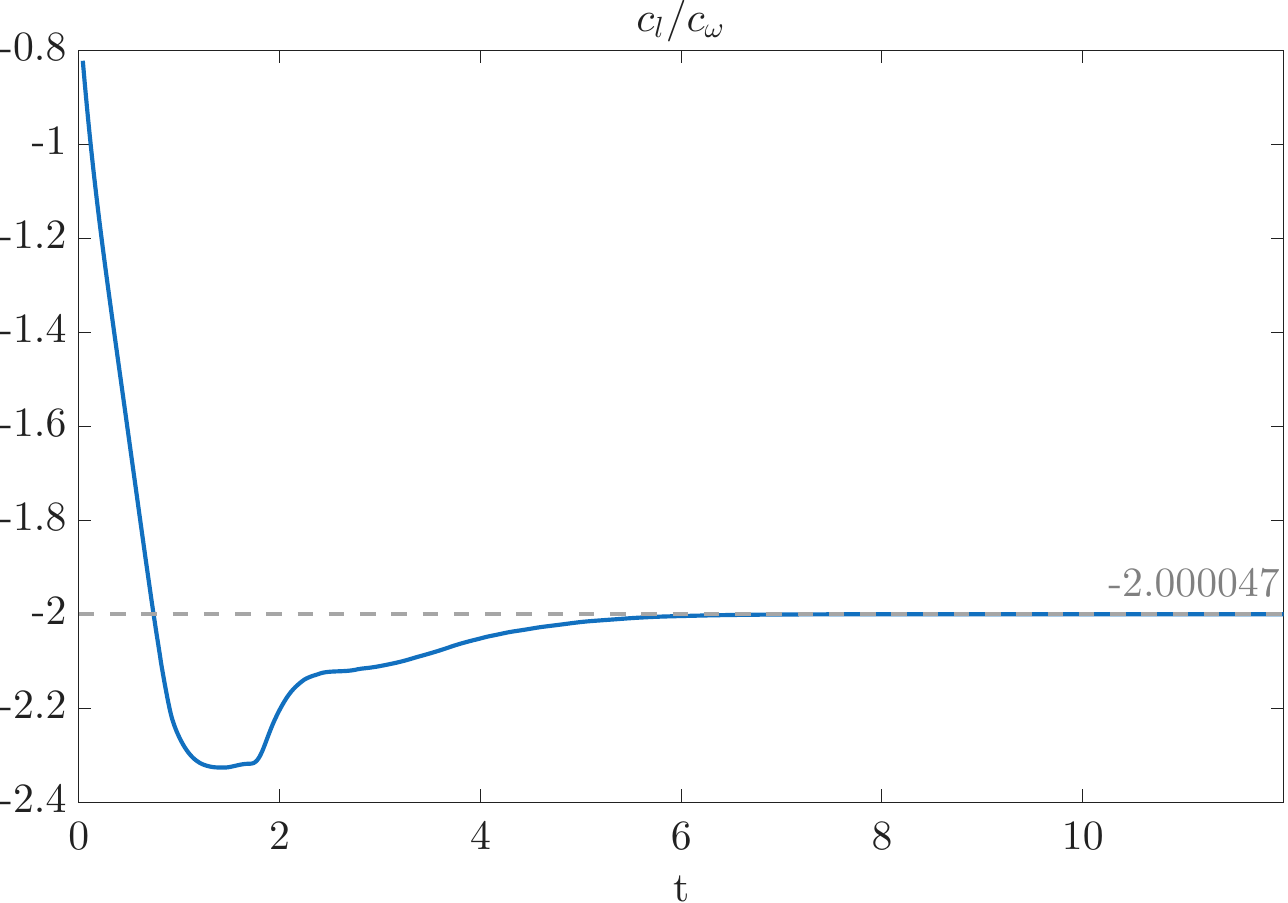}
        \includegraphics[width=0.4\textwidth]{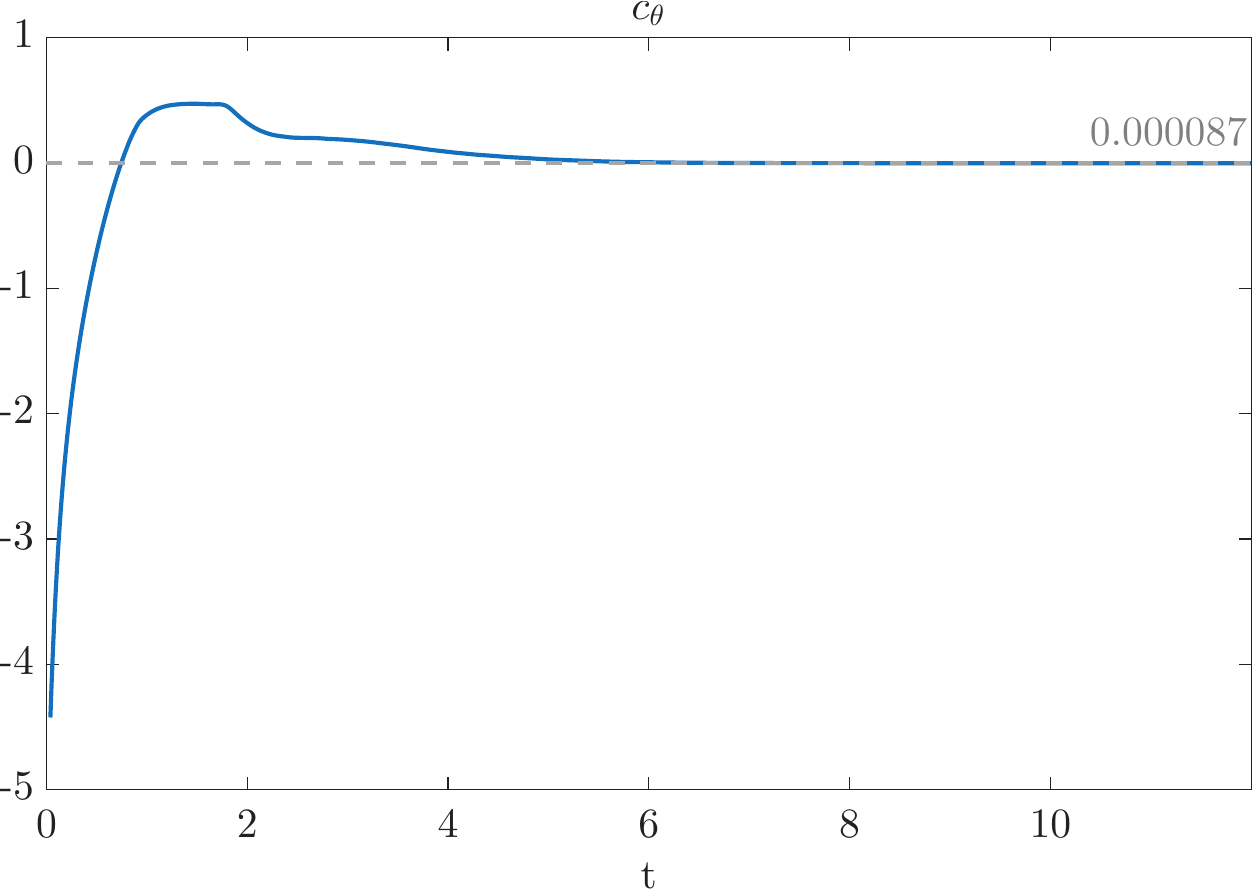}
    \caption[Evolution of $c_l/c_{\om}$(left figure) and $c_{\theta}=c_l+2c_{\om} $ (Scenario 1, computed on a fixed mesh).]{Evolution of $c_l/c_{\om}$(left figure) and $c_{\theta} = c_l+2c_{\om}$ (right figure) computed on a fixed mesh in Scenario 1. The ratio $c_l/c_{\omega}$ eventually stabilizes near $-2$, which is consistent with the 1D case.}
    \label{fig:Bs_sc1_time_evolution}
\end{figure}

\begin{figure}[!htbp]
\centering
        \includegraphics[width=0.4\textwidth]{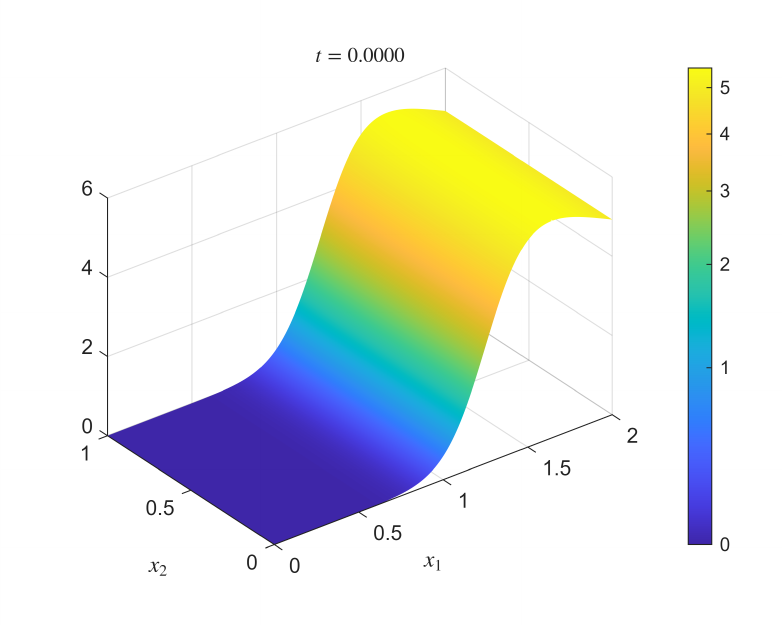}
        \includegraphics[width=0.4\textwidth]{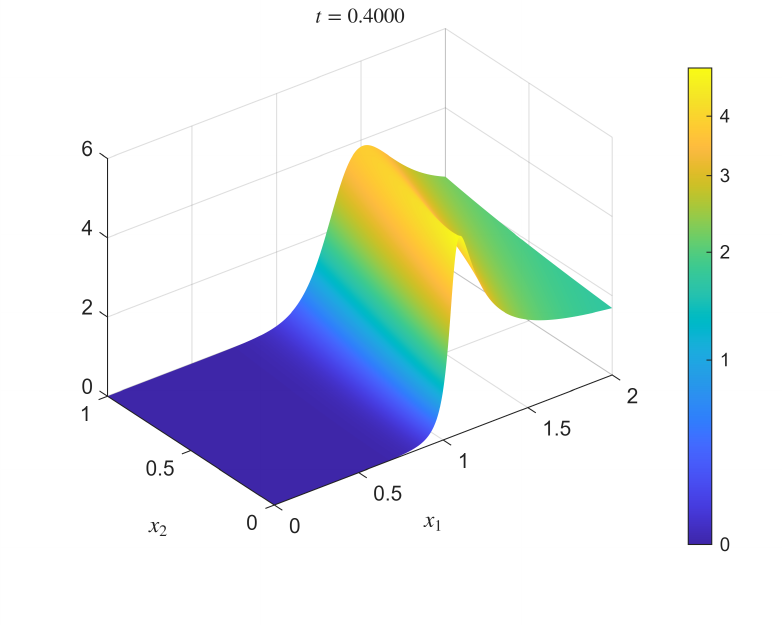}
        \includegraphics[width=0.4\textwidth]{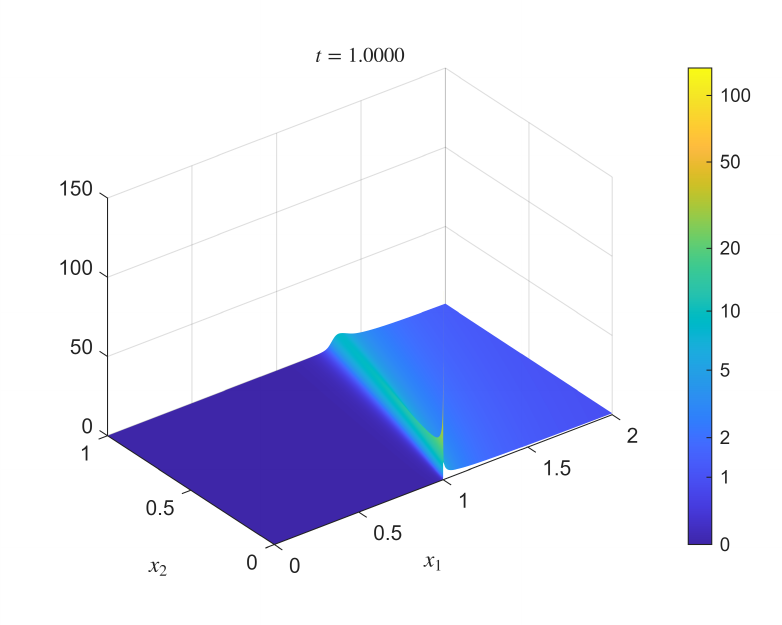}
        \includegraphics[width=0.4\textwidth]{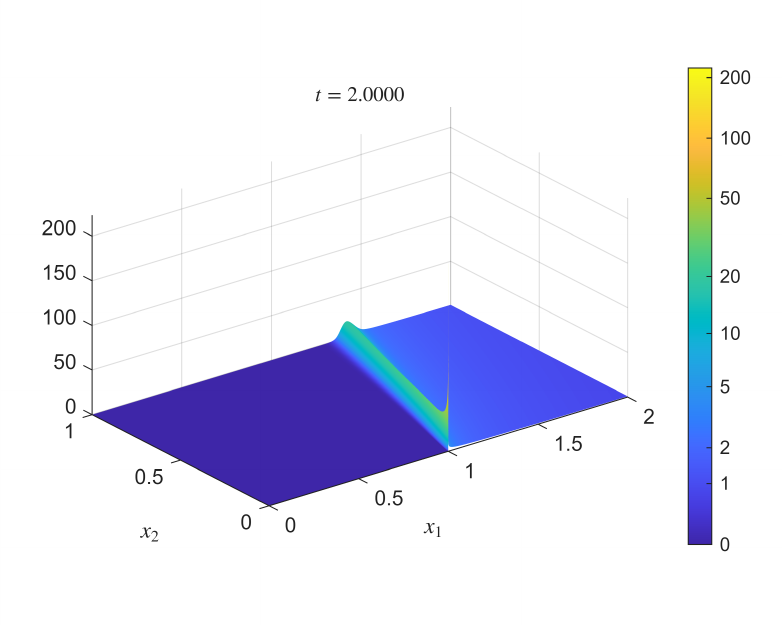}
\caption[Evolution of 3D spatial profiles of $\Omega$ computed on a fixed mesh in Scenario 1.]{Evolution of 3D spatial profiles of $\Omega$ computed on a fixed mesh in Scenario 1. The visualizations highlight the emergence of sharp gradients near the singularity, alongside a progressive concentration of the solution's support.}
    \label{fig:Bs_sc1_mesh_evo_om}
\end{figure}

\begin{figure}[!htbp]
\centering
        \includegraphics[width=0.4\textwidth]{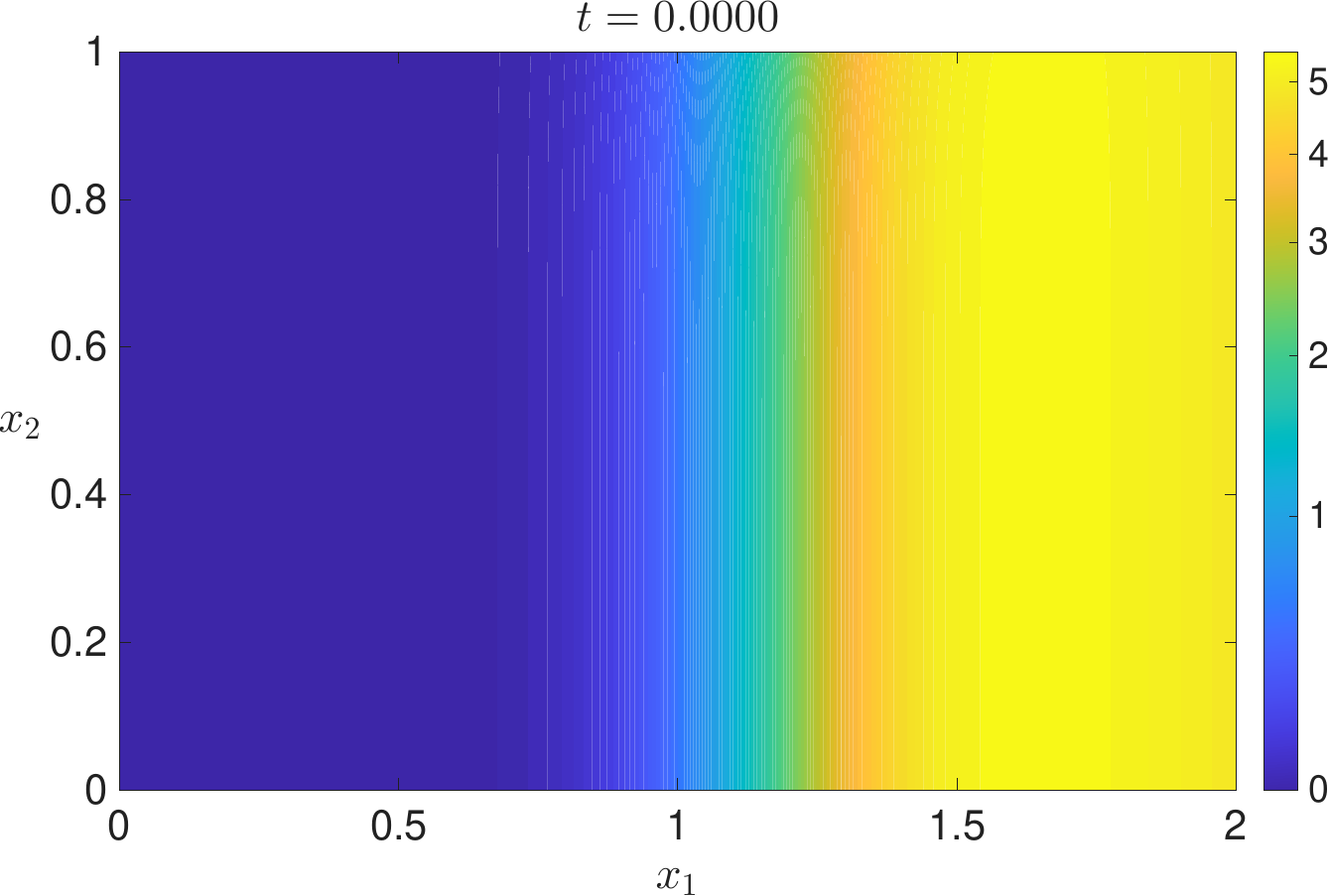}
        \includegraphics[width=0.4\textwidth]{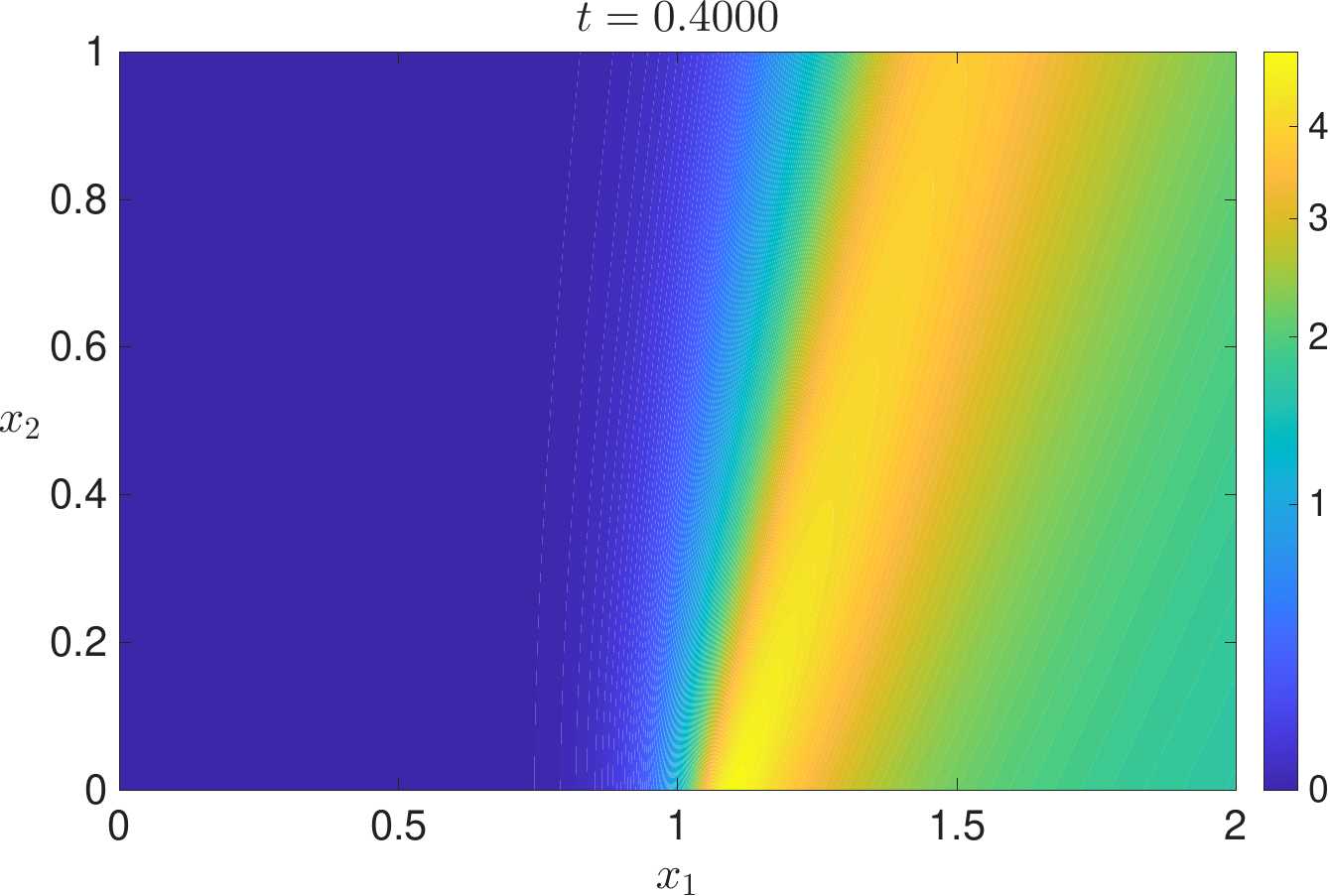}
        \includegraphics[width=0.4\textwidth]{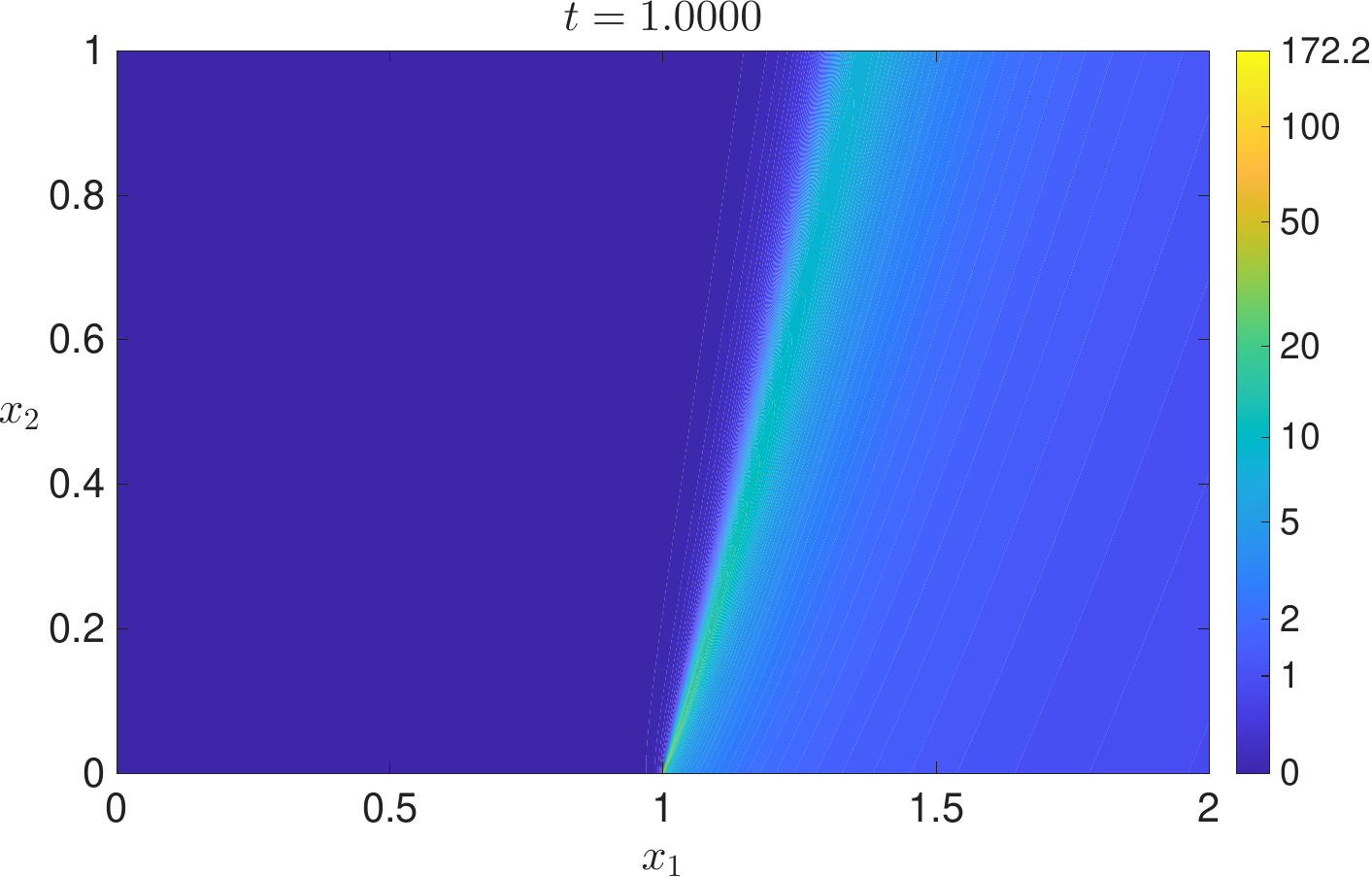}
        \includegraphics[width=0.4\textwidth]{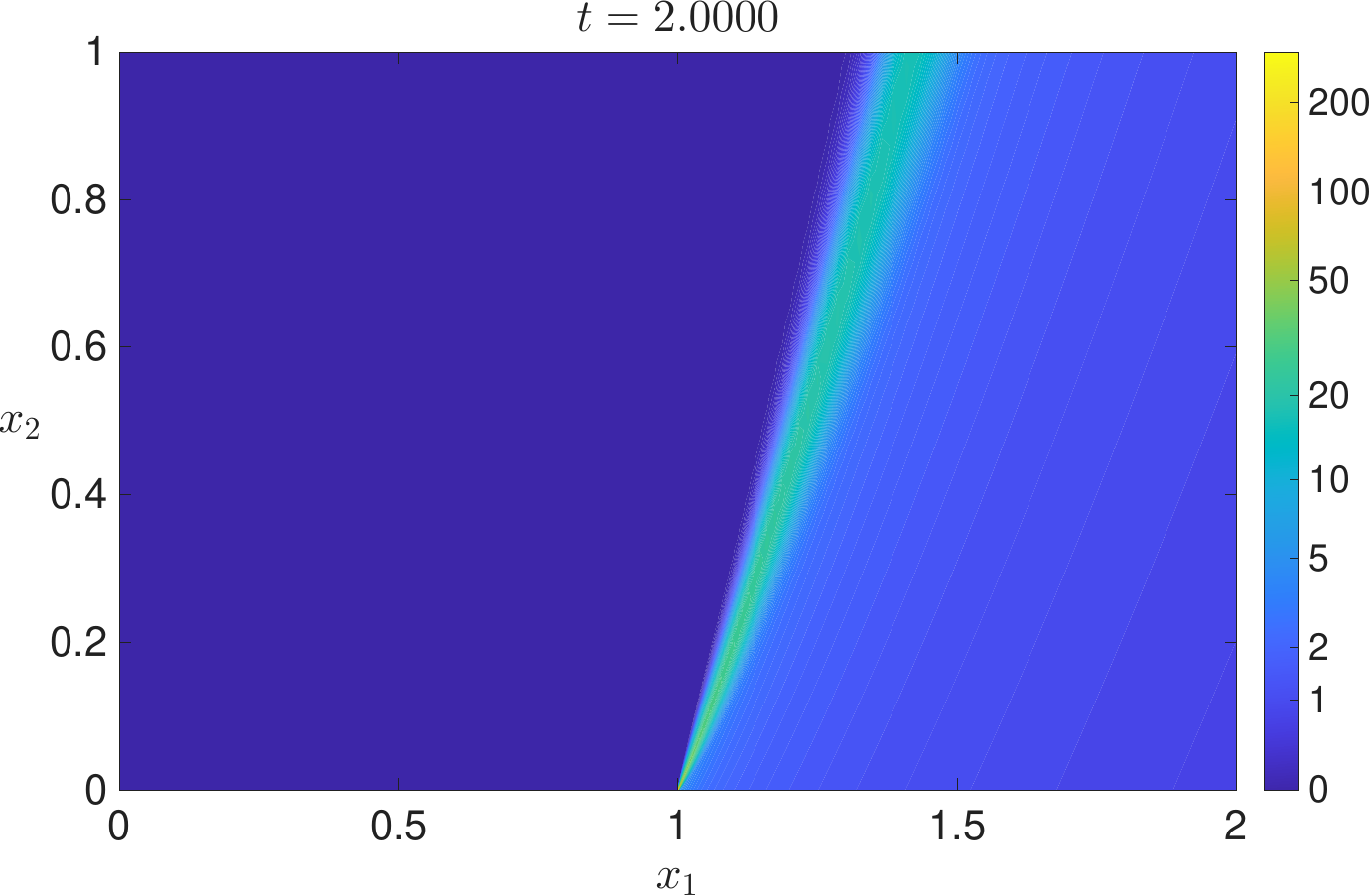}
    \caption[Evolution of contour plots of $\Omega$ computed on a fixed mesh in Scenario 1.]{Evolution of contour plots of $\Omega$ computed on a fixed mesh in Scenario 1. The support of $\Omega$ is observed to concentrate into a right-leaning bounded triangular region, with strongest concentration near $(1,0)$.}
    \label{fig:Bs_sc1_con_evo_om}
\end{figure}

\begin{figure}[!htbp]
\centering
        \includegraphics[width=0.4\textwidth]{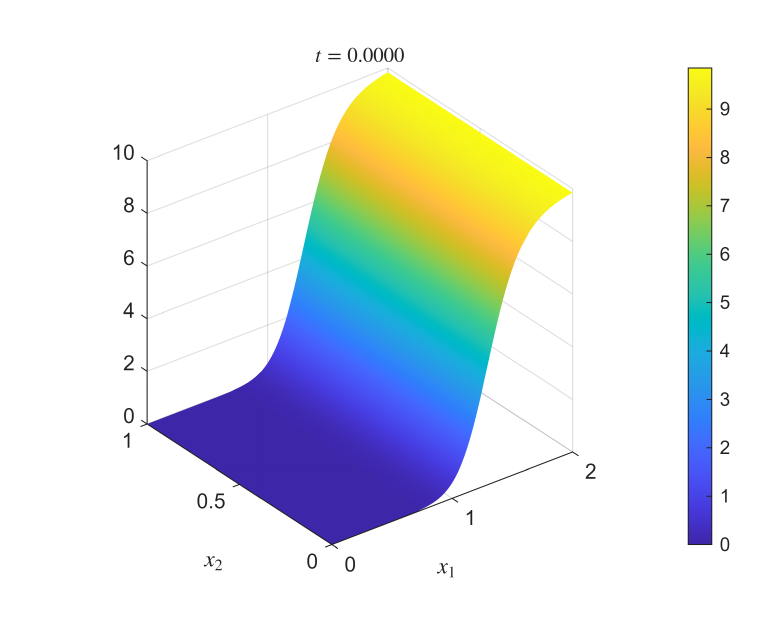}   
        \includegraphics[width=0.4\textwidth]{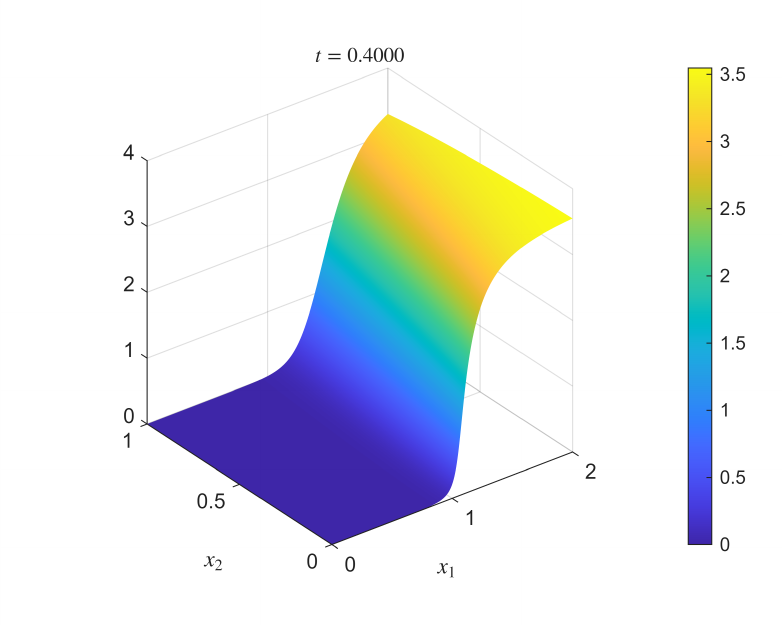}    
        \includegraphics[width=0.4\textwidth]{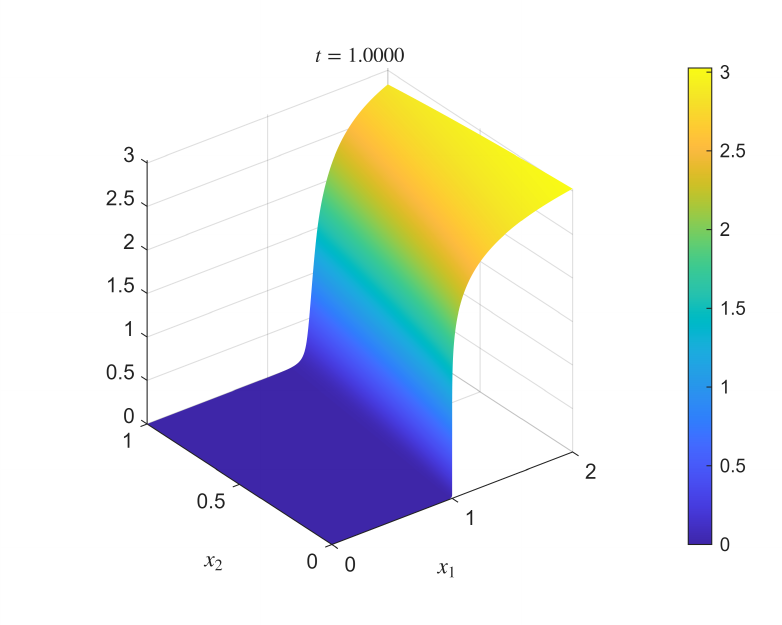}    
        \includegraphics[width=0.4\textwidth]{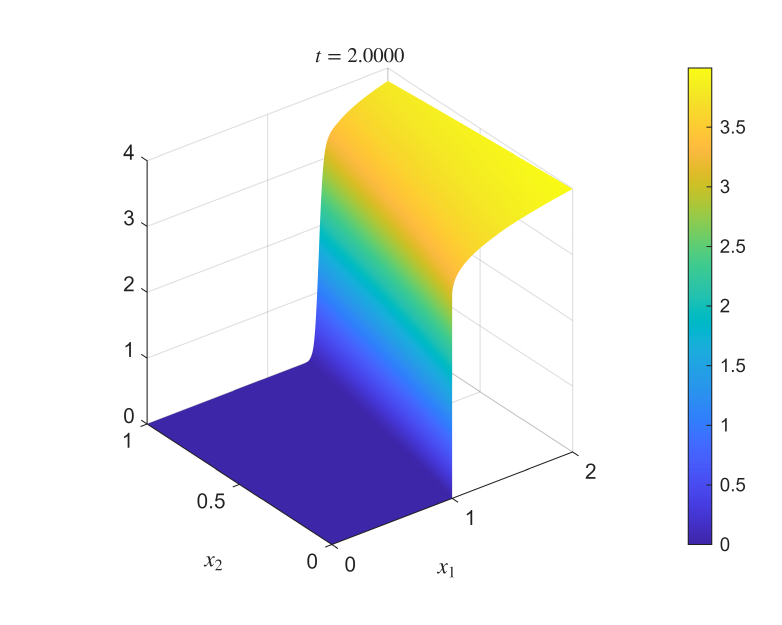}
    \caption[Evolution of 3D spatial profiles of $\Theta$ computed on a fixed mesh in Scenario 1. ]{Evolution of 3D spatial profiles of $\Theta$ computed on a fixed mesh in Scenario 1. The steep transition layer is well-resolved throughout the simulation.}
    \label{fig:Bs_sc1_mesh_evo_th}
\end{figure}

\begin{figure}[!htbp]
\centering
        \includegraphics[width=0.4\textwidth]{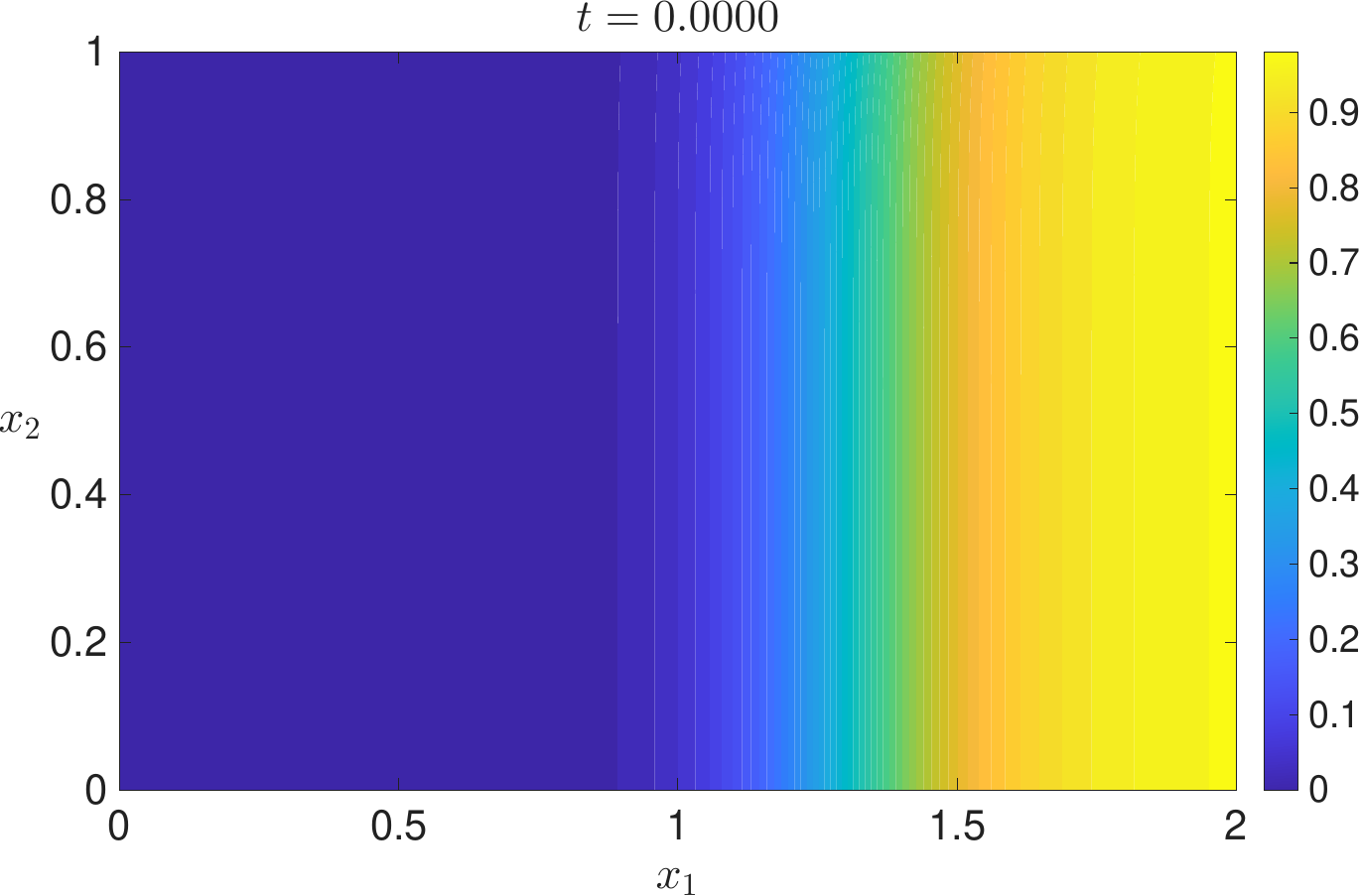}
        \includegraphics[width=0.4\textwidth]{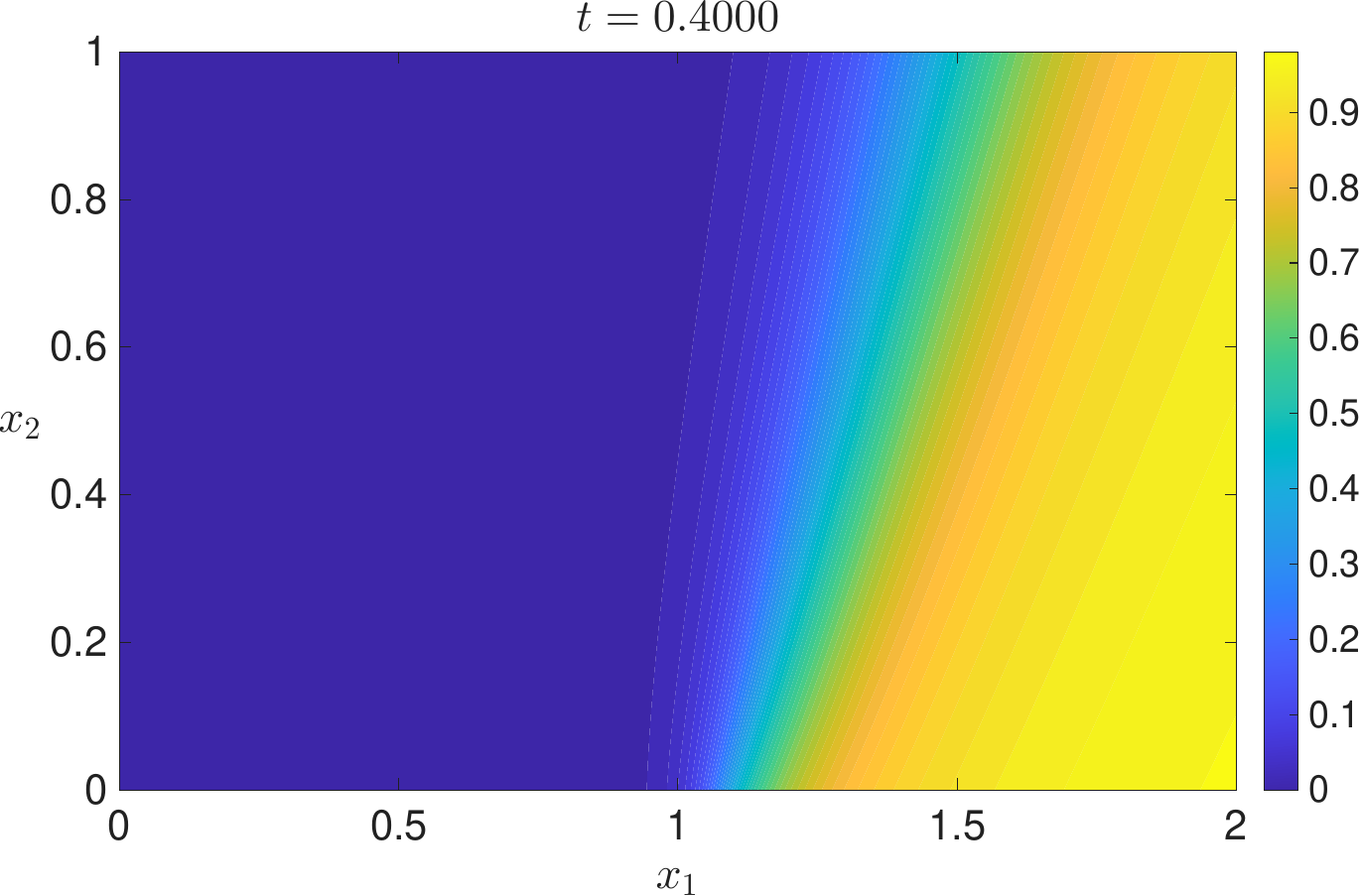}
        \includegraphics[width=0.4\textwidth]{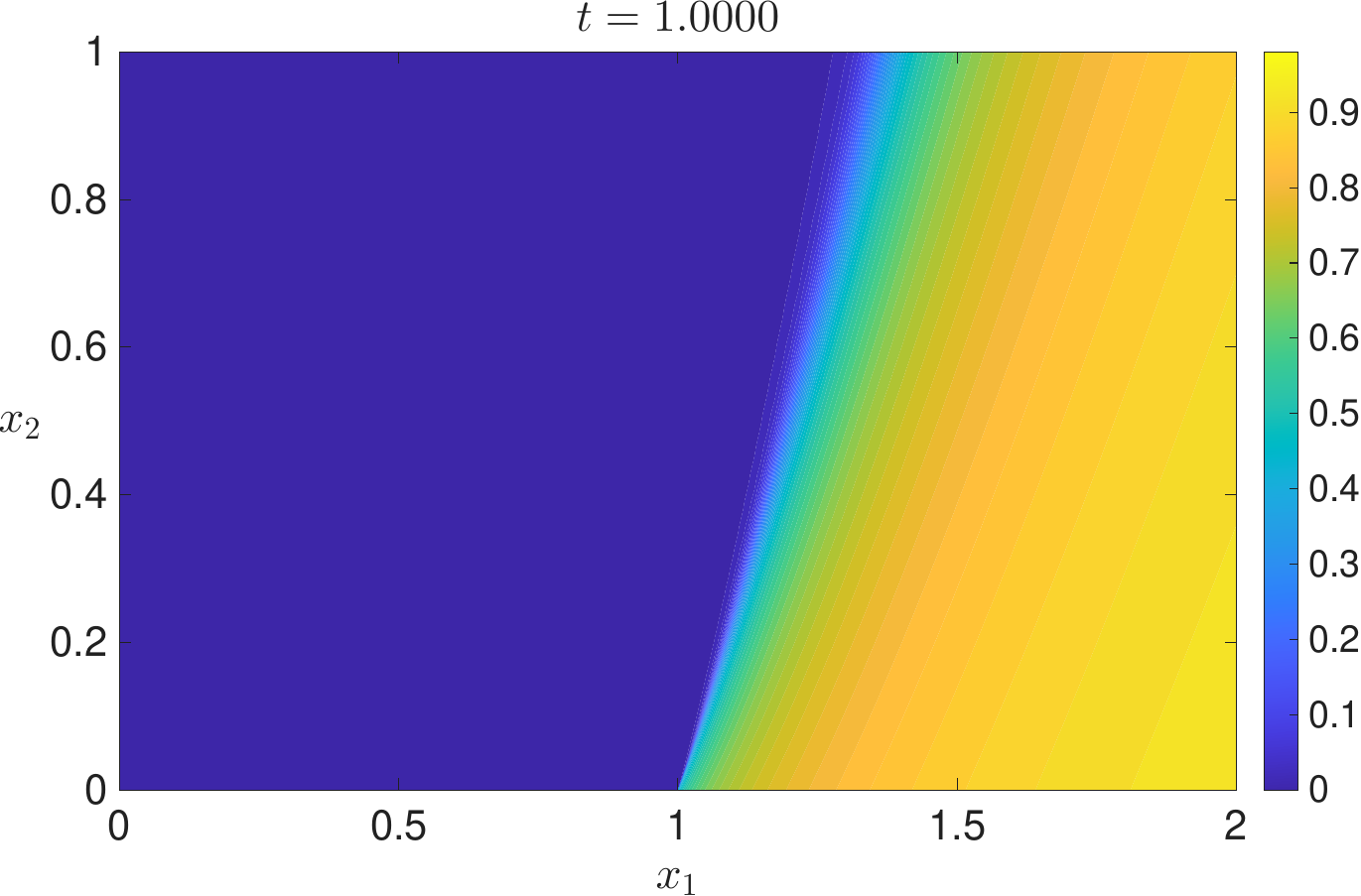}
        \includegraphics[width=0.4\textwidth]{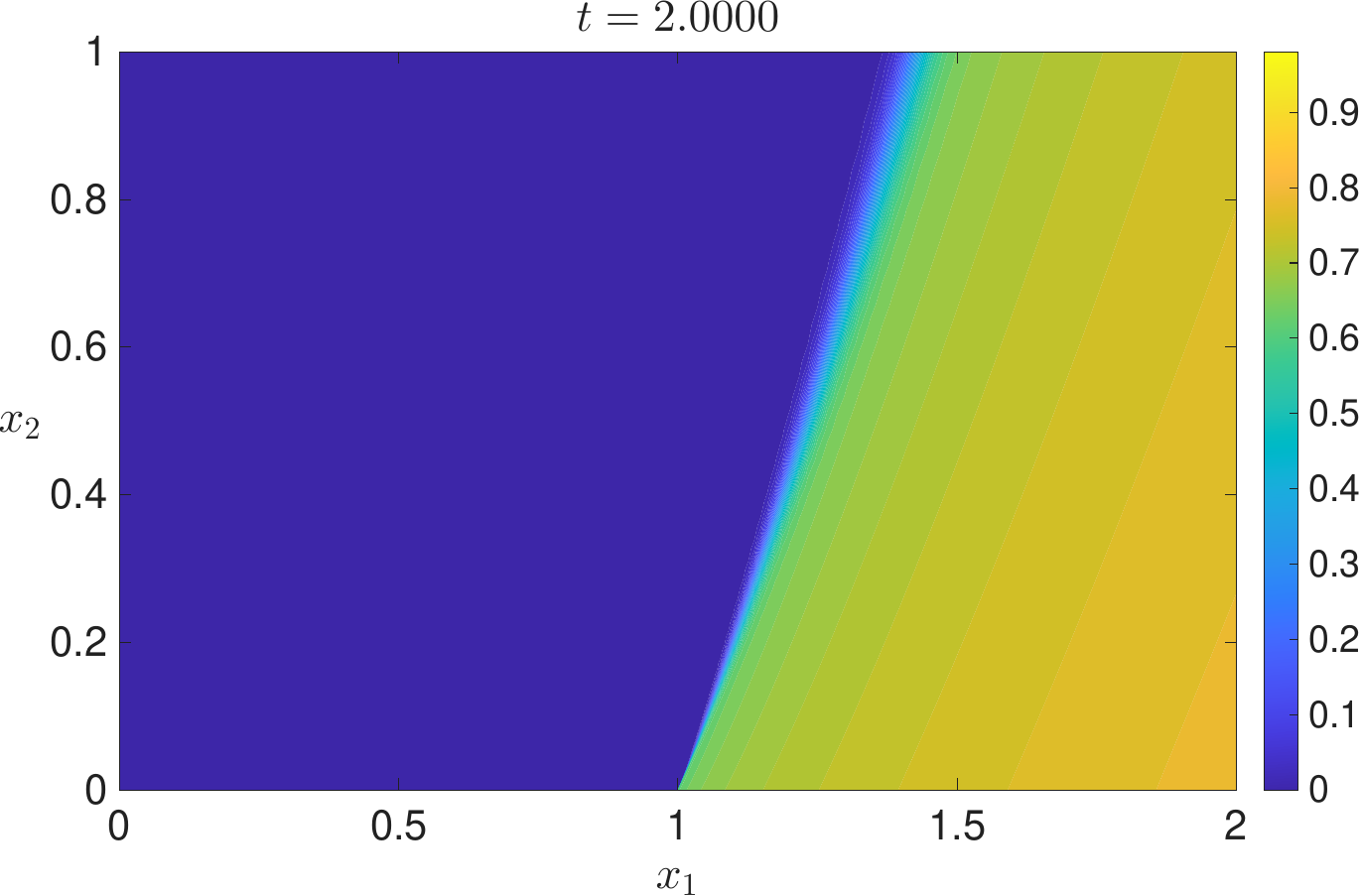}
    \caption[Evolution of contour plots of $\Theta$ computed on a fixed mesh in Scenario 1.]{Evolution of contour plots of normalized $\Theta$ computed on a fixed mesh in Scenario 1.
    As shown, the transition layer becomes increasingly steep and approaches a
    shock-like structure.}
    \label{fig:Bs_sc1_con_evo_th}
\end{figure}

\begin{figure}[!htbp]
\centering
    \begin{subfigure}[b]{0.45\textwidth}
        \includegraphics[width=\textwidth]{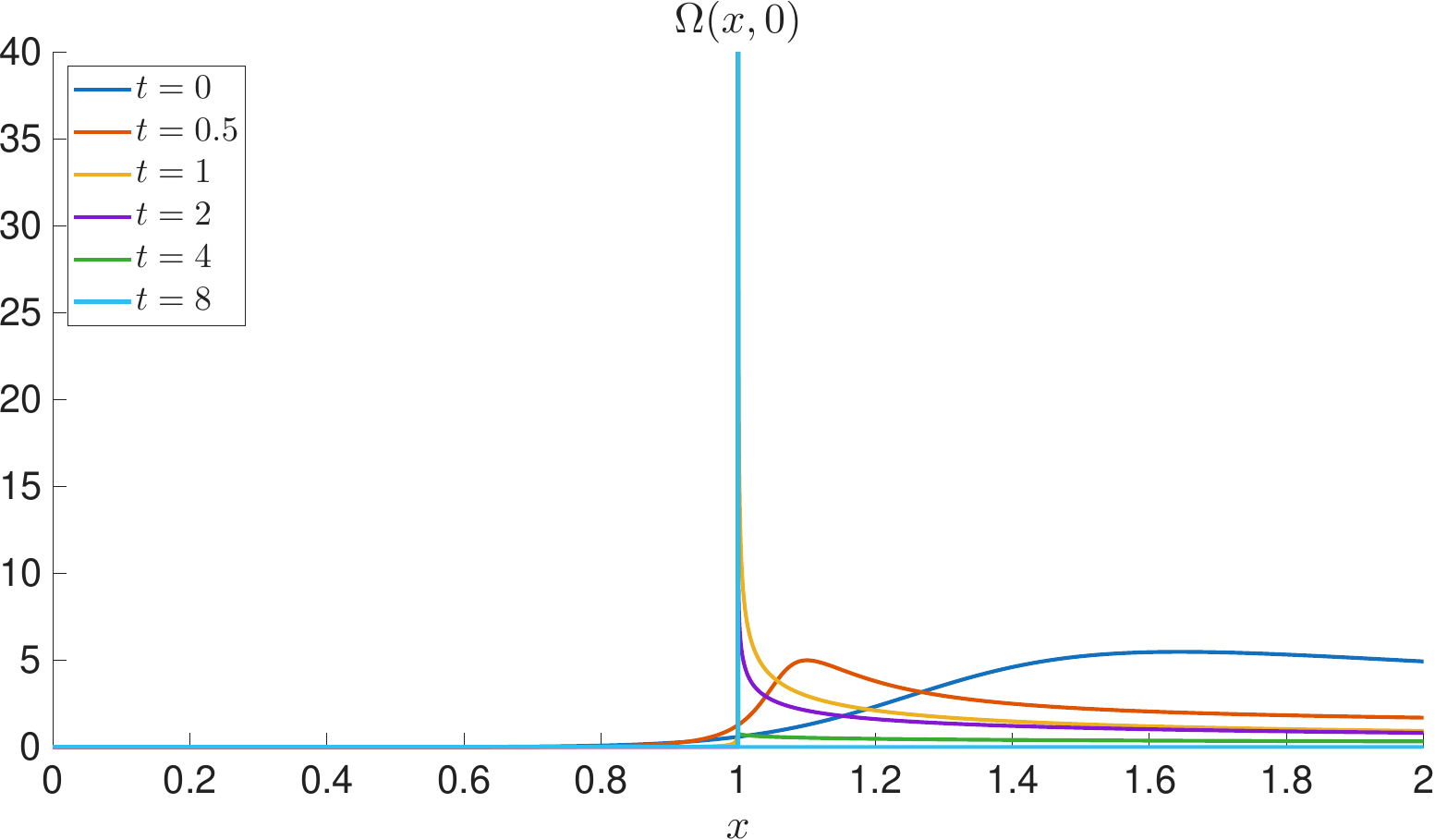}
        
    \end{subfigure}
    \begin{subfigure}[b]{0.45\textwidth}
        \includegraphics[width=\textwidth]{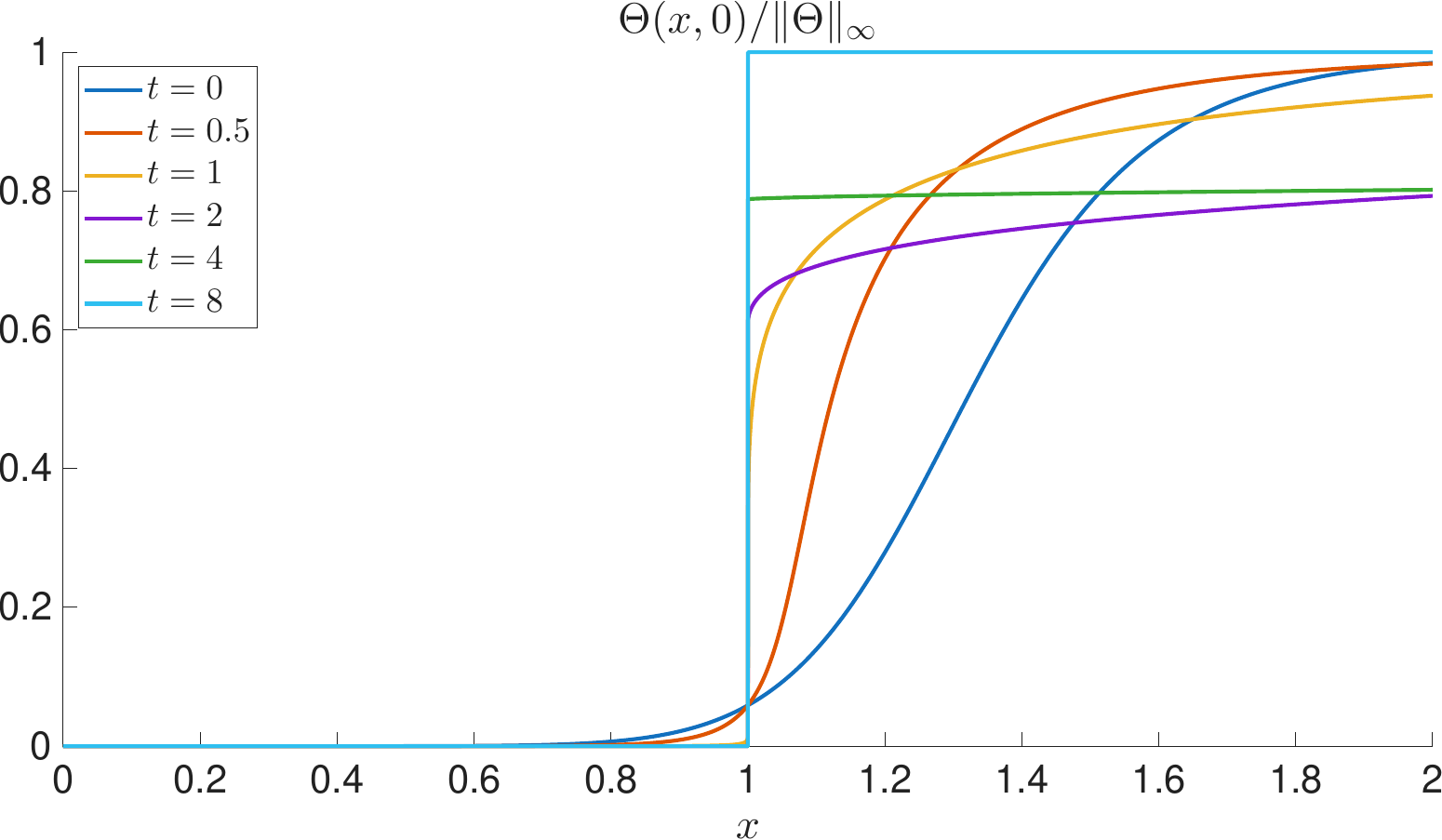}
        
    \end{subfigure}
    \caption[Evolution of cross-sectional profiles of the solutions along $x_2=0$ computed on a fixed mesh in Scenario 1.]{Evolution of cross-sectional profiles of the solutions along $x_2=0$ computed on a fixed mesh in Scenario 1. As illustrated, on the boundary, $\Omega$ exhibits a trend of evolving into a limiting profile that is singular at $x_1=1$, while $\Theta$ approaches a Heaviside-type jump.}  
     \label{fig:Bs_sc1_bd_evo}
\end{figure}

Finally, let us conclude this subsection by summarizing the potential two-stage self-similar blowup scenarios of \eqref{eqt:Boussinesq_v2} as follows:
\begin{itemize}
    \item \textbf{Stage 1:} Starting from degenerate initial data, the rescaled variables $\Omega$ and $\Theta_{X_1}$ develop finite-time self-similar blowups with regular profiles at $(X_1,X_2)=(1,0)$. These profiles do not possess any symmetry with respect to the blowup point. Correspondingly, in the physical space, $\omega$ and $\theta_{x_1}$ develop finite-time self-similar singularities at a boundary point away from the origin with the same self-similar profiles. This corresponds to local $L^\infty$ blowups of $\omega$ and $\theta_{x_1}$ at the first blowup time.
\item \textbf{Stage 2:} After the Stage 1 blowup, $\Omega$ and $\Theta_{X_1}$ continue in the weak sense and eventually converge to singular profiles. Correspondingly, $\omega$ and $\theta_{x_1}$ develop finite-time self-similar blowups at the origin with singular profiles. This corresponds to a local $L^p$ blowup of $\omega$ at the origin for some $p\in (0,+\infty)$. Moreover, our numerical results suggest that $p=4$. 
\end{itemize}

\subsection{Scenario 2: novel self-similar finite-time blowups with positive regular profiles.}

Motivated by the Stage 1 blowup observed in Scenario 1, in this subsection we investigate this phenomenon using a modified dynamic rescaling formulation analogous to the 1D case. More specifically, we introduce a time-dependent spatial shift $r(\tau)$ in the following change of variables:
\begin{align*}
    &\omega(\mtx{x},t)=C_{\om}(\tau)^{-1}\Omega\left(C_l(\tau)\mtx{x}-r(\tau)\mtx{e}_1,\tau(t)\right),\\ &\theta(\mtx{x},t)=C_{\theta}(\tau)^{-1}\Theta\left(C_l(\tau)\mtx{x}-r(\tau)\mtx{e}_1,\tau(t)\right),\\ &u(\mtx{x},t)=C_{\om}(\tau)^{-1}C_{l}(\tau)^{-1}U\left(C_l(\tau)\mtx{x}-r(\tau)\mtx{e}_1,\tau(t)\right)
\end{align*}
with
\begin{align*}
    C_{\om}(\tau)&=\exp\left(\int_0^\tau c_{\om}(s)\idiff s\right) ,\quad  C_{\theta}(\tau)=\exp\left(\int_0^\tau c_{\theta}(s) \idiff s\right),\quad  C_{l}(\tau)=\exp\left(\int_0^\tau c_l(s)\idiff s\right),\\ r(\tau)&=C_l(\tau)\int_{\tau}^{+\infty}c_r(s)C_l(s)^{-1}\idiff s, \quad  t(\tau)=\int_0^{\tau}C_{\om}(s)\idiff s, \quad c_\theta=c_l+2c_{\om},
\end{align*}
where $\mtx{e}_1=(1,0)$. Then \eqref{eqt:Boussinesq_v2} is reformulated as
\begin{equation}\label{eqt:dynamic_BS_scenario2_v1}
\begin{aligned}
&\Omega_{\tau} + (\mtx{U}+c_l \mtx{X} + c_r \mtx{e}_1)\cdot\nabla\Omega
= c_{\omega}\Omega + \Theta_{X_1},\\
&\Theta_{\tau} + (\mtx{U}+c_l \mtx{X} + c_r \mtx{e}_1)\cdot\nabla\Theta
= (c_l+2c_{\omega})\Theta,\\
&\mtx{U}=\nabla^{\perp}(-\Delta)^{-1}\Omega,
\end{aligned}
\end{equation}
where $\mtx{X} = C_l(\tau)\mtx{x}-r(\tau)\mtx{e}_1$. Similar to the 1D case, it is not hard to see that the asymptotic stability of a steady state of \eqref{eqt:dynamic_BS_scenario2_v1} implies the asymptotically self-similar blowup of the original equations \eqref{eqt:Boussinesq_v2}. For notation simplicity, we still denote the rescaled variables $(\mtx{X},\tau)$ by $(\mtx{x},t)$ and rewrite \eqref{eqt:dynamic_BS_scenario2_v1} as 
\begin{equation}\label{eqt:dynamic_BS_scenario2}
\begin{aligned}
&\Omega_{t} + (\mtx{U}+c_l \mtx{x} + c_r \mtx{e}_1)\cdot\nabla\Omega
= c_{\omega}\Omega + \Theta_{x_1},\\
&\Theta_{t} + (\mtx{U}+c_l \mtx{x} + c_r \mtx{e}_1)\cdot\nabla\Theta
= (c_l+2c_{\omega})\Theta,\\
&\mtx{U}=\nabla^{\perp}(-\Delta)^{-1}\Omega,
\end{aligned}
\end{equation}

To conduct numerical simulations of \eqref{eqt:dynamic_BS_scenario2}, we need to determine the values of $c_l$, $c_{\omega}$ and $c_r$ by imposing 3 normalization conditions. Specifically, we choose $c_l$, $c_{\omega}$ and $c_r$ to enforce that 
\[
\partial_t\Omega(\mtx{0},t)=0,\quad
\partial_t\Omega_{x_1}(\mtx{0},t)=0,\quad
\partial_t\Omega_{x_1x_1}(\mtx{0},t)=0.
\]
Accordingly, $c_l$, $c_{\omega}$, and $c_r$ are obtained by solving the following linear system:
\begin{equation}\label{eqt:normalization__constant_scenario2}
\begin{cases}
\Omega_{x_1}(\mtx{0})c_r - \Omega(\mtx{0})c_{\omega} = \Theta_{x_1}(\mtx{0}),\\
\Omega_{x_1x_1}(\mtx{0})c_r - \Omega_{x_1}(\mtx{0})c_{\omega} + \Omega_{x_1}(\mtx{0})c_l
= \Theta_{x_1x_1}(\mtx{0}) - U_{1,x_1}(\mtx{0})\Omega_{x_1}(\mtx{0}),\\
\Omega_{x_1x_1x_1}(\mtx{0})c_r - \Omega_{x_1x_1}(\mtx{0})c_{\omega}
+ 2\Omega_{x_1x_1}(\mtx{0})c_l
= \Theta_{x_1x_1x_1}(\mtx{0}) - \big(U_{1,x_1x_1}(\mtx{0})\Omega_{x_1}(\mtx{0})
+ 2U_{1,x_1}(\mtx{0})\Omega_{x_1x_1}(\mtx{0})\big).
\end{cases}
\end{equation}

Unlike the 1D case, we cannot obtain a self-contained system for $\Omega$ and $\Theta_{x_1}$ by taking the $x_1$-derivative of the second equation in \eqref{eqt:dynamic_BS_scenario2}. We therefore numerically solve \eqref{eqt:dynamic_BS_scenario2} directly by imposing the above normalization conditions. Note that if $(\Omega(\mtx{x},t),\Theta(\mtx{x},t))$ solves \eqref{eqt:dynamic_BS_scenario2} with $(c_l,c_{\om},c_r)$ given by \eqref{eqt:normalization__constant_scenario2}, then so does $(\Omega(\mtx{x},t),\Theta(\mtx{x},t)+C e^{(c_l+2c_{\omega})t})$ for any constant $C$. This is equivalent to the translation invariance of the original Boussinesq equations: if $(\omega(\mtx{x},t),\theta(\mtx{x},t))$ solves \eqref{eqt:Boussinesq_v2}, then $(\omega(\mtx{x},t),\theta(\mtx{x},t)+C)$ also solves \eqref{eqt:Boussinesq_v2} for any constant $C$. Hence, to guarantee that $\Theta(\mtx{x},t)$ does not diverge to infinity, one must determine the constant $C$. This is straightforward in the non-shifted version of the dynamic rescaling equations \eqref{eqt:dynamic_BS_scenario1}, since if we set $\Theta(\mtx{0},0)=0$, the condition $\Theta(\mtx{0},t)=0$ is naturally preserved by the equations, which guarantees that $\Theta$ does not diverge to infinity. However, this does not hold true for the shifted case. Therefore, in the following numerical results regarding the dynamics of $\Theta$, the observed convergence should be understood modulo a time-dependent constant.

In the numerical simulations, we specify the initial data as
\[ \Omega(x_1,x_2) = \exp\left(\frac{-(x_1-3)^2 - x_2^2}{18}\right), \quad \Theta(x_1,x_2) = \frac{1}{\exp\left(-\frac{x_1-3}{4+x_2}\right)+1}.\]
The computation is terminated when $\|\Omega_t\|_{L^{\infty}} < 10^{-6}$, where the $L^\infty$-norm is evaluated over all computational grid points. Figure \ref{fig:Bs_sc2_time_evolution} tracks the time evolution of the residuals $\|\Omega_t\|_{L^2}$ and $\|\Omega_t\|_{L^{\infty}}$. As demonstrated, the stopping criterion is eventually satisfied, suggesting that the solution of \eqref{eqt:dynamic_BS_scenario2} has stabilized at a regular steady state.

   \begin{figure}[!htbp]
\centering
        \includegraphics[width=0.45\textwidth]{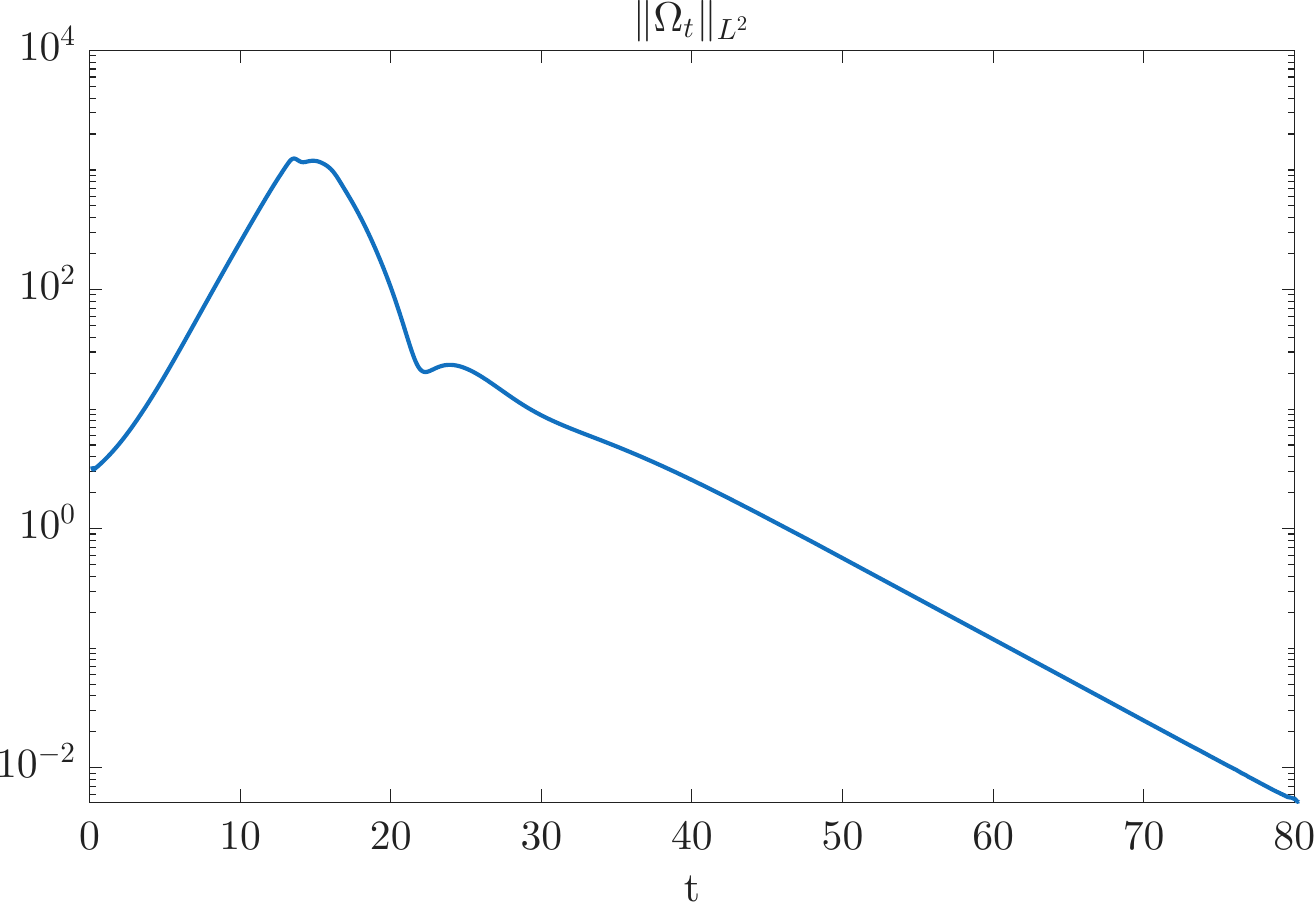}
        \includegraphics[width=0.45\textwidth]{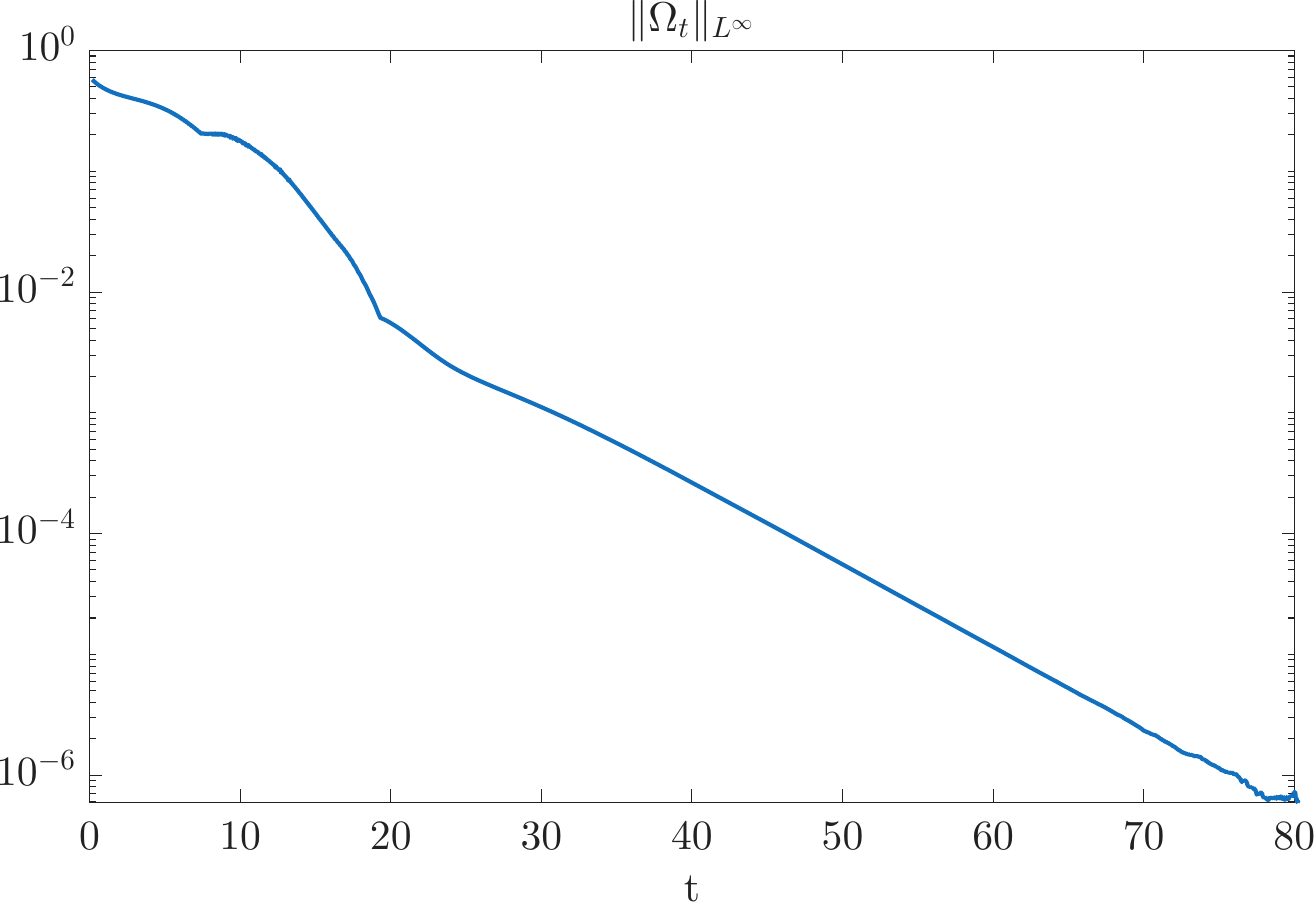}
    \caption[Decay of the residuals $\|\Omega_t\|_{L^2}$ and 
    $\|\Omega_t\|_{L^{\infty}}$ (Scenario 2).]{Decay of the residuals $\|\Omega_t\|_{L^2}$ (left figure) and 
    $\|\Omega_t\|_{L^{\infty}}$ (right figure) in Scenario 2. Both residuals decay rapidly and then level
    off at a small magnitude, which indicates that $\Omega$ eventually stabilizes at a regular profile.}
    \label{fig:Bs_sc2_time_evolution}
\end{figure}

The evolution of the spatial profiles of $\Omega$ and $\Theta$ over time is further illustrated in Figures \ref{fig:Bs_sc2_mesh_evo_om} and \ref{fig:Bs_sc2_mesh_evo_th}, respectively. As observed, $\Omega$ converge to a non-symmetric regular profile that remains strictly positive throughout the computational domain. Meanwhile, $\Theta$ also settle to a regular profile modulo a time-dependent constant. Figure \ref{fig:BS_c} tracks the convergence of $c_r$ and the ratio $c_l/c_{\om}$. The computed limiting value of $c_l/c_{\om}$ is $-2.4489$, which is remarkably close to the 1D case.

\begin{figure}[!htbp]
\centering
        \includegraphics[width=0.4\textwidth]{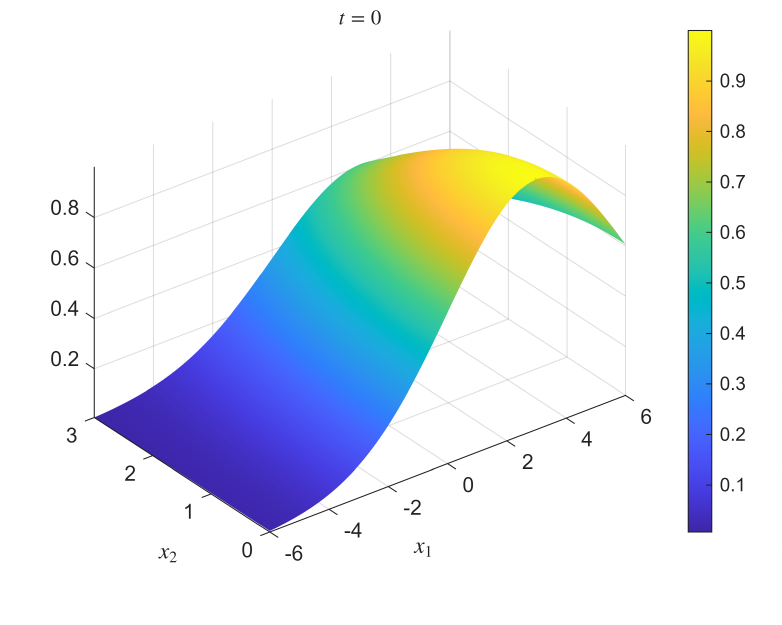}
        \includegraphics[width=0.4\textwidth]{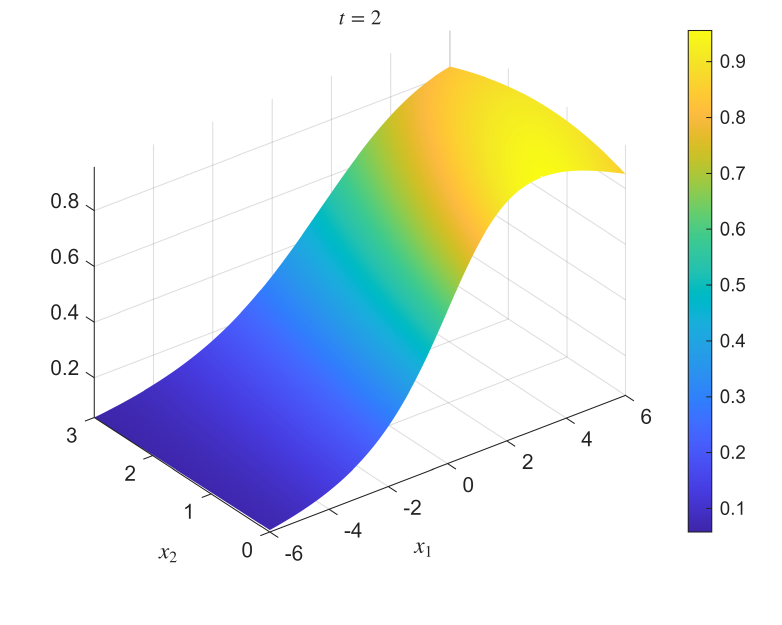}
        \includegraphics[width=0.4\textwidth]{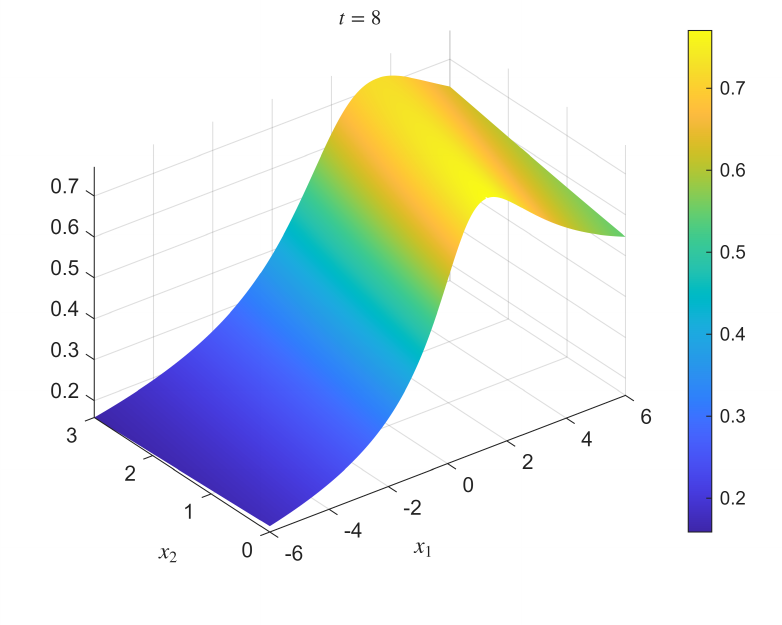}
        \includegraphics[width=0.4\textwidth]{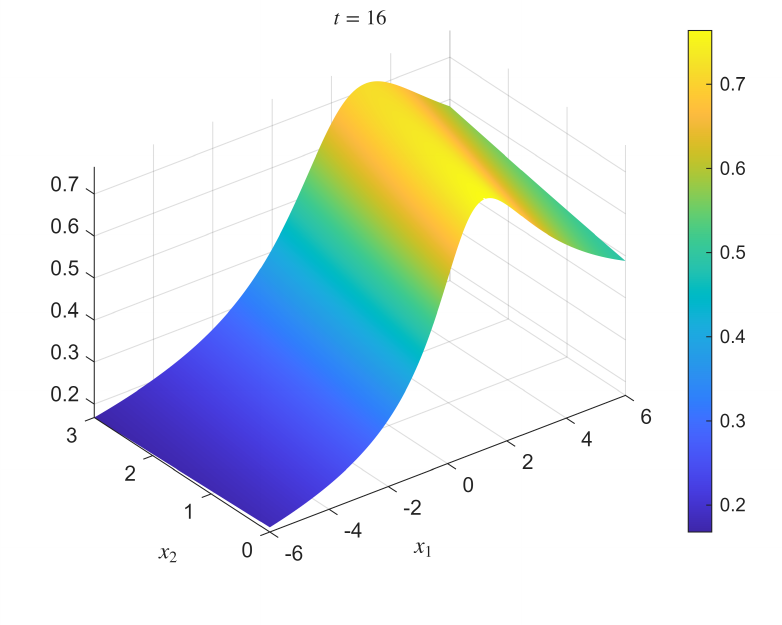}
    \caption[Evolution of 3D spatial profiles of $\Omega$ in Scenario 2.]{Evolution of 3D spatial profiles of $\Omega$ in Scenario 2. As demonstrated, $\Omega$ evolves into a regular limiting profile that remains strictly positive throughout the computational domain.}
    \label{fig:Bs_sc2_mesh_evo_om}
\end{figure}
\begin{figure}[!htbp]
\centering
        \includegraphics[width=0.4\textwidth]{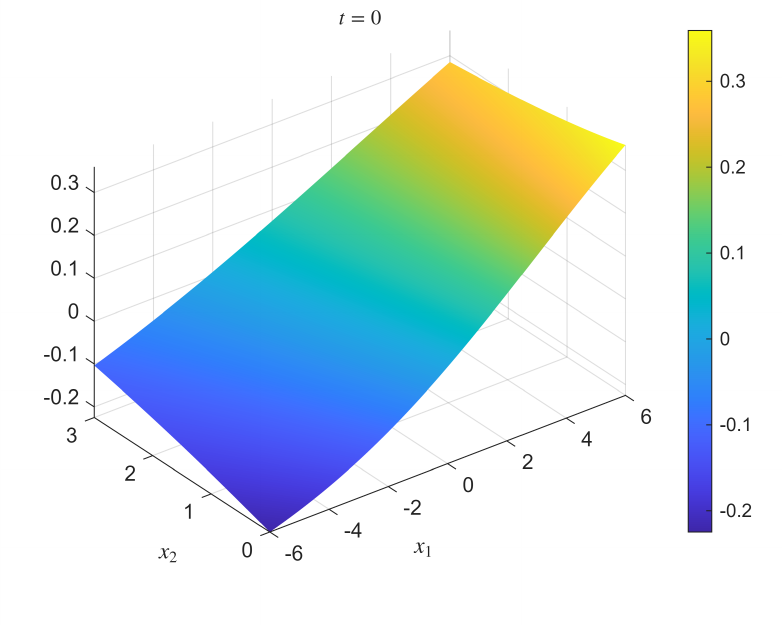}
        \includegraphics[width=0.4\textwidth]{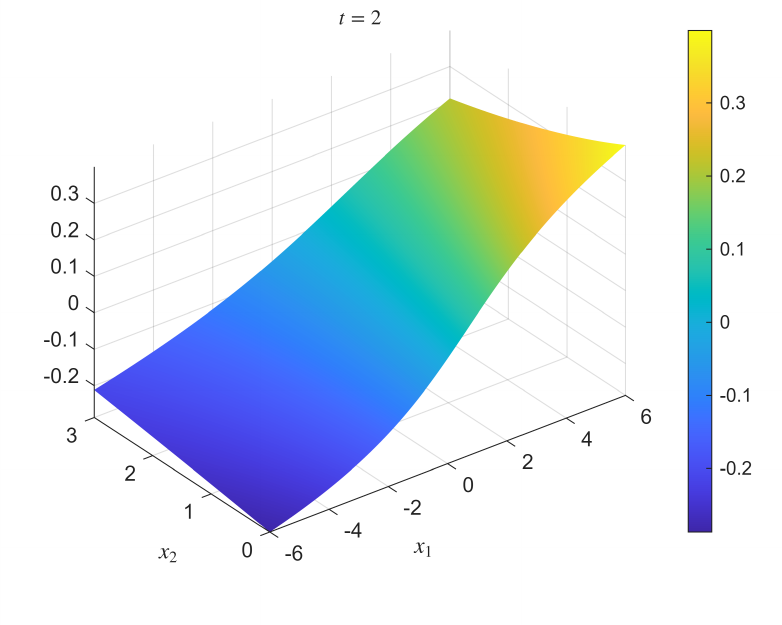}
        \includegraphics[width=0.4\textwidth]{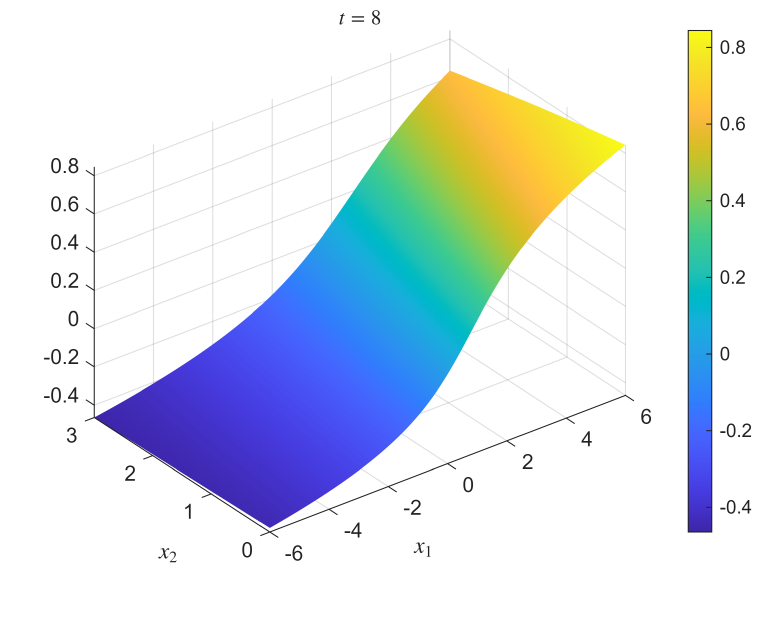}
        \includegraphics[width=0.4\textwidth]{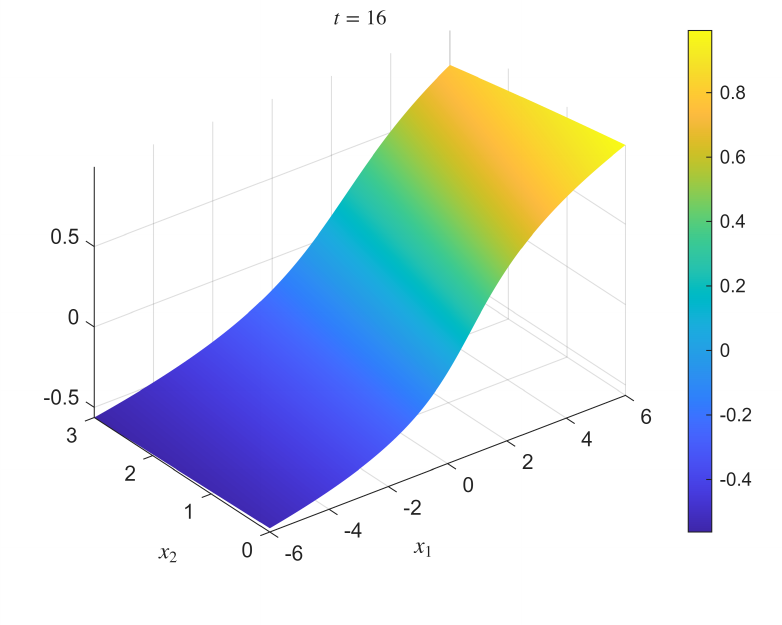}
    \caption[Evolution of 3D spatial profiles of $\Theta$ in Scenario 2.]{Evolution of 3D spatial profiles of $\Theta$ in Scenario 2. Modulo a time-dependent constant, $\Theta$ is observed to approach a regular limiting profile.}
    \label{fig:Bs_sc2_mesh_evo_th}
\end{figure}

\begin{figure}[!htbp]
\centering
        \includegraphics[width=0.45\textwidth]{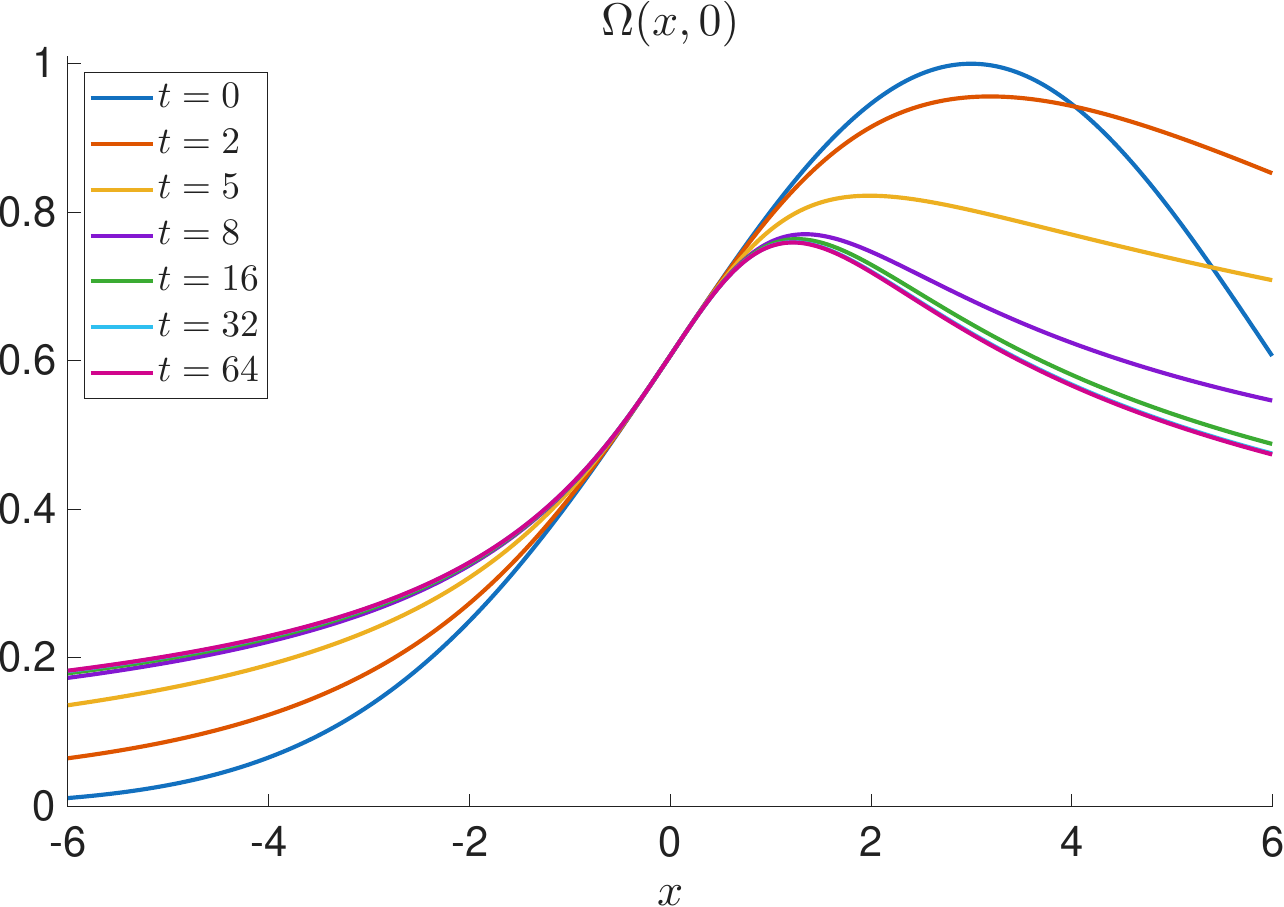}
        \includegraphics[width=0.45\textwidth]{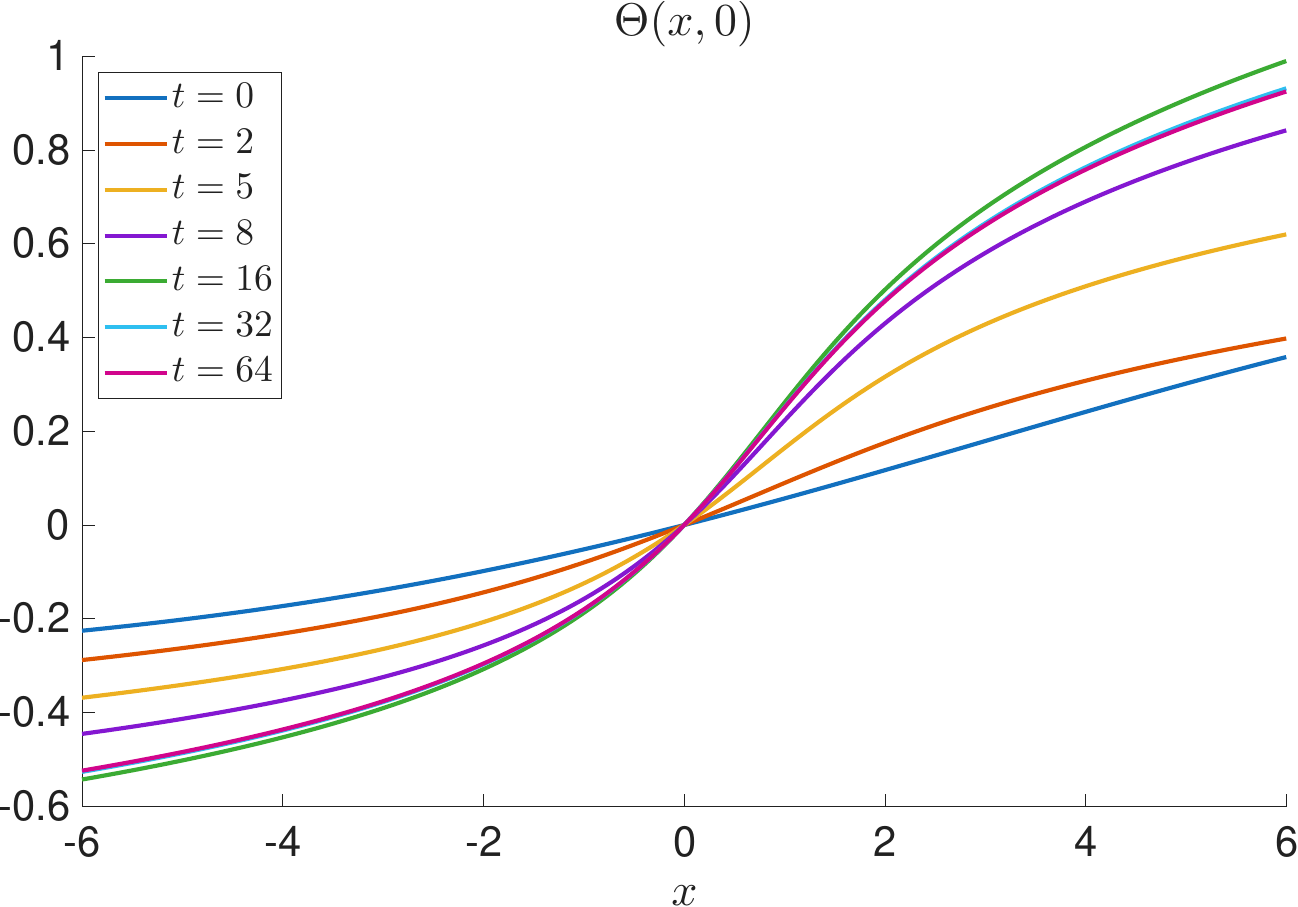}
    \caption[Boundary profiles in Scenario 2.]{Evolution of cross-sectional profiles of the solutions along $x_2=0$ in Scenario 2. As demonstrated, both $\Omega$ (left figure) and $\Theta$ (right figure) approach regular limiting profiles.}  
     \label{fig:Bs_sc2_bd_evo}
\end{figure}

\begin{figure}[!htbp]
\centering
        \includegraphics[width=0.45\textwidth]{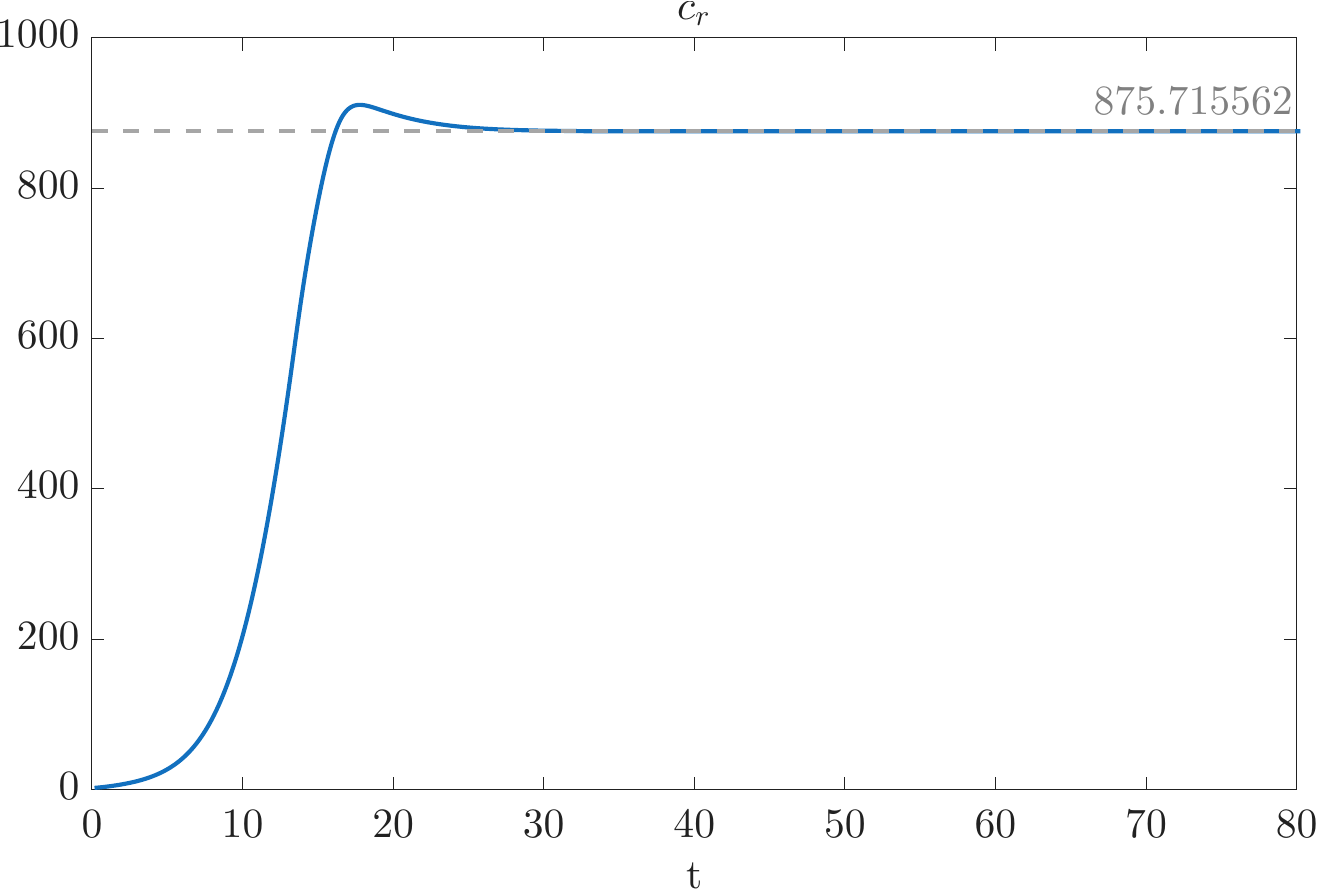}
        \includegraphics[width=0.45\textwidth]{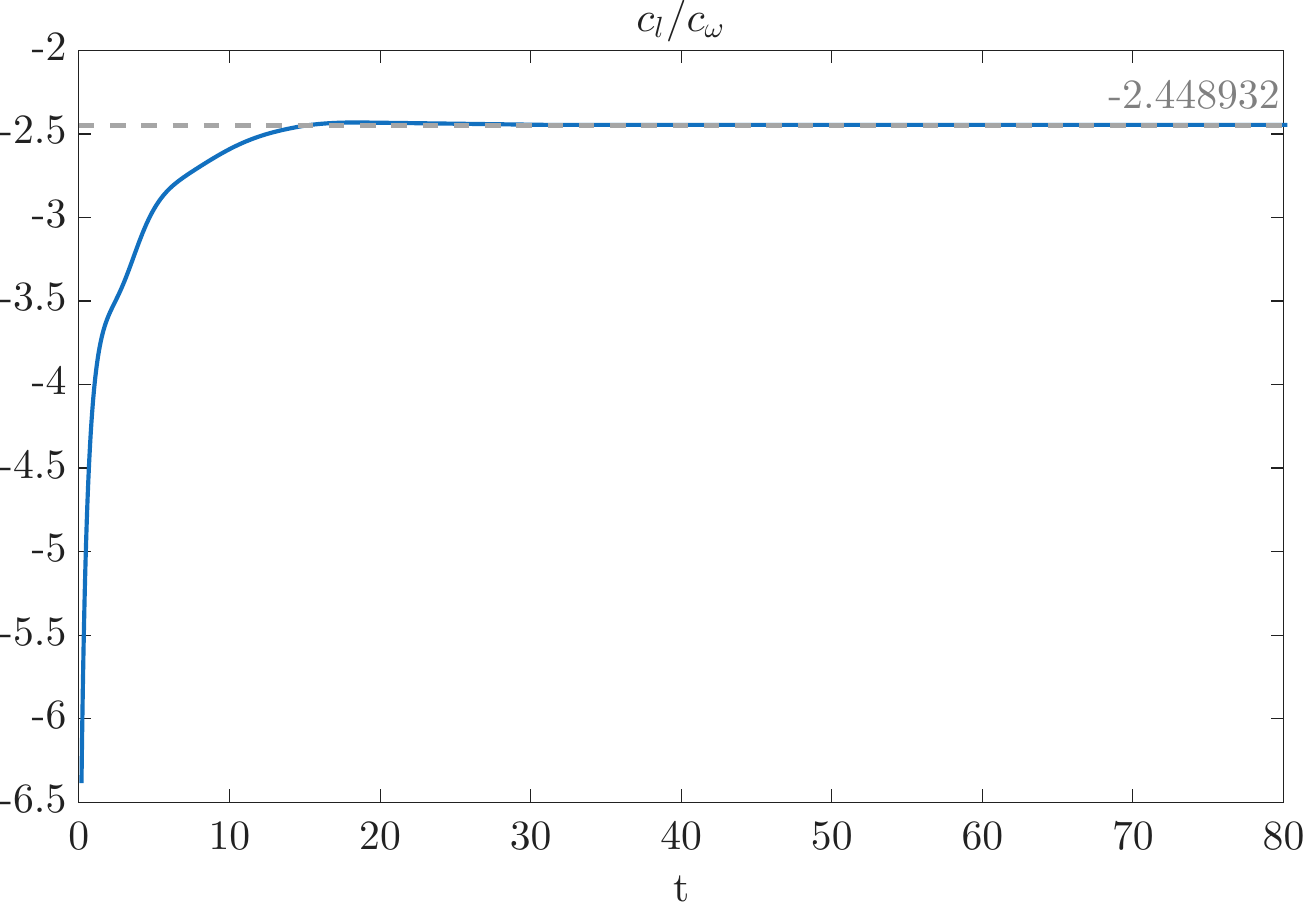}
        \caption[Time evolution of $c_r$ and $c_l/c_{\omega}$ in Scenario 2.]{Time evolution of $c_r$ (left figure) and $c_l/c_{\omega}$ (right figure) in Scenario 2.} 
     \label{fig:BS_c}
\end{figure}

Furthermore, we compare the inner self-similar profiles of $\Omega$ and $\Theta_{x_1}$ observed during the Stage 1 blowup of Scenario 1 presented in Subsection \ref{subsec:BS_scenario1} with the limiting profiles obtained from the numerical simulations of \eqref{eqt:dynamic_BS_scenario2}. Figure \ref{fig:BS_scenario2_profiles_comparison} illustrates a comparison of the cross-sectional profiles between Scenario 1 and Scenario 2. As observed, after appropriate rescaling, the inner profiles in Scenario 1 match closely with the limiting profiles in Scenario 2.
In addition to comparing the boundary cross-sectional profiles, we also present comparisons of the 3D spatial profiles and contour plots in Figures \ref{fig:BS_scenario2_2d_mesh_comparison} and \ref{fig:BS_scenario2_2d_contour_comparison}, respectively. Together, these observations strongly suggest that the modified dynamic rescaling equations \eqref{eqt:dynamic_BS_scenario2} accurately capture the intrinsic mechanism governing the Stage 1 self-similar blowup described in Subsection \ref{subsec:BS_scenario1}.

\begin{figure}[!htbp]
\centering
        \includegraphics[width=0.45\textwidth]{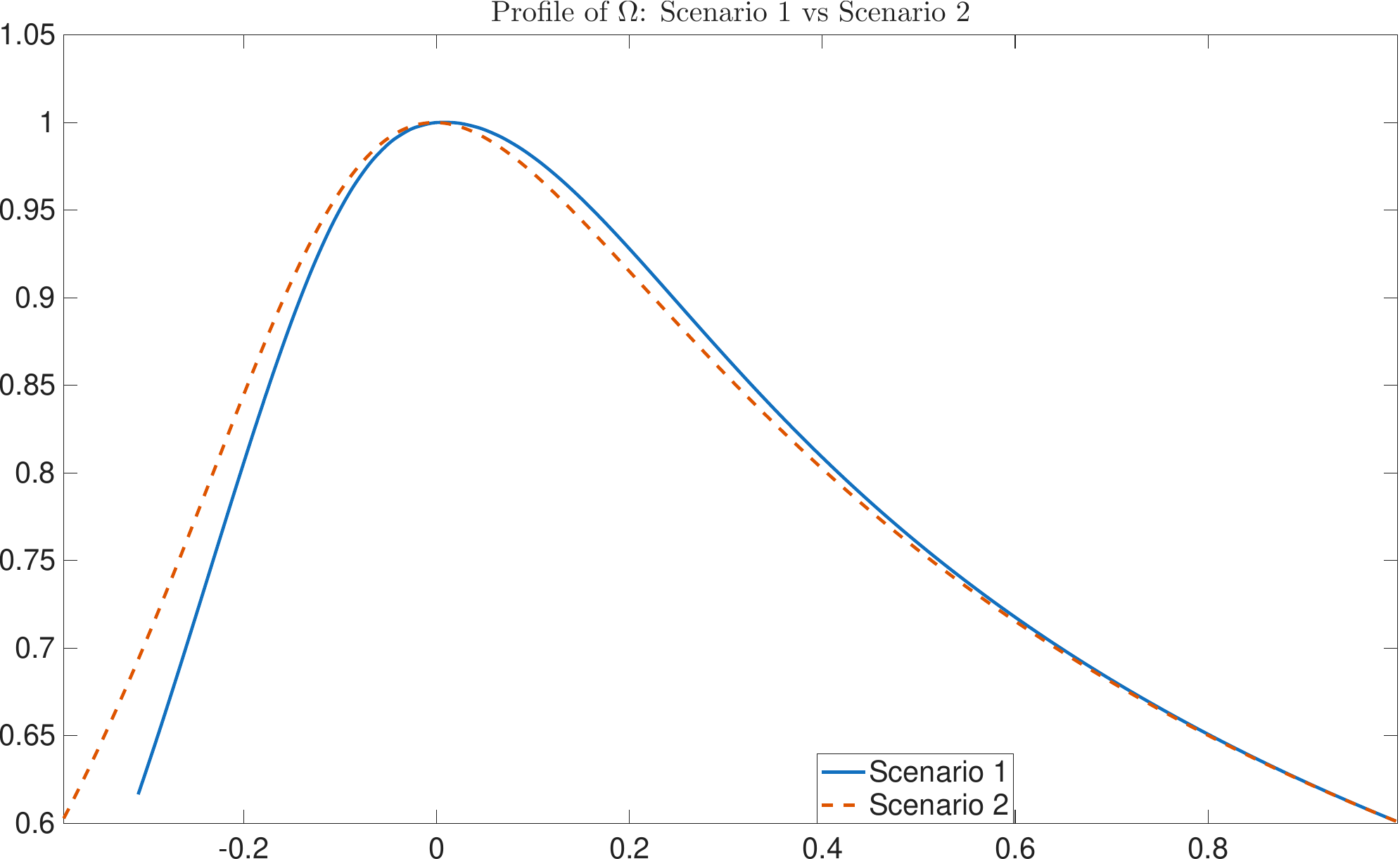}
        \includegraphics[width=0.45\textwidth]{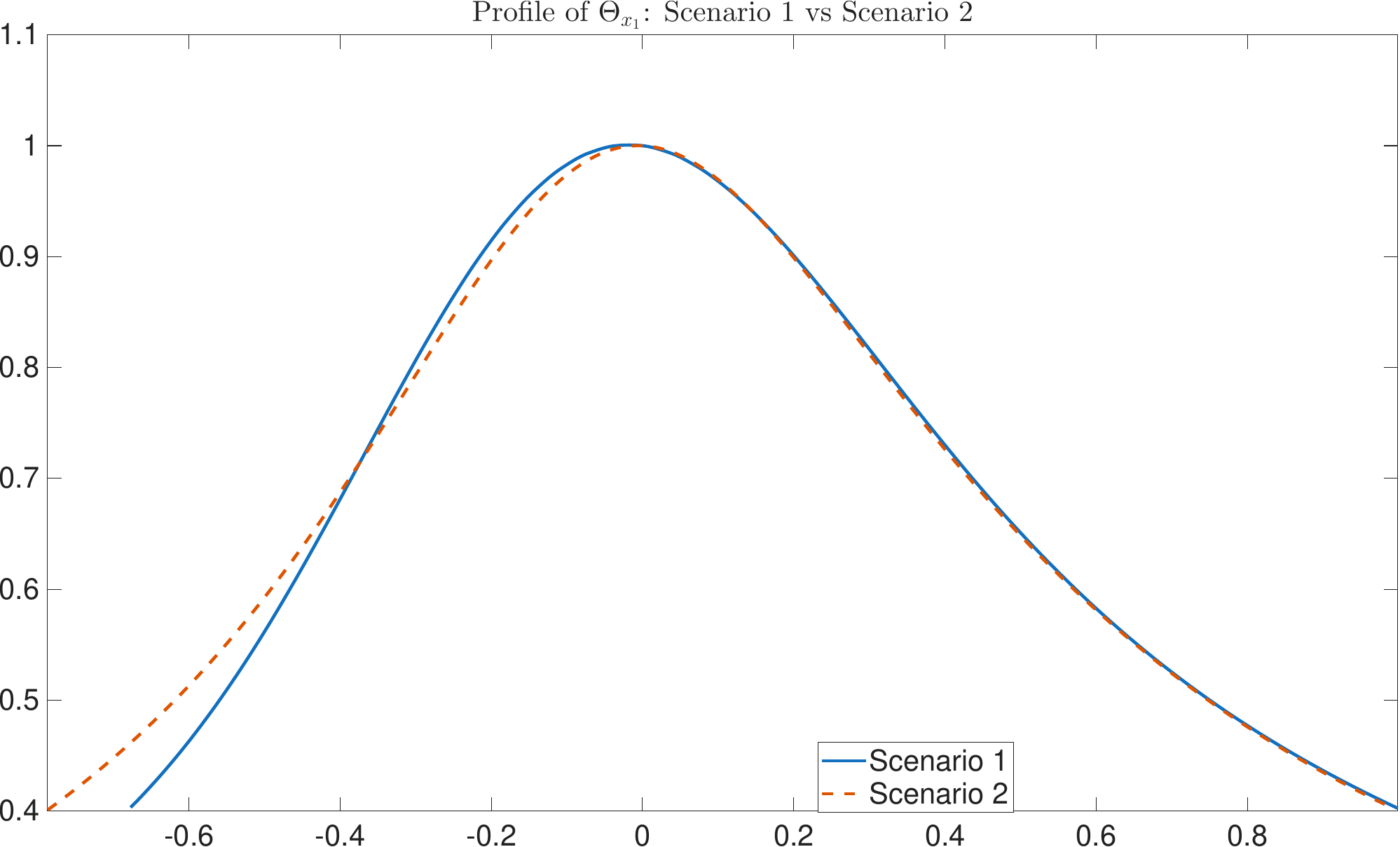}
    \caption[Comparison between the inner profile in Scenario 1 at $t = 0.92$ (blue solid line) and the limiting profile in Scenario 2 (red dashed line) for the 2D Boussinesq equations.]{Comparison between the inner profile in Scenario 1 at $t = 0.92$ (blue solid line) and the limiting profile in Scenario 2 (red dashed line) for the 2D Boussinesq equations. Left figure: profile of $\Omega$; Right figure: profile of $\Theta_{x_1}$. The profiles are compared after appropriate rescaling.}
     \label{fig:BS_scenario2_profiles_comparison}
\end{figure}

\begin{figure}[!htbp]
\centering
       \includegraphics[width=0.4\textwidth]{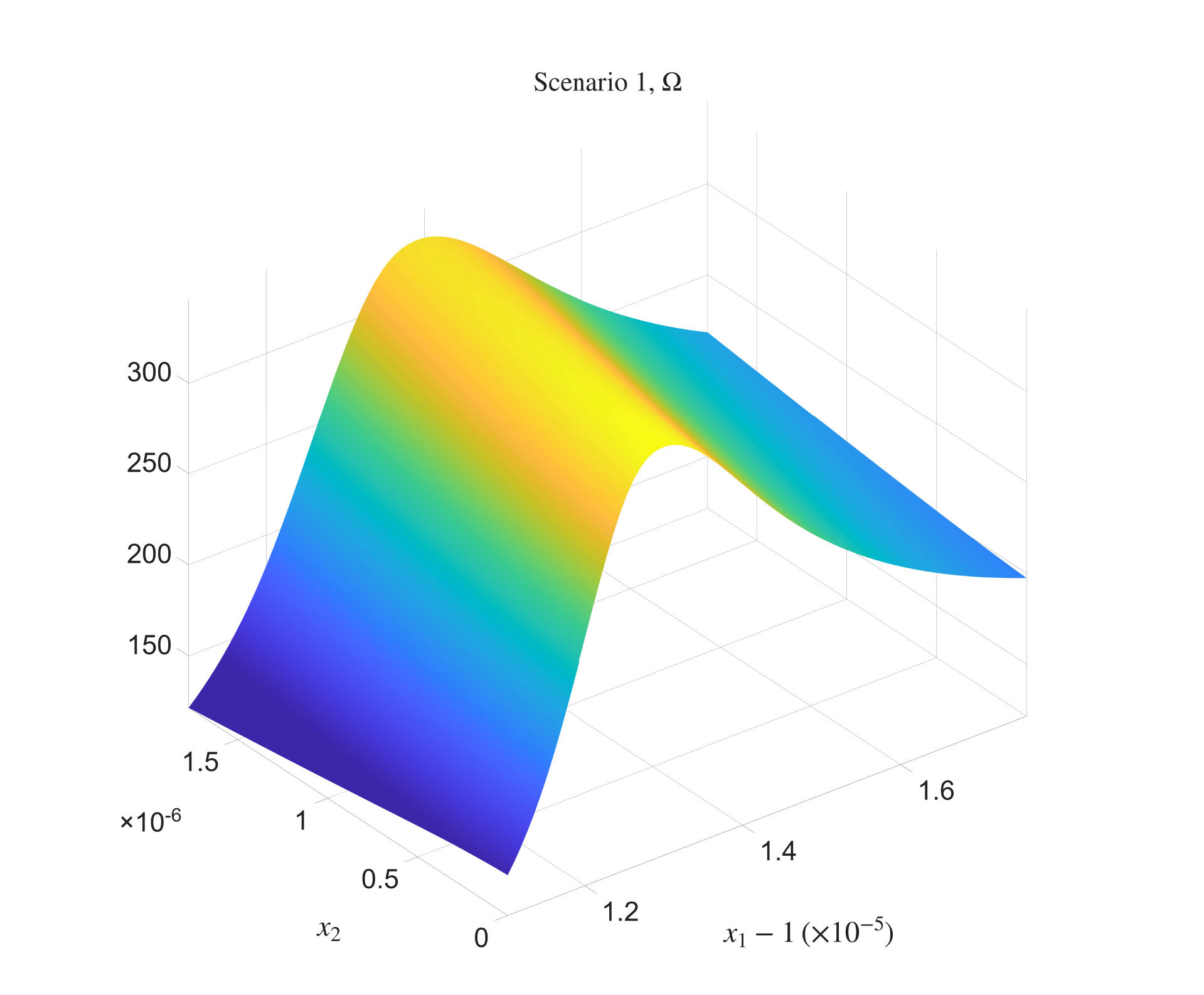}
       \includegraphics[width=0.4\textwidth]{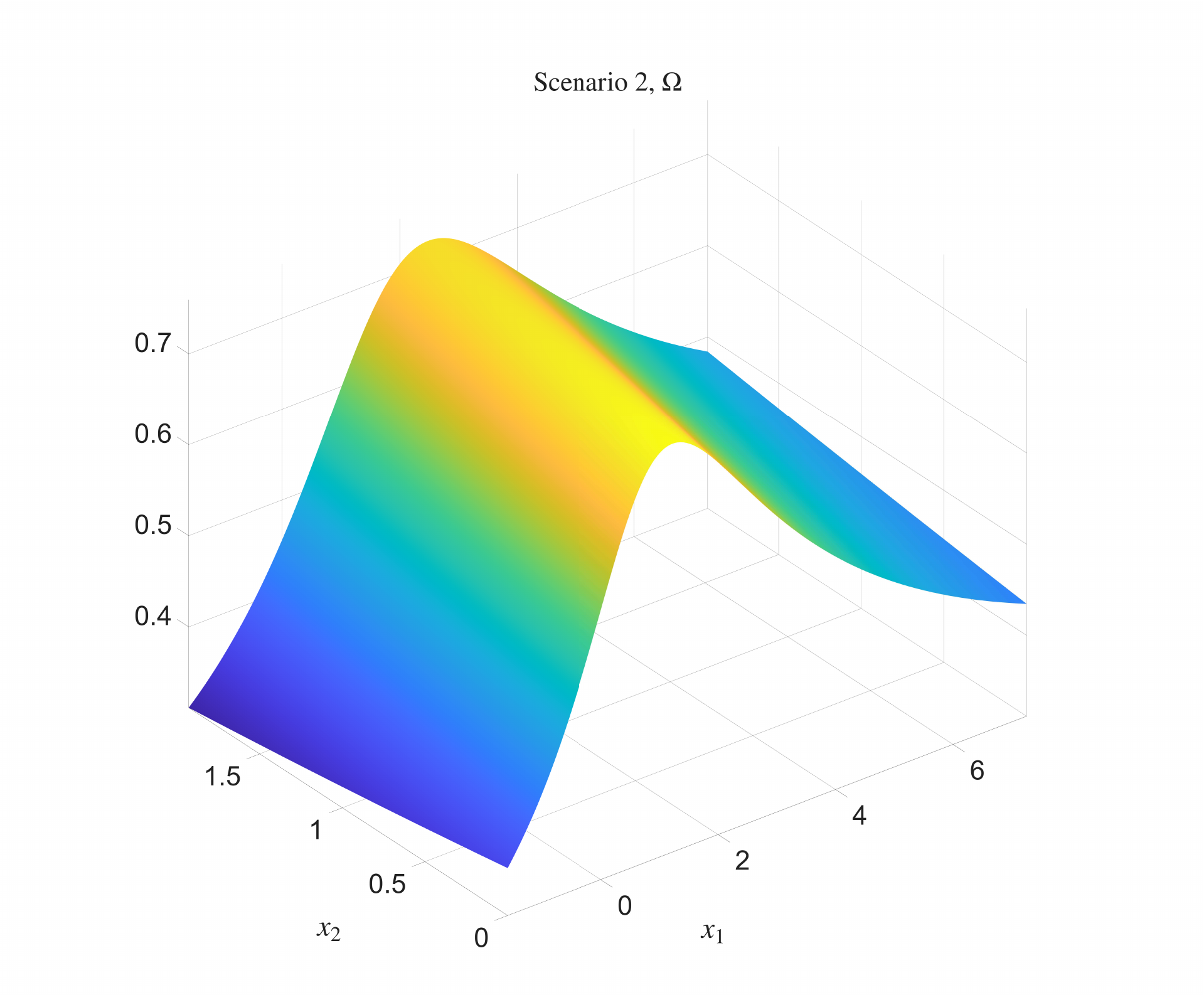}
       \includegraphics[width=0.4\textwidth]{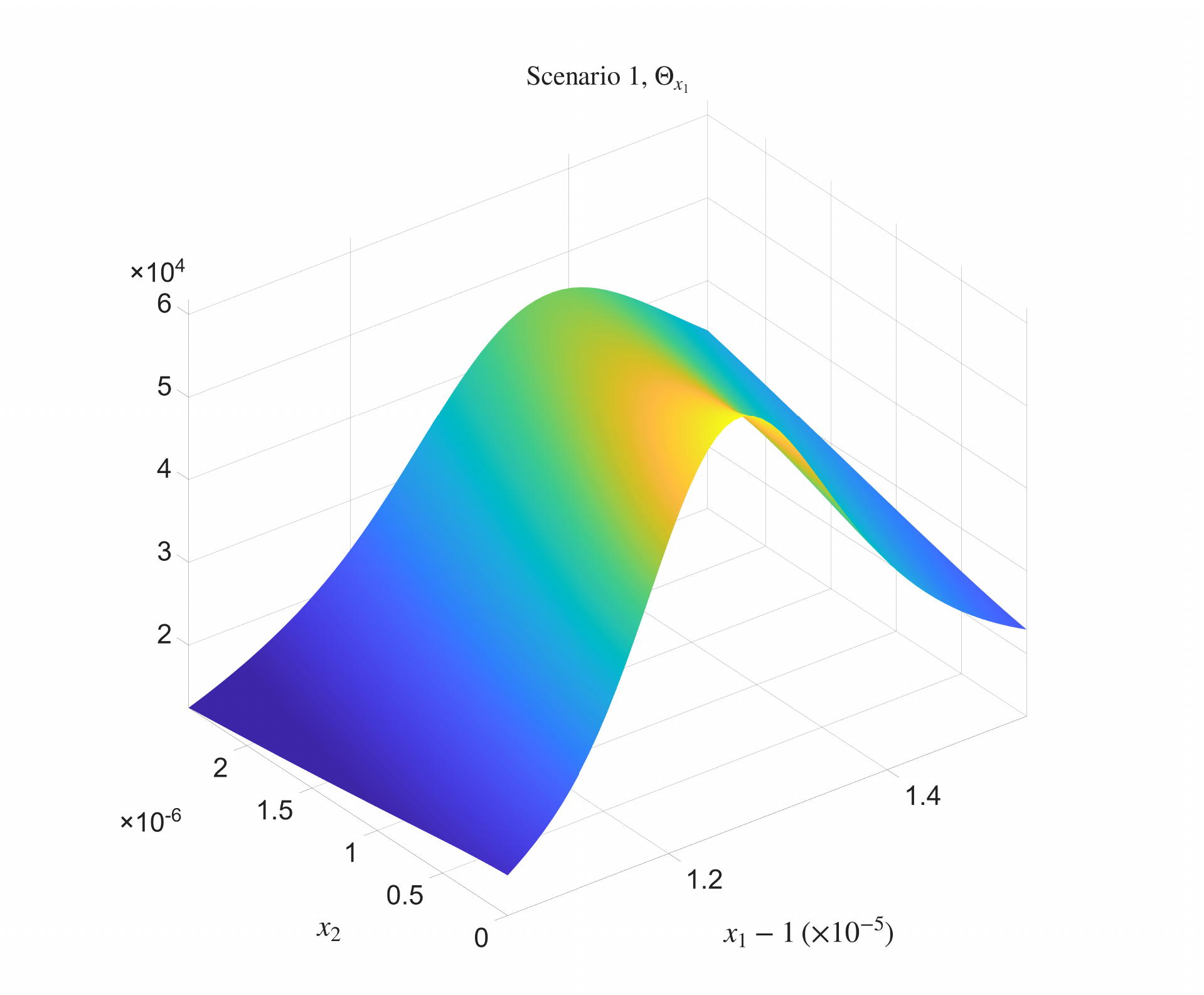}
       \includegraphics[width=0.4\textwidth]{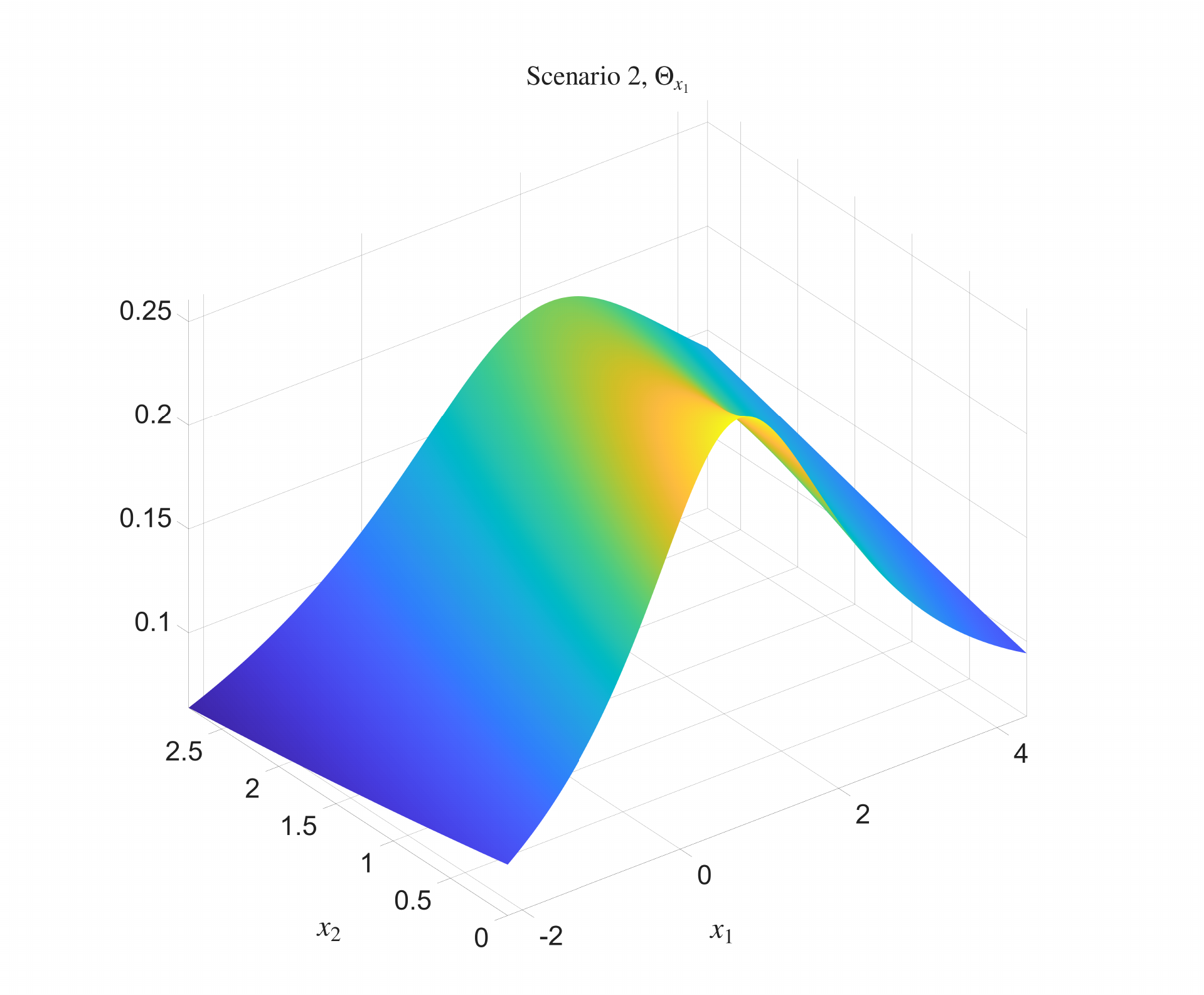}
    \caption[Comparison between Scenario 1 and Scenario 2 for the 2D Boussinesq equations.]{Comparison between Scenario 1 and Scenario 2 for the 2D Boussinesq equations. Top left: inner profile of $\Omega $ in Scenario 1 at $t = 0.92$; Top right: limiting profile of $\Omega$ in Scenario 2. Bottom left: inner profile of $\Theta_{x_1}$ in Scenario 1 at $t = 0.92$; Bottom right: limiting profile of $\Theta_{x_1}$ in Scenario 2.}
    \label{fig:BS_scenario2_2d_mesh_comparison}
\end{figure}
\begin{figure}[!htbp]
\centering
       \includegraphics[width=0.4\textwidth]{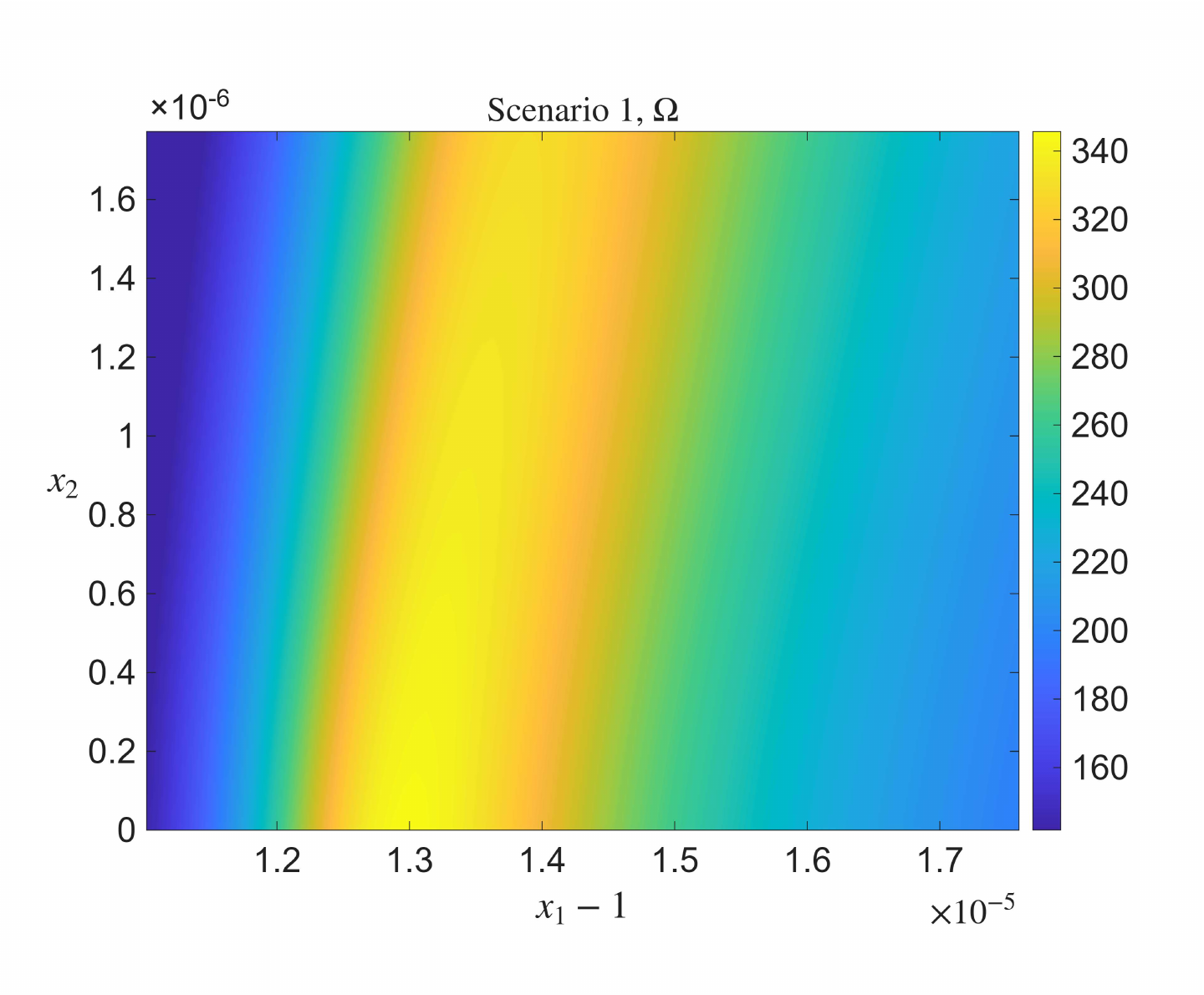}
       \includegraphics[width=0.4\textwidth]{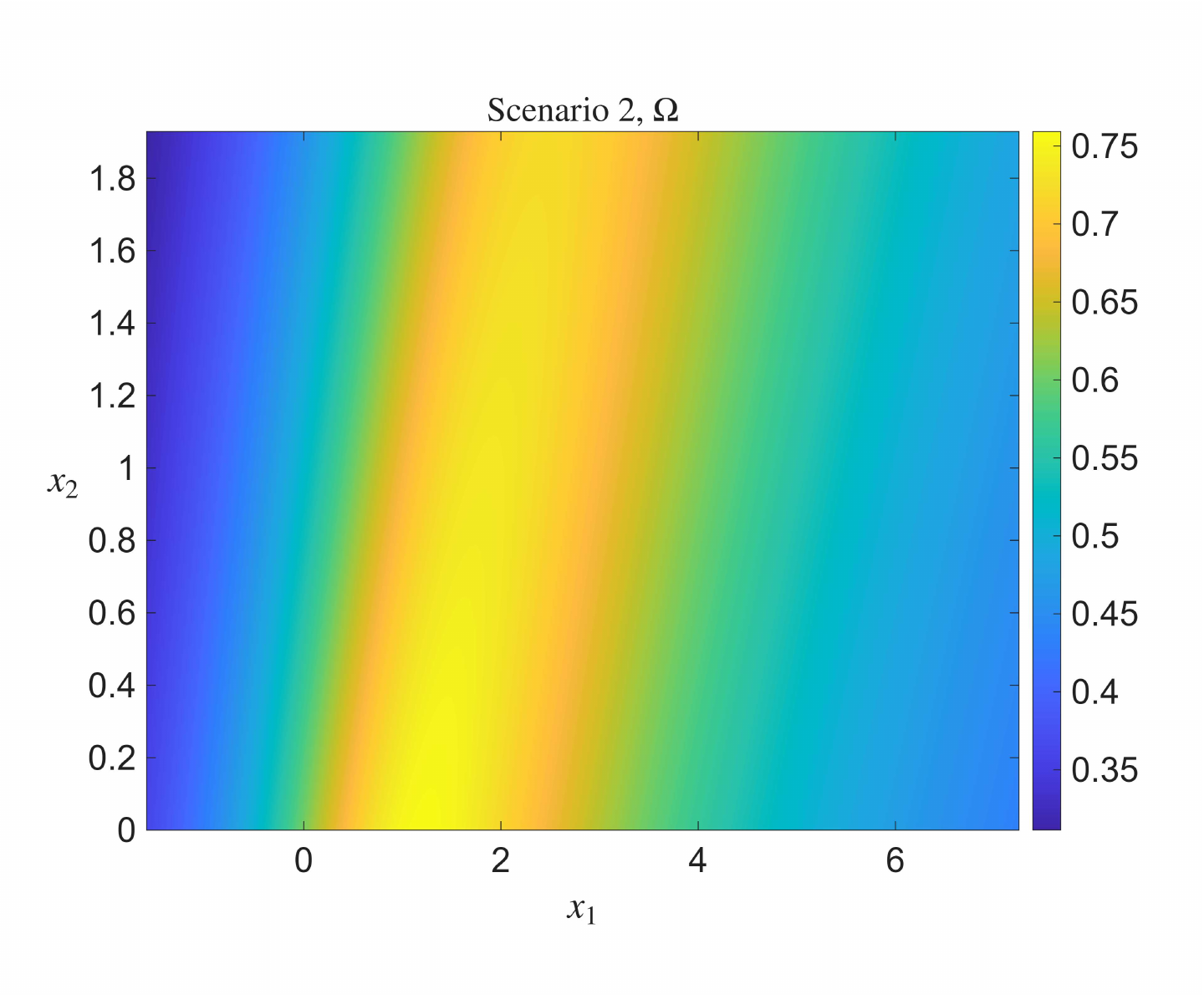}
       \includegraphics[width=0.4\textwidth]{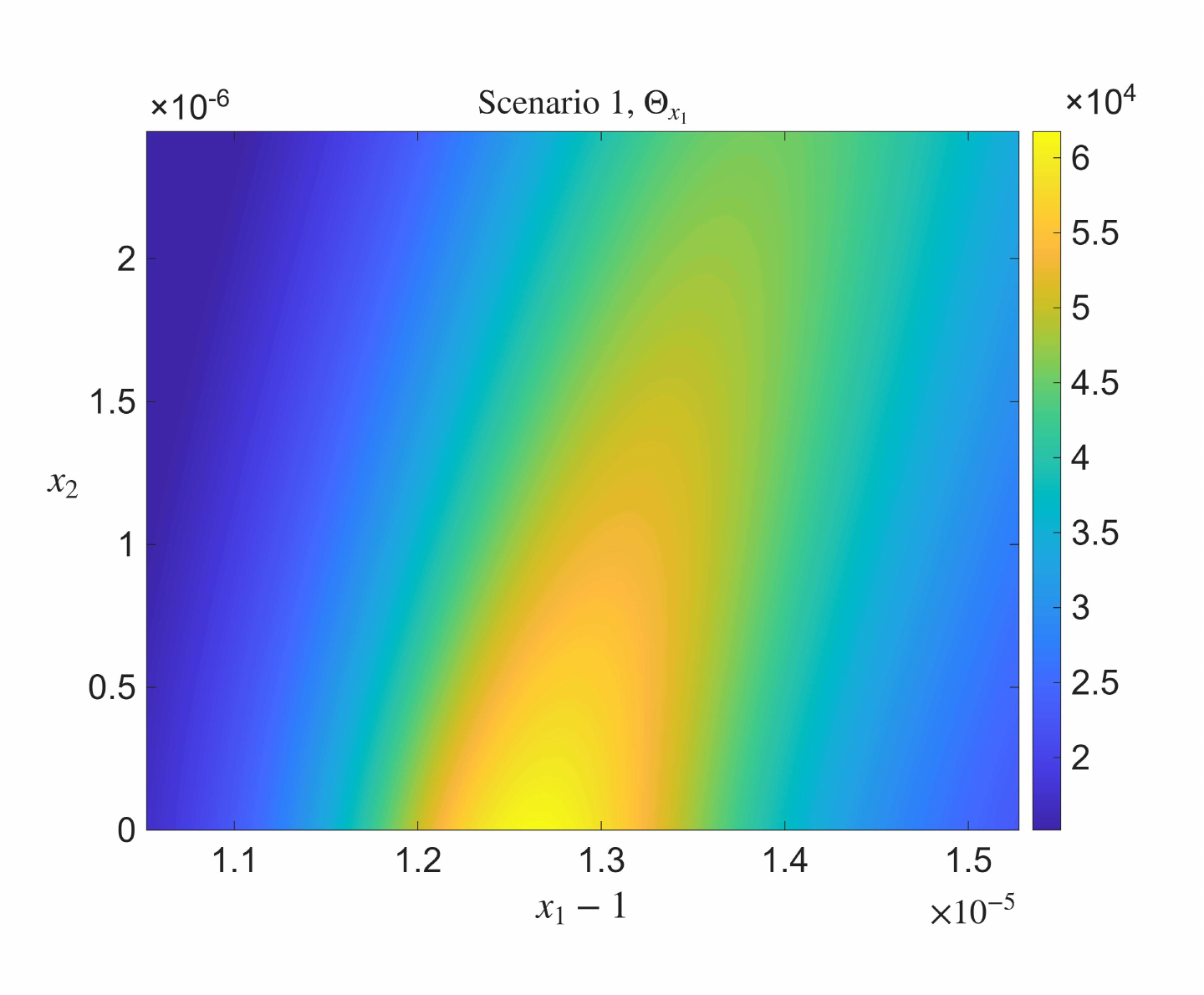}
       \includegraphics[width=0.4\textwidth]{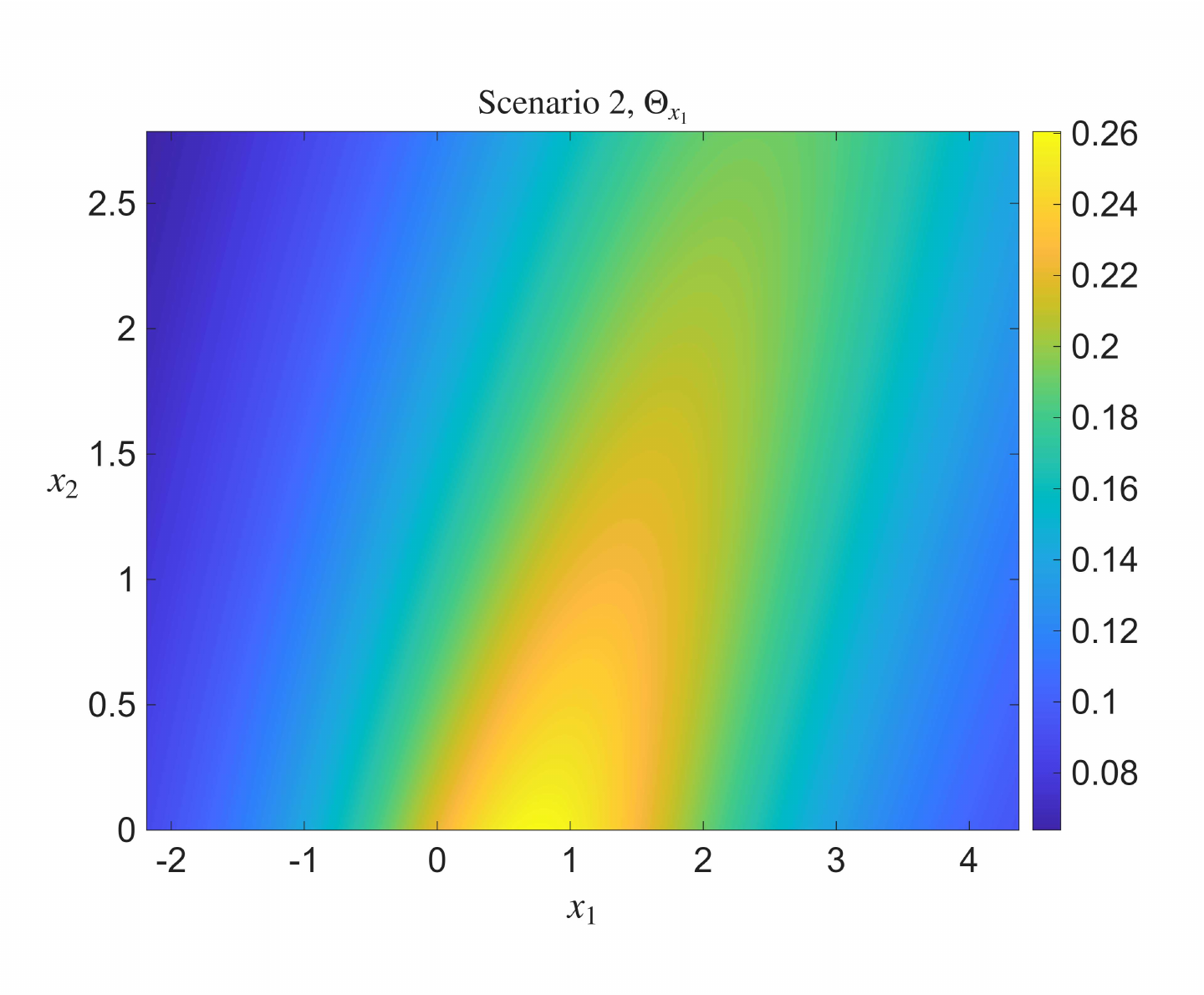}
    \caption[Comparison between Scenario 1 and Scenario 2 for the 2D Boussinesq equations.]{Comparison between Scenario 1 and Scenario 2 for the 2D Boussinesq equations. Top left: inner profile of $\Omega $ in Scenario 1 at $t = 0.92$; Top right: limiting profile of $\Omega$ in Scenario 2. Bottom left: inner profile of $\Theta_{x_1}$ in Scenario 1 at $t = 0.92$; Bottom right: limiting profile of $\Theta_{x_1}$ in Scenario 2.}
    \label{fig:BS_scenario2_2d_contour_comparison}
\end{figure}

\appendix
\section{Translation invariance of the HL model}\label{sec:HL_translation_invariance}
In this section we complete the proof of Lemma \ref{lem:translation_invariance}. 
\begin{proof}[Proof of Lemma \ref{lem:translation_invariance}]
    Suppose that there exists a time-dependent quantity $c(t)$ such that $\om_2(x,t)=\om_1(x-c(t),t)$, $\theta_2(x,t)=\theta_1(x-c(t),t)$. Then we have 
\begin{align*}
u_2(x,t)&=\frac{1}{\pi}\int_{\R}\om_2(y,t)\ln\left|\frac{x-y}{x_2(t)-y}\right|\idiff y\\
        &=\frac{1}{\pi}\int_{\R}\om_1(y-c(t),t)\ln\left|\frac{x-c(t)-(y-c(t))}{x_2(t)-c(t)-(y-c(t))}\right|\idiff y\\
        &= \frac{1}{\pi}\int_{\R}\om_1(y,t)\left(\ln\left|\frac{x-c(t)-y}{x_1(t)-y}\right|-\ln\left|\frac{x_2(t)-c(t)-y}{x_1(t)-y}\right|\right)\idiff y\\
        &=u_1(x-c(t),t)-u_1(x_2(t)-c(t),t).
\end{align*}
Substituting the above expressions into the original HL model \ref{eqt:1Dhouluo}, we find that $(\om_2,\theta_2,u_2)$ solves the same equation as $(\om_1,\theta_1,u_1)$ if and only if $c'(t)=-u_1(x_2(t)-c(t),t)$. Therefore, the conclusion follows immediately from the uniqueness of the solutions to the Cauchy problem of \eqref{eqt:1Dhouluo} (for a brief discussion on the local well-posedness of \eqref{eqt:1Dhouluo}, see \cite{choi2017finite}).
\end{proof}

\section{Numerical methods for the HL model}\label{sec:HL_numerical method}
In this appendix, we describe the numerical methods that we use to compute the dynamic rescaling equations of the Hou--Luo model \eqref{eqt:1Dhouluo}.
\subsection{Computational mesh}
In both scenarios, the solutions are computed on a finite domain $D$, where $D=[0,L]$ in Scenario 1 and $D=[-L,L]$ in Scenario 2, with $L \approx 10^{12}$ chosen to be sufficiently large. We construct computational meshes in $D$ with grid points $\{x_i\}_{i=0}^N$ satisfying $x_{i-1} < x_i$ and a mesh stretching ratio bounded by $\max \{(x_i-x_{i-1})/(x_{i+1}-x_i),(x_{i+1}-x_{i})/(x_{i}-x_{i-1})\}<1.01$. In what follows, we use $\Omega_i^k$ and $V_i^k$ to denote the numerical solutions at the discrete time step $t_k$ and the spatial grid point $x_i$, respectively.   

In Scenario 1, to accurately resolve the singularity of the solutions, the mesh is dynamically refined near $x=1$. Let $y_1 < y_2$ be the spatial points satisfying $\Omega(y_1) = \Omega(y_2) = 0.3 \|\Omega\|_{L^{\infty}}$. In the adaptive mesh setting, we record the value $w = y_2 - y_1$ each time the mesh is updated. The mesh is refined whenever $w$ decreases below half of its previously recorded value, ensuring that the interval $[y_1, y_2]$ contains 80 grid points in the newly generated mesh. When computing the numerical solutions using a fixed mesh, the finest grid spacing near $x=1$ is $2\times 10^{-8}$.

\subsection{Computing the Hilbert transform}
Given the grid point values $\{\Omega_i^k\}_{i=0}^N$, we use a standard cubic spline interpolation subject to the boundary conditions $\partial_x \Omega^k(0)=\partial_x \Omega^k(L)=0$ to obtain the whole function $\Omega^k$. We then use the kernel integral to obtain value of $U^k$ and $U_x^k$ at grid points:
\begin{equation}\label{eqt:numerically_compute_Hilbert_transform}
    \begin{aligned}
    U_i^k&=\frac{1}{\pi}\int_{0}^{L}\Omega^k(y)\ln\left| \frac{x_i-y}{y}\right| \idiff y,\quad  U_{x,i}^k=\frac{1}{\pi}\mathrm{P.V.}\int_{0}^{L}\frac{\Omega^k(y)}{x_i-y} \idiff y, \quad \,\  \text{if } \operatorname{supp}(\Omega^k)\subset [0,+\infty), \\
U_i^k&=\frac{1}{\pi}\int_{0}^{L}\Omega^k(y)\ln\left| \frac{x_i-y}{x_i+y}\right| \idiff y,\quad  U_{x,i}^k=\frac{1}{\pi}\mathrm{P.V.}\int_{0}^{L}\frac{2y\Omega^k(y)}{x_i^2-y^2} \idiff y, \,\ \text{if } \Omega^k \text{ is odd}.  
    \end{aligned}
\end{equation}
In particular, for each $x_i$, the contributions to the above integrals from ${[x_{i-m},x_{i+m}]}$ are computed explicitly using the cubic spline expression of $\Omega^k$. The remaining contributions from ${[0,L]\backslash[x_{i-m},x_{i+m}]}$ are approximated using a piecewise 8-point Legendre-Gauss quadrature, in order to avoid large round-off error.

\subsection{Numerical regularization}
In Scenario 1, we apply a suitable numerical regularization scheme for two main reasons:
\begin{enumerate}
    \item The solution may develop oscillations near the singularity, which can cause numerical instability. 
    \item When using a fixed mesh, the rapid growth of the solution's $L^{\infty}$ norm and the fine grid spacing near the singularity lead to extremely small time steps due to the CFL stability condition, which makes it computationally expensive to simulate the solution for a long time.
\end{enumerate}
To suppress the potential oscillations and control the growth of the solution's $L^{\infty}$ norm, we apply the following numerical regularization scheme to $\Omega$ and $V$ at the end of each time step:

\begin{align*} \Omega_i^k &\leftarrow \frac{x_{i+1}-x_i}{x_{i+1}-x_{i-1}}\alpha_i^k\Omega_{i-1}^k+(1-\alpha_i^k)\Omega_i^k+\frac{x_{i}-x_{i-1}}{x_{i+1}-x_{i-1}}\alpha_i^k\Omega_{i+1}^k, \\ V_i^k &\leftarrow \frac{x_{i+1}-x_i}{x_{i+1}-x_{i-1}}\alpha_i^k V_{i-1}^k+(1-\alpha_i^k)V_i^k+\frac{x_{i}-x_{i-1}}{x_{i+1}-x_{i-1}}\alpha_i^kV_{i+1}^k, \end{align*} where $\alpha_i^k$ is an adaptive coefficient depending on the minimum mesh spacing $h_{\min}$ and the discrete time step $\Delta t_k$. Specifically, let $\beta_i^k=50 \Delta t_k$ and 
\[\gamma_i^k=\left\{ \begin{array}{ll}
    1, &\text{if }x_i<1+10000h_{\min},\\
 \displaystyle  \frac{1+20000h_{\min}-x_i}{10000h_{\min}},&\text{if } x_i\in[1+10000h_{\min},1+20000h_{\min}], \\ 
0, & \text{if } x_i>1+20000h_{\min}.
\end{array}\right.\]
We then set $\alpha_i^k = \beta_i^k + \gamma_i^k$ in the fixed mesh setting and $\alpha_i^k = \beta_i^k$ in the adaptive mesh setting.

\subsection{Overall algorithm}
Let us describe the over algorithm for numerically solving equations \eqref{eqt:numerical_dynamic_rescaling_of_HLscenario1} and \eqref{eqt:dynamic_rescaling_of_HL_scenario2_numerical}. The algorithm consists of the following steps:
\begin{enumerate}
    \item The initial data are chosen as follows.
 \begin{itemize}
    \item For the odd symmetry case in Scenario 1,
    \[\Omega^0=\frac{\lambda_1 x^7}{x^{8}+5},\quad V^0=\frac{x^5}{x^6+10}.\] 
    \item For the one-sided case in Scenario 1,
   \[\Omega^0=\frac{\lambda_2\exp(-1/x) x^7}{x^{8}+1}, \quad V^0=\frac{\exp(-1/x)x^5}{x^6+10}. \]
   \item For Scenario 2,
    \[\Omega^0=\frac{1}{(x-1)^2+1},\quad V^0=\frac{1}{2(x-1)^2+2}.\]
 \end{itemize}
   Here $\lambda_1, \lambda_2$ are constants that guarantee $\mtx{H}(\Omega ^0)=-1$.
    \item The spatial derivatives of $\Omega^k$ and $V^k$ at grid points are obtained using their cubic spline expression. The grid point values of $U^k$ and $U_x^k$ are computed according to \eqref{eqt:numerically_compute_Hilbert_transform}. The normalization constants are then determined via \eqref{eqt:HLscenario1_normlization_1}, \eqref{eqt:HLscenario1_normlization_2} or \eqref{eqt:HLscenario2_normalization}.
    \item The solutions $\Omega^k$ and $V^k$ are advanced in time using a 2-stage, 2nd-order explicit Runge-Kutta method with adaptive time stepping. The discrete time step size $\Delta t_k=t_{k+1}-t_{k}$ is given by
    \[\Delta t_k=\left\{\begin{array}{ll} 
        C_1\min_{0\leq i\leq N-1} (x_{i+1}-x_i)/|U_i^k+c_l^kx_i|& \text{for Scenario 1}, \\ C_2\min_{0\leq i\leq N-1} (x_{i+1}-x_i)/|U_i^k+c_l^kx_i+c_r^k|& \text{for Scenario 2} \end{array} \right. \]
     respecting the CFL stability condition, where $C_1, C_2<1$ are suitable constants.
    \item For Scenario 1, a numerical regularization procedure described above is applied after each time step. Furthermore, in the adaptive mesh setting, the mesh is updated when the criterion described in the previous subsection is satisfied. For Scenario 2, no numerical regularization is applied and the mesh is fixed throughout the simulation.
\end{enumerate}

\section{Resolution Study of the HL model}
In this section, we conduct resolution studies for the numerical methods of the HL model by investigating the convergence of the numerical solutions under mesh refinement and demonstrate the accuracy and robustness of our numerical scheme.

\subsection{Scenario 1}
In this subsection, we perform a resolution study on the numerical solutions of \eqref{eqt:numerical_dynamic_rescaling_of_HLscenario1} obtained via the adaptive mesh strategy. Recall that the adaptive mesh method is employed to investigate the Stage 1 blowup of the system. Consequently, as the solution approaches the blowup time, certain variables may develop large $L^{\infty}$ norms. To verify the effectiveness of our scheme, we quantify the relative error of some solution variable $f$ by comparing it to a reference solution $\hat{f}$ computed on a finer mesh at the same time instant. Specifically, for the domain $D$ considered, we interpolate $f$ onto the reference mesh of $\hat{f}$ and then compute the sup-norm relative error between $f$ and $\hat{f}$ on $D$ as 
 \[\mathrm{E}_{f}(D)= \frac{\|f-\hat{f}\|_{L^\infty(D)}}{\|\hat{f}\|_{L^{\infty}(D)}},\]
 where $\|\cdot\|_{L^{\infty}(D)}$ is evaluated over all grid points of the reference mesh that lie in $D$.

Table \ref{tab:HL_resolution_study_scenario1} reports the relative errors of $\Omega$ and $\Theta$ computed on meshes with varying densities in the odd symmetry case. The results demonstrate that the relative error decreases as the mesh is refined, indicating that the numerical solutions approach the high-resolution reference solution. This behavior confirms the reliability and effectiveness of our numerical scheme.

\begin{table}[h!]
  \centering  
    \vspace{2mm} 
    \begin{tabular}{c c c c}     
      \toprule 
      \textbf {Grid points (\#)} & \textbf{$\mathrm{E}_{\Omega}(\R)$}& \textbf{$\mathrm{E}_{\Omega}(\R\backslash{[-0.9999,1.0001]})$} & \textbf{$\mathrm{E}_{\Theta}(\R)$}   \\
      \midrule
      1384  & $7.4066\times 10^{-1}$ & $4.1856 \times 10^{-3}$ & $3.7588\times 10^{-2}$ \\
      2903  & $5.1998\times 10^{-1}$ & $1.2187 \times 10^{-3}$ & $8.8915\times 10^{-3}$ \\
      6038  & $2.5881\times 10^{-1}$ & $3.9646 \times 10^{-4}$ & $1.7926\times 10^{-3}$ \\
      7501  & $1.8304\times 10^{-1}$ & $3.2827 \times 10^{-4}$ & $9.3233\times 10^{-3}$ \\
      10128 & $9.6718\times 10^{-2}$ & $2.0130 \times 10^{-4}$& $3.8108\times 10^{-4}$ \\
      12348 & $4.9708\times 10^{-2}$ & $1.1136 \times 10^{-4}$& $1.9012\times 10^{-4}$ \\
      \bottomrule
    \end{tabular}
    \caption{Relative $L^\infty$ errors of $\Omega$ and $\Theta$ at $t=4.39$. The reference solutions $\hat{\Omega}$ and $\hat{\Theta}$ are computed on a fine mesh with $14007$ grid points. Notably, the relative error of $\Omega$ away from the singular region is significantly smaller than that over the whole space.}
    \label{tab:HL_resolution_study_scenario1}
\end{table}

\subsection{Scenario 2}
In this subsection, we perform a resolution study on the numerical solutions of \eqref{eqt:dynamic_rescaling_of_HL_scenario2_numerical} obtained on meshes with different densities. Recall that in this scenario, the numerical solutions eventually settle to regular profiles. Moreover, for all mesh densities considered, the convergence criterion is satisfied at the end of the iteration:
\[|\Omega_t(x_i)|<10^{-6},\quad |\Theta_{x,t}(x_i)|<10^{-6} \]
for any $i=0,1,2,\cdots N$. This indicates that the time derivatives of the terminal variables $\Omega$ and $\Theta_x$ are consistently small at the computational grid points. To verify the effectiveness of our scheme globally, we select a set of test points $\{y_i\}_{i=1}^M$ distinct from the computational grid points and compute the maximal residuals of the terminal solution variables $\Omega$ and $\Theta_x$ at these locations:
\[
  \mathrm{Re}_{\Omega} := \sup_{i \in \{1, \dots, M\}} |\Omega_t(y_i)|, \quad \mathrm{Re}_{\Theta_x} := \sup_{i \in \{1, \dots, M\}} |\Theta_{x,t}(y_i)|.
\]
Here, we interpret $\Omega$ and $\Theta_x$ as piecewise $C^2$ functions on $\mathbb{R}$ via the cubic spline interpolation to ensure that $\Omega_t(y_i)$ and $\Theta_{x,t}(y_i)$ are well-defined.

 Table \ref{tab:HL_resolution_study_scenario2} reports the maximal residuals of $\Omega$ and $\Theta_x$ on the test points $\{y_i\}_{i=1}^M$ for varying mesh densities. It is observed that with mesh refinement, the residuals decrease over the whole space $\mathbb{R}$, which corroborates the effectiveness of our numerical scheme.

 \begin{table}[h!]
  \centering  
    \vspace{2mm} 
    \begin{tabular}{c c c}     
      \toprule 
      \textbf {Grid points (\#)} & \textbf{$\mathrm{Re}_{\Omega}$} & \textbf{$\mathrm{Re}_{\Theta_x}$} \\
      \midrule
      1221  & $2.6921\times 10^{-5}$    & $2.3809\times 10^{-5}$ \\
      1525  & $2.1642\times 10^{-5}$    & $1.5409\times 10^{-5}$ \\
      2033  & $1.6478\times 10^{-5}$    & $9.7263\times 10^{-6}$ \\
      2441  & $1.3415\times 10^{-5}$    & $7.8620\times 10^{-6}$ \\
      3051  & $1.0212\times 10^{-5}$    & $6.0356\times 10^{-6}$ \\
      4067  & $8.6111\times 10^{-6}$    & $4.9949\times 10^{-6}$ \\
      6101  & $6.0221\times 10^{-6}$    & $3.4735\times 10^{-6}$ \\
      12203 & $3.1075\times 10^{-6}$    & $1.8424\times 10^{-6}$ \\
      \bottomrule
    \end{tabular}
    \caption{maximal residuals of $\Omega$ and $\Theta_x$ on the test points $\{y_i\}_{i=1}^M$ for varying mesh densities.}
    \label{tab:HL_resolution_study_scenario2}

\end{table}
\section{The numerical methods for 2D Boussinesq equations}\label{sec:2D_numerical_method}

In the two-dimensional setting, computational cost significantly limits the feasible grid resolution. To resolve the evolving profile accurately, we therefore employ high-order numerical schemes.

Our discretization is hybrid on the rectangular domain $D=[-L_1,R_1]\times[0,R_2]$: the elliptic stream-function recovery is handled by a spline-based finite-element method, while the transport terms are discretized by a finite-difference WENO scheme.
\subsection{Adaptive mesh}

We use a mapping from the computational domain $(\rho,\eta)$ to the physical domain $(x_1,x_2)$:
\[
    x_1(\rho) = \int_0^{\rho} \varrho_1(s)\,ds - L_1,\qquad
    x_2(\eta) = \int_0^{\eta} \varrho_2(w)\,dw,
\]
where $(x_1,x_2)\in[-L_1,R_1]\times[0,R_2]$ and the mesh densities $\varrho_1,\varrho_2$ are given by
\[
\begin{aligned}
\varrho_1(\rho) &= \alpha_1^{(1)}
    + \alpha_1^{(2)}\,L\!\left(1 - \frac{\rho}{\beta_1}\right)
    + \alpha_1^{(3)}\,\exp\!\Big(-\pi\,\tfrac{(\rho-1)^2}{\sigma_1^2}\Big)
    + \alpha_1^{(4)}\,\exp\!\Big(-\pi\,\tfrac{\rho^2}{\sigma_3^2}\Big),\\
\varrho_2(\eta) &= \alpha_2^{(1)}
    + \alpha_2^{(2)}\Big[ L\!\left(1 - \tfrac{\eta}{\beta_2}\right) + L\!\left(1 + \tfrac{\eta}{\beta_2}\right)\Big] \\
&\quad\; + \alpha_2^{(3)}\Big[\exp\!\Big(-\pi\,\tfrac{(\eta-1)^2}{\sigma_3^2}\Big) + \exp\!\Big(-\pi\,\tfrac{(\eta+1)^2}{\sigma_3^2}\Big)\Big].
\end{aligned}
\]
The density function $\varrho_2(\eta)$ is chosen to be even in $\eta$, so the mesh remains nearly symmetric and close to uniform near the boundary $x_2=0$.

The parameter vector $\sigma=(\sigma_1,\sigma_2,\sigma_3)$ controls the width and sharpness of the refinement features. The coefficient vectors $\alpha_1=(\alpha_1^{(1)},\alpha_1^{(2)},\alpha_1^{(3)},\alpha_1^{(4)})$ and $\alpha_2=(\alpha_2^{(1)},\alpha_2^{(2)},\alpha_2^{(3)})$, together with $\beta=(\beta_1,\beta_2)$, are updated dynamically to track the evolving profile. In practice, the mesh first concentrates around the $10\%$ level set and then shifts its finest resolution toward the location of maximal vorticity magnitude.

In our simulations, we fix $\sigma = (1/16,1/8,1/8)$, while $\alpha_1$, $\alpha_2$, and $\beta$ are updated adaptively.

The two scenarios are treated with different symmetry settings.
In Scenario 1, we enforce odd symmetry with respect to $x_1=0$ for vorticity and thus stream function,
that is,
\[
\omega(-x_1,x_2)=-\omega(x_1,x_2),\qquad \psi(-x_1,x_2)=-\psi(x_1,x_2),
\]
and therefore set $L_1=0$ and compute on $[0,R_1]\times[0,R_2]$.
In Scenario 2, no additional symmetry in $x_1$ is imposed; we solve on the full truncated upper-half-plane
$[-L_1,R_1]\times[0,R_2]$, with typical scales
\[
R_1 = R_2\approx 10^6,\qquad L_1\approx 10^5.
\]

\subsection{Velocity recovery} 

The velocity is recovered from the vorticity through the Biot--Savart law. In the half-plane
$\R\times\R_+$ with no-penetration boundary condition $u_2|_{x_2=0}=0$, we introduce the stream
function $\psi$ such that
\[
\mtx{u}=\nabla^\perp\psi=(-\psi_{x_2},\psi_{x_1}),\qquad -\Delta\psi=\omega,\qquad \psi|_{x_2=0}=0.
\]
Let $\mtx{y}^*=(y_1,-y_2)$ denotes the reflection of $\mtx{y}=(y_1,y_2)$ across
$x_2=0$, then
\begin{equation}\label{eqt:BS_biot_savart_halfplane_stream}
\psi(\mtx{x})=\frac{1}{2\pi}\int_{\R\times\R_+}\log\frac{|\mtx{x}-\mtx{y}^*|}{|\mtx{x}-\mtx{y}|}\,\omega(\mtx{y})\,d\mtx{y},
\end{equation}

In Scenario 1, the odd symmetry in $x_1$ is imposed throughout the computation. In particular,
$\omega(0,x_2)=\psi(0,x_2)=0$, which is used in ghost-value construction at $x_1=0$.
Moreover, the Biot--Savart kernels are reduced to the half-strip $\{y_1>0,y_2>0\}$ by symmetry.
Let
\[
\mtx{y}^{(1)}:=(-y_1,y_2),\qquad \mtx{y}^{(2)}:=(y_1,-y_2),\qquad \mtx{y}^{(12)}:=(-y_1,-y_2).
\]
Then for $x_1\ge 0$, we use
\begin{equation}\label{eqt:BS_biot_savart_halfplane_stream_sc1}
\psi(\mtx{x})=\frac{1}{2\pi}\int_{\R_+\times\R_+}
\log\frac{|\mtx{x}-\mtx{y}^{(1)}|\,|\mtx{x}-\mtx{y}^{(2)}|}{|\mtx{x}-\mtx{y}|\,|\mtx{x}-\mtx{y}^{(12)}|}\,\omega(\mtx{y})\,d\mtx{y},
\end{equation}

In Scenario 2, this symmetry constraint is removed, and velocity recovery is carried out on the full
truncated domain stated above.

Numerically, we do not evaluate \eqref{eqt:BS_biot_savart_halfplane_stream} and \eqref{eqt:BS_biot_savart_halfplane_stream_sc1} directly at all grid
points. Instead, we adopt a two-step elliptic recovery:
\begin{enumerate}
    \item Use \eqref{eqt:BS_biot_savart_halfplane_stream_sc1} in Scenario 1 (or
    \eqref{eqt:BS_biot_savart_halfplane_stream} in Scenario 2) to compute Dirichlet data of $\psi$ on
    the outer boundary of the truncated box $D$.
    \item Solve the Poisson problem $-\Delta\psi=\omega$ in $D$ with the above boundary data by a
    conforming spline finite-element method.
\end{enumerate}
We use tensor-product 4th-order B-splines centered at the grid points. This yields a globally smooth
($C^2$) approximation of $\psi$, after which the velocity is recovered by differentiation,
$\mtx{u}=(-\psi_{x_2},\psi_{x_1})$. Compared with direct Biot--Savart quadrature in the full domain,
this hybrid strategy is more efficient on adaptive meshes and remains reliable near the boundary and
near singular regions.

\subsection{Time-stepping scheme}

The system is advanced in time by an explicit third-order strong-stability-preserving (SSP) Runge--Kutta method. To suppress spurious oscillations near steep gradients, we use the fifth-order weighted essentially non-oscillatory (WENO5) reconstruction for the convective fluxes.

The WENO5 reconstruction is applied dimension-by-dimension in both the $x_1$- and $x_2$-directions. For definiteness, we present the formula in the $x_1$-direction for each fixed $x_2$; the $x_2$-direction is completely analogous. The interface values $\omega_{i+1/2}^{\pm}$ are reconstructed by a convex combination of third-order polynomials on three substencils. For the left-biased reconstruction $\omega_{i+1/2}^-$, the substencils are $S_0=\{x_{1,i-2},x_{1,i-1},x_{1,i}\}$, $S_1=\{x_{1,i-1},x_{1,i},x_{1,i+1}\}$, and $S_2=\{x_{1,i},x_{1,i+1},x_{1,i+2}\}$.

\[
\omega_{i+1/2}^- = \sum_{k=0}^2 \omega_k q_k,
\]
where the third-order approximations $q_k$ are
\begin{align*}
q_0 &= \frac{1}{3}\omega_{i-2} - \frac{7}{6}\omega_{i-1} + \frac{11}{6}\omega_i, \\
q_1 &= -\frac{1}{6}\omega_{i-1} + \frac{5}{6}\omega_i + \frac{1}{3}\omega_{i+1}, \\
q_2 &= \frac{1}{3}\omega_i + \frac{5}{6}\omega_{i+1} - \frac{1}{6}\omega_{i+2}.
\end{align*}
The nonlinear weights $\omega_k$ are defined as
\[
\omega_k = \frac{\alpha_k}{\sum_{j=0}^2 \alpha_j}, \quad \alpha_k = \frac{d_k}{(\epsilon + \beta_k)^2},
\]
where $d_0=0.1$, $d_1=0.6$, and $d_2=0.3$ are the ideal weights, and $\epsilon$ is a small regularization parameter to prevent division by zero. We use $\epsilon=10^{-6}$ in Scenario 1 and $\epsilon=10^{-10}$ in Scenario 2 to better resolve smooth limiting profiles. The smoothness indicators $\beta_k$ are
\begin{align*}
\beta_0 &= \frac{13}{12}(\omega_{i-2}-2\omega_{i-1}+\omega_i)^2 + \frac{1}{4}(\omega_{i-2}-4\omega_{i-1}+3\omega_i)^2, \\
\beta_1 &= \frac{13}{12}(\omega_{i-1}-2\omega_i+\omega_{i+1})^2 + \frac{1}{4}(\omega_{i-1}-\omega_{i+1})^2, \\
\beta_2 &= \frac{13}{12}(\omega_i-2\omega_{i+1}+\omega_{i+2})^2 + \frac{1}{4}(3\omega_i-4\omega_{i+1}+\omega_{i+2})^2.
\end{align*}
The right-biased reconstruction $\omega_{i+1/2}^+$ is obtained by symmetry. The convective derivative is then approximated by an upwind flux determined by the local sign of the transport velocity. Near the boundaries, ghost values are filled consistent with the half-plane formulation and with the parity of the even/odd extensions across $x_2=0$. At the far-field boundaries, we use flow splitting with  prescribed exterior data on inflow segments and extrapolation on outflow segments.

The time step is selected according to the CFL condition with a constant $0.4$.

\subsection{Resolution study}

In this subsection, we conduct resolution studies for the numerical methods of the 2D Boussinesq equations by investigating the convergence of numerical solutions under mesh refinement, and demonstrate the accuracy and reliability of our numerical scheme.

\subsubsection{Scenario 1}

In this subsection, we perform a resolution study at a fixed target time $t=0.2$ by comparing adjacent resolutions on dyadic meshes. For each pair $(N,2N)$, we interpolate the coarse-grid solution $(\Omega_N, \Theta_N)$ onto the fine grid as $(\Omega_{N\to 2N},\Theta_{N\to 2N})$ and compare it to the fine grid solution $(\Omega_{2N},\Theta_{2N})$. Specifically we investigate the grid-point value of their differences
\[
\Delta \Omega_{N\to 2N}=\Omega_{N\to 2N} -\Omega_{2N},\qquad
\Delta \Theta_{N\to 2N}=\Theta_{N\to 2N} -\Theta_{2N},
\]  

Table \ref{tab:BS_resolution_study_scenario1} reports $\sup(|\Delta \Omega_{N\to 2N}|)$ and $\sup(|\Delta \Theta_{N\to 2N}|)$ for $N=64,128,256,512$. Both errors decrease monotonically under mesh refinement, supporting the numerical reliability of the Scenario 1 computation.

\begin{table}[h!]
    \centering
        \vspace{2mm}
        \begin{tabular}{c c c}
            \toprule
            \textbf{Coarse $\to$ Fine} & \textbf{$\sup(|\Delta \Omega|)$} & \textbf{$\sup(|\Delta \Theta|)$} \\
            \midrule
            $64\to 128$   & $1.3991\times 10^{-1}$ & $1.1706\times 10^{0}$ \\
            $128\to 256$  & $4.9028\times 10^{-2}$ & $7.6318\times 10^{-1}$ \\
            $256\to 512$  & $2.2386\times 10^{-2}$ & $4.2334\times 10^{-1}$ \\
            $512\to 1024$ & $3.9207\times 10^{-3}$ & $6.3977\times 10^{-2}$ \\
            \bottomrule
        \end{tabular}
         \caption{$L^{\infty}$ errors from adjacent-resolution comparison at $t=0.2$ (Scenario 1).}
        \label{tab:BS_resolution_study_scenario1}
\end{table}

\subsubsection{Scenario 2}
In this subsection, we perform a resolution study on the numerical solutions obtained on meshes with different densities. Recall that in this scenario, the numerical solutions eventually settle to regular profiles. 
 For all mesh densities considered, the convergence criterion $\|\widehat{\Omega}_t\|_{\infty}<10^{-6}$ is satisfied at the end of the iteration.
Here $\widehat{\Omega}$ refers to the numerical solution of $\Omega$.

 Same as in the 1D case, to verify the effectiveness of our scheme globally, we evaluate how close the computed terminal solution is to a discrete steady state by re-applying the full discrete operator on a finer mesh. More precisely, for a solution computed on a mesh of size $N$, we first lift $\Omega$ to a mesh of size $2N$ via high-order interpolation, then re-evaluate the full discrete operators on the fine grid, including the Poisson solve, velocity recovery, and WENO flux reconstruction, to obtain the instantaneous residuals $\Omega_t$ and $(\Theta_{x_1})_t$. Meanwhile, the scaling factors $c_l$ and $c_\omega$ are re-calculated on the fine grid so that the normalization conditions remain satisfied, ensuring that the measured residual reflects only the discretization error of the putative steady state. We then select a set of test points $\{(x_i,y_i)\}_{i=1}^M$, distinct from the computational grid points, and define
\[
    \mathrm{Re}_{\Omega} := \sup_{i \in \{1, \dots, M\}} |\Omega_t(x_i,y_i)|,
    \qquad
    \mathrm{Re}_{\Theta_{x_1}} := \sup_{i \in \{1, \dots, M\}} |(\Theta_{x_1})_t(x_i,y_i)|.
\]

Table \ref{tab:BS_resolution_study_scenario2} reports the $L^\infty$ residuals of $\Omega_t$ and $(\Theta_{x_1})_t$ on different mesh levels. The residuals decrease monotonically under refinement, supporting the numerical reliability of the Scenario 2 computation.

\begin{table}[h!]
    \centering
    \vspace{2mm}
    \begin{tabular}{c c c c c}
	        \textbf{Mesh size $N$} & \textbf{$\mathrm{Re}_{\Omega}$} & \textbf{$\mathrm{Re}_{\Theta_{x_1}}$}  \\
        \midrule
        $128$ & $1.1385\times 10^{-1}$ & $1.4508\times 10^{-2}$\\
        $256$ & $3.8487\times 10^{-2}$ & $3.2856\times 10^{-3}$  \\
        $512$ & $4.3836\times 10^{-3}$ & $5.1663\times 10^{-4}$  \\
        \bottomrule
    \end{tabular}
        \caption{$L^{\infty}$ residuals of $\Omega_t$ and $(\Theta_{x_1})_t$ (Scenario 2).}
    \label{tab:BS_resolution_study_scenario2}
\end{table}

Overall, the above observations indicate that the errors and residuals decrease as the mesh is refined, which corroborates the accuracy and reliability of the 2D numerical scheme.

\vspace{2mm}

\subsection*{Acknowledgement} The authors are supported by the National Key R\&D Program of China under the grant 2021YFA1001500 and the National Natural Science Foundation of China under the grant NSFC No. 12288101.

\bibliographystyle{myalpha}
\bibliography{reference}

\end{document}